\documentclass[11pt]{amsart}
\usepackage{amssymb}
\usepackage{amsmath}
\usepackage{mathtools}
\usepackage{mdwtab}
\usepackage[alphabetic]{amsrefs}
\usepackage{ytableau}
\usepackage{tabularray}
\usepackage{caption}
%\captionsetup[table]{position=bottom}
\usepackage{subcaption}
\usepackage{eucal}
\usepackage{bm}
\usepackage{stmaryrd}
\usepackage{tikz}
\usepackage{tikz-cd}
\usetikzlibrary{positioning,fit}
\usepackage[indent]{parskip}
\usepackage{url}

\usepackage{fullpage}
\setlength{\footskip}{30pt}

\theoremstyle{plain}
\newtheorem{theorem}{Theorem}[subsection]
\newtheorem{prop}[theorem]{Proposition}
\newtheorem{lemma}[theorem]{Lemma}
\newtheorem{corollary}[theorem]{Corollary}

\theoremstyle{definition}
\newtheorem{dfn}[theorem]{Definition}
\newtheorem{ex}[theorem]{Example}

\newtheorem{rem}[theorem]{Remark}
\numberwithin{equation}{section}

\DeclareMathAlphabet\mathsf{OT1}{cmss}{m}{n}

\newcommand{\ssA}{\mathsf{A}}
\newcommand{\ssC}{\mathsf{C}}
\newcommand{\ssD}{\mathsf{D}}
\newcommand{\ssE}{\mathsf{E}}
\newcommand{\C}{\mathbb{C}}
\newcommand{\scrO}{\mathcal{O}}
\newcommand{\R}{\mathbb{R}}
\DeclareMathOperator{\GL}{GL}
\DeclareMathOperator{\U}{U}
\DeclareMathOperator{\SL}{SL}

\newcommand{\gl}{\mathfrak{gl}}
\DeclareMathOperator{\Sp}{Sp}
\DeclareMathOperator{\RSK}{RSK}
\renewcommand{\sp}{\mathfrak{sp}}
\renewcommand{\O}{\operatorname{O}}
\DeclareMathOperator{\SO}{SO}
\newcommand{\so}{\mathfrak{so}}

\newcommand{\g}{\mathfrak{g}}
\renewcommand{\k}{\mathfrak{k}}
\newcommand{\h}{\mathfrak{h}}
\renewcommand{\q}{\mathfrak{q}}
\newcommand{\p}{\mathfrak{p}}
\newcommand{\F}[2]{F^{(#2)}_{#1}}

\newcommand{\Cleq}[1]{\mathcal{C}^{#1}_{\leqslant k}}
\newcommand{\Ceq}[1]{\mathcal{C}^{#1}_{k}}
\renewcommand{\phi}{\varphi}
\renewcommand{\P}{\mathcal{P}}
\DeclareMathOperator{\supp}{supp}
\DeclareMathOperator{\msupp}{msupp}
\newcommand{\bp}{\mathbf{p}}
\newcommand{\f}{\mathbf{f}}
\newcommand{\scrF}{\mathcal{F}}
\DeclareMathOperator{\Par}{Par}
\DeclareMathOperator{\M}{M}
\DeclareMathOperator{\SM}{SM}
\DeclareMathOperator{\AM}{AM}
\DeclareMathOperator{\SSYT}{SSYT}
\DeclareMathOperator{\SYT}{SYT}
\newcommand{\la}{\lambda}
\newcommand{\ep}{\varepsilon}
\newcommand{\al}{\alpha}
\newcommand{\be}{\beta}
\newcommand{\cor}{{\rm cor}}
\newcommand{\s}{{\rm s}}
\newcommand{\m}{{\rm m}}
\newcommand{\0}{\textcolor{gray!50}{0}}
\newcommand{\sm}[1]{\left[\begin{smallmatrix} #1 \end{smallmatrix}\right]}
\newcommand{\Lplusk}{\Lambda^{\!+\!}(\k)}
\newcommand{\ind}{\ell}
\newcommand{\Wedge}{\scalebox{.85}{$\bigwedge$}}

\allowdisplaybreaks

\setcounter{tocdepth}{1}

\begin{document}

\title{Stanley decompositions of rings of invariants and certain  highest weight Harish-Chandra modules}

\author{William Q.~Erickson}
\address{
William Q.~Erickson\\
Department of Mathematics\\
Baylor University \\ 
One Bear Place \#97328\\
Waco, TX 76798} 
\email{Will\_Erickson@baylor.edu}

\author{Markus Hunziker}
\address{
Markus Hunziker\\
Department of Mathematics\\
Baylor University \\ 
One Bear Place \#97328\\
Waco, TX 76798} 
\email{Markus\_Hunziker@baylor.edu}

\begin{abstract}
The first half of this paper is largely expository, wherein we present a systematic combinatorial approach to the theory of polynomial (semi)invariants and multilinear invariants of several vectors and covectors, for the classical groups.
This culminates in a graphical description of graded linear bases.
By applying well-known results of lattice-path combinatorics to Weyl's fundamental theorems of classical invariant theory, we write down Stanley decompositions and Hilbert--Poincar\'e series in terms of families of non-intersecting lattice paths, enumerated with respect to certain corners.

In the second half of the paper, we revisit the (semi)invariants in the first half as a special case of a much broader phenomenon.
On one hand, polynomial invariants of a group $H$ can be generalized to modules of covariants, i.e., $H$-equivariant polynomial functions between $H$-modules.
On the other hand, from the perspective of Roger Howe's theory of dual pairs, these modules of covariants can be viewed as infinite-dimensional simple $(\g, K)$-modules.
This suggests an expanded program in which our goal is to apply combinatorial techniques involving lattice paths in order to write down Hilbert series for arbitrary unitarizable highest-weight $(\g,K)$-modules.
As a preview of future work in this program, we present examples showing how modules of covariants --- even those which are not Cohen--Macaulay, and therefore which we would not expect to be combinatorially nice --- can be decomposed in terms of lattice paths.
We also extend these methods beyond the classical groups.
\end{abstract}

% \subjclass[2020]{Primary 05E10; Secondary 22E47, 17B10}

%13A50 Actions of groups on commutative rings; invariant theory

\keywords{Classical invariant theory, lattice paths, Stanley decompositions, Hilbert--Poincar\'e series, modules of covariants, Howe duality}

\maketitle

\tableofcontents

\section{Introduction}

We have written this paper with two goals.
In the first half (Sections~\ref{sec:prelim}--\ref{sec:Stanley decomp and Hilbert series}), we present a unified exposition outlining some major results of classical invariant theory through the combinatorial lens of lattice paths.
The connections we mention here are, for the most part, scattered in the literature of the 1990s, and often only implicitly.
For this reason, we believe it is worthwhile to describe this framework in detail for all of the complex classical groups, i.e., the general/special linear, the (special) orthogonal, and the symplectic groups.
Incidental to this first goal, we have included certain dimension formulas which seem to be new.
The primary contribution of this paper, however, and one which is entirely new, is the second half (Sections~\ref{sec:Howe duality and Enright reduction}--\ref{sec:ADE}), where we extend the lattice path approach from (semi)invariants to \emph{co}variants, and from the classical groups to the exceptional groups.
In the overview below, we point out previous work in the literature and we highlight the new results contained in this paper.

\subsection{Classical invariant theory}

Let $H$ be one of the classical groups over the complex numbers: the general/special linear group, the (special) orthogonal group, or the symplectic group.
Let $W$ be a representation of $H$, i.e., a complex vector space along with a group homomorphism $\rho: H \longrightarrow \GL(W)$.
Thus $H$ acts linearly on $W$, and for brevity we write $hw \coloneqq \rho(h)w$.
Then $H$ also acts linearly on $\C[W]$, the space of complex-valued polynomial functions on $W$.
The primary problem in classical invariant theory is to determine generators and relations for the algebra of (polynomial) invariants, denoted by
\[
\C[W]^H:=\{ f\in \C[W] \mid f(hw)=f(w)\ \mbox{for all $h\in H, \: w\in W$}\}.
\]

In the latter half of the 19th century, nearly every paper in invariant theory contained explicit computations of generators and relations for some algebra $\C[W]^H$.
See the survey~\cite{DieudonneCarrell} for a comprehensive overview of the subject.
Central in classical invariant theory is the case where $W$ is a direct sum of vectors and covectors; i.e., if $V$ is the defining representation of $H$, we let $W$ be the space
\[
V^{*p} \oplus V^q:=\underbrace{V^*\oplus \cdots \oplus V^*}_{\mbox{$p$ copies}} \oplus \underbrace{V\oplus \cdots \oplus V}_{\mbox{$q$ copies}}.
\]
(The distinction between vectors and covectors is needed only when $H$ is the general or special linear group.)
Hermann Weyl~\cite{Weyl} determined generators and relations for $\C[V^{*p} \oplus V^q]^H$, in his first and second fundamental theorems of classical invariant theory.
When $H$ is $\GL(V)$, $\O(V)$, or $\Sp(V)$, the generators are certain quadratics $f_{ij}$, and the relations among them are given by the vanishing of determinants (or Pfaffians) of all minors of size $\dim V + 1$.
Hence for these three groups, $\C[W]^H$ is isomorphic to the coordinate ring of a determinantal variety.

Weyl's theorems are called ``fundamental'' in the sense that many problems in invariant theory can be reduced to the case of several vectors and covectors, via polarization and restitution; this is the essence of the ``symbolic'' (or ``umbral'') method that flourished in the 19th century.  (See~\cite{Kraft}.)
One such instance is the classical problem of \emph{multilinear invariants}, i.e., the $H$-invariant tensors in the space
\[
V^{\otimes p} \otimes V^*{}^{\otimes q}:=
\underbrace{V\otimes \cdots \otimes V}_{\mbox{$p$ copies}} \otimes \underbrace{V^*\otimes \cdots \otimes V^*}_{\mbox{$q$ copies}},
\]
where $H$ acts naturally on each tensor factor.
Via the canonical isomorphism $V^{**} \cong V$, one can identify $V^{\otimes p} \otimes V^*{}^{\otimes q}$ with the space of multilinear forms on $V^{*p} \oplus V^q$, which in turn is a single multigraded component of $\C[V^{*p} \oplus V^q]$.
Hence the multilinear invariants are a special case of the polynomial invariants.
We will see that the combinatorial analogue is the relationship between standard and semistandard Young tableaux.

\subsection{Linear bases}

In contrast with the historical goal of finding generators and relations (as an algebra), our main interest in this paper is in describing linear bases.
In particular, we focus on writing down an explicit basis for each graded component of $\C[V^{*p} \oplus V^q]^H$ as a vector space.
If $H$ is $\GL(V)$, $\O(V)$, or $\Sp(V)$, in light of Weyl's fundamental theorems, this problem is equivalent to writing down a graded linear basis for a determinantal ring.
This can be done via standard monomial theory (SMT), as in \cites{DeConciniProcesi,Lakshmibai,Procesi}, where the standard ``monomials'' are products of minors encoded by certain semistandard tableaux.
Then following Sturmfels~\cite{Sturmfels} and Conca~\cite{Conca94}, one can exploit the Robinson--Schensted--Knuth (RSK) correspondence to obtain a bijection between standard monomials and ordinary monomials.
More specifically, the strategy employs the bijections below:
\[
\text{standard monomials} \xrightarrow{{\rm SMT}} \text{tableaux} \xrightarrow{{\rm RSK}} \text{$\mathbb{N}$-matrices} \longrightarrow \text{ordinary monomials and graphs},
\]
where the final arrow denotes viewing a matrix either as the degree matrix of a monomial in variables $z_{ij}$, or as the adjacency matrix of a graph.
Our graphical interpretation in terms of arc diagrams seems to be new, although graphical methods are nearly as old as invariant theory itself; see Sylvester's ``algebro-chemical'' theory outlined in~\cite{Sylvester}, but also Olver's elaboration in~\cite{Olver}*{Ch.~7}, along with modern innovations such as webs and spiders~\cite{Kuperberg}.
See also the work by Stanley~\cite{StanleyCombInvThy} and Proctor~\cite{Proctor} on the interplay of combinatorics and invariant theory.

\noindent \emph{\textbf{New results:}} In the five Propositions~4.$*$.1 (one for each classical group $H$), we describe a linear basis for each graded component of $\C[V^{*p} \oplus V^q]^H$.
We represent the basis elements by arc diagrams, where each edge represents a fundamental invariant $f_{ij}$.
Hence for $H = \GL(V)$, $\O(V)$, and $\Sp(V)$, our graphs are essentially visualizations of the SMT results described above.
In the cases where $H = \SL(V)$ or $\SO(V)$, however, the $f_{ij}$'s are not enough to generate the ring of invariants, and so we attach \emph{hyperedges} to our graphs according to certain rules.
In Propositions~4.$*$.2, by restricting our graphs to obey certain degree conditions, we give a linear basis for the space of multlinear invariants.
As a consequence, in Corollaries~4.$*$.3 we are able to formulate the dimension of this space by enumerating the standard Young tableaux of a certain shape.
Although some of these dimension formulas are presumably known (see~\cite{Smith}*{p.~654} in the case of the orthogonal group, along with~\cite{Westbury} for an approach via crystal bases), we believe that at least the $\SL(V)$ and $\SO(V)$ formulas may be new.

\subsection{Stanley decompositions and Hilbert--Poincar\'e series}

Once we understand linear bases consisting of ordinary monomials, our next step is to write down a \emph{Stanley decomposition}
\[
\C[V^{*p} \oplus V^q]^H = \bigoplus \C[\text{certain subset of the $f_{ij}$'s}] \cdot (\text{monomial in certain $f_{ij}$'s}).
\]
Sturmfels~\cite{Sturmfels}, expanding upon~\cite{Billera}, showed how to find a Stanley decomposition of the coordinate ring of the determinantal varieties of generic $p \times q$ matrices, via shellings of the $k$th order complexes on the poset of matrix coordinates (which we will call $\P_{\GL}$); in this way, the coordinate ring can be identified with a Stanley--Reisner ring.
Sturmfels's technique was extended to symmetric and alternating matrices by~\cite{Conca94} and~\cite{Herzog}, respectively (where in this paper we call the underlying posets $\P_{\O}$ and $\P_{\Sp}$).
See also~\cite{BrunsVetter} concerning determinantal rings.
In all three cases, previous results of Krattenthaler~\cite{Krattenthaler} (generalizing Viennot's ``light and shadow '' \cites{Viennot,HerzogTrung} and Fulton's ``matrix balls'' \cite{Fulton}) yield a combinatorial description of the $k$th order complexes, wherein the facets are the families of $k$ non-intersecting lattice paths in the poset $\P_H$, and the restrictions are certain ``corners'' in the paths.
Thus, when $H = \GL(V)$, $\O(V)$, or $\Sp(V)$, after combining these results with Weyl's fundamental theorems, we obtain Stanley decompositions of the form
\[
    \C[V^{*p} \oplus V^q]^H = \bigoplus_{\text{facets $\f$}} \C[f_{ij} \mid (i,j) \in \f] \cdot f_{\cor(\f)},
\]
where $f_{\cor(\f)}$ denotes the product of all $f_{ij}$'s such that $(i,j)$ is a corner of $\f$.
The Stanley decomposition can then be used to write down a rational expression for the Hilbert--Poincar\'e series; see~\cite{ConcaHerzog94}, for example.
With this we conclude the first half of the paper.

\noindent \emph{\textbf{New results:}} In Propositions~5.$*$.1 we write down the Stanley decomposition and Hilbert--Poincar\'e series of the invariant ring.  (The lone exception is $\SO(V)$, which is addressed in Corollary~\ref{cor:Hilbert series SO_k invariants}, after treating the $\O(V)$ semiinvariants in Proposition~\ref{prop:Hilbert series O_k semis}.)
As with linear bases above, for the groups $\GL(V)$, $\O(V)$, and $\Sp(V)$, these results are mostly a matter of applying Weyl's fundamental theorems to previous results on determinantal rings and lattice paths.
Although these Hilbert series were already well known (see, e.g.,~\cites{EW,EnrightHunziker04}),
%%\question{EARLIER REFERENCE?}
the proofs in our paper now explain the numerators via lattice paths.
For the groups $\SL(V)$ and $\SO(V)$, on the other hand, the techniques above must be adapted, and we believe that our Stanley decompositions and Hilbert series are genuinely new results.
As an illustration of our techniques, we refer the reader ahead to Example~\ref{ex:SL_k invariants k3p3q4}, where $H = \SL(V)$ with $\dim V = 3$; merely by counting the corners of certain lattice paths, we are able to write down the Hilbert--Poincar\'e series of the invariant ring as follows:
\begin{align*}
       P(\C[V^{*3} \oplus V^4]^{\SL(V)}; t) &= \frac{1}{(1-t^2)^{12}} + \frac{2t^3}{(1-t^3)(1-t^2)^{12}} + \frac{t^3 (1 +t^2 + t^4)}{(1-t^3)^2 (1-t^2)^{11}} \\
       & \qquad + \frac
        {t^3(1+2t^2)}{(1-t^3)^3 (1-t^2)^{10}} + \frac{t^3}{(1-t^3)^4 (1-t^2)^9} \\[2ex]
        &= \frac{1 + 3 t^2 + 2 t^3 + 6 t^4 + 3 t^5 + 8 t^6 + 3 t^7 + 6 t^8 + 2 t^9 +  3 t^{10} + t^{12}}{(1 - t^2)^9 (1 - t^3)^3 (1 - t^6)}.
    \end{align*}

 \subsection{Howe duality and modules of covariants}  

 The success of the lattice path approach to the theory of invariants motivated us to extend these methods to the broader setting of  \emph{modules of covariants}.
 In classical invariant theory, a \emph{covariant} function on $W$, of type $U$ (where $U$ is an $H$-module), is an $H$-equivariant polynomial function $\varphi:W \longrightarrow U$.
 Note that the invariants are the special case where $U$ is the trivial representation.
 The space of covariants of type $U$ is canonically isomorphic to $(\C[W] \otimes U)^H$, which is a $\C[W]^H$-module in the obvious way: the action of $f \in \C[W]^H$ is given by $f \cdot (\sum_i f_i \otimes u_i) = \sum_i (f f_i) \otimes u_i$.
 The 1990s saw renewed interest in modules of covariants, in particular those with the Cohen--Macaulay property; see especially the work of Van den Bergh~\cites{Vandenberg91,VandenBergh}, Brion~\cite{Brion}, and Broer~\cite{Broer}.

 From the perspective of Roger Howe's theory of dual pairs~\cite{Howe89}, one can view modules of covariants as Harish-Chandra modules, i.e., as $(\g,K)$-modules where $\g$ is a certain Lie algebra which is ``dual'' to $H$ (in the sense of Howe duality).
See also~\cite{Leung} concerning invariant theory in the context of dual pairs.
 % \question{Add something using the term ``Harish-Chandra modules'' here?  Also, where to put~\cite{HarrisJakobsen} reference?}
 In this framework, Jackson~\cite{Jackson} developed a standard monomial theory for modules of covariants of the groups $\GL(V)$ and $\Sp(V)$, along with partial results for $\O(V)$.
 In particular, Jackson described the ring $\C[V^{*p} \oplus V^q]^N$ as a Gr\"obner algebra, where $N$ is the maximal unipotent subgroup of $H$
; this ring is isomorphic to the direct sum of the modules of covariants of types $U$ appearing in the decomposition of $\C[W]$. 
Jackson describes a basis of standard monomials in terms of what he calls $H$-sequences, and defines the generators of the idea of non-standard monomials in terms of what he calls $H$-splits.

\noindent \emph{\textbf{New results:}} 
In Theorem~\ref{thm:Hilbert series GL_k covariants}, we write down a Stanley decomposition and Hilbert--Poincar\'e series for $(\C[V^{*p} \otimes V^q] \otimes U)^{\GL(V)}$, where $U$ is an irreducible polynomial representation of $\GL(V)$ or the dual thereof.
(The case of generic rational representations involves some subtlety, and will be described fully in a forthcoming paper.)
Our essential tool is a new type of ``hybrid'' family of lattice paths, combined with Howe duality and Jackson's standard monomial theory.
Strikingly, we are always able to express the Hilbert series as a positive combination of sums over lattice paths, even when the module of covariants is not Cohen--Macaulay.
For an instance of this, see Example~\ref{ex:GL covariants}, where $\dim V = 3$, and where we obtain the Hilbert--Poincar\'e series by summing the following rational expressions:
\begin{align*}
    &t^3 \left(8 \cdot\frac{1+t^2+t^4+t^6}{(1-t^2)^{15}} + 8 \cdot \frac{1+2t^2 + 3t^4}{(1-t^2)^{14}} + 4 \cdot \frac{1 + 3t^2}{(1-t^2)^{13}} + 0 \cdot \frac{1}{(1-t^2)^{12}}\right) \\[2ex]
    = \hspace{1ex}& \frac{20t^3 + 20 t^5 - 4 t^7 - 4 t^9}{(1 - t^2)^{15}}.
\end{align*}
The covariants of the orthogonal group are combinatorially more subtle, but in this paper we obtain a Stanley decomposition and Hilbert--Poincar\'e series for the special case $(\C[V^n] \otimes \Wedge^m V)^{\O(V)}$, for $m \leq \dim V$; see Example~\ref{ex:O covariants}.
Finally, in Theorem~\ref{thm:Sp covariants}, we give a Stanley decomposition and Hilbert series for $(\C[V^n] \otimes U)^{\Sp(V)}$, where $U$ is any finite-dimensional irreducible representation of $\Sp(V)$.

 \subsection{Beyond classical groups}

Recall the dual pairs $(H,\g)$ in the setting of Howe duality above.
As a $\g$-module, the invariant algebra $\C[V^{*p} \oplus V^q]^H$ is known as the \emph{$k$th Wallach representation}, where $k = \dim V$ (or $\frac{1}{2} \dim V$ if $H = \Sp(V)$).
Outside the setting of classical groups, there exist analogous Wallach representations for each Hermitian symmetric pair $(\g,\k)$.
A process called \emph{Enright--Shelton reduction} allows one to interpret the numerator of the Hilbert series of the Wallach representations as the Hilbert series of a finite-dimensional representation of a certain Hermitian symmetric pair $(\g',\k')$, which has lesser rank than $(\g, \k)$.
The reduction process itself maps singular $\k$-dominant weights to regular $\k'$-dominant weights; see~\cites{ES87,ES89,EW,EricksonHunziker23} for a detailed treatment.

\noindent \emph{\textbf{New results:}}
We show, case by case, that for all Hermitian symmetric pairs $(\g,\k)$ with $\g$ simply laced, the Hilbert series of the Wallach representations can be understood via families of non-intersecting lattice paths in the poset of positive noncompact roots, in exactly the same way as the three classical cases above.
We conclude the paper by observing a surprising connection with Enright--Shelton reduction.
In particular, the key to our lattice path approach for the classical groups was the notion of the \emph{corners} of a path.
When considering all families of $k$ non-intersecting lattice paths inside a poset $\P$, we paid special attention to (translations of) the \emph{corner poset} $\operatorname{Cor}_k(\P)$, which is the subset of $\P$ (but endowed with a different poset structure than that inherited from $\P$) in which the corners of an individual path can lie.
See Figure~\ref{fig:facet examples}, where we shade a corner poset $\operatorname{Cor}_k(\P)$ inside each of the three classical posets.
If we rename each poset $\P_H$ as $\P(\g,\k)$ to emphasize the Lie algebra in the dual pair, then we observe the poset isomorphisms
\[
\operatorname{Cor}_k \P(\g,\k) \cong \P(\g',\k').
\]
This extends even to the exceptional groups associated to the simply laced Hermitian symmetric pairs.
In future work, we hope to more fully understand this combinatorial connection.

\section{RSK correspondences and order complexes}
\label{sec:prelim}

\subsection{The ``classical'' posets}
\label{section:posets}

We define the following partially ordered sets (posets), which are planar distributive lattices:
\begin{alignat}{4}
    \P_{\GL} &= \P_{\GL}(p,q) &&\coloneqq \{ (i,j) \mid 1 \leq i \leq p, \: 1 \leq j \leq q \}, && \quad (i,j) \leq (i', j') \iff i \leq i' \text{ and } j \leq j', \\
    \P_{\O} &= \P_{\O}(n) &&\coloneqq \{ (i,j) \mid 1 \leq i \leq j \leq n \}, && \quad (i,j) \leq (i', j') \iff i \leq i' \text{ and } j \geq j', \\
    \P_{\Sp} &= \P_{\Sp}(n) &&\coloneqq \{ (i,j) \mid 1 \leq i < j \leq n \}, && \quad (i,j) \leq (i', j') \iff i \leq i' \text{ and } j \leq j'.
\end{alignat}
Note that $\P_{\Sp}$ is a subposet of $\P_{\GL}$ for sufficiently large $p$ and $q$, but the partial order on $\P_{\O}$ is different than the partial order in the other two.  
The support of a $p \times q$ matrix is a subset of $\P_{\GL}$, the support of an upper-triangular $n \times n$ matrix is a subset of $\P_{\O}$, and the support of an $n \times n$ strictly upper-triangular matrix is a subset of $\P_{\Sp}$.  
For this reason, we will depict these posets using matrix coordinates, with $(1,1)$ in the upper-left; hence our pictures are rotations of the Hasse diagrams. 

As is standard in order theory, we define a \emph{chain} to be a totally ordered subset, and an \emph{antichain} to be a subset whose elements are pairwise incomparable.  
The \emph{height} of a poset is the size of its largest chain; equivalently, by Mirsky's theorem, the height is the minimum number of antichains into which the poset can be partitioned.  
Dually, the \emph{width} of a poset is the size of the largest antichain, which by Dilworth's theorem equals the minimum number of chains into which the poset can be partitioned.  
We will use the term \emph{strict chain} for a chain in which $i < i'$ and $j < j'$ for each pair of distinct elements $(i,j) \leq (i', j')$.  
Given a nonnegative integer matrix $M$, we define its \emph{support} $\supp(M) \coloneqq \{ (i,j) \mid M_{ij} \neq 0\}$, and its \emph{multisupport} $\msupp(M) \coloneqq \{(i,j)^{M_{ij}}\}$, where the superscript denotes the multiplicity of each element in the multiset.  We will refer to the \emph{height} of $\msupp(M)$, meaning the size of its largest chain including multiplicities (equivalently, the minimum number of antichains in a multiset decomposition). 

\subsection{RSK correspondences} By generalizing an algorithm due to Robinson and (independently) Schensted, Knuth \cite{Knuth} defined the celebrated Robinson--Schensted--Knuth (RSK) correspondence, which is a bijection sending each pair of semistandard Young tableaux (SSYT's) of the same shape to a matrix with entries in $\mathbb{N} \coloneqq \mathbb{Z}_{\geq 0}$.

Let $[n] \coloneqq \{1, \ldots, n\}$.
We define the set
\[
\SSYT(\mu,n) \coloneqq \{ \text{SSYT's with shape $\mu$ and entries in $[n]$}\},
\]
where $\mu$ is the partition giving the row lengths.
We write $\Par(a \times b)$ to denote the set of partitions whose Young diagram fits inside a rectangle with $a$ rows and $b$ columns; if we do not wish to restrict the number of columns, then we write $\Par(a \times \infty)$.  
Below we summarize the RSK correspondence, along with two variations we will need.  
We adapt the presentation slightly for the purposes of this paper.  
The subscripts $\GL$, $\O$, and $\Sp$ refer to the classical groups we will introduce in Section~\ref{sec:invariant theory}.
We write $\M_{p,q}(\mathbb{N})$, $\SM_{n}(\mathbb{N})$, and $\AM_{n}(\mathbb{N})$ to denote generic matrices, symmetric matrices, and alternating matrices, respectively.
Explicit details of the following correspondence can be found in~\cite{Knuth}*{\S3}.

\begin{prop}[Knuth]
    \label{prop:RSK_GL}  There is a bijection
\[
\RSK_{\GL} : \bigcup_{\mathclap{\mu \in \Par(\min\{p,q\} \times \infty)}} \: \SSYT(\mu,p) \times\SSYT(\mu,q) \longrightarrow \M_{p,q}(\mathbb{N})
\]
with the following properties. Let $M = \RSK_{\GL}(T,U)$, where $T$ and $U$ have shape $\mu$: 
\begin{enumerate}
     \item  We have $\sum_{i,j} M_{ij} = |\mu|$.

     %\item The multisupport of $M$ is the union of maximal chains (resp., antichains) in $\P_{\GL}$ whose lengths are given by the lengths of the rows (resp., columns) in the Young diagram of $\mu$.

    \item In $\P_{\GL}$, the width of $\supp(M)$ equals the number of rows in the Young diagram of $\mu$, while the height of $\msupp(M)$ equals the number of columns.

 \end{enumerate}

\end{prop}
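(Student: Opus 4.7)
The plan is to prove the proposition by exhibiting $\RSK_{\GL}$ via Knuth's row-insertion procedure, verifying bijectivity, and then establishing the two listed properties. First, I would recall the construction. Given $M \in \M_{p,q}(\mathbb{N})$, form the \emph{two-line array} whose columns are the pairs $\binom{i}{j}$ with $(i,j)$ appearing $M_{ij}$ times, sorted so that the top row is weakly increasing and ties are broken by weakly increasing bottom entries. Reading left to right, successively row-insert each bottom entry into an initially empty tableau $U$; at each step a new box is added to the shape, into which the corresponding top entry is recorded in an auxiliary tableau $T$. The standard arguments show $T \in \SSYT(\mu,p)$ and $U \in \SSYT(\mu,q)$ for a common shape $\mu$ having at most $\min\{p,q\}$ rows, and the reverse row-insertion algorithm recovers $M$ from $(T,U)$, giving the bijection.

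Property~(1) is immediate from the construction: each column of the biword contributes exactly one new box to $\mu$, and there are $\sum_{i,j} M_{ij}$ columns in total, so $|\mu| = \sum_{i,j} M_{ij}$.

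For property~(2), the key tool is Greene's theorem in the biword setting. The translation between subsequence language and $\P_{\GL}$-poset language is as follows: a set of positions in the biword whose top and bottom rows are both weakly increasing corresponds exactly to a multichain in $\P_{\GL}$ drawn from $\msupp(M)$, so by Greene's theorem the maximum size of such a set is $\mu_1$; similarly, a set of positions for which the bottom row is strictly decreasing (ties in the top row forcing strict decrease below by lex-sorting) corresponds to an antichain in $\supp(M)$, and its maximum size is $\mu'_1$. This yields the height of $\msupp(M)$ equals $\mu_1 =$ number of columns, and the width of $\supp(M)$ equals $\mu'_1 =$ number of rows.

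The main obstacle is Greene's theorem itself, whose standard proof proceeds by showing invariance of the longest-chain/antichain statistics under elementary Knuth equivalence on biwords, then reducing to the reading word of the $P$-tableau, where $\mu_1$ and $\mu'_1$ appear by inspection. I would cite Knuth's treatment in~\cite{Knuth} rather than reproducing this, since the result is well known; the content that is genuinely relevant to this paper is the dictionary between subsequences in the biword and chains/antichains in the poset $\P_{\GL}$, which I would spell out carefully to set the stage for the order-complex combinatorics that follows.
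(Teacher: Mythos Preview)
Your proposal is correct and follows the standard route. Note, however, that the paper does not actually prove this proposition: it is stated as a result of Knuth, with the sentence immediately preceding it reading ``Explicit details of the following correspondence can be found in~\cite{Knuth}*{\S3}.'' So there is no in-paper argument to compare against; your sketch simply fills in what the citation points to. One small remark: for property~(2) you only need the Schensted--Knuth statement about $\mu_1$ and $\mu'_1$, not the full Greene theorem on $\mu_1+\cdots+\mu_k$, and that weaker statement is indeed already in Knuth's paper, so your proposed citation is appropriate.
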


By applying Knuth's ``dual insertion'' algorithm on ``dual tableaux'' (transposes of SSYT's), we obtain a similar bijection.  
Burge \cite{Burge}*{p.~22} uses an argument similar to Knuth's to spell out this bijection explicitly, thus associating each dual tableau to a matrix in $\SM_n(\mathbb N)$ whose diagonal entries are even.  
(The same construction is described in \cite{Conca94}*{p.~410}.) For this paper, we will replace the codomain by the set of upper-triangular matrices; clearly any such matrix $M$ corresponds uniquely to one of Burge's matrices $M+M^t$.  
We write the correspondence in terms of SSYT's rather than dual tableaux:

\begin{prop}[Burge, Conca]
    \label{prop:RSK_O}  There is a bijection
\[
 \RSK_{\O} : \bigcup_{\mathclap{\substack{\mu \in \Par(n \times \infty)\\ \textup{with even row lengths}}}} \: \SSYT(\mu,n) \longrightarrow 
 \{M \in \M_n(\mathbb N) \mid \textup{$M$ upper-triangular}\}
 \]
with the following properties. Let $M = \RSK_{\O}(T)$, where $T$ has shape $\mu$: 
\begin{enumerate}
     \item  We have $\sum_{i,j} M_{ij} = |\mu|/2$.

    %\item The multisupport of $M$ is the union of maximal chains (resp., antichains) in $\P_{\O}$ whose lengths are given by the row half-lengths (resp., by the column lengths) in the Young diagram of $\mu$.

    \item In $\P_{\O}$, the width of $\supp(M)$ equals the number of rows in the Young diagram of $\mu$, while the height of $\msupp(M)$ equals half the number of columns.
    
    \end{enumerate}
\end{prop}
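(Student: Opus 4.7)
The plan is to assemble $\RSK_{\O}$ from two pieces: first, the explicit bijection of Burge (following Knuth's dual insertion) between $\SSYT(\mu,n)$ for $\mu$ with all even row lengths and symmetric matrices in $\SM_n(\mathbb{N})$ with even diagonal entries; second, the elementary ``folding'' that identifies such a symmetric matrix $S$ with the unique upper-triangular $M \in \M_n(\mathbb{N})$ given by $M_{ij} = S_{ij}$ for $i < j$ and $M_{ii} = S_{ii}/2$ (equivalently, $S = M + M^t$). The folding is manifestly a bijection onto the set of upper-triangular matrices in $\M_n(\mathbb{N})$ because the diagonal of $S$ is even; composing with Burge's bijection gives $\RSK_{\O}$.

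Property (1) is then immediate. Burge's bijection is weight-preserving in the sense that $\sum_{i,j} S_{ij} = |\mu|$, so folding gives $\sum_{i \leq j} M_{ij} = \frac{1}{2}\sum_{i,j} S_{ij} = |\mu|/2$.

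For property (2), the main tool is Greene's theorem, in the form adapted to Burge's symmetric correspondence: it identifies the number of rows of $\mu$ with the maximum size of an antichain in $\supp(S) \subseteq [n]\times[n]$ under the standard product order, and identifies the number of columns of $\mu$ with the maximum size of a multichain in $\msupp(S)$. Since $S$ is symmetric, both statistics admit realizations that are invariant under the reflection $(i,j) \leftrightarrow (j,i)$. The partial order on $\P_{\O}$ uses $(i,j) \leq (i',j') \iff i \leq i' \text{ and } j \geq j'$, and this reversal of the second coordinate is exactly what is needed so that a reflection-invariant antichain (resp.\ multichain) in $[n] \times [n]$ intersects the upper triangle in a $\P_{\O}$-antichain in $\supp(M)$ (resp.\ a $\P_{\O}$-chain in $\msupp(M)$), and conversely. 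A careful count then gives the width equality on the nose, and the height equality up to a factor of $\frac{1}{2}$, with the halving reflecting that each column of $\mu$ pairs up symmetrically across the diagonal of $S$ (which is what the even-row-length hypothesis guarantees).

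The main obstacle is the handling of diagonal entries: elements of the form $(i,i)$ are fixed points of the reflection, so they contribute a single point to a reflection-invariant antichain rather than a symmetric pair, and a similar issue arises for multichains passing through the diagonal. Making the translation between $\supp(S)$ and $\supp(M)$ sharp therefore requires a small case analysis distinguishing diagonal from strictly upper-triangular contributions, and the height statement in particular depends on checking that the halving of $S_{ii}$ in the folding matches the halving of the column count. Once this bookkeeping is carried out, both parts of (2) follow directly from Greene's theorem.
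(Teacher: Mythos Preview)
The paper does not give a self-contained proof of this proposition; it attributes the bijection to Burge and Conca, explains the folding $S = M + M^t$ (which you have reproduced correctly), and defers the properties to those references. Your treatment of the folding and of property~(1) matches the paper's exposition.

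Your argument for property~(2), however, contains two genuine errors. First, the Greene-type statement you invoke for Burge's symmetric matrix $S$ is false as written. Burge's correspondence is \emph{not} the restriction of $\RSK_{\GL}$ to the diagonal $(T,T)$; that restriction is what the paper uses for $\RSK_{\Sp}$ (even \emph{column} shapes, trace-zero matrices). Burge's bijection uses dual insertion and lands on even-\emph{row} shapes and even-\emph{diagonal} symmetric matrices, so the Greene invariants are different. Concretely, take $n=2$ and $T = \begin{smallmatrix}1&2\end{smallmatrix}$ of shape $\mu=(2)$: then $S = \left(\begin{smallmatrix}0&1\\1&0\end{smallmatrix}\right)$, whose support $\{(1,2),(2,1)\}$ is a product-order antichain of size $2$, while $\mu$ has only $1$ row.

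Second, your translation between the product order on $[n]\times[n]$ and the order on $\P_{\O}$ is backwards. Because $\P_{\O}$ reverses the second coordinate, two upper-triangular points are $\P_{\O}$-\emph{incomparable} precisely when they are \emph{strictly comparable} in the product order; hence a $\P_{\O}$-antichain is a strict product-order chain, not the upper-triangular slice of a product-order antichain. For instance, $\{(1,3),(2,2),(3,1)\}$ is a reflection-invariant product-order antichain whose intersection with $\{i\le j\}$ is $\{(1,3),(2,2)\}$, and these two points are $\P_{\O}$-comparable. So even a correct Greene statement for $S$ would not transfer to $\P_{\O}$ by your mechanism.

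Property~(2) really does require the specific combinatorics of Burge's construction (or, equivalently, the \emph{dual} form of Greene's theorem applied to the dual-insertion biword, in which the roles you assigned to rows and columns are swapped). The diagonal bookkeeping you flag as the ``main obstacle'' is secondary; the primary obstacle is identifying the correct Greene statement for this particular RSK variant.
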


Knuth's observation that $\RSK_{\GL}(T,U) = \left[\RSK_{\GL}(U,T)\right]^t$ leads to a bijection between single SSYT's and symmetric matrices.  
Upon setting $p = q = n$, we have the bijection
 \begin{align}
     \label{RSK_GL on twins}
     \begin{split}
     \bigcup_{\mathclap{\mu \in \Par( n \times \infty)}} \SSYT(\mu,n) & \longrightarrow \{M \in \SM_n(\mathbb N) \mid \text{$M$ upper-triangular}\},\\
     T &\longmapsto \RSK_{\GL}(T,T),\\   
     \text{\# odd-length columns in $T$} & \;\; = \;\; \text{trace of $\RSK_{\GL}(T,T)$}.
     \end{split}
 \end{align}
Therefore, by restricting to those shapes with even-length columns, and replacing symmetric matrices by their upper-triangular parts, we obtain our final RSK variant (see  \cite{Knuth}*{\S4}, or the equivalent construction in~\cite{Burge}*{\S2} on even-column tableaux):

\begin{prop}[Knuth]
    \label{prop:RSK_Sp}  There is a bijection
\[
\RSK_{\Sp} : \bigcup_{\mathclap{\substack{\mu \in \Par(n \times \infty)\\ \textup{with even column lengths}}}} \: \SSYT(\mu,n) \longrightarrow 
 \{M \in \M_n(\mathbb N) \mid \textup{$M$ strictly upper-triangular}\}
\]
with the following properties. Let $M = \RSK_{\Sp}(T)$, where $T$ has shape $\mu$: 
\begin{enumerate}
     
     \item  We have $\sum_{i,j} M_{ij} = |\mu|/2$.

     %\item The multisupport of $M$ is the union of maximal chains (resp., antichains) in $\P_{\Sp}$ whose lengths are given by the row lengths (resp., by the column half-lengths) in the Young diagram of $\mu$.

    \item In $\P_{\Sp}$, the width of $\supp(M)$ equals half the number of rows in the Young diagram of $\mu$, while the height of $\msupp(M)$ equals the number of columns.
 \end{enumerate}
\end{prop}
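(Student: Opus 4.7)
The plan is to derive $\RSK_{\Sp}$ directly from the twin case of $\RSK_{\GL}$ recorded in~(\ref{RSK_GL on twins}), restricted to shapes with even-length columns. When every column of $\mu$ has even length, the trace identity in~(\ref{RSK_GL on twins}) forces $\RSK_{\GL}(T,T)$ to have zero trace, so it is a symmetric matrix in $\M_n(\mathbb{N})$ with vanishing diagonal, hence is uniquely determined by its \emph{strictly} upper-triangular part. I would define $\RSK_{\Sp}(T)$ to be this strictly upper-triangular part; the bijection onto $\{M \in \M_n(\mathbb{N}) : M \text{ strictly upper-triangular}\}$ is then immediate. Property~(1) follows at once, since the entries of $\RSK_{\GL}(T,T)$ sum to $2\sum_{i<j}\RSK_{\Sp}(T)_{ij}$ by symmetry and vanishing diagonal, and this sum equals $|\mu|$ by Proposition~\ref{prop:RSK_GL}(1).

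The core of the proof is property~(2), which compares chains and antichains in $\P_{\Sp}(n)$ with those in $\P_{\GL}(n,n)$ restricted to the symmetric zero-diagonal $\supp$ (respectively $\msupp$) of $\RSK_{\GL}(T,T)$. For the height of $\msupp$, I would introduce the reflection $\sigma:(i,j)\mapsto(\min\{i,j\},\max\{i,j\})$ and verify by a short four-case check on consecutive elements that $\sigma$ sends any multichain in $\P_{\GL}$ on the symmetric multisupport to a multichain of equal length in $\P_{\Sp}$ on its upper-triangular half; together with the obvious inequality in the other direction (the upper-triangular multisupport is contained in the symmetric one), this identifies the two heights, and Proposition~\ref{prop:RSK_GL}(2) then yields the number of columns of $\mu$. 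For the width of $\supp$, the doubling map $A\mapsto A\cup A^t$ sends each antichain $A\subseteq\supp(\RSK_{\Sp}(T))$ in $\P_{\Sp}$ to an antichain of size $2|A|$ in $\P_{\GL}$ inside $\supp(\RSK_{\GL}(T,T))$, which by Proposition~\ref{prop:RSK_GL}(2) gives that the width in $\P_{\Sp}$ is at most half the number of rows of $\mu$.

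I expect the main obstacle to be the reverse inequality for the width: the reflection $\sigma$ destroys antichains (distinct incomparable pairs in $\P_{\GL}$ can become comparable after reflecting, e.g., $(1,3)$ and $(4,2)$), so the height argument does not transfer. The cleanest remedy is to produce a transpose-invariant maximum antichain in $\supp(\RSK_{\GL}(T,T))$, which by its zero diagonal splits evenly into upper- and lower-triangular halves, the upper half then realizing an antichain of the desired size in $\P_{\Sp}$. Such an invariant antichain can be obtained via the distributive-lattice structure on maximum antichains (on which transposition acts as an involution with nonempty fixed-point set), or alternatively by appealing to the explicit column insertion algorithm of~\cite{Burge}*{\S2} applied to an even-column tableau, from which property~(2) can be read off directly.
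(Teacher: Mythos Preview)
Your proposal is correct. The derivation of $\RSK_{\Sp}$ from the twin case~\eqref{RSK_GL on twins}, together with the trace identity forcing zero diagonal, is exactly how the paper defines the map; the paper then simply cites Knuth~\cite{Knuth}*{\S4} and Burge~\cite{Burge}*{\S2} for properties (1) and (2) rather than proving them in place.

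Where you differ is in giving a self-contained poset-theoretic argument for property~(2) instead of deferring to the literature. Your height argument via the folding map $\sigma:(i,j)\mapsto(\min\{i,j\},\max\{i,j\})$ is clean and works as stated: the key point (which you should make explicit) is that for $i\neq j$ the pair $(i,j)$ and $(j,i)$ are incomparable in $\P_{\GL}$, so a multichain uses at most one of them, and hence $\sigma$ does not inflate multiplicities beyond those in $\msupp(M)$. Your width argument is also sound: the doubling $A\mapsto A\cup A^t$ gives one inequality, and your remedy~(a) for the reverse inequality is correct and elegant --- transposition is a lattice automorphism of the distributive lattice of maximum antichains of $\supp(\RSK_{\GL}(T,T))$, and for any such involution $\tau$ the element $x\wedge\tau(x)$ is fixed, yielding a transpose-invariant maximum antichain that (by zero diagonal) splits evenly. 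This is a genuinely nicer argument than appealing to the explicit insertion algorithm, and it makes the proposition independent of the combinatorial details in~\cite{Burge}.
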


\subsection{Multichain and antichain decompositions}

Suppose that $M$ is a matrix obtained from one of the RSK correspondences above.  
In this subsection, we recursively define two important decompositions of the multisupport of $M$, into chains and antichains.  
The number of chains (resp., antichains) determines the number of rows (resp., columns) in the corresponding tableau, as detailed in part (2) of the RSK propositions in the previous subsection.
This provides a two-dimensional visualization of the original context of RSK, in terms of extracting weakly increasing and strongly decreasing subsequences from two-row arrays.
The methods below are all variations on constructions of Krattenthaler~\cite{Krattenthaler}*{Fig.~8}, Fulton~\cite{Fulton}*{\S4.2}, and Herzog--Trung~\cite{HerzogTrung}*{pp.~14, 27}, who generalized the ``light and shadow'' technique of Viennot~\cite{Viennot}.

First we regard $\msupp(M)$ as a multiset with elements in $\P_{\GL}$.  Define the multiset
\begin{align}
\label{C*1 and C1 for GL_k}
\begin{split}
   C^*_1 = C^*_1(M) &\coloneqq \{ (i,j) \in \msupp(M) \mid i' \geq i\text{ or } j' \leq j \text{ for all }(i',j') \in \supp(M)\},\\
    C_1 = C_1(M) &\coloneqq \{ (i,j) \in \msupp(M) \mid i' \leq i\text{ or } j' \geq j \text{ for all }(i',j') \in \supp(M)\}.
    \end{split}
\end{align}
Then for $i > 1$, define $C^*_{i+1} \coloneqq C^*_1(M \setminus \bigcup_{j = 1}^{i} C^*_j)$, and likewise for $C_{i+1}$.  
The process terminates once $\msupp(M)$ has been exhausted. 
One can visualize $C^*_1$ (resp., $C_1$) as the ``northeast (resp., southwest) border'' of $\msupp(M)$; then each successive chain is obtained by peeling off the previous chain and taking the border of the remaining multisupport.  

We construct antichains $D_i$ in a similar way:
\begin{equation}
    \label{D1 for GL_k}
    D_1 = D_1(M) \coloneqq \{ (i,j) \in \supp(M) \mid i' > i \text{ or } j' > j \text{ for all }(i',j') \in \supp(M)\},
\end{equation}
and $D_{i+1} \coloneqq D_1(M \setminus \bigcup_{j=1}^i D_j)$, with the process terminating once $\msupp(M)$ has been exhausted.  
One can visualize $D_1$ as the corners of the northwest border of $\msupp(M)$, and iterate for each $D_i$ by removing the previous corners.  
(We will use the term \emph{corner} in a technical sense in the next subsection, in a seemingly different context; there is actually a close connection, however, which will be explained in the proof of Proposition~\ref{prop:Hilbert series GL_k}.)  See Figure~\ref{fig:Ci Di for GL_k} for a full example of the $C^*_i$, $C_i$, and $D_i$.
Item (1) in the following Lemma will be an especially important ingredient in the new results of this paper.

\begin{lemma}
\label{lemma:Ci and Di for GL_k}

    Let $M = \RSK_{\GL}(T,U)$, with $C^*_i$, $C_i$, and $D_i$ as in~\eqref{C*1 and C1 for GL_k} and~\eqref{D1 for GL_k}.
    
    \begin{enumerate} 

    \item Let $(a_1, \ldots, a_w)$ be the first column of $T$, and $(b_1, \ldots, b_w)$ the first column of $U$.  
    Then $a_i$ is the smallest row index in $C^*_i$, and $b_i$ is the smallest column index in $C_i$.

    \item The underlying set of each $C^*_i$ is a maximal chain in $\supp(M) \setminus \bigcup_{j=1}^{i-1} C^*_j$, and the number of $C^*_i$'s equals the number of rows in $T$ (and in $U$).
    The same holds for the $C_i$'s.
    
    \item Each $D_i$ is a maximal antichain in $\msupp(M) \backslash \bigcup_{j=1}^{i-1} D_j$, and the number of $D_i$ equals the number of columns in $T$ (and in $U$). 
    Moreover, in $\P_{\GL}$, the upper-order ideal generated by $D_i$ contains the upper-order ideal generated by $D_{i+1}$.
    
    \end{enumerate}
\end{lemma}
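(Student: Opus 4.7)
The plan is to identify the set-theoretic recursions in~\eqref{C*1 and C1 for GL_k} and~\eqref{D1 for GL_k} with two classical peeling algorithms---Viennot's ``light and shadow'' construction for the $C^*_i$ and $C_i$, and the greedy minimal-antichain algorithm for the $D_i$---and then to invoke the theorems of Viennot, Greene, and Mirsky to translate those identifications into the claimed tableau data.

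First I would unpack~\eqref{C*1 and C1 for GL_k}: the underlying set of $C^*_1$ is the ``northeast frontier'' of $\supp(M)$, i.e., the support points with no other support point strictly northeast. Any two such points are comparable in $\P_{\GL}$, so the underlying set of $C^*_1$ is a chain, and it is maximal in $\supp(M)$ because any non-frontier point has a strictly-northeast obstruction that forbids its being inserted into the chain. After subtracting $C^*_1$ from $M$ with multiplicities, the new matrix's northeast frontier is by definition $C^*_2$, and induction yields the chain/maximality portion of~(2); the symmetric southwest-frontier argument handles the $C_i$.

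Next I would identify this frontier recursion with Viennot's shadow-line construction of $\RSK_{\GL}$ (cf.~\cite{Viennot}, \cite{Fulton}*{\S4.2}, \cite{Krattenthaler}*{Fig.~8}), under which $C^*_i$ is exactly the $i$-th shadow line read off from $M$. Viennot's theorem then equates the smallest row index on the $i$-th shadow line with the $i$-th entry of the first column of the insertion tableau $P = T$, namely $a_i$; symmetrically, the smallest column index on the $i$-th shadow line $C_i$ is the $i$-th entry of the first column of the recording tableau $Q = U$, namely $b_i$. This establishes~(1), and since the total number of shadow lines is $\ell(\mu)$, also the row-count statement in~(2). For~(3), note that $D_1$ is by definition the set of minimal elements of $\supp(M)$, which is a maximal antichain (every other support element lies strictly above some minimum, hence is comparable to it). Iterating, $D_i$ is the maximal antichain of minima in the reduced multisupport. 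The nesting of upper-order ideals reduces to showing that each $(i,j) \in D_{i+1}$ dominates some element of $D_i$: this is trivial if $(i,j) \in D_i$, and otherwise $(i,j)$ was non-minimal just before the removal of $D_i$, so some strictly smaller $(i',j')$ was in the support at that stage; for $(i,j)$ to have become minimal, each such $(i',j')$ must have had its multiplicity consumed by $D_i$, placing $(i',j') \in D_i$ with $(i',j') < (i,j)$. Finally, the number of antichains in the peeling is the length of the longest multichain in $\msupp(M)$ (Mirsky's theorem in its multiset version), which by Greene's theorem equals $\mu_1$, the column count of $T$ and $U$.

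The main obstacle will be pinning down the precise dictionary between our explicit frontier recursion and Viennot's graphical shadow-line procedure, since the literature presents the latter in several superficially distinct but equivalent guises (geometric shadows, Fulton's matrix balls, non-intersecting lattice paths); once this dictionary is fixed, the classical identifications of shadow-line endpoints with first-column tableau entries directly supply the enumerative content of~(1) and~(2), while~(3) is a short consequence of greedy antichain peeling combined with Greene's theorem.
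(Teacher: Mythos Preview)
Your treatment of parts (2) and (3) is sound. The direct chain/maximality argument for (2) matches the paper's, and for (3) your greedy-antichain peeling plus Greene's theorem is a legitimate alternative to the paper's route (which simply identifies the $D_i$ with Fulton's matrix-ball labels and reads off all three properties from that construction).

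Part (1), however, has a genuine gap. You identify $C^*_i$ with ``Viennot's $i$-th shadow line,'' but in the standard Viennot construction (light from the southwest, as in \cite{Viennot} or \cite{Fulton}*{\S4.2}) the shadow lines are the \emph{antichain} layers---i.e., the $D_i$ of this paper, not the $C^*_i$. Correspondingly, Viennot's theorem reads off the first \emph{row} of $P$ and $Q$ from the shadow-line endpoints, and the number of lines is $\mu_1$, not $\ell(\mu)$. Your $C^*_i$ are chains (the northeast-frontier peeling), there are $\ell(\mu)$ of them, and you want first-\emph{column} data. So the black-box appeal to ``Viennot's theorem'' does not deliver what you need. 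One can make a chain-peeling variant work---for instance by reflecting $j\mapsto q+1-j$ so that the $C^*_i$ become the $D_i$ of the reflected matrix and then tracking how RSK behaves under that reflection---but that takes real work you have not sketched. (A secondary issue: the paper's convention is that $T$ is the \emph{recording} tableau and $U$ the insertion tableau, opposite to your $P=T$, $Q=U$.)

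The paper avoids this entirely: for (1) it argues directly from Knuth's row-insertion, observing that a new row of the recording tableau $T$ is created precisely when the next biletter in lex order increases the running width of the processed multisupport, and that the row index recorded at that moment is the northwestern-most element of the next $C^*_i$. Then the statement for the $b_i$ follows from $\RSK_{\GL}(U,T)=M^{t}$. This is short and self-contained; if you want to keep a geometric approach, you should either spell out the reflected-Viennot dictionary or fall back on this insertion argument.
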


\begin{proof}\

\begin{enumerate}
   \item This is a direct consequence of the $\RSK_{\GL}$ construction defined in~\cite{Knuth}*{\S3}.  
    Note that in this paper, our convention is that $T$ is the recording tableau, while $U$ is the insertion tableau.  
    Clearly $a_1$ is the northwestern-most element of $C^*_1$. 
    Recall that $T$ and $U$ are constructed following the lexicographical order on $\msupp(M)$.
    Therefore, the second row is created in $T$ exactly when the next element in $\msupp(M)$ increases the cumulative width of the multisupport, and the entry $a_2$ in this new box is the row index of this element.  
    This element is the northwestern-most element in $C^*_2$, and so the lemma holds for $a_2$.  
    Proceeding in this way, we see that the first entry in row $i$ of $T$ is the row index of the northwestern-most element in $C^*_i$.  
    This proves the lemma for the $a_i$.  
    Since $\RSK_{\GL}(U,T) = M^t$, and since the constructions of $C^*_i$ and $C_i$ are transposes of each other, we automatically have the proof for the $b_i$ as well.
   
   \item Suppose that $C^*_1$ contained two incomparable elements $(i,j)$ and $(i',j')$, with $i'<i$.  
   Then the definition of $C^*_1$ forces  $j' \leq j$, which is a contradiction.  
   The identical argument holds for all $C^*_i$, which are therefore chains. 
   We obtain a similar contradiction if we assume that an element of $\supp(M) \setminus \bigcup_{j=1}^i C^*_j$ is comparable with each element inside $C^*_i$; therefore we have maximal chains, and the number of such chains is the width of $\supp(M)$.
   The result now follows from part (2) of Proposition~\ref{prop:RSK_GL}.

    \item Our $D_i$ decomposition is nothing other than Fulton's matrix-ball construction~\cite{Fulton}*{\S4.2}, where $D_i$ consists of all matrix coordinates containing a ball labeled $i$.  
    The maximal antichain property, the number of $D_i$, and the inclusion of upper-order ideals all follow directly from our definition of $D_i$, or, more graphically, from Fulton's visualization. \qedhere
    \end{enumerate}
\end{proof}

\begin{figure}[t]
    \centering
    \input{Matrix_decomp_figures/CD_GL_fig}
    
    \caption{Example of the decompositions in Lemma~\ref{lemma:Ci and Di for GL_k}, where we represent multisets by the matrices they support.  
    We observe the three points of the lemma: 
    (1) the first column of $T$ encodes the smallest row index in each $C^*_i$, and the first column of $U$ encodes the smallest column index in each $C_i$; (2) the number of $C^*_i$ and the number of $C_i$ equals the number of rows in $T$ and $U$; 
    (3) the number of $D_i$ equals the number of columns in $T$ and $U$, and the upper-order ideal generated by each $D_i$ contains the upper-order ideal generated by $D_{i+1}$.}
    \label{fig:Ci Di for GL_k}
\end{figure}

Next we define the analogous decompositions where we regard $\msupp(M)$ as a multiset with elements of $\P_{\O}$.  
Define the multiset
\begin{equation}
    \label{C1 for O_k}
    C_1 = C_1(M) \coloneqq \{ (i,j) \in \msupp(M) \mid i' \geq i \text{ or } j' \geq j \text{ for all } (i',j') \in \supp(M)\},
\end{equation}
and iterate so that $C_{i+1} \coloneqq C_1(M \setminus \bigcup_{j=1}^i C_j)$ until $\msupp(M)$ is exhausted.  
One can visualize each $C_i$ as the northwest border of the remaining part of $\msupp(M)$.  
Likewise, we define 
\begin{equation}
    \label{D1 for O_k}
    D_1 = D_1(M) \coloneqq \{(i,j) \in \supp(M) \mid i' > i \text{ or } j' < j \text{ for all }(i', j') \in \supp(M)\},
\end{equation}
iterating for each $D_i$ exactly as before.  
One can visualize each $D_i$ as the corners of the northeast border of the remaining part of $\msupp(M)$.

\begin{lemma}
\label{lemma:Ci and Di for O_k}

    Let $M = \RSK_{\O}(T)$, with $C_i$ and $D_i$ as in~\eqref{C1 for O_k} and~\eqref{D1 for O_k}.
    
    \begin{enumerate} 

    \item Let $(a_1, \ldots, a_w)$ be the entries in the first column of $T$.  
    Then $a_i$ is the smallest row index in $C_i$.
    
    \item The underlying set of each $C_i$ is a maximal chain in $\supp(M) \setminus \bigcup_{j=1}^{i-1} C_j$, and the number of $C_i$ equals the number of rows in $T$.
    
    \item Each $D_i$ is a maximal antichain in $\msupp(M) \backslash \bigcup_{j=1}^{i-1} D_j$, and the number of $D_i$ equals half the number of columns in $T$. 
    Moreover, in $\P_{\O}$, the upper-order ideal generated by $D_i$ contains the upper-order ideal generated by $D_{i+1}$.
    
    \end{enumerate}
\end{lemma}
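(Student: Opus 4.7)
The plan is to parallel the proof of Lemma~\ref{lemma:Ci and Di for GL_k}, replacing Knuth's row-insertion with Burge's dual-insertion and taking care that the partial order on $\P_{\O}$ differs from that on $\P_{\GL}$ (smaller row paired with larger column is ``smaller'' in $\P_{\O}$). For part (1), I would appeal directly to the Burge/Conca construction: elements of $\msupp(M)$ are processed in lexicographic order, and a new row is appended to $T$ precisely when the incoming element forces an increase in the cumulative width of the multisupport (measured in $\P_{\O}$). The row index of this element is then the entry $a_i$ placed in the first box of the new row, and is by construction the smallest row index appearing in $C_i$.

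For part (2), the claim that each $C_i$ is a chain follows from a direct two-element argument in $\P_{\O}$: if $(i,j)$ and $(i'',j'')$ both lay in $C_1$ and were incomparable, then we may assume $i<i''$, and incomparability in $\P_{\O}$ forces $j<j''$; the membership condition for $(i'',j'')$ then fails against $(i,j)$, a contradiction. Maximality of $C_i$ in the remaining support is forced by its defining condition, since any strictly incomparable element not yet removed would itself have been admitted to $C_i$. Iterating produces the chain decomposition, and the number of $C_i$'s equals the width of $\supp(M)$, which by Proposition~\ref{prop:RSK_O}(2) is the number of rows of $T$.

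For part (3), the same style of argument shows that $D_1$ is a maximal antichain in $\msupp(M)$ and that the upper-order ideal generated by $D_{i+1}$ is contained in that generated by $D_i$. The delicate point---and the step I expect to be the main obstacle---is verifying that the total number of $D_i$'s is exactly half the column count of $T$. The natural route is to transport Fulton's matrix-ball construction (as used in the $\GL$ case) to Burge's upper-triangular setting, labeling each coordinate of $\msupp(M)$ by the index of the $D_i$ containing it, and then to check that this ball-labeling tracks the column growth of $T$ under Burge's dual insertion. The factor of $1/2$ reflects the symmetrization $M \mapsto M + M^t$ implicit in Burge's construction, and the careful handling of diagonal versus off-diagonal entries (consistent with the even-row-length constraint on $\mu$) is the technical point to verify. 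Once this correspondence is in hand, Proposition~\ref{prop:RSK_O}(2) identifies the number of $D_i$'s with half the number of columns of $T$.
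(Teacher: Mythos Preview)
Your plan matches the paper's exactly: both prove the lemma by paralleling the argument for Lemma~\ref{lemma:Ci and Di for GL_k}, with part~(1) resting directly on Burge's construction of $\RSK_{\O}$ rather than Knuth's.

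That said, you are making part~(3) harder than it needs to be. You do not need to transport Fulton's matrix-ball construction into the Burge setting, nor worry about symmetrization $M \mapsto M + M^t$ or diagonal-versus-off-diagonal bookkeeping. The count of the $D_i$'s follows from a purely order-theoretic fact (the same one used implicitly in the $\GL$ proof): the greedy decomposition---peel off the maximal antichain of minimal elements, repeat---always uses exactly $\operatorname{height}(\msupp(M))$ many antichains. Once you have that, Proposition~\ref{prop:RSK_O}(2) hands you the identification of this height with half the number of columns of $T$ directly; the factor of $1/2$ is already absorbed there and needs no separate justification at this stage.
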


\begin{proof}
    The proofs of all three parts are entirely analogous to those in Lemma~\ref{lemma:Ci and Di for GL_k}, the only substantial difference being that part (1) follows directly from Burge's construction of $\RSK_{\O}$.
\end{proof}

Finally, regarding $\supp(M)$ as a subset of $\P_{\Sp}$, we define $C_i$ and $D_i$ exactly as we did in~\eqref{C*1 and C1 for GL_k} and~\eqref{D1 for GL_k} for the $\P_{\GL}$ case.  
%\question{CHECK this against the $\O_k$-covariants example.
%Depending upon our level of detail when we address the $\O_k$-covariants, it may be better to define $D_i$ ``backwards'' here, so that our painted endpoints work out nicely.}

The proof of the following lemma mimics those of the lemmas above.

\begin{lemma}
\label{lemma:Ci and Di for Sp_2k}

    Let $M = \RSK_{\Sp}(T)$, with $C_i$ and $D_i$ as in~\eqref{C*1 and C1 for GL_k} and~\eqref{D1 for GL_k}.
    
    \begin{enumerate} 

    \item Let $(a_1, \ldots, a_w,b_1,\ldots,b_w)$ be the first column of $T$.
     Then $a_i$ (resp., $b_i$) is the smallest row (resp., column) index in $C_i$.
     
    \item The underlying set of each $C_i$ is a maximal chain in $\supp(M) \setminus \bigcup_{j=1}^{i-1} C_j$, and the number of $C_i$ equals half the number of rows in $T$. 
    
    \item Each $D_i$ is a maximal antichain in $\msupp(M) \setminus \bigcup_{j=1}^{i-1} D_j$, and the number of $D_i$ equals the number of columns in $T$. 
    Moreover, in $\P_{\Sp}$, the upper-order ideal generated by $D_i$ contains the upper-order ideal generated by $D_{i+1}$.
    
    \end{enumerate}
\end{lemma}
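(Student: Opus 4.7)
The plan is to adapt the proofs of Lemma~\ref{lemma:Ci and Di for GL_k} and Lemma~\ref{lemma:Ci and Di for O_k}, exploiting the relationship $\RSK_{\Sp}(T) = M$, where $M$ is the strictly upper-triangular part of the symmetric matrix $\RSK_{\GL}(T,T) = M + M^t$. Recall that $M + M^t$ has zero diagonal precisely because $T$ has even-length columns (see~\eqref{RSK_GL on twins}).

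Parts (2) and (3) follow by the same arguments as in Lemma~\ref{lemma:Ci and Di for GL_k}(2),(3). Since the partial order on $\P_{\Sp}$ is the restriction of that on $\P_{\GL}$ to strictly upper-triangular coordinates, the formulas in~\eqref{C*1 and C1 for GL_k} and~\eqref{D1 for GL_k} automatically yield maximal chains and maximal antichains in $\P_{\Sp}$, with the upper-order ideal containment $D_i \supseteq D_{i+1}$ again visible through Fulton's matrix-ball picture. The chain and antichain counts (half the number of rows of $T$ and the number of columns of $T$, respectively) then follow from Proposition~\ref{prop:RSK_Sp}(2).

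Part (1) is the main step. I would apply Lemma~\ref{lemma:Ci and Di for GL_k}(1) to the pair $(T,T)$: since $T$ serves simultaneously as recording and insertion tableau, its first column---of length $2w$---records both the smallest row indices of the chains $C^*_j(M+M^t)$ and the smallest column indices of the chains $C_j(M+M^t)$ in $\P_{\GL}$. The symmetry of $M+M^t$ partitions these $2w$ chains into $w$ lying in the strict upper triangle and $w$ in the strict lower triangle (related by reflection across the diagonal); the upper chains are precisely the Sp chains $C_i(M)$ in $\P_{\Sp}$. Under this identification, the first $w$ entries $a_1 < \cdots < a_w$ of the first column of $T$ are the smallest row indices of the $C_i$, while the last $w$ entries $b_1 < \cdots < b_w$ are the smallest row indices of the lower chains, which under the reflection coincide with the smallest column indices of the $C_i$.

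The main obstacle will be verifying that the chains $C^*_j(M+M^t)$ split cleanly across the diagonal, with no chain straddling it, so that the identification with the Sp chains $C_i(M)$ is unambiguous and aligns with the SSYT ordering of the first column of $T$. The expectation is that this splitting is forced by the zero-diagonal property of $M+M^t$ together with the iterative border-peeling construction of the $C^*_j$: a chain that contained both a strictly upper-triangular element $(i,j)$ and its mirror $(j,i)$ would, by comparability in $\P_{\GL}$, force the existence of a diagonal element in the support, contradicting the zero-diagonal assumption. Once the splitting is established, the indexing in the statement is consistent with Lemma~\ref{lemma:Ci and Di for GL_k}(1) applied to $(T,T)$, and parts (2)--(3) complete the proof.
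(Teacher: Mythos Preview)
Your handling of parts (2) and (3) is fine and matches the paper's intent: the paper's own proof is literally the one line ``mimics those of the lemmas above,'' so reducing to Lemma~\ref{lemma:Ci and Di for GL_k} via the inclusion $\P_{\Sp}\subset\P_{\GL}$ and invoking Proposition~\ref{prop:RSK_Sp}(2) is exactly right.

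For part (1), your route through $\RSK_{\GL}(T,T)=M+M^t$ is natural, but the central claim---that the chains $C^*_j(M+M^t)$ split cleanly into $w$ chains lying entirely in the strict upper triangle and $w$ in the strict lower triangle---is \emph{false}. Take $n=4$ and $M=E_{14}+E_{23}+E_{34}$; then $C^*_2(M+M^t)=\{(2,3),(4,3)\}$, which straddles the diagonal. Your justification only rules out a chain containing a pair $(i,j)$ together with its mirror $(j,i)$; it does not rule out a chain containing an upper-triangular $(i,j)$ and an unrelated lower-triangular $(k,l)$, and indeed such mixtures occur. So the ``unambiguous identification'' you hope for does not go through as stated.

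What does hold (and is what you actually need) is weaker: for $i\le w$ the \emph{northwestern-most} element of $C^*_i(M+M^t)$ lies strictly above the diagonal and coincides with the northwestern-most element of $C^*_i(M)$ (in fact $C^*_i(M)=C^*_i(M+M^t)\cap\P_{\Sp}$ for $i\le w$), while for $i>w$ that northwestern-most element lies strictly below. The dual statement holds for the $C_i$ and southeastern-most elements. One way to see this is to argue directly from the RSK insertion dynamics, exactly as in the proof of Lemma~\ref{lemma:Ci and Di for GL_k}(1): the symmetry of the biword for $M+M^t$ forces the first $w$ ``new-row'' events (which create entries $a_1,\dots,a_w$ in the first column of $T$) to be triggered by upper-triangular elements of $\msupp(M+M^t)$, and the last $w$ by their lower-triangular reflections. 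This is closer to what the paper means by ``mimic,'' and avoids the global splitting claim altogether.
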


\subsection{Shellings of the order complex}
\label{sec:shellings}
We follow Stanley's exposition in~\cite{StanleyAC}*{Ch.~12}.
Let $\Delta$ be a finite abstract simplicial complex.  
Recall that the maximal faces of $\Delta$, with respect to inclusion, are called \emph{facets}. 
 A simplicial complex is said to be \emph{pure} if all facets have the same cardinality.  
 Furthermore, a pure simplicial complex is said to be \emph{shellable} if there exists an ordering $\f_1, \ldots, \f_r$ of its facets with the following property: for all $i = 1, \ldots, r$, the subcomplex generated by $\f_i$ has a unique minimal element not belonging to the subcomplex generated by $\f_1, \ldots, \f_{i-1}$.  
 Such an ordering is called a \emph{shelling}, and the unique minimal element associated with each facet $\f_i$ is called its \emph{restriction}.  
 Shellings are in general not unique, but each choice of shelling decomposes the complex in a canonical way: each face is associated to a unique facet depending on the restrictions it contains. 
 %\question{Find this statement somewhere.  Or move it and describe explicitly.}
 
 Given a poset $\P$, the \emph{$k$th order complex} $\Delta_k = \Delta_k(\P)$ is the simplicial complex on $\P$ whose faces are the  subsets of width $\leq k$.  
 Let $\scrF_k \subset \Delta_k$ be the set of facets of $\Delta_k$.  
 For the three classical posets $\P$ introduced in Section~\ref{section:posets}, the facets can be described as families of $k$ non-intersecting lattice paths on $\P$.  
 By a \emph{(lattice) path}, we mean the union of the points $(i,j)$ lying inside a saturated chain, which can be depicted as a sequence of horizontal and vertical steps in $\P$.  
 For each of our posets $\P$, there is a shelling such that the restrictions of the facets are the points at certain \emph{corners} in the paths. 
 Details follow, and are depicted in Figure~\ref{fig:facet examples}:
 \begin{itemize}
     \item For $\P_{\GL}$, each facet $\f \in \scrF_k$ is the union of non-intersecting paths $\bp_i: (i,1) \rightarrow (p, \: q-i+1)$, for $1 \leq i \leq k$.  
     If we imagine $(1,1)$ in the northwest corner, then each step in a path advances either south or east.  
     The non-intersecting condition forces $\bp_i$ to pass through the point $s_i$ which is $k-i$ steps to the east of its starting point; likewise, $\bp_i$ must pass through the point $t_i$, which is $k-i$ steps north of its endpoint.  
     A \emph{corner} of a path $\bp$ is a point $(i,j)$ such that both $(i-1, \: j)$ and $(i, \: j+1)$ are in $\bp$. 
     (In other words, a corner occurs at each \scalebox{2}{$\llcorner$}-pattern in a path.)  
     Note that in each path $\bp_i$, its corners must form a strict chain within the $(p-k) \times (q-k)$ region whose upper-left corner is immediately below $s_i$ and whose southeast corner is immediately to the west of $t_i$.
     (We shade this region for $\bp_3$ in Figure~\ref{subfig:facet example GL}.)
     Note also that each facet has size $k(p+q-k)$.

     \item For $\P_{\O}$, each facet $\f \in \scrF_k$ is the union of non-intersecting paths $\bp_i$ starting at $(i,n)$, for $1 \leq i \leq k$, where each step in a path advances either south or west.  
     Hence each path $\bp_i$ has its endpoint along the diagonal of $\P_{\O}$, that is, some point of the form $(j,j)$.  
     Each $\bp_i$ must pass through the point $s_i$ which is $k-i$ steps to the west of its starting point.
     A \emph{corner} of a path $\bp$ is a point $(i,j)$ such that both $(i-1, \: j)$ and $(i, \: j-1)$ are in $\bp$, or a point $(j,j)$ such that $(j-1, \: j)$ is in $\bp$.  
     (In other words, a corner occurs at each \scalebox{2}{$\lrcorner$}-pattern in a path, and wherever the path ends with a vertical step.)  
     Note that the corners of $\bp_i$ form a strict chain inside the $(n-k) \times (n-k)$ right triangle whose northeast vertex is immediately south of $s_i$.
     (We shade this region for $\bp_3$ in Figure~\ref{subfig:facet example O}.)
     Note also that each facet has size $\frac{k}{2}(2n-k+1)$.

     \item For $\P_{\Sp}$, each facet $\f \in \scrF_k$ is the union of non-intersecting paths $\bp_i: (1,\: 2i) \rightarrow (n-2i+1, \: n)$, for $1 \leq i \leq k$.  
     Each step in a path advances either south or east.
     The non-intersecting condition forces $\bp_i$ to begin as an alternating east/south path for $2(k-i)+1$ steps until it passes through $s_i$; likewise, it must pass through the point $t_i$ which is the reflection of $s_i$ about the anti-diagonal.    
     A \emph{corner} of $\bp_i$, is an \scalebox{2}{$\llcorner$}-pattern of a path $\bp$ which lies strictly between $s_i$ and $t_i$.  
     The corners of $\bp_i$ must therefore form a strict chain within the $(n-2k-1) \times (n-2k-1)$ right triangle whose northeast vertex is immediately south of $s_i$.
     (We shade this region for $\bp_1$ in Figure~\ref{subfig:facet example Sp}.)
     Note that each facet has size $k(2n-2k-1)$.
 \end{itemize}

Given a facet $\f$ and its decomposition into paths $\bp_i$, we write $\cor(\bp_i)$ for the set of corners of each path, and we set $\cor(\f) \coloneqq \bigcup_{i=1}^k \cor(\bp_i)$.

\begin{rem}
    The shaded regions in Figure~\ref{fig:facet examples}, i.e., the subsets in which the corners of an individual path must lie, play a leading role in the proofs of Propositions~5.$*$.1.
    We also revisit these regions in Section~\ref{sub:ES reduction}, where we call them the \emph{corner posets} ${\rm Cor}_k(\P)$ associated to each clasical poset $\P$.
\end{rem}

\begin{figure}[t]
     \centering
     \begin{subfigure}[t]{0.3\textwidth}
         \centering
         \tikzstyle{corner}=[rectangle,draw=black,fill=red, minimum size = 4pt, inner sep=0pt]
\tikzstyle{smallend}=[circle,fill=black, minimum size = 5pt, inner sep=0pt]

\begin{tikzpicture}[scale=.6, baseline]

\fill[blue, very nearly transparent] (0.5,0.5) rectangle (4.5,3.5);

\draw[gray!50, very thick] (1,1) grid (7,6);

\draw[ultra thick] (1,6)--(4,6)--(4,5) -- (5,5) -- (5,3) -- (7,3) -- (7,1);
\draw[ultra thick] (1,5) -- (2,5) -- (2,4) -- (4,4) -- (4,2) -- (6,2) -- (6,1);
\draw[ultra thick] (1,4)--(1,3)--(2,3)--(2,2)--(3,2)--(3,1)--(5,1);

\node at (1,6) [label={west:$\bp_1$}] {};
\node at (1,5) [label={west:$\bp_2$}] {};
\node at (1,4) [label={west:$\bp_3$}] {};
\node at (7,1) [label={east:$(6,7)$}] {};
\node at (3,6) [label={[label distance=-5pt]north east:$s_1$},smallend] {};
\node at (2,5) [label={[label distance=-5pt]north east:$s_2$},smallend] {};
\node at (1,4) [label={[label distance=-5pt]north east:$s_3$},smallend] {};
\node at (5,1) [label={[label distance=-5pt]north east:$t_3$},smallend] {};
\node at (6,2) [label={[label distance=-5pt]north east:$t_2$},smallend] {};
\node at (7,3) [label={[label distance=-5pt]north east:$t_1$},smallend] {};

\node at (1,3) [corner] {};
\node at (2,2) [corner] {};
\node at (3,1) [corner] {};
\node at (2,4) [corner] {};
\node at (4,2) [corner] {};
\node at (4,5) [corner] {};
\node at (5,3) [corner] {};

\end{tikzpicture}

%node[left] {$(1,1)$} 

%label={[align=left]Grid}

%[label distance=1cm]above
         \caption{$\Delta_k(\P_{\GL})$: $k=3$, $p=6$, $q=7$.}
         \label{subfig:facet example GL}
     \end{subfigure}
     \hfill
     \begin{subfigure}[t]{0.3\textwidth}
         \centering
         \tikzstyle{corner}=[rectangle,draw=black,fill=red, minimum size = 4pt, inner sep=0pt]
\tikzstyle{smallend}=[circle,fill=black, minimum size = 5pt, inner sep=0pt]

\begin{tikzpicture}[scale=.6, baseline]

\draw[gray!50, very thick] (1,1) grid (6,6);
\draw[white,fill=white] (0.95,0.95) -- (6,0.95) -- (0.95,6) -- cycle;
\draw[gray!50, very thick] (1,6)--(2,6)--(2,5)--(3,5)--(3,4)--(4,4)--(4,3)--(5,3)--(5,2)--(6,2)--(6,1);
\fill[blue, very nearly transparent] (3.5,3.5) -- (6.5,3.5) -- (6.5, 0.5) -- (5.5,0.5) -- (5.5,1.5) -- (4.5,1.5) -- (4.5, 2.5) -- (3.5, 2.5) -- cycle;
%\draw[gray!50, very thick] (1,6) -- (6,1);

\draw[ultra thick] (6,4) -- (6,3) -- (5,3) -- (5,2);
\draw[ultra thick] (6,5) -- (4,5) -- (4,3);
\draw[ultra thick] (6,6) -- (3,6) -- (3,5) -- (2,5);

\node at (6,6) [label={east:$\bp_1$}] {};
\node at (6,5) [label={east:$\bp_2$}] {};
\node at (6,4) [label={east:$\bp_3$}] {};
\node at (6,1) [label={east:$(6,6)$}] {};
\node at (4,6) [label={[label distance=-5pt]north west:$s_1$},smallend] {};
\node at (5,5) [label={[label distance=-5pt]north west:$s_2$},smallend] {};
\node at (6,4) [label={[label distance=-5pt]north west:$s_3$},smallend] {};

\node at (5,2) [corner] {};
\node at (6,3) [corner] {};
\node at (4,3) [corner] {};
\node at (3,5) [corner] {};

\end{tikzpicture}
         \caption{$\Delta_k(\P_{\O})$: $k=3$, $n = 6$.}
         \label{subfig:facet example O}
     \end{subfigure}
     \hfill
     \begin{subfigure}[t]{0.35\textwidth}
         \centering
         \tikzstyle{corner}=[rectangle,draw=black,fill=red, minimum size = 4pt, inner sep=0pt]
\tikzstyle{smallend}=[circle,fill=black, minimum size = 5pt, inner sep=0pt]

\begin{tikzpicture}[scale=.6, baseline]

\draw[gray!50, very thick] (1,1) grid (7,7);
\draw[white,ultra thick,fill=white] (0.95,0.95) -- (7,0.95) -- (0.95,7) -- cycle;
\draw[gray!50, very thick] (3,6) -- (3,5) -- (4,5);
\fill[blue, very nearly transparent] (2.5,5.5) -- (5.5,5.5) -- (5.5,2.5) -- (4.5,2.5) -- (4.5,3.5) -- (3.5,3.5) -- (3.5,4.5) -- (2.5,4.5) -- cycle;
%\draw[gray!50, very thick] (1,7) -- (7,1);

\draw[ultra thick] (3,7) -- (5,7) -- (5,6) -- (6,6) -- (6,4) -- (7,4) -- (7,3);
\draw[ultra thick] (1,7) -- (2,7) -- (2,6) -- (4,6) -- (4,4) -- (5,4) -- (5,3) -- (6,3) -- (6,2) -- (7,2) -- (7,1);

\node at (1,7) [label={north:$\bp_1$}] {};
\node at (3,7) [label={north:$\bp_2$}] {};
\node at (7,1) [label={east:$(7,8)$}] {};
\node at (3,6) [label={[label distance=-5pt]north east:$s_1$},smallend] {};
\node at (4,7) [label={[label distance=-5pt]north east:$s_2$},smallend] {};
\node at (6,3) [label={[label distance=-5pt]north east:$t_1$},smallend] {};
\node at (7,4) [label={[label distance=-5pt]north east:$t_2$},smallend] {};

\node at (4,4) [corner] {};
\node at (5,3) [corner] {};
\node at (5,6) [corner] {};
\node at (6,4) [corner] {};

\end{tikzpicture}
         \caption{$\Delta_k(\P_{\Sp})$: $k=2$, $n=8$.}
         \label{subfig:facet example Sp}
     \end{subfigure}
        \caption{An example of a facet $\f \in \scrF_k$ in the order complex of each classical poset.  
        The red squares indicate the elements in $\cor(\f)$.  In each diagram, for one of the $\bp_i$, we shade the region in which $\cor(\bp_i)$ must be contained.
        The regions for the other $\bp_i$ are found by translating this shaded region diagonally upwards, parallel to the points $s_i$.
        (This shaded region, later called the \emph{corner poset}, will be the subject of Section~\ref{sub:ES reduction}.)}
        \label{fig:facet examples}
\end{figure}
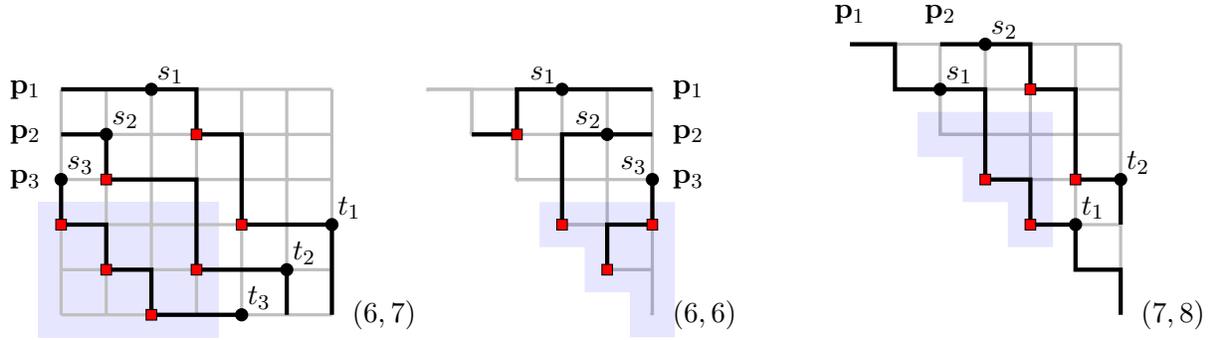
 
Let $\P$ be one of the three classical posets from Section~\ref{section:posets}, and let $\C[z_{ij}] \coloneqq \C[z_{ij} \mid (i,j) \in \P]$.  
Given a subset $S \subset \P$, we define the monomial
\[z_S \coloneqq \prod_{\mathclap{(i,j) \in S}} z_{ij}.
\]
Let $I_{\Delta_k}$ be the ideal generated by all monomials $z_S$ where $S \subset \P$ has width $k+1$.   
The \emph{Stanley--Reisner ring} of $\Delta_k$ is then defined to be the quotient $\C[\Delta_k] \coloneqq \C[z_{ij}]/I_{\Delta_k}$.  
Our shelling of $\Delta_k$ induces a \emph{Stanley decomposition} of the following form~\cite{Stanley82}*{p.~191}:
\begin{equation}
    \label{Stanley decomp}
    \C[\Delta_k] = \bigoplus_{\f \in \scrF_k} z_{\cor(\f)} \C[z_{ij} \mid (i,j) \in \f].
\end{equation}
 
\section{Fundamental theorems of classical invariant theory}
\label{sec:invariant theory}

Let $H$ be one of the complex classical groups, to be defined in the subsections below.
% \begin{align*}
%     \GL_k &= \{ h \in \M_k(\C) \mid \det h \neq 0\},\\
%     \SL_k &= \{ h \in \GL_k \mid \det h = 1\},\\
%     \O_k &= \{h \in \GL_k \mid hh^t = I\},\\
%     \SO_k &= \{ h \in \SO_k \mid \det h = 1\},\\
%     \Sp_{2k} &= \{h \in \GL_{2k} \mid hJh^t = J\}, \quad \text{where } J = \sm{0 & I_k \\ -I_k & 0}.
% \end{align*}
Let $W$ be a finite-dimensional representation of $H$.
We define the \emph{ring of invariants}
\[
\C[W]^H \coloneqq \{f \in \C[W] \mid f(hw) = f(w) \text{ for all } h \in H, \: w \in W\}.
\]
In his first and second fundamental theorems (FFT and SFT) for each classical group, Hermann Weyl \cite{Weyl} determined generators and relations, respectively, for $\C[W]^H$.  
In each case, the generators are quadratic, and the relations among them are given by the vanishing of determinants or Pfaffians.  
We present the details below for each classical group.  
(We will use the name $\pi^*$ for a certain surjective homomorphism of algebras in the following discussions; the reason for this notation will become clear in Section~\ref{sec:Howe duality and Enright reduction}, where we introduce a map $\pi : W \rightarrow \p^+$ in the context of Howe duality.)

\subsection{The general linear group}
\label{sub:GL_k classical inv th}

Let $\dim V = k$.  
The \emph{general linear group} $\GL(V)$ is the group of all invertible linear operators on $V$.
Let $(v^*_1, \ldots, v^*_p, v_1, \ldots, v_q) \in V^{*p} \oplus V^q$.  
The FFT states that $\C[V^{*p} \oplus V^q]^{\GL(V)}$ is generated by the contractions
\begin{equation}
    \label{f_ij for GL_k}
    f_{ij}(v^*_1, \ldots, v^*_p, v_1, \ldots, v_q) \coloneqq v^*_i(v_j), \qquad 1 \leq i \leq p, \: 1 \leq j \leq q.
\end{equation}
Note that each $f_{ij}$ has degree 2.
Now let $\C[z_{ij}] \coloneqq \C[z_{ij} \mid 1 \leq i \leq p, \: 1 \leq j \leq q]$, and define the algebra homomorphism
\begin{align*}
\pi^*: \C[z_{ij}] &\longrightarrow \C[V^{*p} \oplus V^q]^{\GL(V)},\\
z_{ij} &\longmapsto f_{ij}.
\end{align*}
The SFT states that $\ker \pi^*$ is the ideal generated by the determinants of the $(k+1)$-minors of the matrix of indeterminates $[z_{ij}]$.  
It follows that $\C[z_{ij}] / \ker \pi^* \cong \C[\M_{p,q}^{\leqslant k}]$, namely the coordinate ring of the \emph{determinantal variety} $\M_{p,q}^{\leqslant k} \coloneqq \{M \in \M_{p,q}(\C) \mid \operatorname{rank}M \leq k\}$.

\subsection{The special linear group}

Again let $\dim V = k$.
The \emph{special linear group} $\SL(V) \subset \GL(V)$ is the group of all linear operators on $V$ with determinant 1.
In order to state the fundamental theorems for $\SL(V)$, we introduce the poset
\begin{equation}
    \label{C^n_leq k definition}
    \Cleq{n} \coloneqq \{\text{subsequences $A \subseteq (1, \ldots, n)$ such that $0 \leq \#A \leq k$}\},
\end{equation}
where
\[
(a_1, \ldots, a_s) \leq (b_1, \ldots, b_t) \iff s \geq t \text{ and } a_i \leq b_i \text{ for all }1 \leq i \leq s.
\]
Equivalently, $\Cleq{n}$ can be regarded as the set of all possible columns in an SSYT with entries in $[n]$ and with at most $k$ rows; we have $A \leq B$ if and only if the column $A$ can appear to the left of the column $B$.  
We also define the subposet of full columns, namely
\begin{equation}
    \label{C^n_k definition}
    \Ceq{n} \coloneqq \left\{ I \in \Cleq{n} \text{ such that }\#I = k\right\}.
\end{equation}
Throughout the paper, we will use $A$, $B$, $X$, and $Y$ for elements of $\Cleq{n}$, while we use $I$ and $J$ for elements of $\Ceq{n}$.

The FFT states that $\C[V^{*p} \oplus V^q]^{\SL(V)}$ is generated by the following elements:
\begin{alignat}{3}
    f_{ij} &\text{ as in }\eqref{f_ij for GL_k}, && \nonumber \\
    {\rm det}_{I^*}(v^*_1, \ldots, v^*_p, v_1, \ldots, v_q) & \coloneqq \det(v^*_{i_1}, \ldots, v^*_{i_k}), &&\qquad I = (i_1, \ldots, i_k) \in \Ceq{p}, \label{SL generators}\\
    {\rm det}_J(v^*_1, \ldots, v^*_p, v_1, \ldots, v_q) &\coloneqq \det(v_{j_1}, \ldots, v_{j_k}), &&\qquad J = (j_1, \ldots, j_k) \in \Ceq{q}. \nonumber
\end{alignat}
Note that the generators $\det_{I^*}$ exist only if $k \leq p$, and the $\det_J$ exist only if $k \leq q$.

Now consider $\C[z_{ij}, u_{_{\!I}}, \upsilon_{_{\!J}} \mid I \in \Ceq{p}, \: J \in \Ceq{q}]$, and define the algebra homomorphism
\begin{align*}
    \psi^* : \C[z_{ij}, u_{_{\!I}}, \upsilon_{_{\!J}}] & \longrightarrow \C[V^{*p} \oplus V^q]^{\SL(V)},\\
    z_{ij} & \longmapsto f_{ij},\\
    u_{_{\!I}} & \longmapsto {\rm det}_{I^*},\\
    \upsilon_{_{\!J}} & \longmapsto {\rm det}_J.
\end{align*}
The relations among the $f_{ij}$ are the same as in the $\GL(V)$ case.  
As for the complete SFT, Weyl~\cite{Weyl}*{p.~70} lists the relations involving the other generators, which are somewhat more complicated.

We observe that for $h \in \GL(V)$, we have $h \cdot \det_{I^*} = (\det h) \det_{I^*}$ and $h \cdot \det_J = (\det h)^{-1} \det_J$.  
This leads us to view the $\SL(V)$-invariants as semiinvariants for $\GL(V)$, as follows.  
Given a character $\chi: H \rightarrow \C^\times$, we define the \emph{module of semiinvariants} with respect to $\chi$:
\[
\C[W]^{H, \chi} \coloneqq \{ f \in \C[W] \mid f(hw) = \chi(h)  f(w) \text{ for all } h \in H, \: w \in W\}.
\]
Taking $H = \GL(V)$ and $\chi = \det^m$ for each $m \in \mathbb{Z}$, we have 
\begin{equation}
    \label{decomp SL invariants}
    \C[V^{*p} \oplus V^q]^{\SL(V)} \cong \bigoplus_{m \in \mathbb{Z}} \C[V^{*p} \oplus V^q]^{\GL(V), \: \det^m}.
\end{equation}
as modules over $\C[V^{*p} \oplus V^q]^{\SL(V)}$.  

\subsection{The orthogonal group}

Let $\dim V = k$, and let $b( \;, \;)$ be a nondegenerate symmetric bilinear form on $V$.
Then the \emph{orthogonal group} $\O(V) = \O(V,b)$ is the group of all invertible linear operators on $V$ that preserve $b$.
That is to say, $b(hu,hv) = b(u,v)$ for all $u,v \in V$ and $h \in \O(V)$.
The FFT states that $\C[V^{n}]^{\O(V)}$ is generated by the contractions
\begin{equation}
    \label{f_ij for O_k}
    f_{ij}(v_1, \ldots, v_n) \coloneqq b(v_i, v_j), \qquad 1 \leq i \leq j \leq n.
\end{equation}
Now let $\C[z_{ij}] = \C[z_{ij} \mid 1 \leq i \leq j \leq n]$, and define the algebra homomorphism
\begin{align*}
\pi^*: \C[z_{ij}] &\longrightarrow \C[V^{n}]^{\O(V)},\\
z_{ij} &\longmapsto f_{ij}.
\end{align*}
The SFT states that $\ker \pi^*$ is the ideal generated by the determinants of the $(k+1)$-minors of the matrix of indeterminates $[z_{ij}]$.  
It follows that $\C[z_{ij}] / \ker \pi^* \cong \C[\SM_n^{\leqslant k}]$, where $\SM_n^{\leqslant k} \coloneqq \{ M \in \SM_n(\C) \mid \operatorname{rank}M \leq k\}$ is the determinantal variety of symmetric matrices.

\subsection{The special orthogonal group}

Let $\dim V = k$.
The \emph{special orthogonal group} $\SO(V)$ is the subgroup of $\O(V)$ whose elements have determinant 1.
The FFT states that $\C[V^{n}]^{\SO(V)}$ is generated by the functions
\begin{align}
    \label{SO_k generators}
    \begin{split}
        f_{ij} &\text{ as in }\eqref{f_ij for O_k},\\
        {\rm det}_I(v_1, \ldots, v_n) & \coloneqq \det(v_{i_1}, \ldots, v_{i_k}), \qquad I = (i_1, \ldots, i_k) \in \Ceq{n}.
    \end{split}
\end{align}
Note that the generators $\det_I$ exist only if $k \leq n$.  

Now consider $\C[z_{ij}, u_{_{\!I}} \mid I \in \Ceq{n}]$, and define the algebra homomorphism
\begin{align}
\label{SO_k phi}
\begin{split}
    \psi^* : \C[z_{ij}, u_{_{\!I}}] & \longrightarrow \C[V^{n}]^{\SO(V)}, \\
    z_{ij} & \longmapsto f_{ij},\\
    u_{_{\!I}} & \longmapsto {\rm det}_I.
\end{split}
\end{align}
The relations among the $f_{ij}$ are the same as in the $\O(V)$ case.  
As for the complete SFT, the relations (and even an explicit Gr\"obner basis) are recorded in \cite{Domokos}*{Thm.~2.1}.

Just as we did for $\SL(V) \subset \GL(V)$, we can view the $\SO(V)$-invariants as semiinvariants for $\O(V)$.  
In this case the decomposition is much simpler:
\begin{equation}
    \label{decomp SO_k invariants}
    \C[V^{n}]^{\SO(V)} \cong \C[V^{n}]^{\O(V)} \oplus \C[V^{n}]^{\O(V), \: \det}.
\end{equation}
(This is the decomposition $R = R^0 + R^1$ in \cite{Domokos}, expressed in the language of semiinvariants.)

\subsection{The symplectic group}

Let $\dim V = 2k$, and let $\omega(\;,\;)$ be a nondegenerate skew-symmetric bilinear form on $V$. 
Then the \emph{symplectic group} $\Sp(V) = \Sp(V,\omega)$ is the group of all invertible linear operators on $V$ that preserve $\omega$.
That is to say, $\omega(hu,hv) = \omega(u,v)$ for all $u,v \in V$ and $h \in \Sp(V)$.
The FFT states that $\C[V^{n}]^{\Sp(V)}$ is generated by the contractions
\begin{equation}
    \label{f_ij for Sp_2k}
    f_{ij}(v_1, \ldots, v_n) \coloneqq \omega(v_i, v_j), \qquad 1 \leq i < j \leq n.
\end{equation}
Now let $\C[z_{ij}] = \C[z_{ij} \mid 1 \leq i < j \leq n]$, and define the algebra homomorphism
\begin{align*}
\pi^* : \C[z_{ij}] &\longrightarrow \C[V^{n}]^{\Sp(V)},\\
z_{ij} &\longmapsto f_{ij}.
\end{align*}
The SFT states that $\ker \pi^*$ is the ideal generated by the $2(k+1)$-Pfaffians of the matrix of indeterminates $[z_{ij}]$.  
It follows that $\C[z_{ij}] / \ker \pi^* \cong \C[\AM_n^{\leqslant 2k}]$, where $\AM_n^{\leqslant 2k} \coloneqq \{ M \in \AM_n(\C) \mid \operatorname{rank}M \leq 2k\}$ is the determinantal variety of alternating  matrices.

\begin{rem}
    We have defined the classical groups above in a coordinate-free manner.
    In concrete examples where $k$ assumes a specific value, we will use the usual subscript notation $\GL_k$, $\SL_k$, $\O_k$, $\SO_k$, or $\Sp_{2k}$.
\end{rem}

\section{Linear bases for classical invariants and semiinvariants}
\label{sec:linear bases}

For each classical group, we now exhibit a linear basis for the algebra of invariants, in which basis elements are depicted as graphs.  
The total degree of each graph encodes the actual degree of the corresponding invariant polynomial function.  
(In forthcoming work, we will refine this by showing that the degree sequence yields the weight under the action of $\g$ by differential operators, as described in Section~\ref{sec:Howe duality and Enright reduction}.)
Each graph can be interpreted either as a \emph{standard monomial} (to be defined below) or as an ordinary monomial in the contractions $f_{ij}$ described above, and the RSK correspondence gives the transformation between these two monomial bases.  
For $\O(V)$ and $\Sp(V)$, the passage from the graphs to the standard monomials is essentially an application of the Burge correspondence \cite{Burge}.

\subsection{The general linear group}

\label{sub:graphs GL_k}
For $A = (a_1, \ldots, a_t) \in \Cleq{p}$ and $B = (b_1, \ldots, b_t) \in  \Cleq{q}$, let $\det(A,B)$ be the determinant of the $t$-minor in $[z_{ij}]$ determined by the row indices $a_i$ and the column indices $b_i$.  
Then for $A_1 \leq \cdots \leq A_\ell \in \Cleq{p}$ and $B_1 \leq \cdots \leq B_\ell \in \Cleq{q}$, with each $\#A_i = \#B_i$, we define the \emph{standard monomial}
\begin{equation}
    \label{def:standard monomial GL_k}
    \s(A_1, \ldots, A_\ell \mid B_1, \ldots, B_\ell) \coloneqq \prod_{i=1}^\ell \det(A_i,B_i) \in \C[z_{ij}].
\end{equation}
We call $\#A_1$ the \emph{width} of the standard monomial above; clearly this width is at most $k$.  
The key fact \cite{Sturmfels}*{Prop.~3} is that the standard monomials form a linear basis for $\C[z_{ij}]/\ker \pi^* \cong \C[\M_{p,q}^{\leqslant k}]$.  
As is typical in the literature, we abuse notation slightly by identifying standard monomials with their images in the quotient ring.
We will also call their images (under $\pi^*$) \emph{standard monomials} in $\C[V^{*p} \oplus V^q]^{\GL(V)}$.
By regarding the $A_i$ and $B_i$ as the columns of two SSYT's, we have a natural bijection
\begin{equation}
    \label{GL_k bijection SSYT}
    \{\text{standard monomials of width $\leq k$}\} \longleftrightarrow 
\bigcup_{\mathclap{\mu \in \Par(k \times \infty)}} \; \SSYT(\mu,p) \times \SSYT(\mu,q).
\end{equation}
The elements on the right-hand side --- and by some authors, also those on the left --- are called \emph{bitableaux} in the literature, where they are typically depicted with the $A_i$ and $B_i$ as rows rather than columns; see the original treatment in~\cite{DRS}.
For the sake of comparison,
\[
\begin{array}{r|l}
    \longleftarrow A_1 & B_1 \longrightarrow \\
    \vdots & \vdots \\
    \longleftarrow A_\ell & B_\ell \longrightarrow
\end{array}
\text{ is the bitableau corresponding to the tableau pair }
(A_1 \cdots A_\ell, \: B_1 \cdots B_\ell),
\]
where the arrows indicate the direction in which the entries increase.  
For example,
\[
\begin{array}{r|l}
    65421 & 12345 \\
    76432 & 12345 \\
    7544 & 2345\\
    65 & 35\\
    5 & 3
\end{array}
\text{ corresponds to }
\left(
\ytableausetup{centertableaux,smalltableaux}
\ytableaushort{12455,2346,445,567,67}_{\textstyle{,}} \: \ytableaushort{11233,2235,334,445,55}
\right).
\]

When we apply $\RSK_{\GL}$ to the pair $(A,B)$ of tableaux whose columns are the $A_i$ and $B_i$, the result is a matrix in $\M_{p,q}(\mathbb{N})$ whose entries sum to the size (i.e., number of boxes) $|A| = |B|$ of either tableau, and whose support in $\P_{\GL}$ has width $\#A_1 = \#B_1$.  
By viewing this matrix as the degree matrix of a monomial in the $z_{ij}$, we arrive at an ordinary monomial in $\C[z_{ij}]$ which we write as $\m(A_1, \ldots, A_\ell \mid B_1, \ldots, B_\ell)$.
We will be mainly concerned with the images under $\pi^*$, and so we set the shorthand
\begin{equation}
\label{f_AB}
f_{AB} \coloneqq \pi^*(\m(A_1, \ldots, A_\ell \mid B_1, \ldots, B_\ell)) = \prod_{\substack{(i,j) \in \msupp \RSK_{\GL}(A,B)}} f_{ij}.
\end{equation}
Since $\pi^*: z_{ij} \mapsto f_{ij}$ doubles the degree, $f_{AB}$ has degree $2|A| = 2|B|$.
Following \cite{Sturmfels}, we say that $\#A_1 = \#B_1$ is the \emph{width} of $f_{AB}$.  
In summary, we have a bijective correspondence
\begin{equation}
    \label{RSK_GL s to m}
\pi^*(\s(A_1, \ldots, A_\ell \mid B_1, \ldots, B_\ell)) \xmapsto{\RSK_{\GL}} f_{AB},
\end{equation}
preserving both degree and width.
The set of all monomials on either side of~\eqref{RSK_GL s to m} furnishes a basis for $\C[V^{*p} \oplus V^q]^{\GL(V)}$.

We will depict each of these basis monomials $f_{AB}$ as a graph --- more specifically, as a \emph{labeled $(p,q)$-bipartite arc diagram} --- by viewing its degree matrix as a biadjacency matrix.  
In particular, arrange the vertices $1^*, \ldots, p^*$ (the ``starred part'' of the graph) in a horizontal line, followed by the vertices $1, \ldots, q$ (the ``unstarred part'').  
See Figure~\ref{fig:GL_k graph example}, where we draw a vertical line to separate the two parts.  
Given a monomial $f_{AB}$, each arc $(i^*, j)$ represents a factor $f_{ij}$ in $f_{AB}$, including multiplicities.

Since each $f_{ij}$ has degree 2, the total degree of a graph (i.e., twice the number of arcs) encodes the polynomial degree of the associated invariant monomial $f_{AB}$.
Our graphs also nicely encode the width of the associated monomial via nested arcs.  Note that a maximal antichain in $\P_{\GL}$ corresponds to a maximal family of pairwise nested arcs, which we will call a \emph{strict nesting}.  
(By ``strict'' we mean that no two nested arcs share a vertex). 
Hence the width of $f_{AB}$ equals the number of arcs in the largest strict nesting in the graph of $f_{AB}$.

Recall that a graph is \emph{1-regular} if every vertex has degree 1.
We write $\C[W]_d$ to denote the graded component consisting of homogeneous polynomials of degree $d$.
Let $\SYT(\mu)$ denote the set of standard Young tableaux whose shape is given by the partition $\mu$. 
As usual, $\mu \vdash p$ means that $\mu$ is a partition of $p$, and $\ell(\mu)$ denotes the number of parts in $\mu$.

 \begin{prop}[Polynomial invariants for $\GL(V)$]
    \label{prop:GL_k graphs}
    Let $\dim V = k$.
    A basis for $\C[V^{*p} \oplus V^q]^{\GL(V)}_d$ is given by the set of all $(p,q)$-bipartite arc diagrams with total degree $d$, such that no strict nesting contains more than $k$ arcs.
    In particular, we interpret each such arc diagram as the product of contractions in which $f_{ij}$ appears once for each arc $(i^*, j)$.
 \end{prop}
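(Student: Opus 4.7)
The plan is to reduce the statement to the standard monomial basis of the determinantal coordinate ring $\C[\M_{p,q}^{\leqslant k}]$, and then to transport this basis to the monomial side via $\RSK_{\GL}$, reinterpreting the combinatorial data as arc diagrams.

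First, I would invoke Sturmfels's result (cited after~\eqref{def:standard monomial GL_k}) which states that the standard monomials $\s(A_1, \ldots, A_\ell \mid B_1, \ldots, B_\ell)$ of width $\leq k$ form a $\C$-linear basis for $\C[z_{ij}]/\ker \pi^* \cong \C[\M_{p,q}^{\leqslant k}]$. Since $\pi^*$ factors through this quotient and the $\GL(V)$-FFT identifies the image with $\C[V^{*p} \oplus V^q]^{\GL(V)}$, the images $\pi^*(\s(A_1,\ldots,A_\ell \mid B_1,\ldots,B_\ell))$ of width $\leq k$ furnish a basis for the invariant ring. Via the bijection~\eqref{GL_k bijection SSYT}, this basis is indexed by pairs $(A,B) \in \SSYT(\mu,p) \times \SSYT(\mu,q)$ with $\mu \in \Par(k \times \infty)$.

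Next, I would apply $\RSK_{\GL}$ to each such pair. By Proposition~\ref{prop:RSK_GL}, this gives a bijection onto the set of matrices $M \in \M_{p,q}(\mathbb{N})$ whose support in $\P_{\GL}$ has width at most $k$ (since $\ell(\mu) \leq k$). Regarding $M$ as the degree matrix of a monomial in the $f_{ij}$'s yields the element $f_{AB}$ of~\eqref{f_AB}. Because standard monomials and the corresponding $f_{AB}$'s are related by the bijection~\eqref{RSK_GL s to m}, and standard monomials form a basis, the monomials $\{f_{AB}\}$ with the width constraint also form a basis of $\C[V^{*p} \oplus V^q]^{\GL(V)}$.

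Finally, I would translate the statement into the language of arc diagrams. Each matrix $M$ is the biadjacency matrix of a $(p,q)$-bipartite arc diagram where the multiplicity of the arc $(i^*, j)$ equals $M_{ij}$. Two arcs $(i^*,j)$ and $({i'}^*,j')$ with $i<i'$ are strictly nested precisely when $j'<j$, i.e., when $(i,j)$ and $(i',j')$ are incomparable in $\P_{\GL}$; hence a strict nesting of $r$ arcs corresponds to an antichain of size $r$ in $\supp(M)$. Consequently the width-$\leq k$ condition on $\supp(M)$ is equivalent to the combinatorial condition that no strict nesting contains more than $k$ arcs. The degree match is immediate: each arc contributes a factor $f_{ij}$ of degree $2$, so the total degree of the arc diagram (twice the number of arcs) equals the polynomial degree of $f_{AB}$. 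Restricting to the component of fixed degree $d$ then yields the desired graded basis.

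The only substantive step is the translation between antichains in $\supp(M)$ and strict nestings in the diagram; everything else is assembling Sturmfels's standard monomial theorem, the $\GL(V)$-FFT, and the enumerative data already recorded in Proposition~\ref{prop:RSK_GL}. I do not anticipate any real obstacle.
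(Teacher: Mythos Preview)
Your approach is correct and mirrors the paper's own: the proposition is presented there as a summary of the discussion in \S\ref{sub:graphs GL_k}, which proceeds exactly as you do (Sturmfels's standard monomial basis, transport via $\RSK_{\GL}$ to ordinary monomials $f_{AB}$, and the dictionary between antichains in $\P_{\GL}$ and strict nestings of arcs). One small caveat: a bijection of index sets does not by itself transfer the basis property from the standard monomials to the $f_{AB}$'s; you need either Sturmfels's Gr\"obner-basis statement (that the initial ideal of $\ker\pi^*$ equals the Stanley--Reisner ideal of $\Delta_k$, so monomials of width $\leq k$ are a basis of the quotient) or an equinumerosity-plus-spanning argument --- but the paper glosses over this same step, simply asserting after~\eqref{RSK_GL s to m} that both sides furnish a basis.
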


\begin{prop}[Tensor invariants for $\GL(V)$]
    \label{prop:GL_k tensor invariants}
    The space $(V^{\otimes p} \otimes V^*{}^{\otimes q})^{\GL(V)}$ is nonzero if and only if $p=q$.
    In this case, a basis for $(V^{\otimes p} \otimes V^{*\otimes p})^{\GL(V)}$ is given by the set of all 1-regular $(p,p)$-bipartite arc diagrams, such that no strict nesting contains more than $k$ arcs.
    In particular, identifying $V^{\otimes p} \otimes V^{*\otimes p}$ with the space of multilinear forms on $V^{*p} \oplus V^p$, we interpret each arc $(i^*, j)$ as the contraction $f_{ij}$.
\end{prop}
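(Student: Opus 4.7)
The plan is to deduce this from Proposition~\ref{prop:GL_k graphs} by restricting to a single multi-graded component. First, I would identify $V^{\otimes p} \otimes V^{*\otimes q}$ with the space of multilinear forms on $V^{*p} \oplus V^q$ via the canonical pairing $V^{**} \cong V$, as already noted in the introduction. Under this identification, the $\GL(V)$-invariants on the left correspond to the subspace of $\C[V^{*p} \oplus V^q]^{\GL(V)}$ consisting of polynomials that are linear in each of the $p+q$ arguments separately, i.e., to the multi-graded component of multidegree $(1, \ldots, 1) \in \mathbb{N}^{p+q}$.

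Second, I would observe that the diagonal $\GL(V)$-action on $V^{*p} \oplus V^q$ preserves the natural multi-grading of $\C[V^{*p} \oplus V^q]$ indexed by the degrees in each individual vector/covector slot, so the ring of invariants inherits this multi-grading. Each contraction $f_{ij}$ is multi-homogeneous of multidegree $e_{i^*} + e_j$, and hence every basis monomial $f_{AB}$ from Section~\ref{sub:graphs GL_k} is multi-homogeneous, with its multidegree given precisely by the vertex-degree sequence of the corresponding arc diagram. Consequently, the basis in Proposition~\ref{prop:GL_k graphs} refines to a basis of each multi-graded piece by restricting to those arc diagrams whose vertex-degree sequence matches the prescribed multidegree.

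Applying this to multidegree $(1, \ldots, 1)$, the basis of the multilinear invariants consists of exactly those $(p,q)$-bipartite arc diagrams in which every vertex has degree $1$ (i.e., 1-regular graphs), still subject to the nesting bound of $k$. Since each arc contributes $1$ to the total degree on each side of the bipartition, a 1-regular graph must have $p = \#\text{arcs} = q$; when $p \neq q$ no such diagram exists and $(V^{\otimes p} \otimes V^{*\otimes q})^{\GL(V)} = 0$, while when $p = q$ such diagrams exist in abundance (e.g., the identity matching). The only subtle point is the refinement step, but this is automatic: the $f_{AB}$'s are already linearly independent and multi-homogeneous, so they remain linearly independent within each multi-graded component, and no serious obstacle arises.
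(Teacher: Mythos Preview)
Your proposal is correct and follows essentially the same argument as the paper: both proofs identify $V^{\otimes p}\otimes V^{*\otimes q}$ with the multigraded component $\C[V^{*p}\oplus V^q]_{\mathbf{1},\mathbf{1}}$, observe that the $\GL(V)$-action respects this multigrading, and then restrict the arc-diagram basis of Proposition~\ref{prop:GL_k graphs} to the $1$-regular diagrams, which exist only when $p=q$. Your write-up is a bit more explicit about why the monomial basis refines across multigraded pieces, but the approach is the same.
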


\begin{proof}
    Let $\mathbf{d} = (d_1, \ldots, d_p) \in \mathbb{N}^p$ and $\mathbf{e} = (e_1, \ldots, e_q) \in \mathbb{N}^q$. Write $\mathbf{1} \coloneqq (1, \ldots, 1)$.
    Let $\C[V^{*p} \oplus V^q]_{\mathbf{d},\mathbf{e}}$ denote the multigraded component consisting of polynomial functions which are degree $d_i$ in $v^*_i$ and degree $e_j$ in $v_j$, for all $i,j$.
    (See~\cite{GW}*{p.~256}.)
    This multigraded decomposition is $\GL(V)$-invariant.
    A basis for $\C[V^{*p} \oplus V^q]^{\GL(V)}_{\mathbf{d},\mathbf{e}}$ is given by the set of arc diagrams in Proposition~\ref{prop:GL_k graphs} such that each vertex $i^*$ has degree $d_i$ and each vertex $j$ has degree $e_j$.
    Now, there is a canonical linear isomorphism
    \begin{align*}
    V^{\otimes p} \otimes V^*{}^{\otimes q} &\cong \C[V^{*p} \oplus V^q]_{\mathbf{1}, \mathbf{1}}
    %,\\
    %v_1 \otimes \cdots \otimes v_p \otimes v^*_1 \otimes \cdots \otimes v^*_q &\longmapsto \Big[ (w^*_1, \ldots, w^*_p, w_1, \ldots, w_q) \mapsto w^*_1(v_1) \cdots w^*_p(v_p) v^*_1(w_1) \cdots v^*_q(w_q)\Big],
    \end{align*}
    given by contractions between vectors and covectors in the natural way; therefore a basis for $(V^{\otimes p} \otimes V^*{}^{\otimes q})^{\GL(V)}$ is given by the set of 1-regular graphs in Proposition~\ref{prop:GL_k graphs}.  
    Clearly there are no such graphs unless $p=q$.
\end{proof}

\begin{corollary}
    We have $\displaystyle\dim {\rm End}_{\GL(V)}(V^{\otimes p}) = \sum_{\substack{\mu \vdash p, \\ \ell(\mu) \leq k}} \#\SYT(\mu)^2$.
\end{corollary}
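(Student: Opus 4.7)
The plan is to identify $\operatorname{End}_{\GL(V)}(V^{\otimes p})$ with the space of $\GL(V)$-invariant tensors $(V^{\otimes p} \otimes V^{*\otimes p})^{\GL(V)}$ via the canonical isomorphism $\operatorname{End}(V^{\otimes p}) \cong V^{\otimes p} \otimes V^{*\otimes p}$, so that the dimension is computed by counting the basis described in Proposition~\ref{prop:GL_k tensor invariants}. Specifically, a basis is given by the 1-regular $(p,p)$-bipartite arc diagrams whose largest strict nesting has at most $k$ arcs, and the task is to enumerate these.

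The next step is to translate arc diagrams into combinatorial data. A 1-regular $(p,p)$-bipartite arc diagram is the same as a perfect matching between $\{1^*,\ldots,p^*\}$ and $\{1,\ldots,p\}$, i.e., a permutation $\sigma \in S_p$, or equivalently a permutation matrix $M \in \M_{p,p}(\mathbb{N})$. Under this identification, a strict nesting of arcs in the diagram corresponds exactly to a strict chain in $\supp(M) \subset \P_{\GL}$. Since $M$ has only $0/1$ entries, $\msupp(M) = \supp(M)$, and ``strict chain'' coincides with ``chain'' for permutation supports. Hence the ``no strict nesting of size $> k$'' condition is exactly the condition that the width of $\supp(M)$ in $\P_{\GL}$ is at most $k$.

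Now I would apply $\RSK_{\GL}$ from Proposition~\ref{prop:RSK_GL}. Permutation matrices are precisely the matrices in $\M_{p,p}(\mathbb{N})$ with all row and column sums equal to $1$, and these correspond under RSK to pairs $(T,U)$ of standard Young tableaux of the same shape $\mu \vdash p$. By part~(2) of Proposition~\ref{prop:RSK_GL}, the width of $\supp(M)$ equals the number of rows of $\mu$, namely $\ell(\mu)$. So the width constraint becomes $\ell(\mu) \leq k$, and summing over shapes we obtain
\[
\dim (V^{\otimes p} \otimes V^{*\otimes p})^{\GL(V)} = \sum_{\substack{\mu \vdash p \\ \ell(\mu) \leq k}} \#\SYT(\mu)^2,
\]
which combined with the initial identification yields the claim.

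No step here poses a serious obstacle, since the heavy lifting has already been done by Propositions~\ref{prop:RSK_GL} and~\ref{prop:GL_k tensor invariants}. The only point that deserves care is the matching between ``strict nestings'' of the arc diagram and ``width in $\P_{\GL}$'' for the associated permutation matrix; this is the place where one has to verify carefully that nested arcs $(i_1^*, j_1), (i_2^*, j_2)$ with $i_1 < i_2$ force $j_1 > j_2$, i.e., incomparability in $\P_{\GL}$ under the convention that $(i,j) \leq (i',j') \iff i \leq i'$ and $j \leq j'$ used in Section~\ref{section:posets}.
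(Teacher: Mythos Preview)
Your approach is essentially the paper's: invoke Proposition~\ref{prop:GL_k tensor invariants} to obtain a basis indexed by $p\times p$ permutation matrices whose support has width at most $k$, then apply $\RSK_{\GL}$ (Proposition~\ref{prop:RSK_GL}) to identify these with pairs of SYT's of a common shape $\mu\vdash p$ with $\ell(\mu)\leq k$. The only addition is that you make the identification $\operatorname{End}_{\GL(V)}(V^{\otimes p})\cong (V^{\otimes p}\otimes V^{*\otimes p})^{\GL(V)}$ explicit, which the paper leaves tacit.

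One slip to correct: in your second paragraph you assert that a strict nesting corresponds to a \emph{strict chain} in $\P_{\GL}$, yet your own closing paragraph (correctly) observes that nested arcs yield incomparable pairs, i.e., an \emph{antichain}. The conclusion ``width $\leq k$'' follows from the antichain reading, not the chain reading; with ``strict chain'' replaced by ``antichain'' the argument is internally consistent and matches the paper.
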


\begin{proof}
    It follows from the construction of $\RSK_{\GL}$ that $T$ contains one copy of $i$ and $U$ contains one copy of $j$ for each occurrence of $(i,j)$ in the multisupport of $\RSK_{\GL}(T,U)$.  
    The biadjacency matrix of each basis graph in Proposition~\ref{prop:GL_k tensor invariants} is a $p \times p$ permutation matrix whose support has width $\leq k$, and hence each of its corresponding tableaux contains each entry $1, \ldots, p$ exactly once; thus they are both SYT's.
    The result follows from Proposition~\ref{prop:RSK_GL}.
\end{proof}

\begin{ex}
    If $k=2$, then $\dim {\rm End}_{\GL(V)} (V^{\otimes p})$ equals the $p$th Catalan number $\frac{1}{p+1}\binom{2p}{p}$.
\end{ex}

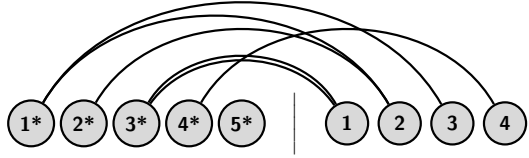
\begin{figure}[t]
    \centering
    \scalebox{0.7}{
\begin{tikzpicture}[-,auto,node distance=1cm,
  very thick,plain node/.style={minimum size=0.8cm,circle,draw,font=\sffamily\bfseries, fill=gray!30}, bend left = 60]

\node[plain node] (1*) {1*};
\node[plain node] (2*) [right of=1*] {2*};
\node[plain node] (3*) [right of=2*] {3*};
\node[plain node] (4*) [right of=3*] {4*};
\node[plain node] (5*) [right of=4*] {5*};
\node (divider) [right of=5*] {$\Bigg|$};
\node[plain node] (1) [right of=divider] {1};
\node[plain node] (2) [right of=1] {2};
\node[plain node] (3) [right of=2] {3};
\node[plain node] (4) [right of=3] {4};

\draw (1*) to (2);
\draw (1*) to (3);
\draw (2*) to (2);
\draw (3*) to (1);
\draw [bend left=55] (3*) to (1);
\draw (4*) to (4);

\end{tikzpicture}
}
    \caption{Example of a basis graph for the polynomial $\GL(V)$-invariants, with $p = 5$ and $q = 4$. There are $6$ edges, for a total degree of $12$. 
    Explicitly, the graph represents the product $f_{12}f_{13}f_{22}f_{31}^2 f_{44}$, with $f_{ij}$ as defined in~\eqref{f_ij for GL_k}.
    Since the largest strict nesting $\{(1^*,3), (2^*, 2), (3^*, 1)\}$ has size $3$, this graph is included in the basis of $\C[V^{*5} \oplus V^4]^{\GL(V)}_{12}$ whenever $\dim V \geq 3$.}
    \label{fig:GL_k graph example}
\end{figure}
 
\subsection{The special linear group}  
\label{sub:graphs SL_k}

For $\SL(V)$, we expand our definition of the standard monomials to include the following elements of $\C[z_{ij}, u_{_{\!I}}, \upsilon_{_{\!J}}]$:
\begin{align}
    &\s(A_1, \ldots, A_\ell \mid B_1, \ldots, B_\ell)\phantom{,} \quad \text{as in \eqref{def:standard monomial GL_k}}, \nonumber \\
    u_{_{\!I_1}} \cdots \, u_{_{\!I_m}}  &\s(A_1, \ldots, A_\ell \mid B_1, \ldots, B_\ell), \quad I_1 \leq \cdots \leq I_m \leq A_1, \label{s-monomials SL_k}\\
    \upsilon_{_{\!J_1}} \cdots \, \upsilon_{_{\!J_m}} &\s(A_1, \ldots, A_\ell \mid B_1, \ldots, B_\ell), \quad J_1 \leq \cdots \leq J_m \leq B_1, \nonumber
\end{align}
where $m$ ranges over all positive integers. 
As explained in~\cite{Lakshmibai}*{Ch.~11}, the set of standard monomials above furnishes a linear basis for the quotient $\C[z_{ij}, u_{_{\!I}}, \upsilon_{_{\!J}}] / \ker \psi^*$.
For our purposes in this paper, we give the following interpretation.
Each standard monomial can be still be identified with a unique pair of SSYT's (but not of the same shape), constructed by taking the pair $(A,B)$ determined by the $A_i$ and $B_i$, and either prepending the columns $I_1, \ldots, I_m$ to the tableau $A$, or prepending the columns $J_1, \ldots, J_m$ to the tableau $B$.
Upon passing to the $\SL(V)$-invariant ring via $\psi^*$, the $\RSK_{\GL}$ correspondence~\eqref{RSK_GL s to m} yields a bijection between standard monomials and ordinary monomials, where the latter take one of the three forms
\begin{equation}
\label{fAB three forms SL}
f_{AB} \qquad \text{or} \qquad f_{AB} \prod_{\ell = 1}^m \textstyle \det_{I_\ell^*} \qquad \text{or} \qquad \displaystyle f_{AB} \prod_{\ell=1}^m \textstyle\det_{J_\ell}.
\end{equation}
Note that the two latter forms have degree  $2|A| + m$.

Due to the presence of the determinantal factors, the graphical analogue to the $\GL(V)$-graphs must now involve not only arcs, but also \emph{hyperedges} that connect $k$ vertices rather than only two. 
We will denote these hyperedges by the $I$'s or $J$'s which label the vertices they connect; in this way, we identify a hyperedge with an element of either $\Ceq{p}$ or $\Ceq{q}$.  
Since the degree of a vertex is defined to be the number of edges incident to it, each arc still contributes 2 to the total degree, while each hyperedge contributes $k$.  
Hence the total degree equals the degree of the asociated monomial in~\eqref{fAB three forms SL}.

We depict each hyperedge as a row of $k$ dots directly beneath $k$ vertices (see Figure~\ref{fig:SL_k graph example}). 
The hyperedges must appear exclusively either in the starred part or in the unstarred part. 
 As a visual aid, we connect the dots from level to level to obtain a family of $k$ non-intersecting trails which float weakly to the left as they move downward.  
 This is a consequence of the condition in~\eqref{s-monomials SL_k} that the $I$'s or $J$'s must form a chain in $\Ceq{p}$ or $\Ceq{q}$.
 (Looking ahead to Example~\ref{ex:SL invariants k2p3q4}, we see that these families of trails will also appear as extensions of lattice paths in $\P_{\GL}$.)

It remains to specify how the topmost hyperedge is constrained by the arcs, and vice versa.  
This is governed by the conditions $I_m \leq A_1$ and $J_m \leq B_1$ in~\eqref{s-monomials SL_k}.  
Hence by part (1) of Lemma~\ref{lemma:Ci and Di for GL_k}, it suffices to understand how the chain decompositions into the $C^*_i$ and $C_i$, defined in~\eqref{C*1 and C1 for GL_k}, interact with $I_m$.  
These decompositions have an especially nice graphical interpretation: $C^*_1$ (resp., $C_1$) consists of all arcs which are not strictly nested inside (resp., outside) any other arcs, and we iterate this to obtain each successive $C^*_i$ or $C_i$.  
This leads to the following rule:
\begin{equation}
\label{SL_k painted vertex rules}
    \parbox{0.8\linewidth}{If the hyperedges are on the starred side of the graph, then the $i$th dot in the topmost hyperedge must lie weakly outside all arcs in $C^*_i$.  
    If the hyperedges are on the unstarred side, then the $i$th dot in the topmost hyperedge must lie weakly inside all arcs in $C_i$.}
    \end{equation}
For indices $i$ where $C_i$ does not exist, the rule above is fulfilled vacuously.  
As a non-example of the rule~\eqref{SL_k painted vertex rules}, the graph below is \emph{not} included in the basis for the $\SL_2$-invariants on $V^{*3} \oplus V^2$:
\begin{center}\scalebox{0.5}{
\begin{tikzpicture}[-,auto,node distance=1cm,
  very thick,plain node/.style={minimum size=0.8cm,circle,draw,font=\sffamily\bfseries, fill = gray!30},
  plain hypernode/.style={minimum size=0.4cm,circle,draw=gray},
  painted hypernode/.style={minimum size=0.4cm,circle,draw,fill=black},bend left = 60]

\draw[ultra thick] (2,0) \foreach \x/\y in {0/-1,-1/-1}{
-- ++(\x,\y) node[painted hypernode]{}
};
\draw[ultra thick] (0,0) \foreach \x/\y in {0/-1,0/-1}{
-- ++(\x,\y) node[painted hypernode]{}
};

\node[plain node] (1*) {1*};
\node[plain node] (2*) [right of=1*] {2*};
\node[plain node] (3*) [right of=2*] {3*};
\node (divider) [right of=3*] {$\Bigg|$};
\node[plain node] (1) [right of=divider] {1};
\node[plain node] (2) [right of=1] {2};

\draw (1*) to (2);
\draw (2*) to (1);
\draw (3*) to (2);

\end{tikzpicture}
}
\end{center}
This is because the $2$nd dot in the topmost hyperedge does not lie weakly outside $C^*_2 = \{(2^*, 1)\}$.

We summarize the characterization of the $\SL(V)$-graphs in the following propositions.  
The proof of Proposition~\ref{prop:SL_k tensor invariants} is the same as that of Proposition~\ref{prop:GL_k tensor invariants}, given the fact that $\det_{I^*}$ (resp., $\det_J$) is linear in each index in $I$ (resp., $J$).

\begin{prop}[Polynomial invariants for $\SL(V)$] Let $\dim V = k$.
\label{prop:SL_k graphs}

\begin{enumerate}
    \item A basis for $\C[V^{*p} \oplus V^q]^{\SL(V)}_d$ is given by graphs of total degree $d$, obtained from those in Proposition~\ref{prop:GL_k graphs} by adjoining pairwise comparable hyperedges of size $k$ to either the starred or unstarred side of the graph, such that~\eqref{SL_k painted vertex rules} is satisfied. 

    \item The corresponding $\SL(V)$-invariant is the product of all arcs and hyperedges, where each arc $(i^*, j)$ contributes the contraction $f_{ij}$, each hyperedge $I$ on the starred vertices contributes $\det_{I^*}$, and each hyperedge $J$ on the unstarred vertices contributes $\det_J$, as defined in~\eqref{SL generators}.
    
    \item The graphs with $m$ hyperedges on the starred side furnish a basis for $\C[V^{*p} \oplus V^q]^{\GL(V), \: \det^m}$.
 
    \item The graphs with $m$ hyperedges on the unstarred side furnish a basis for $\C[V^{*p} \oplus V^q]^{\GL(V), \: \det^{-m}}$.
 \end{enumerate}
\end{prop}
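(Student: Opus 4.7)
The plan is to leverage the Lakshmibai basis of standard monomials together with the $\RSK_{\GL}$ machinery developed in Section~\ref{sec:prelim}. By \cite{Lakshmibai}*{Ch.~11}, the standard monomials in \eqref{s-monomials SL_k} descend via $\psi^*$ to a linear basis of $\C[V^{*p}\oplus V^q]^{\SL(V)}$. I would first use \eqref{RSK_GL s to m} to replace the factor $\s(A_1,\ldots,A_\ell \mid B_1,\ldots,B_\ell)$ of each standard monomial by the ordinary monomial $f_{AB}$ in the fundamental invariants, thereby obtaining the three forms listed in \eqref{fAB three forms SL}. These in turn are encoded as graphs in the evident way: each factor $f_{ij}$ becomes an arc $(i^*, j)$, each $\det_{I_\ell^*}$ becomes a size-$k$ hyperedge $I_\ell$ on the starred side, and each $\det_{J_\ell}$ becomes a size-$k$ hyperedge $J_\ell$ on the unstarred side. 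Since $f_{ij}$ has degree $2$ while $\det_{I^*}$ and $\det_J$ have degree $k$, the polynomial degree matches the total degree of the graph.

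The chain condition $I_1 \leq \cdots \leq I_m$ (resp., $J_1 \leq \cdots \leq J_m$) in \eqref{s-monomials SL_k} is precisely the pairwise comparability of the hyperedges, visualized as $k$ non-intersecting trails that float weakly leftward as they descend. The real work lies in verifying that the constraint on the topmost hyperedge---namely, $I_m \leq A_1$ or $J_m \leq B_1$---translates into the painted vertex rule \eqref{SL_k painted vertex rules}. Here Lemma~\ref{lemma:Ci and Di for GL_k}(1) is the key: it identifies the $i$-th entry of $A_1$ with the smallest row index appearing in $C^*_i$, and symmetrically for $B_1$ and $C_i$. Recalling that $C^*_i$ (resp., $C_i$) corresponds graphically to the $i$-th outer (resp., inner) shell of arcs, the condition $I_m \leq A_1$ says exactly that the $i$-th dot of the topmost hyperedge on the starred side sits weakly to the left of---hence weakly outside---every arc in $C^*_i$, with the analogous statement on the unstarred side. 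This establishes parts~(1) and~(2).

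For parts~(3) and~(4), I would invoke the multigraded decomposition \eqref{decomp SL invariants}, using the fact that each $\det_{I^*}$ is a $\GL(V)$-semiinvariant of weight $\det$ while each $\det_J$ has weight $\det^{-1}$; the $f_{ij}$ themselves are $\GL(V)$-invariant. Hence a graph with $m$ hyperedges on the starred (resp., unstarred) side is precisely a basis element of $\C[V^{*p}\oplus V^q]^{\GL(V),\,\det^m}$ (resp., $\det^{-m}$), and summing over $m \in \mathbb{Z}$ recovers the full $\SL(V)$-invariant basis.

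The principal obstacle is bookkeeping rather than conceptual: translating the Lakshmibai-type inequality $I_m \leq A_1$ into the graphical painted vertex rule requires carefully invoking Lemma~\ref{lemma:Ci and Di for GL_k}(1) and correctly interpreting the chain decomposition in terms of nested arcs. Once this translation is pinned down, everything else---comparability of hyperedges, degree arithmetic, and the semiinvariant grading---falls out immediately from the standard monomial theory and the $\RSK_{\GL}$ bijection of Proposition~\ref{prop:RSK_GL}.
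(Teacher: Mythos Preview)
Your proposal is correct and follows essentially the same approach as the paper. The paper does not give a separate formal proof of this proposition; instead, the discussion in Section~\ref{sub:graphs SL_k} leading up to the statement \emph{is} the proof, and it proceeds exactly as you outline: the Lakshmibai standard monomial basis~\eqref{s-monomials SL_k}, the $\RSK_{\GL}$ passage to ordinary monomials~\eqref{fAB three forms SL}, the translation of $I_m \leq A_1$ into the painted-vertex rule via Lemma~\ref{lemma:Ci and Di for GL_k}(1), and the semiinvariant decomposition~\eqref{decomp SL invariants} for parts~(3) and~(4).
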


\begin{prop}[Tensor invariants for $\SL(V)$]
  \label{prop:SL_k tensor invariants}

Let $\dim V = k$.
The space $(V^{\otimes p} \otimes V^*{}^{\otimes q})^{\SL(V)}$ is nonzero if and only if $k \mid (p -q)$.  
In this case, a basis is given by the 1-regular graphs in Proposition~\ref{prop:GL_k tensor invariants}.
In particular, each basis graph contains exactly $\min\{p,q\}$ arcs, and exactly $\frac{|p-q|}{k}$ hyperedges on one side, such that the rule~\eqref{SL_k painted vertex rules} is satisfied, and each vertex has degree 1.  
    
\end{prop}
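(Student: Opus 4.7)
The plan is to mimic the proof of Proposition~\ref{prop:GL_k tensor invariants}, using the multigraded structure of $\C[V^{*p} \oplus V^q]$ together with Proposition~\ref{prop:SL_k graphs}. First, I would set up the multigraded decomposition
\[
\C[V^{*p} \oplus V^q] = \bigoplus_{\mathbf{d} \in \mathbb{N}^p,\, \mathbf{e} \in \mathbb{N}^q} \C[V^{*p} \oplus V^q]_{\mathbf{d}, \mathbf{e}},
\]
where $\C[V^{*p} \oplus V^q]_{\mathbf{d}, \mathbf{e}}$ consists of polynomials that are homogeneous of degree $d_i$ in $v^*_i$ and of degree $e_j$ in $v_j$. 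Since $\SL(V) \subset \GL(V)$ acts linearly on each copy of $V$ and $V^*$ separately, this decomposition is $\SL(V)$-invariant. The canonical contraction pairing gives a linear isomorphism $V^{\otimes p} \otimes V^{*\otimes q} \cong \C[V^{*p} \oplus V^q]_{\mathbf{1}, \mathbf{1}}$, reducing the problem to describing the $\SL(V)$-invariants of this single multigraded component.

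Next I would apply Proposition~\ref{prop:SL_k graphs}: a basis of $\C[V^{*p} \oplus V^q]^{\SL(V)}_{\mathbf{d},\mathbf{e}}$ is given by those graphs from Proposition~\ref{prop:SL_k graphs} in which each starred vertex $i^*$ has degree $d_i$ and each unstarred vertex $j$ has degree $e_j$. This uses the fact that each arc $(i^*,j)$ contributes $f_{ij}$, which is bilinear in $v^*_i$ and $v_j$, and that each hyperedge $I$ (resp.\ $J$) contributes $\det_{I^*}$ (resp.\ $\det_J$), which is linear in each of its $k$ distinct indices. Specializing to $\mathbf{d} = \mathbf{e} = \mathbf{1}$ picks out exactly the 1-regular graphs, which is precisely the desired basis.

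Finally, the divisibility condition and the enumeration of arcs and hyperedges would follow from an elementary degree count. By Proposition~\ref{prop:SL_k graphs}, hyperedges may appear on only one side of the graph. Suppose a 1-regular basis graph has $a$ arcs and $m$ hyperedges, all on the starred side. Then summing vertex degrees gives $a + km = p$ on the starred side and $a = q$ on the unstarred side, so $km = p - q$. Symmetrically, if the hyperedges lie on the unstarred side, then $km = q - p$. In either case $k \mid (p-q)$, the number of arcs is $\min\{p,q\}$, and the number of hyperedges is $|p-q|/k$. Conversely, whenever $k \mid (p-q)$, the existence of at least one such 1-regular graph (e.g., one whose arcs are a partial matching and whose hyperedges use up the remaining vertices) shows the space is nonzero.

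There is no real obstacle here, since the heavy lifting has been done in Proposition~\ref{prop:SL_k graphs}; the only subtlety is to confirm that the multilinearity condition of the tensor invariants is compatible with the graph basis, which is automatic given that the $k$ vertices touched by a hyperedge $I \in \Ceq{p}$ or $J \in \Ceq{q}$ are distinct.
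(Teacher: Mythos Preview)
Your proposal is correct and follows essentially the same approach as the paper. The paper's proof is a single sentence pointing to the proof of Proposition~\ref{prop:GL_k tensor invariants} together with the observation that $\det_{I^*}$ and $\det_J$ are linear in each of their $k$ distinct indices; you have simply unpacked this in full, including the multigraded identification, the degree count yielding $k \mid (p-q)$, and the converse via an explicit 1-regular graph.
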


\begin{corollary}
\label{cor:SL_k dim tensor invariants}
Assuming that $|p-q| = mk$, we have 
\[
\dim (V^{\otimes p} \otimes V^*{}^{\otimes q})^{\SL(V)} = \sum_{\mu} \#\SYT(\mu + m^k) \cdot \#\SYT(\mu),
\]
where $\mu \vdash \min\{p,q\}$ with $\ell(\mu) \leq k$, and where $\mu + m^k$ is the shape obtained from $\mu$ by adding $m$ boxes to all $k$ rows.
\end{corollary}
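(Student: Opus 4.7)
The plan is to set up a bijection between the graph basis of Proposition~\ref{prop:SL_k tensor invariants} and the pairs of standard Young tableaux counted on the right-hand side of the formula. Without loss of generality assume $p \geq q$, so $p - q = mk$ and the hyperedges appear on the starred side; the case $p < q$ is entirely symmetric. Each basis graph decomposes uniquely into $q$ arcs together with $m$ pairwise-disjoint hyperedges $I_1 \leq \cdots \leq I_m$ in $\Ceq{p}$: the arcs encode a zero-one matrix $M \in \M_{p,q}(\mathbb{N})$ whose column sums all equal $1$ and whose row sums are $0$ or $1$, and the hyperedges cover the $mk$ starred vertices indexed by the zero rows of $M$.

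First I would apply $\RSK_{\GL}$ to $M$ to obtain a pair $(T,U)$ of tableaux of common shape $\mu$. The unit-column-sum condition forces $U$ to be an SYT of shape $\mu \vdash q$ with $\ell(\mu) \leq k$, while the sparse-row condition makes $T$ a semistandard tableau of shape $\mu$ with distinct entries drawn from $[p]$. The hyperedge part, being a chain of pairwise-disjoint $k$-subsets of $[p]$, assembles into an SYT of rectangular shape $m^k$ whose entries are exactly the complement $[p] \setminus (\textup{entries of } T)$. Glueing this rectangle to the left of $T$ then produces a filling of shape $\mu + m^k$ that uses each element of $[p]$ exactly once, and the claim is that this filling is standard.

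The hard part is verifying this last claim, which reduces to strict row-increase across the seam between the $m^k$ block and the $\mu$ block: for each $i \leq \ell(\mu)$, the $i$th smallest entry of the rightmost hyperedge $I_m$ must be strictly less than the $(i,1)$-entry of $T$. By Lemma~\ref{lemma:Ci and Di for GL_k}(1) that $(i,1)$-entry is the minimum row index $a_i$ occurring in the chain $C^*_i$, while the geometric content of rule~\eqref{SL_k painted vertex rules} is exactly the weak inequality $(I_m)_i \leq a_i$; $1$-regularity, which prohibits a vertex from being simultaneously a hyperedge dot and an arc endpoint, upgrades this to the required strict inequality. The inverse map --- strip the leftmost $m$ columns off any $S \in \SYT(\mu + m^k)$ to recover the hyperedges, then apply $\RSK_{\GL}^{-1}$ to the remaining shape-$\mu$ tableau together with $U$ --- is clearly well-defined and inverse to the forward construction, so we have a bijection. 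Summing over $\mu \vdash q$ with $\ell(\mu) \leq k$ yields the claimed identity.
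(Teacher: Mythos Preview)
Your proof is correct and follows essentially the same approach as the paper's: pass from the graph basis of Proposition~\ref{prop:SL_k tensor invariants} to a tableau pair via $\RSK_{\GL}$, then prepend the hyperedge columns $I_1,\dots,I_m$ to the $A$-tableau and observe that $1$-regularity forces both tableaux to be standard. The paper compresses the seam verification into a single sentence (relying on the standard-monomial framework of Section~\ref{sub:graphs SL_k}, where the condition $I_m\le A_1$ is built in), whereas you spell out explicitly via Lemma~\ref{lemma:Ci and Di for GL_k}(1) and rule~\eqref{SL_k painted vertex rules} why the join between the rectangle and the shape-$\mu$ block is strictly increasing.
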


\begin{proof}
     
   Without loss of generality, assume that $p = q+mk$.
   Then there are exactly $m$ disjoint hyperedges $I_1, \ldots, I_m$ on the starred side of the graph.
   Hence the graph corresponds to some monomial $f_{AB}\det_{I_1^*} \cdots \det_{I_m^*}$.
   Consider the tableau obtained by prepending the columns $I_1, \ldots, I_m$ to the tableau $A$.
   By the 1-regularity of the graph, each entry appears exactly once in this tableau, as well as in $B$.
   Hence the SSYT's are actually SYT's, and this procedure is clearly invertible.
\end{proof}

\begin{ex}[$\SL_2$-invariants and Catalan numbers]
    Recall that one of the many descriptions for the Catalan numbers $C_n = \frac{1}{n+1} \binom{2n}{n}$ is the fact that $C_n = \#\SYT(n,n)$.
    In the special case $k\coloneqq \dim V = 2$, we have $V \cong V^*$, and so Corollary~\ref{cor:SL_k dim tensor invariants} leads to families of identities for the Catalan numbers in terms of SYT's.
    For example, let $n=4$, so that $p = 2n = 8$, and consider the $\SL_2$-invariants in
    \[
    V^{\otimes 8} \cong V^{\otimes 7} \otimes V^* \cong V^{\otimes 6} \otimes V^{*\otimes 2} \cong V^{\otimes 5} \otimes V^{*\otimes 3} \cong V^{\otimes 4} \otimes V^{*\otimes 4}.
    \]
    By Corollary~\ref{cor:SL_k dim tensor invariants}, the dimension of the space of invariants equals
    \ytableausetup{boxsize=.6em}
    \begin{align*}
    C_4 &= \#\SYT\left(\ydiagram{4,4}\right)\#\SYT(\varnothing) = 14 \cdot 1\\
    &= \#\SYT\left(\ydiagram{4,3}\right) \#\SYT\left(\ydiagram{1}\right) = 14 \cdot 1\\
    &= \#\SYT\left(\ydiagram{4,2}\right) \#\SYT(\ydiagram{2}) + \#\SYT\left(\ydiagram{3,3}\right) \#\SYT\left(\ydiagram{1,1}\right) = 9 \cdot 1 + 5 \cdot 1\\
    &= \#\SYT\left(\ydiagram{4,1}\right) \#\SYT(\ydiagram{3}) + \#\SYT\left(\ydiagram{3,2}\right) \#\SYT\left(\ydiagram{2,1}\right) = 4 \cdot 1 + 5 \cdot 2\\
     &= \#\SYT\left(\ydiagram{4}\right)^2 + \#\SYT\left(\ydiagram{3,1}\right)^2 + \#\SYT\left(\ydiagram{2,2}\right)^2 = 1^2 + 3^2 + 2^2\\
     &= 14.
    \end{align*}
    For $k > 2$, rather than a family of identities, we have only $\dim(V^{\otimes nk})^{\SL_k} = \#\SYT(n^k)$ for $n = 1, 2, \ldots$.  
    These are the ``$k$-dimensional Catalan numbers'' observed in~\cite{BostanEtAl}.
\end{ex}

\begin{ex}[Tensor invariants for $\SL_4$]

As another example of Corollary~\ref{cor:SL_k dim tensor invariants}, we take $k=4$, with $p = 11$ and $q = 3$.
Since $(p-q)/k = 2$ is an integer, the space of invariants is nonzero, and we can count its dimension by counting SYT's of size 3, paired with SYT's obtained by prepending $m=2$ columns of length $k=4$.
(Since $\min\{p,q\} \leq k$, the condition $\ell(\mu) \leq k$ is automatic.)
We conclude that the dimension of $(V^{\otimes 11} \otimes V^{*\otimes 3})^{\SL_4}$ equals
\begin{align*}
    &\phantom{=} \#\SYT\left(\ydiagram{5,2,2,2}\right)\#\SYT\left(\ydiagram{3}\right) + \#\SYT\left(\ydiagram{4,3,2,2}\right)\#\SYT\left(\ydiagram{2,1}\right) + \#\SYT\left(\ydiagram{3,3,3,2}\right)\#\SYT\left(\ydiagram{1,1,1}\right)\\
    & = 825 \cdot 1 + 1320 \cdot 2 + 462 \cdot 1 = 3927.
\end{align*}
\ytableausetup{boxsize=normal}
\end{ex}

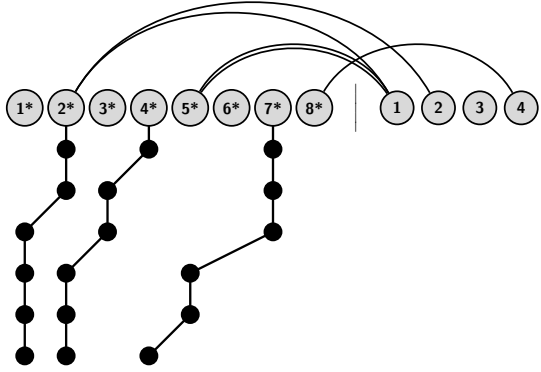
\begin{figure}[h]
    \centering
    \scalebox{.55}{
\begin{tikzpicture}[-,auto,node distance=1cm,
  very thick,plain node/.style={minimum size=0.8cm,circle,draw,font=\sffamily\bfseries, fill = gray!30}, painted node/.style={minimum size=0.8cm,circle,draw,font=\sffamily\bfseries,fill=black, text=white}, 
  plain hypernode/.style={minimum size=0.4cm,circle,draw=gray},
  painted hypernode/.style={minimum size=0.4cm,circle,draw,fill=black},bend left = 60]

\draw[ultra thick] (6,0) \foreach \x/\y in {0/-1,0/-1,0/-1,-2/-1,0/-1,-1/-1}{
-- ++(\x,\y) node[painted hypernode]{}
};
\draw[ultra thick] (3,0) \foreach \x/\y in {0/-1,-1/-1,0/-1,-1/-1,0/-1,0/-1}{
-- ++(\x,\y) node[painted hypernode]{}
};
\draw[ultra thick] (1,0) \foreach \x/\y in {0/-1,0/-1,-1/-1,0/-1,0/-1,0/-1}{
-- ++(\x,\y) node[painted hypernode]{}
};

\node[plain node] (1*) {1*};
\node[plain node] (2*) [right of=1*] {2*};
\node[plain node] (3*) [right of=2*] {3*};
\node[plain node] (4*) [right of=3*] {4*};
\node[plain node] (5*) [right of=4*] {5*};
\node[plain node] (6*) [right of=5*] {6*};
\node[plain node] (7*) [right of=6*] {7*};
\node[plain node] (8*) [right of=7*] {8*};
\node (divider) [right of=8*] {$\Bigg|$};
\node[plain node] (1) [right of=divider] {1};
\node[plain node] (2) [right of=1] {2};
\node[plain node] (3) [right of=2] {3};
\node[plain node] (4) [right of=3] {4};

\draw (2*) to (1);
\draw (2*) to (2);
\draw (5*) to (1);
\draw [bend left=55] (5*) to (1);
\draw (8*) to (4);

\end{tikzpicture}
}
    \caption{Example of a basis graph for the polynomial $\SL_3$-invariants, where $p = 8$ and $q = 4$. 
    Each arc $(i^*, j)$ represents the contraction $f_{ij}$, defined in~\eqref{f_ij for GL_k}.  
    The three painted circles in each row represent a hyperedge connecting the three vertices directly above them. 
    (The fact that $k=3$ is therefore immediate from the number of painted circles in each row.)  
    Each hyperedge $I = (i^*_1, i^*_2, i^*_3)$ represents the determinantal factor $\det_{I^*}$.  
    The chain decomposition is given by $C^*_1 = \{(2^*, 1), (2^*, 2), (8^*, 4)\}$ and $C^*_2 = \{(5^*, 1), (5^*, 1)\}$; therefore the corresponding monomial has width $2$, which is also the size of the largest strict nesting. 
    Note that the rule~\eqref{SL_k painted vertex rules} is satisfied: $2^*$ lies weakly outside all of the arcs in $C^*_1$, while $4^*$ lies outside all of the arcs in $C^*_2$. 
    Since $C^*_3$ does not exist, the third painted vertex could have been any of $5^*, \ldots, 8^*$.  
    The total degree of the graph equals twice the number of arcs, plus the number of painted circles, which is $2(5) + 18 = 28$; this is also the degree of the corresponding monomial, which is the product of all arcs and hyperedges. 
    Since there are 6 hyperedges (i.e., the length of the trails), this function lies in the semiinvariant component $\C[W]^{\GL_3, \: \det^{6}}_{28}$.}
    \label{fig:SL_k graph example}
\end{figure}

\subsection{The orthogonal group} 
\label{sec:graphs O}

For $A = (a_1, \ldots, a_t) \leq B = (b_1, \ldots, b_t) \in \Cleq{n}$, let $\det(A,B)$ denote the determinant of the $t$-minor in the symmetric matrix $[z_{ij}]$ determined by the row indices $a_i$ and column indices $b_i$.  
Then for $A_1 \leq B_1 \leq \cdots \leq A_{\ell} \leq B_{\ell} \in \Cleq{n}$, with each $\#A_i = \#B_i$, we define the standard monomial
\begin{equation}
    \label{def:standard monomial O_k}
    \s(A_1, B_1, \ldots, A_\ell, B_\ell) \coloneqq \prod_{i=1}^\ell \det(A_i, B_i).
\end{equation}
The set of standard monomials furnishes a basis for $\C[z_{ij}]/\ker \pi^*$; see~\cite{Conca94}*{Thm.~5.1} or~\cite{Procesi}*{\S8.3}.  
Again we call $\#A_1$ the \emph{width} of the standard monomial.  
Note that each standard monomial can be viewed as the even-rowed SSYT whose columns are the $A_i$ and $B_i$, interlaced as above.  
This gives a natural bijection
\begin{equation}
    \label{O_k bijection SSYT}
    \{\text{standard monomials of width $\leq k$}\} \longleftrightarrow 
\bigcup_{\mathclap{\substack{\mu \in \Par(k \times \infty)\\ \text{with even row lengths}}}} \; \SSYT(\mu,n).
\end{equation}
(The elements on either side are sometimes called $d$-tableaux in the literature, and are displayed with the $A_i$ and $B_i$ as interlacing rows rather than as columns.)  
We define $\m(A_1, B_1, \ldots, A_\ell, B_\ell) \in \C[z_{ij}]$ to be the ordinary monomial whose degree matrix equals $\RSK_{\O}(A_1, B_1, \ldots, A_\ell, B_\ell)$.  
By Proposition~\ref{prop:RSK_O}, we thus have the analogue of the degree- and width-preserving bijection~\eqref{RSK_GL s to m} between standard and ordinary monomials.
The image of either set of monomials under $\pi^*$ likewise is a basis for $\C[V^n]^{\O(V)}$.

Let $f_{AB} \coloneqq \pi^*(\m(A_1, B_1, \ldots, A_\ell, B_\ell))$.
Then $f_{AB}$ can be represented by an arc diagram, with loops and multiple arcs allowed, on the vertices $1, \ldots, n$; see \cite{Burge}*{Fig.~2}.
Viewed as an $\O(V)$-invariant, each graph is the product of its arcs, where an arc $(i,j)$ represents $f_{ij}$; Figure~\ref{subfig:O_k graph example} shows an example.
This time, a chain (rather than an antichain) in $\P_{\O}$ corresponds to a \emph{weak} nesting (i.e., two nested arcs may share one or both vertices).  
The width of $f_{AB}$ therefore equals the minimum number of weak nestings into which the arcs can be decomposed.  
One way to determine this number is to use the graphical equivalent of the chains $C_i$ defined in~\eqref{C1 for O_k}.  
Note that the term ``leftmost arc'' is well-defined (up to multiple arcs connecting the same two vertices, which are interchangeable): in a properly drawn arc diagram, the leftmost arc is the arc that extends furthest left, and (in case of a tie) is the longest of all such arcs.  
We can construct $C_i$ by repeatedly adding the leftmost arc which is weakly nested inside its predecessor; then we delete $C_i$ and construct $C_{i+1}$ from the remaining graph, until the arcs are exhausted.  
We arrive at the following characterizations:

 \begin{prop}[Polynomial invariants for $\O(V)$]
     \label{prop:O_k graphs}
     Let $\dim V = k$.
     A basis for $\C[V^{n}]^{\O(V)}_d$ is given by the set of all arc diagrams (with multiple edges and loops) on $n$ vertices, with total degree $d$, whose arcs can be decomposed into $\leq k$ weak nestings.  
     In particular, we interpret each such arc diagram as the product in which $f_{ij}$ appears once for each arc $(i,j)$.
 \end{prop}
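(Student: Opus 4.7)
The plan is to mirror the three-step argument that underlies Proposition~\ref{prop:GL_k graphs}, adapting each step to the symmetric-matrix setting. The ingredients are already in place in Sections~\ref{sec:prelim}--\ref{sec:invariant theory}: Weyl's SFT for $\O(V)$, the standard monomial theory for symmetric determinantal rings (Conca, Procesi), the Burge/Conca correspondence $\RSK_{\O}$ of Proposition~\ref{prop:RSK_O}, and the graphical translation of matrices into arc diagrams.

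First, by the SFT for $\O(V)$ (Section~3.3), $\pi^*$ descends to an isomorphism $\C[z_{ij}]/\ker\pi^* \xrightarrow{\sim} \C[V^n]^{\O(V)}$, so it suffices to exhibit a graded basis of the domain consisting of ordinary monomials in the $z_{ij}$ whose support in $\P_{\O}$ has width $\leq k$. Next, I invoke the standard monomial theorem (\cite{Conca94}*{Thm.~5.1}, \cite{Procesi}*{\S8.3}): the standard monomials $\s(A_1,B_1,\ldots,A_\ell,B_\ell)$ of width $\leq k$ form a $\C$-basis of $\C[z_{ij}]/\ker\pi^*$, and via \eqref{O_k bijection SSYT} they are indexed by SSYT's with even row lengths, entries in $[n]$, and at most $k$ rows.

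The second step is to pass from standard monomials to ordinary monomials. Apply $\RSK_{\O}$ (Proposition~\ref{prop:RSK_O}) to each such SSYT $T$ of shape $\mu$: the output is an upper-triangular matrix $M\in\M_n(\mathbb N)$ with $\sum_{i,j}M_{ij}=|\mu|/2$ and $\supp(M)\subset\P_{\O}$ of width equal to the number of rows of $\mu$, hence $\leq k$. Since $\RSK_{\O}$ is a bijection, and since each upper-triangular $M$ uniquely determines an ordinary monomial $\m(T)\coloneqq\prod_{i\leq j}z_{ij}^{M_{ij}}$, the images $\{\m(T)\}$ form an alternative $\C$-basis for $\C[z_{ij}]/\ker\pi^*$, characterized purely by the condition $\mathrm{width}(\supp(M))\leq k$ in $\P_{\O}$. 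Pushing forward through $\pi^*$ and recalling $z_{ij}\mapsto f_{ij}$, the resulting $f_{AB}=\prod_{i\leq j}f_{ij}^{M_{ij}}$ give a basis of $\C[V^n]^{\O(V)}$. Since each $f_{ij}$ is quadratic, the polynomial degree of $f_{AB}$ equals $2\sum M_{ij}=|\mu|$, so the grading matches edge count in the prescribed way.

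Finally, I translate matrices into arc diagrams: read $M_{ij}$ as the number of arcs joining vertices $i$ and $j$ (with loops when $i=j$), so that the total arc degree of the resulting multigraph is $2\sum M_{ij}=d$. The remaining point, and the one I expect to be the main bookkeeping obstacle, is to identify the width of $\supp(M)$ in $\P_{\O}$ with the minimum number of weak nestings into which the arcs can be partitioned. This is a direct comparison of definitions: the relation $(i,j)\leq(i',j')\iff i\leq i'\text{ and }j\geq j'$ in $\P_{\O}$ says precisely that the arc $(i',j')$ is weakly nested inside $(i,j)$ (allowing shared endpoints, and in particular allowing parallel edges and loops to form a weak nesting); hence a chain in $\P_{\O}$ is exactly a weak nesting of arcs, and a minimum chain partition of $\supp(M)$ is a minimum weak-nesting decomposition. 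The $C_i$-decomposition of Lemma~\ref{lemma:Ci and Di for O_k} realizes this explicitly: each $C_i$ is built by greedily adding the leftmost arc weakly nested inside its predecessor, and the number of $C_i$'s equals the number of rows of $\mu$. Combining these identifications, the monomial $f_{AB}$ lies in our basis precisely when its arc diagram decomposes into at most $k$ weak nestings, proving the proposition.
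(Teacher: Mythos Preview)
Your argument is correct and follows essentially the same route as the paper: the discussion preceding Proposition~\ref{prop:O_k graphs} in Section~\ref{sec:graphs O} lays out precisely this pipeline (SFT $\to$ standard monomial basis via Conca/Procesi $\to$ ordinary monomial basis via $\RSK_{\O}$ $\to$ arc-diagram interpretation with chains in $\P_{\O}$ as weak nestings), and the proposition is stated as its summary. One small point worth tightening: the sentence ``Since $\RSK_{\O}$ is a bijection\ldots the images $\{\m(T)\}$ form an alternative $\C$-basis'' is not a self-contained deduction---a bijection of sets with a known basis does not by itself produce another basis---so you are implicitly invoking Conca's Gr\"obner/initial-ideal result that the monomials with $\operatorname{width}(\supp M)\le k$ already span the quotient, which is exactly what the paper (and \cite{Conca94}) supplies.
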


 \begin{prop}[Tensor invariants for $\O(V)$]
     \label{prop:O_k tensor invariants}
    The space $(V^{\otimes n})^{\O(V)}$ is nonzero if and only if $n$ is even.
     In this case, a basis for $(V^{\otimes n})^{\O(V)}$ is given by the set of 1-regular arc diagrams on $n$ vertices, whose arcs can be decomposed into $\leq k$ weak nestings.
     In particular, identifying $V^{\otimes n}$ as the space of multilinear forms on $V^{n}$, we interpret each arc $(i,j)$ as the contraction $f_{ij}$.  
 \end{prop}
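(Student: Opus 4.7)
The plan is to mirror the proof of Proposition~\ref{prop:GL_k tensor invariants}, replacing the bigrading on $\C[V^{*p} \oplus V^q]$ with the natural multigrading on $\C[V^{n}]$. Let $\mathbf{d} = (d_1, \ldots, d_n) \in \mathbb{N}^n$ and write $\mathbf{1} = (1, \ldots, 1)$. Denote by $\C[V^n]_{\mathbf{d}}$ the multigraded component of polynomial functions which are homogeneous of degree $d_i$ in the variable $v_i$, for each $i$. This multigrading is preserved by the diagonal action of $\O(V)$, so we obtain an $\O(V)$-invariant decomposition
\[
\C[V^n]^{\O(V)} = \bigoplus_{\mathbf{d} \in \mathbb{N}^n} \C[V^n]^{\O(V)}_{\mathbf{d}}.
\]

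Next I would identify the vertex degrees of the basis graphs from Proposition~\ref{prop:O_k graphs} with the multigrading. Each arc $(i,j)$ represents the contraction $f_{ij} = b(v_i, v_j)$, which has degree $1$ in $v_i$ and in $v_j$; a loop at vertex $i$ represents $f_{ii} = b(v_i, v_i)$, contributing $2$ to the degree at $i$. Consequently, the basis from Proposition~\ref{prop:O_k graphs} respects the multigrading, and a basis for $\C[V^n]^{\O(V)}_{\mathbf{d}}$ is given by the arc diagrams in which each vertex $i$ has degree $d_i$, subject to the weak-nesting restriction of width $\leq k$.

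Now I would invoke the canonical linear isomorphism
\[
V^{\otimes n} \cong \C[V^n]_{\mathbf{1}},
\]
which sends a decomposable tensor to the corresponding multilinear form via the bilinear form $b$; this is an isomorphism of $\O(V)$-modules because $b$ is $\O(V)$-invariant. Taking $\O(V)$-invariants on both sides identifies $(V^{\otimes n})^{\O(V)}$ with $\C[V^n]^{\O(V)}_{\mathbf{1}}$, whose basis, by the previous paragraph, consists exactly of the arc diagrams in which every vertex has degree $1$. Since a loop would force some vertex to have degree $2$, such diagrams have no loops, and they are precisely the $1$-regular graphs (perfect matchings) on $n$ vertices with the width restriction stated.

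Finally, the space is nonzero if and only if at least one such $1$-regular graph exists, i.e., if and only if $n$ is even, completing the proof. The only mild point requiring care is the bookkeeping of loops contributing degree $2$; once that is noted, the argument reduces immediately to Proposition~\ref{prop:O_k graphs} applied to the multilinear component.
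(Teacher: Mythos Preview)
Your proposal is correct and follows essentially the same approach as the paper: both reduce to Proposition~\ref{prop:O_k graphs} by identifying $V^{\otimes n}$ with the multigraded component $\C[V^n]_{\mathbf{1}}$, exactly as in the $\GL(V)$ case. Your added remark that loops contribute degree $2$ (and are therefore excluded from $1$-regular diagrams) is a helpful clarification that the paper leaves implicit.
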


 \begin{proof}
     The argument is identical to that for $\GL(V)$ in Proposition~\ref{prop:GL_k tensor invariants}, upon identifying $V^{\otimes n}$ with the multigraded component $\C[V^{n}]_{\mathbf{1}}$.  Note that this forces $(V^{n})^{\O(V)} = 0$ when $n$ is odd.
 \end{proof}

\begin{corollary}[\cite{Smith}*{p.~654}]
    \label{cor:O_k tensor dim}
    We have $\displaystyle\dim (V^{\otimes n})^{\O(V)} = \sum_{\mathclap{\substack{\mu \vdash n, \\ \ell(\mu) \leq k, \\ \textup{even row lengths}}}} \#\SYT(\mu)$ if $n$ is even, and $0$ if $n$ is odd.
\end{corollary}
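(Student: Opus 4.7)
The plan is to combine Proposition~\ref{prop:O_k tensor invariants} with the $\RSK_{\O}$ bijection of Proposition~\ref{prop:RSK_O}. By Proposition~\ref{prop:O_k tensor invariants}, $\dim (V^{\otimes n})^{\O(V)}$ equals the number of $1$-regular arc diagrams on $n$ vertices (with loops and multi-edges allowed) whose arcs decompose into at most $k$ weak nestings. Because the loop $(i,i)$ corresponds to $f_{ii}(v_1,\ldots,v_n)=b(v_i,v_i)$, which is of degree $2$ in $v_i$, the $1$-regularity condition rules out loops; thus the basis diagrams are precisely the perfect matchings on $[n]$ with at most $k$ weak nestings. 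This already handles the case $n$ odd, for which no perfect matching exists and the dimension is $0$.

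For $n$ even, I would encode each perfect matching by its upper-triangular adjacency matrix $M \in \M_n(\mathbb N)$ (with $M_{ij}$ the multiplicity of the arc $(i,j)$ for $i<j$ and $M_{ii}=0$), which lands in the codomain of $\RSK_{\O}$. Applying the inverse of $\RSK_{\O}$ gives a unique SSYT $T$ of some shape $\mu$ with even row lengths and entries in $[n]$. Part~(1) of Proposition~\ref{prop:RSK_O} gives $|\mu|=2\sum_{i,j}M_{ij}=n$, and part~(2) gives $\ell(\mu)=\operatorname{width}(\supp(M))$, which in the graphical translation of Section~\ref{sec:graphs O} is the minimum number of weak nestings into which the arcs can be decomposed. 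Hence the restriction of $\leq k$ weak nestings is equivalent to $\ell(\mu)\leq k$.

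The key step is to verify that $T$ is in fact a standard Young tableau, i.e.\ that each value in $[n]$ occurs exactly once. Via Burge's construction, the multiplicity of the entry $i$ in $T$ is the $i$th row sum of the associated symmetric matrix $M+M^t$, which equals $2M_{ii}+\sum_{j>i}M_{ij}+\sum_{j<i}M_{ji}$; this is precisely the degree of vertex $i$ in the arc diagram. For a perfect matching this degree is $1$, so $T$ contains each entry exactly once and is therefore a SYT of shape $\mu$. Conversely, given any SYT of a shape $\mu\vdash n$ with $\ell(\mu)\leq k$ and even row lengths, the inverse of $\RSK_{\O}$ returns an upper-triangular matrix whose symmetrization has all row sums equal to $1$ and zero diagonal, which is the adjacency matrix of a perfect matching obeying the nesting bound. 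Summing $\#\SYT(\mu)$ over the allowable $\mu$ then yields the stated formula.

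The main obstacle I anticipate is the bookkeeping of how the Burge/$\RSK_{\O}$ correspondence translates vertex degrees in the diagram to entry multiplicities in the tableau (i.e.\ the factor of $2$ on the diagonal of $M+M^t$); once this dictionary is written down explicitly, the rest of the proof is a direct application of Propositions~\ref{prop:RSK_O} and~\ref{prop:O_k tensor invariants}.
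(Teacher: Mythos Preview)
Your proof is correct and follows essentially the same approach as the paper: both invoke Proposition~\ref{prop:O_k tensor invariants} to reduce to counting $1$-regular arc diagrams, then use the Burge property of $\RSK_{\O}$ (each $(i,j)$ in the multisupport contributes one $i$ and one $j$ to the tableau) to identify these with SYT's of the required shapes. One small slip: in your converse step you write ``the inverse of $\RSK_{\O}$ returns an upper-triangular matrix,'' but since $\RSK_{\O}$ maps tableaux to matrices, you mean $\RSK_{\O}$ itself there, not its inverse.
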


\begin{proof}
    By the construction in~\cite{Burge}*{p.~22}, a tableau $T$ contains one copy of $i$ and $j$ for each occurrence of $(i,j)$ in the multisupport of $\RSK_{\O}(T)$. 
    The adjacency matrix of each arc diagram in Proposition~\ref{prop:O_k tensor invariants} is an $n \times n$ permutation matrix with $0$'s on the diagonal, whose support has width $\leq k$.
    Taking the upper-triangular part of such a matrix, we see that its corresponding tableau contains each entry $1, \ldots, n$ exactly once, and is therefore an SYT.
    The result follows from Proposition~\ref{prop:RSK_O}.
\end{proof}

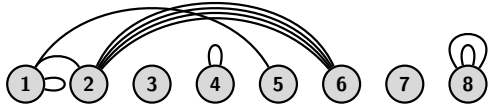
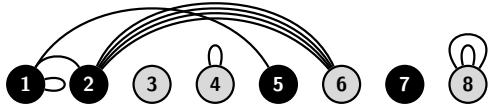
\begin{figure}[t]
    \centering
    \begin{subfigure}[b]{\textwidth}
    \centering
    \scalebox{0.7}{
\begin{tikzpicture}[-,auto,node distance=1.2cm,
  very thick,plain node/.style={circle,draw,font=\sffamily\bfseries,fill=gray!30}, painted node/.style={circle,draw,font=\sffamily\bfseries,fill=black, text-white}]
\tikzstyle{every loop}=[-, distance = 15]

\node[plain node] (1) {1};
\node[plain node] (2) [right of=1] {2};
\node[plain node] (3) [right of=2] {3};
\node[plain node] (4) [right of=3] {4};
\node[plain node] (5) [right of=4] {5};
\node[plain node] (6) [right of=5] {6};
\node[plain node] (7) [right of=6] {7};
\node[plain node] (8) [right of=7] {8};

%%% Non-loops go here.  We can change the 60 degrees if something else looks better:

\draw [bend left = 60] (1) to (2);
\draw [bend left = 60] (1) to (5);
\draw [bend left = 60] (2) to (6);
\draw [bend left = 55] (2) to (6);
\draw [bend left = 50] (2) to (6);
\draw [bend left = 66] (2) to (6);

%%% Loops go here:

\path (1) edge [loop right] node {} (1);
\path (4) edge [loop above] node {} (4);
\path (8) edge [loop above] node {} (8);
\path (8) edge [loop above, in=55, out=125,distance=30] node {} (8);

\end{tikzpicture}
}
    \caption{The graph corresponding to the polynomial $\O(V)$-invariant $(f_{11} f_{12} f_{15})(f_{26}^4 f_{44})(f_{88}^2)$, where the parentheses designate the weak nestings $C_1$, $C_2$, and $C_3$.  
    The fact that there are three nestings means that this graph is included in the basis for $\C[V^8]^{\O(V)}_{18}$ as long as $\dim V \geq 3$.}
    \label{subfig:O_k graph example}
    \end{subfigure}

    \begin{subfigure}[b]{\textwidth}
    \centering
        \scalebox{0.7}{
\begin{tikzpicture}[-,auto,node distance=1.2cm,
  very thick,plain node/.style={circle,draw,font=\sffamily\bfseries,fill=gray!30}, painted node/.style={circle,draw,font=\sffamily\bfseries,fill=black, text=white}]
\tikzstyle{every loop}=[-, distance = 15]

\node[painted node] (1) {1};
\node[painted node] (2) [right of=1] {2};
\node[plain node] (3) [right of=2] {3};
\node[plain node] (4) [right of=3] {4};
\node[painted node] (5) [right of=4] {5};
\node[plain node] (6) [right of=5] {6};
\node[painted node] (7) [right of=6] {7};
\node[plain node] (8) [right of=7] {8};

%%% Non-loops go here.  We can change the 60 degrees if something else looks better:

\draw [bend left = 60] (1) to (2);
\draw [bend left = 60] (1) to (5);
\draw [bend left = 60] (2) to (6);
\draw [bend left = 55] (2) to (6);
\draw [bend left = 50] (2) to (6);
\draw [bend left = 66] (2) to (6);

%%% Loops go here:

\path (1) edge [loop right] node {} (1);
\path (4) edge [loop above] node {} (4);
\path (8) edge [loop above] node {} (8);
\path (8) edge [loop above, in=55, out=125,distance=30] node {} (8);

\end{tikzpicture}
}
        \caption{The graph corresponding to the product of the invariant above with $\det(v_1, v_2, v_5, v_7)$.  
        This means that $\dim V=4$ in this example.  
        Note that by the rule~\eqref{SO_k painted vertex rules}, the first two painted vertices are forced to be $1$ and $2$, since $C_1$ and $C_2$ begin at those two vertices.  
        Since $C_3$ begins at vertex $8$, and since $C_4$ does not exist, the third and fourth painted vertices could have been chosen anywhere to the right of vertex $2$.  
        The degree of the $\SO_4$-invariant equals $18 + 4 = 22$.}
        \label{subfig:SO_k graph example}
    \end{subfigure}
    
    \caption{Example of a basis graph for an $\O(V)$-invariant and an $\O(V)$-semiinvariant, where $n = 8$.  
    Together, these two types of graphs furnish a basis for the $\SO(V)$-invariants.}
    \label{fig:O_k and SO_k graph example}
\end{figure}

\subsection{The special orthogonal group} 
\label{sec:graphs SO}

Just as for $\SL(V) \subset \GL(V)$, for $\SO(V) \subset \O(V)$ we expand our definition of the standard monomials to include the following elements of $\C[z_{ij}, u_{_{\!{I}}}]$:
\begin{align}
    &\s(A_1, B_1, \ldots, A_\ell, B_\ell)\phantom{,} \quad \text{as in \eqref{def:standard monomial O_k}}, \label{basis R0} \\
    u_{_{\!I}} &\s(A_1, B_1, \ldots, A_\ell, B_\ell), \quad I \leq A_1. \label{basis R1}
\end{align}
The set of standard monomials above furnishes a linear basis for the quotient $\C[z_{ij}, u_{_{\!I}}]/\ker \psi^*$, as defined in~\eqref{SO_k phi}; see \cite{DeConciniProcesi}*{Thm.~5.6(ii)} and the discussion in~\cite{Domokos}*{p.~710}. 
In light of~\eqref{decomp SO_k invariants}, the $\psi^*$-images $f_{AB}$ of the standard monomials of type~\eqref{basis R0} form a basis for the $\O(V)$-invariants, while the images $f_{AB} \det_I$ of type~\eqref{basis R1} form a basis for the $\O(V)$-semiinvariants. 
Note that each monomial of type~\eqref{basis R1} can be identified with an SSYT whose rows have odd length, and whose first column is $I$; its degree in $\C[z_{ij}, u_{_{\!I}}]$ equals $1 + |A|$, but the degree of its image under $\psi^*$ in $\C[V^{n}]^{\SO(V)}$ equals $k + 2|A|$.  

We distinguish the graphs of $f_{AB} \det_I$ from the graphs of $f_{AB}$ by painting the vertices corresponding to the indices in $I$; see Figure~\ref{subfig:SO_k graph example}.
One can view these $k$ painted vertices as a single hyperedge.
The condition $I \leq A_1$ translates to the following rule:
\begin{equation}
\label{SO_k painted vertex rules}
    \parbox{0.8\linewidth}{The $i$th painted vertex must lie weakly to the left of the arcs in $C_i$.}
    \end{equation}
For indices $i$ where $C_i$ does not exist, this condition is fulfilled vacuously.  As a non-example of this rule, the graph below is \emph{not} included in the basis of $\SO_3$-invariants on $V^9$:
\begin{center}
\scalebox{0.5}{\begin{tikzpicture}[-,auto,node distance=1.2cm,
  ultra thick,plain node/.style={circle,draw,font=\sffamily\bfseries,fill=gray!30}, painted node/.style={circle,draw,font=\sffamily\bfseries,fill=black, text=white}]
\tikzstyle{every loop}=[-, distance = 10]

\node[painted node] (1) {1};
\node[plain node] (2) [right of=1] {2};
\node[painted node] (3) [right of=2] {3};
\node[plain node] (4) [right of=3] {4};
\node[plain node] (5) [right of=4] {5};
\node[plain node] (6) [right of=5] {6};
\node[painted node] (7) [right of=6] {7};
\node[plain node] (8) [right of=7] {8};
\node[plain node] (9) [right of=8] {9};

%%% Non-loops go here.  We can change the 60 degrees if something else looks better:

\draw [bend left = 60] (1) to (7);
\draw [bend left = 60] (3) to (6);
\draw [bend left = 60] (2) to (8);
\draw [bend left = 60] (3) to (5);
\draw [bend left = 60] (6) to (8);
\draw [bend left = 60] (7) to (9);

%%% Loops go here:

\path (4) edge [loop above] node {} (1);

\end{tikzpicture}}
\end{center}
This is because the 2nd painted vertex, which is 3, does not lie weakly to the left of $C_2 = \{ (2,8), (6,8) \}$.  If vertex 2 were painted instead of vertex 3, then the graph would obey the rule~\eqref{SO_k painted vertex rules}, since the 3rd painted vertex (7) already lies weakly left of $C_3 = \{(7,9)\}$.

\begin{prop}[Polynomial invariants for $\SO(V)$] Let $\dim V = k$.

\label{prop:SO_k graphs}

\begin{enumerate}
    \item We have $\C[V^{n}]^{\SO(V)} \cong \C[V^{n}]^{\O(V)} \oplus \C[V^{n}]^{\O(V), \: \det}$.
    
    \item A basis for $\C[V^{n}]^{\O(V),\: \det}_d$ is obtained from the arc diagrams in Proposition~\ref{prop:O_k graphs} with total degree $d - k$, by adding exactly one hyperedge (i.e., painting $k$ of the vertices) according to the rule~\eqref{SO_k painted vertex rules}.

    \item The corresponding $\O(V)$-semiinvariant is the product of the $f_{ij}$ corresponding to each arc $(i,j)$, multiplied by $\det(v_{i_1}, \ldots, v_{i_k})$, where $i_1, \ldots, i_k$ are the painted vertices.
 \end{enumerate}
\end{prop}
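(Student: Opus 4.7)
The plan is to treat the three parts in order. Part (1) is simply a restatement of the semiinvariant decomposition~\eqref{decomp SO_k invariants} that was already recorded when the FFT for $\SO(V)$ was introduced, and part (3) is immediate from the definition of $\psi^*$ in~\eqref{SO_k phi}: each arc $(i,j)$ contributes the factor $f_{ij}$, while the single hyperedge at the painted vertices $i_1<\cdots<i_k$ contributes $\psi^*(u_{_{\!I}})=\det(v_{i_1},\ldots,v_{i_k})$. So the real content lies in part (2).

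For part (2), I would begin from the standard-monomial basis of type~\eqref{basis R1}. By the cited results of \cite{DeConciniProcesi} and \cite{Domokos}, the monomials $u_{_{\!I}}\cdot\s(A_1,B_1,\ldots,A_\ell,B_\ell)$ with $I\leq A_1$ constitute a basis for the degree-one part (in $u$) of $\C[z_{ij},u_{_{\!I}}]/\ker\psi^*$, so their $\psi^*$-images form a basis of $\C[V^n]^{\O(V),\det}$. Next, I would invoke the width- and degree-preserving bijection furnished by $\RSK_{\O}$ (Proposition~\ref{prop:RSK_O}, translated to invariants exactly as in Section~\ref{sec:graphs O}) to replace each standard monomial $\pi^*(\s(A_1,B_1,\ldots,A_\ell,B_\ell))$ by the ordinary monomial $f_{AB}$. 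The graph of $f_{AB}$ is the underlying arc diagram of Proposition~\ref{prop:O_k graphs}; its total degree is $|\mu|$, since $\sum_{i,j} M_{ij}=|\mu|/2$ and each $f_{ij}$ has degree $2$. Painting the $k$ vertices indexed by $I$ to record the factor $\det_I$ adds exactly $k$ to the total degree of the graph, so a semiinvariant of polynomial degree $d$ is encoded by an arc diagram of total degree $d-k$ together with one hyperedge.

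The main obstacle, and the one conceptual step in the argument, is to check that the inequality $I\leq A_1$ in the poset $\Cleq{n}$ is exactly equivalent to the painted-vertex rule~\eqref{SO_k painted vertex rules}. Unpacking the partial order in~\eqref{C^n_leq k definition}, the condition $I=(i_1,\ldots,i_k)\leq A_1=((A_1)_1,\ldots,(A_1)_{\#A_1})$ says that $k\geq\#A_1$ (which is automatic) and $i_j\leq (A_1)_j$ for each $j=1,\ldots,\#A_1$. By part~(1) of Lemma~\ref{lemma:Ci and Di for O_k}, the entry $(A_1)_j$ is precisely the smallest row index appearing in the chain $C_j$, which in the arc diagram is the leftmost vertex incident to any arc of $C_j$. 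Hence $i_j\leq(A_1)_j$ is exactly the assertion that the $j$th painted vertex sits weakly to the left of every arc in $C_j$, which is the rule~\eqref{SO_k painted vertex rules}. For $j>\#A_1$ there is no corresponding $C_j$, and the inequality becomes vacuous, matching the stated convention.

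Combining these steps gives a bijection between the standard monomials of type~\eqref{basis R1} and the decorated arc diagrams described in (2), and this bijection intertwines $\psi^*$ on one side with the ``product of arcs and one hyperedge'' interpretation on the other, which is precisely the content of (3). Since the former is a basis of $\C[V^n]^{\O(V),\det}$, so is the latter.
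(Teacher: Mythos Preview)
Your proposal is correct and follows essentially the same approach as the paper: the proposition is a summary of the discussion preceding it, which derives the graph description from the standard-monomial basis~\eqref{basis R0}--\eqref{basis R1} via $\RSK_{\O}$ and translates the condition $I\leq A_1$ into the painted-vertex rule~\eqref{SO_k painted vertex rules}. You have made explicit the one step the paper leaves implicit, namely the appeal to Lemma~\ref{lemma:Ci and Di for O_k}(1) to identify $(A_1)_j$ with the leftmost vertex of $C_j$; this is exactly the intended justification.
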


\begin{prop}[Tensor invariants for $\SO(V)$]
    \label{prop:SO_k tensor invariants}
    A basis for $(V^{\otimes n})^{\SO(V)}$ is given by the set of 1-regular arc diagrams on $n$ vertices which include at most one hyperedge satisfying the rule~\eqref{SO_k painted vertex rules}, and whose arcs can be decomposed into $\leq k$ weak nestings.
    In other words, there are either $0$ or $k$ painted vertices, and each vertex is either painted or else is incident to exactly one arc.
    Assuming $k \leq n$, we have the following possibilities:
    \begin{itemize}
        \item $n,k$ both even $\Longrightarrow$ graphs may have $0$ or $1$ hyperedge;
        \item $n,k$ both odd $\Longrightarrow$ graphs must have $1$ hyperedge;
        \item $n$ even, $k$ odd $\Longrightarrow$ graphs must have $0$ hyperedges;
        \item $n$ odd, $k$ even $\Longrightarrow$ there exist no $\SO(V)$-invariants.
    \end{itemize}
\end{prop}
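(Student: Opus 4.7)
The plan is to mimic the proof of Proposition~\ref{prop:O_k tensor invariants}, using the $\SO(V)$-invariant basis from Proposition~\ref{prop:SO_k graphs} and restricting to the correct multigraded component of $\C[V^n]$. Specifically, I would begin by invoking the canonical linear isomorphism
\[
V^{\otimes n} \cong \C[V^{n}]_{\mathbf{1}},
\]
where the right-hand side denotes the multigraded component of polynomial functions that are linear in each of $v_1,\ldots,v_n$. Since this decomposition is $\GL(V)$-stable (and in particular $\SO(V)$-stable), a basis for $(V^{\otimes n})^{\SO(V)}$ is obtained by intersecting the basis of $\C[V^n]^{\SO(V)}$ from Proposition~\ref{prop:SO_k graphs} with $\C[V^n]_{\mathbf{1}}$.

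Next I would translate ``lying in $\C[V^n]_{\mathbf{1}}$'' into a combinatorial condition on the graphs. An arc $(i,j)$ (including loops $(i,i)$, where both ends count toward vertex $i$) and a painted vertex $i$ each contribute $1$ to the degree at vertex $i$. Hence the multigraded component $\C[V^n]_{\mathbf{1}}$ selects exactly those graphs in which every vertex is either painted or incident to exactly one (nonloop) arc; i.e., the graphs are 1-regular in the sense stated, with either $0$ or $k$ painted vertices, in accordance with the two types~\eqref{basis R0}--\eqref{basis R1} of standard monomials. The width condition (at most $k$ weak nestings) and the rule~\eqref{SO_k painted vertex rules} carry over directly from Proposition~\ref{prop:SO_k graphs}.

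From here the case analysis is a parity bookkeeping exercise. A graph with $0$ hyperedges is a perfect matching on $\{1,\ldots,n\}$, which exists only when $n$ is even. A graph with $1$ hyperedge has $k$ vertices absorbed by the hyperedge and the remaining $n-k$ vertices covered by a perfect matching, which exists only when $n-k$ is even, i.e., $n\equiv k\pmod{2}$. Combining these two possibilities gives the four bullets: both parities even admits either option; both odd forces one hyperedge; $n$ even with $k$ odd forces zero hyperedges; and $n$ odd with $k$ even admits neither option, yielding no invariants.

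I do not foresee a serious obstacle: the entire argument is a restriction of Proposition~\ref{prop:SO_k graphs} to the linear multidegree, paralleling the proofs of Propositions~\ref{prop:GL_k tensor invariants} and~\ref{prop:O_k tensor invariants}. The only mildly delicate point is handling the loops (since a loop $(i,i)$ at a single vertex contributes $2$ to that vertex's degree and therefore cannot appear in a 1-regular diagram), which I would flag explicitly so that the reader sees why loops, although allowed in the $\O(V)$-invariant basis of Proposition~\ref{prop:O_k graphs}, are automatically absent from the tensor-invariant basis.
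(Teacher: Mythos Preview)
Your proposal is correct and matches the paper's approach exactly: the paper leaves this proposition without an explicit proof, treating it as a direct restriction of Proposition~\ref{prop:SO_k graphs} to the multilinear component $\C[V^n]_{\mathbf 1}$, in parallel with the proofs of Propositions~\ref{prop:GL_k tensor invariants} and~\ref{prop:O_k tensor invariants}. Your parity bookkeeping and your remark on loops being automatically excluded are both appropriate and make the implicit argument explicit.
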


\begin{corollary}
    We have $\displaystyle\dim (V^{\otimes n})^{\SO(V)} = \sum_{\mathclap{\substack{\mu \vdash n, \\ \ell(\mu) \leq k, \\ \textup{row lengths all even} \\ \textup{or all odd}}}} \#\SYT(\mu)$.
\end{corollary}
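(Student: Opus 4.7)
The plan is to split the basis of Proposition~\ref{prop:SO_k tensor invariants} into the zero-hyperedge graphs and the one-hyperedge graphs, and to count each piece separately via $\RSK_{\O}$. The zero-hyperedge graphs are precisely the basis of Proposition~\ref{prop:O_k tensor invariants}, so Corollary~\ref{cor:O_k tensor dim} contributes exactly $\sum \#\SYT(\mu)$ over $\mu \vdash n$ with $\ell(\mu) \leq k$ and all row lengths even (this contribution is empty when $n$ is odd, consistent with the case distinction of Proposition~\ref{prop:SO_k tensor invariants}).

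For the one-hyperedge graphs, I would invoke the $\psi^*$-image of the standard monomial $u_{_{\!I}}\s(A_1,B_1,\dots,A_\ell,B_\ell)$ discussed in Section~4.4, with $I \leq A_1$ in $\Cleq{n}$ and $\#I = k$. Under the SSYT identification in~\eqref{O_k bijection SSYT}, such a standard monomial corresponds to a single tableau $T$ whose columns are $I, A_1, B_1, \dots, A_\ell, B_\ell$ read left to right. The $1$-regularity of the graph forces each of $1,\dots,n$ to appear exactly once in $T$, so $T$ is an SYT; since the first column of $T$ has length $k$ and the remaining columns come in equal-length pairs, the shape $\mu$ has $\ell(\mu)=k$ and all row lengths odd. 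Conversely, any SYT of shape $\mu \vdash n$ with $\ell(\mu)=k$ and odd row lengths decomposes uniquely: strip the first column to obtain an SYT with even row lengths, which by $\RSK_{\O}$ encodes a perfect matching on the remaining $n-k$ vertices (so $n-k$ must be even, matching the cases in Proposition~\ref{prop:SO_k tensor invariants}).

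The critical compatibility check is that the painted-vertex rule~\eqref{SO_k painted vertex rules} coincides with the inequality $I \leq A_1$. This falls out of Lemma~\ref{lemma:Ci and Di for O_k}(1): the $i$th entry of the first column of the $\RSK_\O$-tableau of the arc diagram equals the smallest row index in $C_i$, which graphically is the left endpoint of the leftmost arc in the weak nesting $C_i$. The condition that the $i$th painted vertex lies weakly left of all such arcs is then precisely the entrywise inequality $I_i \leq (A_1)_i$, which (together with $\#I = k \geq \#A_1$) is the definition of $I \leq A_1$ in $\Cleq{n}$.

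Summing the two contributions gives the claimed identity, with the convention that ``all row lengths odd'' forces $\ell(\mu)=k$ (since $0$ is not odd, a partition of length $< k$ with the trailing $0$'s read as ``even'' parts falls under the even-row case only). The main potential obstacle is this bookkeeping step --- verifying that the one-hyperedge count is indexed precisely by odd-row SYTs of length exactly $k$ and not by odd-row shapes of shorter length --- and checking that the degenerate case analysis for the parities of $n$ and $k$ in Proposition~\ref{prop:SO_k tensor invariants} matches the existence of even-row and odd-row partitions of $n$ with $\ell(\mu) \leq k$.
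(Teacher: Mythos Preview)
Your proposal is correct and follows essentially the same approach as the paper: split the basis of Proposition~\ref{prop:SO_k tensor invariants} into zero- and one-hyperedge graphs, handle the former via Corollary~\ref{cor:O_k tensor dim}, and for the latter build the odd-row SYT by prepending the painted column $I$ to the $\RSK_{\O}$-tableau of the arc part. You are in fact somewhat more explicit than the paper in two places --- invoking Lemma~\ref{lemma:Ci and Di for O_k}(1) to identify rule~\eqref{SO_k painted vertex rules} with $I \leq A_1$, and flagging that ``all row lengths odd'' forces $\ell(\mu)=k$ --- but the underlying argument is the same.
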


\begin{proof}
    By Corollary~\ref{cor:O_k tensor dim}, we already know that the SYT's with even row lengths count the $\O(V)$ tensor invariants, i.e., the graphs with 0 hyperedges.  To enumerate the graphs with 1 hyperedge, consider the upper-triangular part of the adjacency matrix of such a graph (with respect to arcs only).
    This upper-triangular matrix has all zeros in the rows and columns indexed by the $k$ painted vertices, and has exactly one ``1'' in each of the $n-k$ remaining rows and columns.  Therefore this matrix corresponds, via $\RSK_{\O}$, to a tableau of size $n-k$ with at most $k$ rows, all of which have even length, and where the entries are distinct numbers in $[n]$.
    The remaining $k$ numbers (which label the painted vertices) then determine a column which is prepended to this tableau.
    Since the graph obeyed the rule~\eqref{SO_k painted vertex rules}, this new tableau is an SSYT; but in fact, it contains each entry $1, \ldots, n$ exactly once, and thus is an SYT.
    This process is clearly invertible.
    (Note that the four cases in Proposition~\ref{prop:SO_k tensor invariants}, regarding the parity of $n$ and $k$, still hold when we view graphs as SYT's.)
\end{proof}

\subsection{The symplectic group}
\label{sec:graphs Sp}

We define the subposet
\[
\mathcal{E}^n_{\leqslant 2k} \coloneqq \{ A \in \mathcal{C}^n_{\leqslant 2k} \text{ such that $\#A$ is even} \}.
\]
For $A = (a_1, \ldots, a_{2t}) \in \mathcal{E}^n_{\leqslant 2k}$, let $\operatorname{pf}(A)$ denote the $2t$-Pfaffian of the alternating matrix $[z_{ij}]$ determined by the row and column indices $a_1, \ldots, a_{2t}$.  
Then for $A_1 \leq \cdots \leq A_{\ell} \in \mathcal{E}^n_{\leqslant 2k}$, we define the standard monomial
\[
\s(A_1, \ldots, A_\ell) \coloneqq \prod_{i=1}^\ell \operatorname{pf}(A_i).
\]
The set of standard monomials furnishes a basis for $\C[z_{ij}] / \ker \pi^*$; see \cite{DeConciniProcesi}*{\S6}, \cite{DeConciniSymplectic}, \cite{Procesi}*{\S8.4}, or \cite{Lakshmibai}*{Thm.~7.2.6.4}. 
 Again we call $\#A_1$ the \emph{width} of the standard monomial. 
 Viewing the $A_i$ as columns of an SSYT, we have a natural bijection
\begin{equation}
    \label{bijection s-monomials SSYT Sp_2k}
    \{ \text{standard monomials of width $\leq k$}\} \longleftrightarrow \bigcup_{\mathclap{\substack{\mu \in \Par(2k \times \infty)\\ \text{with even column lengths}}}} \: \SSYT(\mu,n).
\end{equation}
We define $\m(A_1, \ldots, A_\ell)$ to be the ordinary monomial whose degree matrix equals $\RSK_{\Sp}(A_1, \ldots, A_\ell)$.  
Hence by Proposition~\ref{prop:RSK_Sp}, we have the analogue of the degree- and width-preserving bijection~\eqref{RSK_GL s to m}.
Let $f_A \coloneqq \pi^*(\m(A_1, \ldots, A_\ell))$.

Viewing $\RSK_{\Sp}(A_1, \ldots, A_\ell)$ as an adjacency matrix, we see that $f_A$ is represented by an arc diagram with multiple edges but \textit{without} loops, on the vertices $1, \ldots, n$; see~\cite{Burge}*{Fig.~1}.  
The relationship between nestings and monomial width is identical to the $\GL(V)$ case above; see Figure~\ref{fig:Sp_2k graph example}. 
The proofs of the propositions and corollary below follow their analogues in the preceding subsections.

\begin{prop}[Polynomial invariants for $\Sp(V)$]
    \label{prop:Sp_2k graphs}
    Let $\dim V = 2k$.
    A basis for $\C[V^{n}]^{\Sp(V)}$ is given by the set of all arc diagrams (with multiple edges but without loops) on $n$ vertices, with total degree $d$, such that no strict nesting contains more than $k$ arcs.  
    In particular, we interpret each such arc diagram as the product in which $f_{ij} $ appears once for each arc $(i,j)$.
\end{prop}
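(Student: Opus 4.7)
The plan is to mirror the proofs of Propositions~\ref{prop:GL_k graphs} and~\ref{prop:O_k graphs}, combining the standard monomial theory for the Pfaffian coordinate ring $\C[\AM_n^{\leqslant 2k}]$ with the $\RSK_{\Sp}$ correspondence of Proposition~\ref{prop:RSK_Sp}.

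First, by Weyl's SFT and the cited standard monomial theory, the images $\pi^*(\s(A_1, \ldots, A_\ell))$ with $A_1 \leq \cdots \leq A_\ell$ in $\mathcal{E}^n_{\leqslant 2k}$ form a basis of $\C[V^n]^{\Sp(V)}$. Via the bijection~\eqref{bijection s-monomials SSYT Sp_2k}, these standard monomials are indexed by SSYT's $T$ of shape $\mu$ with even column lengths and $\ell(\mu) \leq 2k$. Applying $\RSK_{\Sp}$ together with part~(2) of Proposition~\ref{prop:RSK_Sp} yields a degree- and width-preserving bijection onto strictly upper-triangular matrices $M \in \M_n(\mathbb{N})$ whose support in $\P_{\Sp}$ has width $\leq k$. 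Consequently, the ordinary monomials $f_A \coloneqq \pi^*(\m(A_1, \ldots, A_\ell)) = \prod_{(i,j) \in \msupp(M)} f_{ij}$, whose polynomial degree is $2\sum_{i,j} M_{ij}$, form an equivalent basis of $\C[V^n]^{\Sp(V)}$.

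Next, I view each such $M$ as the adjacency data of an arc diagram on $n$ vertices: no loops (since $M$ is strictly upper-triangular), and $M_{ij}$ arcs between distinct vertices $i<j$. The total degree of this diagram equals $2\sum_{i,j} M_{ij}$, matching the polynomial degree of $f_A$. The key graphical observation is that two distinct elements $(i,j), (i',j') \in \P_{\Sp}$ are incomparable if and only if the corresponding arcs share no endpoint and one strictly nests inside the other; in particular, arcs sharing an endpoint always yield comparable elements. Hence the width of $\supp(M)$ in $\P_{\Sp}$ equals the largest strict nesting in the arc diagram, and the bound ``width $\leq k$'' translates verbatim to the nesting constraint in the statement. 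The identification of each arc with the contraction $f_{ij}$ is immediate from the definition of $\m$. The argument is parallel to the $\GL(V)$ case, so I anticipate no genuine obstacle; the single point deserving care, and the one that distinguishes this setting from the $\O(V)$ setting, is the verification that matching endpoints in $\P_{\Sp}$ produce \emph{comparable} elements, forcing the nestings to be strict rather than weak.
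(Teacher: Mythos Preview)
Your proof is correct and follows essentially the same approach as the paper, which explicitly states that the proof ``follow[s] their analogues in the preceding subsections'' and that ``the relationship between nestings and monomial width is identical to the $\GL(V)$ case.'' Your careful verification that shared endpoints yield comparable elements in $\P_{\Sp}$ (forcing nestings to be strict) is exactly the distinguishing feature the paper alludes to when contrasting with the $\O(V)$ case.
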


\begin{prop}[Tensor invariants for $\Sp(V)$]
     \label{prop:Sp_2k tensor invariants}
     The space $(V^{\otimes n})^{\Sp(V)}$ is nonzero if and only if $n$ is even.
    In this case, a basis for $(V^{\otimes n})^{\Sp(V)}$ is given by the set of 1-regular arc diagrams on $n$ vertices, such that no strict nesting contains more than $k$ arcs.
     In particular, identifying $V^{\otimes n}$ as the space of multilinear forms on $V^{n}$, we interpret each arc $(i,j)$ as the contraction $f_{ij}$. 
 \end{prop}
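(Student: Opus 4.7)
The plan is to deduce Proposition~\ref{prop:Sp_2k tensor invariants} from Proposition~\ref{prop:Sp_2k graphs} by restricting to the appropriate multigraded component, exactly in the spirit of the proof of Proposition~\ref{prop:GL_k tensor invariants}. First, I would set up the multigrading: write $\mathbf{1} = (1,\ldots,1) \in \mathbb{N}^n$ and let $\C[V^n]_{\mathbf{d}}$ denote the multigraded component of polynomial functions that are homogeneous of degree $d_i$ in the $i$th vector argument $v_i$, for each $i$. This decomposition is $\Sp(V)$-invariant since $\Sp(V)$ acts on each vector argument separately, so it descends to the subring of invariants.

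Next, I would observe the canonical $\Sp(V)$-equivariant linear isomorphism
\[
V^{\otimes n} \cong \C[V^n]_{\mathbf{1}},
\]
given by identifying a pure tensor with the multilinear form on $V^n$ obtained by contracting each tensor factor with the corresponding vector argument. Passing to $\Sp(V)$-invariants, it suffices to single out those basis elements from Proposition~\ref{prop:Sp_2k graphs} that lie in the multigraded component $\C[V^n]^{\Sp(V)}_{\mathbf{1}}$. Since each contraction $f_{ij}$ from~\eqref{f_ij for Sp_2k} is linear in both $v_i$ and $v_j$, the multidegree of the monomial $f_A$ corresponding to an arc diagram is exactly the degree sequence of that diagram. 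Therefore the basis elements we want are precisely the 1-regular arc diagrams (with no loops, by Proposition~\ref{prop:Sp_2k graphs}) satisfying the same strict-nesting condition.

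Finally, I would handle the parity of $n$: since every arc contributes exactly $2$ to the total vertex degree and loops are forbidden, a 1-regular arc diagram on $n$ vertices exists only when $n$ is even, in which case it is a perfect matching with $n/2$ arcs. This immediately gives both the nonvanishing criterion and the explicit basis. The argument is essentially routine once the $\GL(V)$ template is in place; the only mild subtlety is to point out that loops are automatically excluded in the symplectic setting because $\omega(v_i,v_i) = 0$, which is consistent with the loop-free arc diagrams appearing in Proposition~\ref{prop:Sp_2k graphs}. I do not anticipate any real obstacle beyond bookkeeping.
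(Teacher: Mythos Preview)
Your proposal is correct and mirrors the paper's approach exactly: the paper states that the proofs of the symplectic propositions ``follow their analogues in the preceding subsections,'' meaning precisely the multigraded-component argument you outline (identify $V^{\otimes n}$ with $\C[V^n]_{\mathbf 1}$, restrict the basis of Proposition~\ref{prop:Sp_2k graphs} to 1-regular diagrams, and note that parity forces $n$ even). Your remark about loops being excluded via $\omega(v_i,v_i)=0$ is a harmless extra observation, already implicit in Proposition~\ref{prop:Sp_2k graphs}.
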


\begin{corollary}
    We have $\displaystyle \dim (V^{\otimes n})^{\Sp(V)} = \sum_{\mathclap{\substack{\mu \vdash n, \\ \ell(\mu) \leq 2k, \\ \textup{even column lengths}}}} \#\SYT(\mu)$ if $n$ is even, and $0$ if $n$ is odd.
\end{corollary}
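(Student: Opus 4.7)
The plan is to mimic the proofs given for the analogous corollaries in the $\GL(V)$ and $\O(V)$ cases, using $\RSK_{\Sp}$ in place of $\RSK_{\GL}$ or $\RSK_{\O}$. First I would dispense with the parity statement: Proposition~\ref{prop:Sp_2k tensor invariants} already says the space is zero unless $n$ is even, since a 1-regular graph on $n$ vertices (a perfect matching, because loops are forbidden for $\Sp(V)$) exists only when $n$ is even. So for the rest of the argument, assume $n$ is even.

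Next, I would set up the bijection. By Proposition~\ref{prop:Sp_2k tensor invariants}, a basis for $(V^{\otimes n})^{\Sp(V)}$ is indexed by 1-regular arc diagrams on $n$ vertices whose arcs admit no strict nesting of size $> k$. Such a graph is a perfect matching, so its adjacency matrix is a symmetric $n\times n$ permutation matrix with zero diagonal; its strictly upper-triangular part $M$ lies in the codomain of $\RSK_{\Sp}$ (Proposition~\ref{prop:RSK_Sp}). The total number of $1$'s in $M$ is $n/2$, and the width of $\supp(M)$ in $\P_{\Sp}$ is at most $k$ (this translates the strict-nesting bound, exactly as in the $\GL(V)$ case of Subsection~\ref{sub:graphs GL_k}).

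Then I would push everything through $\RSK_{\Sp}$. By part (1) of Proposition~\ref{prop:RSK_Sp}, the preimage SSYT $T$ has $|T| = 2 \cdot (n/2) = n$. By part (2) of the same proposition, $T$ has at most $2k$ rows and all column lengths are even. It remains to check that $T$ is actually standard, i.e., contains each element of $[n]$ exactly once. For this I would appeal to the same feature of the Burge/Knuth construction used in the proof of Corollary~\ref{cor:O_k tensor dim}: each $(i,j) \in \supp(M)$ contributes exactly one copy of $i$ and one copy of $j$ to $T$. Since $M$ is the strictly upper-triangular part of a perfect-matching adjacency matrix, the multiset $\{i,j : (i,j) \in \supp(M)\}$ is precisely $[n]$ with each element occurring once, hence $T \in \SYT(\mu)$.

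The inverse is clear (read off $M$ from $T$ via $\RSK_{\Sp}^{-1}$, then reconstruct the matching from its strictly upper-triangular part), so this gives a bijection between basis graphs and the SYT's $\mu \vdash n$ with $\ell(\mu) \leq 2k$ and even column lengths. Summing over shapes yields the stated formula. I do not anticipate a genuine obstacle here; the only mildly delicate point is re-using the SYT-counting trick from Corollary~\ref{cor:O_k tensor dim} in a symplectic-shaped disguise, but since $\RSK_{\Sp}$ is essentially $\RSK_{\O}$ applied to strictly upper-triangular (rather than upper-triangular) matrices, and the corresponding tableaux differ only by the even-column versus even-row condition, the argument carries over verbatim.
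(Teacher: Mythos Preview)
Your proposal is correct and follows essentially the same approach as the paper, which simply states that the proof ``follow[s] their analogues in the preceding subsections'' (namely the $\GL(V)$ and $\O(V)$ corollaries). Your only imprecision is the closing aside that $\RSK_{\Sp}$ is ``essentially $\RSK_{\O}$ applied to strictly upper-triangular matrices''; in the paper's setup $\RSK_{\Sp}$ arises from $\RSK_{\GL}(T,T)$ restricted to even-column shapes, but this does not affect your argument, since the key content property (each $(i,j)$ in the strictly upper-triangular support contributes one $i$ and one $j$ to $T$) holds either way.
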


\begin{figure}
    \centering
    \scalebox{0.7}{
\begin{tikzpicture}[-,auto,node distance=1cm,
  very thick,plain node/.style={circle,draw,font=\sffamily\bfseries,fill=gray!30}, bend left = 60]

%%% Vertices go here:

\node[plain node] (1) {1};
\node[plain node] (2) [right of=1] {2};
\node[plain node] (3) [right of=2] {3};
\node[plain node] (4) [right of=3] {4};
\node[plain node] (5) [right of=4] {5};
\node[plain node] (6) [right of=5] {6};

%%% Edges go here:

\draw (1) to (5);
\draw (2) to (6);
\draw [bend left = 70] (2) to (6);
\draw (3) to (4);
\draw (2) to (6);
\draw (2) to (6);
\draw (2) to (6);
\draw (4) to (6);

\end{tikzpicture}
}
    \caption{An example of a basis graph for the polynomial $\Sp(V)$-invariants, where $n = 6$. 
    This graph corresponds to the invariant $f_{15} f_{26}^2 f_{34} f_{46}$.  By inspection, we see that the largest strict nesting $\{(2,6), \: (3,4)\}$ contains $2$ arcs, and so this graph is included in the basis for $\C[V^6]^{\Sp_{2k}}_{10}$ for all $k \geq 2$.  
    We could also arrive at the width $2$ by decomposing into the two chains $C_1 = \{(1,5), (2,6), (2,6), (4,6)\}$ and $C_2 = \{(3,4)\}$.}
    \label{fig:Sp_2k graph example}
\end{figure}
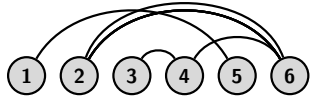

\section{Stanley decompositions and Hilbert--Poincar\'e series}
\label{sec:Stanley decomp and Hilbert series}

Recall from~\eqref{Stanley decomp} that for any of our classical posets $\P$, we have a shelling of $\Delta_k = \Delta_k(\P)$ that induces the Stanley decomposition
\begin{equation}
\label{stanley decomp again}
    \C[\Delta_k] = \bigoplus_{\f \in \scrF_k} z_{\cor(\f)} \C[z_{ij} \mid (i,j) \in \f].
\end{equation}
For $\P = \P_{\GL}$ or $\P_{\O}$ or $\P_{\Sp}$, one has $\C[\Delta_k] \cong \C[\M_{p,q}^{\leqslant k}]$ or $\C[\SM_n^{\leqslant k}]$ or $\C[\AM_n^{\leqslant 2k}]$, respectively.
This fact, when combined with the RSK correspondences between standard monomials~\cite{DeConciniProcesi} and ordinary monomials, was used in the 1990s by~\cite{Sturmfels}, \cite{Conca94}, and \cite{Herzog}, respectively, to describe determinantal rings.
Combining this with the isomorphism $\pi^*: z_{ij} \mapsto f_{ij}$ of Weyl's fundamental theorems, we see also that $\C[\Delta_k] \cong \C[\C[z_{ij}] / \ker \pi^*$.

%(Of course, the notation ``$\cor(\f)$'' is specialized to the posets $\P$ in this paper, where facets can be realized as families of lattice paths, and their restrictions can be realized as corners in those paths.)  
In this section, we use this Stanley decomposition to write down a combinatorial description of the Hilbert--Poincar\'e series of the invariant ring.  
All notation and terminology pertaining to shellings and corners is from Section~\ref{sec:shellings}.
In the propositions in this section, we write $f_{\cor(\f)} \coloneqq \prod_{(i,j) \in \cor(\f)} f_{ij}$.
We also write $\F{\nu}{n}$ for the irreducible $\gl_n$-module with highest weight $\nu$.

\subsection{The general linear group}

The Stanley decomposition in~\eqref{stanley decomp again} leads immediately to the Hilbert--Poincar\'e series 
 \begin{equation}
    \label{Hilbert series z_ij GL_k}
    P\!\left(\C[z_{ij}] / \ker \pi^*; t\right) = 
    \frac{\sum_{ \f \in \scrF_{k} } t^{\# \cor(\f)}}{(1-t)^{k(p+q-k)}},
 \end{equation}
 since $k(p+q-k)$ is the size of each facet $\f \in \scrF_k$.
The alternative form of this Hilbert--Poincar\'e series in the following proposition can be found in~\cite{EnrightHunziker04}*{Thm.~23} and \cite{EricksonHunziker23}*{Table 3, row I}, in the context of the $k$th Wallach representation of $\mathfrak{su}(p,q)$.  
(See Section~\ref{sub:Wallach}.)
The combinatorial proof for the numerator polynomial is adapted from~\cite{Krattenthaler}*{Prop.~28 and Fig.~8}.

 \begin{prop}
 \label{prop:Hilbert series GL_k}
     Let $\dim V = k$.
     The ring of $\GL(V)$-invariants has the Stanley decomposition
    \[
    \C[V^{*p} \oplus V^q]^{\GL(V)} = \bigoplus_{\f \in \scrF_k} f_{\cor(\f)} \C[f_{ij} \mid (i,j) \in \f].
    \]
    Furthermore, for $k \leq \min\{p,q\}$, the Hilbert--Poincar\'e series is 
     \begin{equation*}
         P\!\left(\C[V^{*p} \oplus V^q]^{\GL(V)}; t\right) = \frac{\sum_\nu (\dim\F{\nu}{p-k} \dim \F{\nu}{q-k}) \: t^{2|\nu|}}{(1-t^2)^{k(p+q-k)}},
     \end{equation*}
     where $\nu \in \Par(\min\{p-k,\:q-k\} \times k)$.
 \end{prop}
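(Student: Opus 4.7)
The plan is to reduce the proposition to the Stanley decomposition of the Stanley--Reisner ring $\C[\Delta_k(\P_{\GL})]$ already recorded in~\eqref{Stanley decomp}, and then to rewrite the resulting numerator polynomial in the representation-theoretic form via the standard lattice-path/RSK dictionary. First, Weyl's FFT and SFT (Section~\ref{sub:GL_k classical inv th}) provide a graded algebra isomorphism $\C[z_{ij}]/\ker\pi^* \xrightarrow{\sim} \C[V^{*p} \oplus V^q]^{\GL(V)}$ sending $z_{ij} \mapsto f_{ij}$; this is a doubling-of-degrees isomorphism since $\deg f_{ij} = 2$. The SFT's description of $\ker\pi^*$ as the ideal of $(k+1)$-minors identifies the quotient with the coordinate ring $\C[\M^{\leqslant k}_{p,q}]$, which is isomorphic to $\C[\Delta_k(\P_{\GL})]$ by the shelling result recalled in Section~\ref{sec:shellings}. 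Transporting~\eqref{Stanley decomp} through these isomorphisms yields the Stanley decomposition in the proposition, with each $z_{ij}$ replaced by $f_{ij}$.

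For the Hilbert--Poincar\'e series, each facet $\f \in \scrF_k$ has cardinality $k(p+q-k)$, so the Stanley decomposition, combined with the degree doubling under $\pi^*$, immediately yields
\[
P\!\left(\C[V^{*p} \oplus V^q]^{\GL(V)};t\right) = \frac{\sum_{\f \in \scrF_k}t^{2\#\cor(\f)}}{(1-t^2)^{k(p+q-k)}}.
\]
It then remains to identify the numerator with $\sum_\nu (\dim \F{\nu}{p-k})(\dim \F{\nu}{q-k})\, t^{2|\nu|}$ for $\nu \in \Par(\min\{p-k,q-k\}\times k)$.

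For this identification, the strategy of Krattenthaler (\cite{Krattenthaler}*{Prop.~28, Fig.~8}) applies in our setting. The corners of each path $\bp_i$ form a strict chain in a $(p-k)\times(q-k)$ rectangle positioned immediately below $s_i$ (the shaded region of Figure~\ref{subfig:facet example GL}); diagonally translating these $k$ rectangles onto a single common copy converts the family $\{\bp_i\}$ into a system of $k$ strict chains in one $(p-k)\times(q-k)$ grid, with the non-intersection of the original paths translating to non-intersection of the chains, and with total corner count preserved. By the Lindstr\"om--Gessel--Viennot lemma (or equivalently by $\RSK_{\GL}$ applied to the $0$/$1$ indicator matrix of the corners, in light of Proposition~\ref{prop:RSK_GL} and Lemma~\ref{lemma:Ci and Di for GL_k}), such systems biject with pairs $(T,U) \in \SSYT(\nu,p-k)\times \SSYT(\nu,q-k)$ for $\nu \in \Par(\min\{p-k,q-k\}\times k)$, with $|\nu| = \#\cor(\f)$. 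The Weyl dimension formula $\dim \F{\nu}{n} = \#\SSYT(\nu,n)$ then converts the count of such pairs into $(\dim \F{\nu}{p-k})(\dim \F{\nu}{q-k})$, and substituting gives the stated formula.

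The main obstacle is the bookkeeping in this last step: one must verify that the diagonal shifts genuinely send non-intersection of the $\bp_i$ to non-intersection of the strict chains $\cor(\bp_i)$ (so that LGV/RSK applies cleanly), and that the shape $\nu$ produced by the bijection is indeed bounded by the rectangle $\min\{p-k,q-k\}\times k$ rather than some slightly larger shape. Both points are essentially settled in Krattenthaler's paper, but should be checked carefully against our indexing conventions --- in particular, the placement of $s_i,t_i$ described in Section~\ref{sec:shellings} and the direction of the partial order on $\P_{\GL}$.
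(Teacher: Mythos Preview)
Your overall strategy matches the paper's proof: the Stanley decomposition follows from transporting~\eqref{Stanley decomp} across the isomorphism $\pi^*$, and the Hilbert series then reduces to identifying $\sum_{\f \in \scrF_k} t^{\#\cor(\f)}$ with the SSYT-pair generating function. However, there is a genuine gap in your numerator identification.

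You assert that after diagonally translating the corner regions onto a common $(p-k)\times(q-k)$ grid, the non-intersection of the original paths $\bp_i$ yields non-intersection of the translated corner chains, so that the corner data assembles into a $0/1$ matrix. This is false: corners from distinct paths can land on the same position. For instance, with $k=2$ and $p=q=4$, take $\bp_1$ to be the path $(1,1)\text{--}(1,2)\text{--}(2,2)\text{--}(2,3)\text{--}(2,4)\text{--}(3,4)\text{--}(4,4)$ and $\bp_2$ the path $(2,1)\text{--}(3,1)\text{--}(3,2)\text{--}(3,3)\text{--}(4,3)$. Then $\cor(\bp_1) = \{(2,2)\}$ and $\cor(\bp_2) = \{(3,1)\}$, and both corners translate to the cell $(1,1)$ in the common $2\times 2$ grid. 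Hence neither Lindstr\"om--Gessel--Viennot nor $\RSK_{\GL}$ on a $0/1$ matrix applies as you describe.

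The paper (following Krattenthaler) handles this by allowing the superimposed corners to form an $\mathbb{N}$-valued matrix in $\M_{p-k,q-k}(\mathbb{N})$ whose multisupport is a union of $k$ strict chains, and then taking a vertical reflection so that these become $k$ antichains and the multisupport has height $\leq k$. The inverse map is defined via the canonical antichain decomposition $D_1, D_2, \ldots$ from~\eqref{D1 for GL_k}: the nested inclusion of upper-order ideals in Lemma~\ref{lemma:Ci and Di for GL_k}(3) guarantees that the paths reconstructed from the $D_i$ are non-intersecting, hence give a well-defined facet. One then applies $\RSK_{\GL}$ (Proposition~\ref{prop:RSK_GL}) to this $\mathbb{N}$-matrix, not to a $0/1$ matrix, to obtain the bijection with pairs in $\SSYT(\nu, p-k) \times \SSYT(\nu, q-k)$ where $\nu$ has at most $k$ columns. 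Your final paragraph anticipates that bookkeeping of this kind is needed, but the specific claims you make there point in the wrong direction.
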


\begin{proof}
The Stanley decomposition follows from~\eqref{stanley decomp again} and the fact that $\C[z_{ij}] / \ker \pi^*$ is isomorphic to the $\GL(V)$-invariant ring via $z_{ij} \mapsto f_{ij}$.
We claim that the numerator in~\eqref{Hilbert series z_ij GL_k} equals
\[
\sum_{\nu} \#\SSYT(\nu,\:p-k) \cdot \# \SSYT(\nu, \: q-k) \cdot t^{|\nu|},
\]
with $\nu \in \Par(\min\{p-k,\:q-k\} \times k)$.  
To show this, we will use Proposition~\ref{prop:RSK_GL} to show that $\RSK_{\GL}$ yields a bijection between $\scrF_k$ and the set of tableau pairs above, which converts the number of corners into the size of each tableau.

Let $\f \in \scrF_k$. 
Recall that for each path $\bp_i$ in $\f$, the corners in $\cor(\bp_i)$ form a (possibly empty) strict chain within the $(p-k) \times (q-k)$ region in $\P_{\GL}$ whose northwest vertex is immediately south of the point $s_i$.  
(See the shaded region for $\bp_3$ in Figure~\ref{subfig:facet example GL}.)
By superimposing all of these regions inside a common $(p-k) \times (q-k)$ rectangle, we convert $\cor(\f) = \bigcup_i \cor(\bp_i)$ into a matrix in $\M_{p-k,q-k}(\mathbb{N})$ whose entries sum to $\#\cor(\f)$, and whose multisupport is the union of $k$ strict chains in the smaller poset $\P_{\GL}(p-k,\: q-k)$.  
Consider the vertical reflection of this matrix, in which the $k$ strict chains become $k$ antichains; the multisupport of this reflected matrix therefore has height $\leq k$ in $\P_{\GL}(p-k, \: q-k)$.  
To show surjectivity, we choose any matrix in the latter set, and decompose its multisupport into the antichains $D_i$, defined in~\eqref{D1 for GL_k}; see also Figure~\ref{fig:Ci Di for GL_k}.
Upon vertical reflection, each $D_i$ recovers the corners of the path $\bp_i$.  
For injectivity, note that the inclusions of upper-order ideals in Lemma~\ref{lemma:Ci and Di for GL_k}(3) imply that the paths determined by the $D_i$ do not intersect, and so there is a unique facet $\f = \bigcup_i \bp_i$ determined by the $D_i$.
Hence we have a bijection from $\scrF_k$ to the matrices of height $\leq k$, which (by Proposition~\ref{prop:RSK_GL}) are in bijection with $\bigcup_{\nu} \SSYT(\nu,\:p-k) \times \SSYT(\nu,\:q-k)$, where $\nu$ has at most $k$ columns.  
By part (1) of the same proposition, this bijection $\f \mapsto (T,U)$ has the property that $\#\cor(\f) = |T| = |U|$.  
This proves the claim for the numerator polynomial.  
Finally, since $\pi^* : z_{ij} \longmapsto f_{ij}$ doubles the degree, the proposition follows upon substituting $t^2$ for $t$ in~\eqref{Hilbert series z_ij GL_k}.
\end{proof}

We recall from~\cite{Stanley78}*{Thm.~4.4} the following fact: a Cohen--Macaulay ring is a Gorenstein ring if and only if the numerator polynomial of its reduced Hilbert--Poincar\'e series is palindromic.

\begin{corollary}
    \label{cor:GL_k Gorenstein}
    Suppose that $k < \min\{p,q\}$.  
    Then the ring $\C[V^{*p} \oplus V^q]^{\GL(V)}$ is Gorenstein if and only if $p = q$.
\end{corollary}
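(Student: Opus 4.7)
The plan is to apply the palindromicity criterion recalled just before the corollary. Since the $\GL(V)$-invariant ring is isomorphic, by Weyl's FFT/SFT, to the coordinate ring $\C[\M_{p,q}^{\leqslant k}]$ of a determinantal variety, it is Cohen--Macaulay by the classical theorem of Hochster--Eagon (see~\cite{BrunsVetter}). So Gorensteinness amounts to asking whether the numerator
\[
N(t) \coloneqq \sum_{\nu \in \Par(\min\{p-k,q-k\} \times k)} \dim \F{\nu}{p-k} \dim \F{\nu}{q-k} \: t^{2|\nu|}
\]
of the reduced Hilbert--Poincar\'e series is palindromic. Since every coefficient of $N$ is a positive integer, $N(1) > 0$, so $(1-t^2) \nmid N(t)$, and the formula in Proposition~\ref{prop:Hilbert series GL_k} is already in lowest terms.

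For the direction $p=q$, I would set $n \coloneqq p-k = q-k$, so that $\nu \in \Par(n \times k)$, and consider the rectangular complement involution $\nu \mapsto \nu^c$ defined by $\nu^c_i = k - \nu_{n+1-i}$. This sends $|\nu| = j$ to $|\nu^c| = nk - j$, and a change of variables $a = n+1-j$, $b = n+1-i$ in the Weyl dimension formula
\[
\dim \F{\nu}{n} = \prod_{1 \leq i < j \leq n} \frac{\nu_i - \nu_j + j - i}{j-i}
\]
yields $\dim \F{\nu^c}{n} = \dim \F{\nu}{n}$. Hence the coefficient of $t^{2j}$ in $N(t)$ equals the coefficient of $t^{2(nk-j)}$, so $N$ is palindromic and the ring is Gorenstein.

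For the direction $p \neq q$, assume without loss of generality $p < q$. Then $\min\{p-k,q-k\} = p-k$, the top degree of $N(t)$ is $2k(p-k)$, attained uniquely at the full rectangle $\nu = (k^{p-k})$, and the corresponding leading coefficient is
\[
\dim \F{(k^{p-k})}{p-k} \cdot \dim \F{(k^{p-k})}{q-k} = \dim \F{(k^{p-k})}{q-k},
\]
since $F^{(p-k)}_{(k^{p-k})} \cong \det^k$ is one-dimensional. But the weight $(k^{p-k},0^{q-p})$ for $\GL_{q-k}$ has $p-k < q-k$ nonzero parts, so it is not a constant partition; hence $F^{(q-k)}_{(k^{p-k})}$ is not a power of the determinant, and its dimension strictly exceeds $1$ (a one-line check via the Weyl formula exhibits $\prod_{i=1}^{p-k}\prod_{j=p-k+1}^{q-k}\frac{k+j-i}{j-i} > 1$). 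The constant term of $N$ is $1$, so $N$ is not palindromic, and the ring is not Gorenstein.

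The only step with any subtlety is the rectangular complementarity $\dim \F{\nu}{n} = \dim \F{\nu^c}{n}$ underlying the $p=q$ case; everything else is bookkeeping on extremal coefficients. I do not anticipate a serious obstacle beyond verifying that identity via the Weyl formula as above (or, equivalently, by recognizing $F^{(n)}_{\nu^c}$ as the twist $(F^{(n)}_{\nu})^* \otimes \det^k$ of the contragredient representation).
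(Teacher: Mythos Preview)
Your proof is correct and follows essentially the same approach as the paper's own proof: both use Stanley's palindromicity criterion, the rectangular complement involution $\nu \mapsto \nu^c$ for the $p=q$ direction, and the comparison of the constant term $1$ against the leading coefficient $\dim \F{(k^{p-k})}{q-k} > 1$ for the $p \neq q$ direction. Your version is slightly more explicit in two helpful ways---you verify the Cohen--Macaulay hypothesis (via Hochster--Eagon) and check that $(1-t^2) \nmid N(t)$ so the expression is already reduced---but the underlying argument is the same.
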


\begin{proof}
    Suppose that $\la$ is the shape of a Young diagram, and let $\mu$ be the shape obtained from a 180-degree rotation of the complement of $\la$ inside any rectangular shape.
    It is a general fact that $\F{\mu}{p}$ is the tensor product of $\F{\la}{p}$ with some power of the determinant, and so $\dim\F{\la}{p} = \dim\F{\mu}{p}$.  
    Since the shapes $\nu$ in Proposition~\ref{prop:Hilbert series GL_k} are restricted to a rectangular shape, the numerator polynomial of the Hilbert--Poincar\'e series must be palindromic if $p=q$.  Conversely, suppose (without loss of generality) that $p < q$.  
    Then the constant term in the numerator is $1$, but the leading coefficient is strictly greater than $1$, since the dimension of the $\gl_q$-module whose highest weight corresponds to the full $(p-k) \times k$ rectangle has dimension strictly greater than 1.
\end{proof}

\subsection{The special linear group}
\label{sub:Hilbert series SL_k}

We write $\mathbf{F} \rightarrow I$ to express that $\mathbf F$ is a maximal chain of the form $\mathbf{F}: (1,\ldots, k) \rightarrow I$ in $\Ceq{p}$.  
Likewise, we write $I \rightarrow \f$ to state that $\f$ is the union of non-intersecting paths $\bp_\ell : (i_\ell, 1) \rightarrow (p,\: q+1-\ell)$ in $\P_{\GL}$, for $1 \leq \ell \leq k$.  
%We write $\mathbf{F} \rightarrow I \rightarrow \f \coloneqq (\mathbf{F}\rightarrow I, \: I \rightarrow \f)$.  
The statement $\mathbf{F} \rightarrow \f$ means that there exists an $I \in \Ceq{p}$ such that $\mathbf{F} \rightarrow I$ and $I \rightarrow \f$.
Analogously, $J \downarrow \f$ means that $\f$ is the union of non-intersecting paths $\bp_\ell : (1, j_\ell) \rightarrow (p+1-\ell,\:q)$ in $\P_{\GL}$, for $1 \leq \ell \leq k$.  
We write $\mathbf{F} \downarrow J$ to state that $\mathbf{F}$ is a path of the form $\mathbf{F}:(1,\ldots, k) \rightarrow J$ in $\Ceq{q}$. 
Finally, $\mathbf{F} \downarrow \f$ means that there is some $J \in \Ceq{q}$ such that $\f \downarrow J$ and $J \downarrow \mathbf{F}$.
The direction of our arrows evokes the starting points of the paths $\bp_\ell$ in our diagrams: if $I \rightarrow \f$, then $I$ gives the starting points along the western edge, while if $J \downarrow \f$, then $J$ gives the starting points along the northern edge.

We take as our set of facets
\[
\{ \f \in \scrF_k \} \cup \{ \mathbf{F} \rightarrow \f \} \cup  \{ \f \rightarrow \mathbf{F}\}.
\]
If $I \rightarrow \f$, then we define a corner of $\f$ in the usual way, namely as a \scalebox{2}{$\llcorner$}-pattern.
If $J \downarrow \f$, however, then the situation is rotated by 180 degrees, and so we must define a corner as a \rotatebox[origin=c]{180}{{\scalebox{2}{$\llcorner$}}}-pattern.
In $\Ceq{p}$ or $\Ceq{q}$, we write two consecutive elements of a path with the notation $A \xrightarrow{i} B$ if $B$ is obtained by adding $1$ to the $i$th coordinate of $A$. 
 Then we define a corner of $\mathbf{F}$ to be the set of all $B \in \mathbf{F}$ such that $\mathbf{F}$ contains the pattern
\[
A \xrightarrow{i} B \xrightarrow{j} C, \qquad i < j.
\]
For a subset $S \subseteq \Ceq{p}$, write $\det_{S^*} \coloneqq \prod_{I \in S} \det_{I^*}$.  
Likewise, for $S \subseteq \Ceq{q}$, write $\det_{S} \coloneqq \prod_{J \in S} \det_J$. 

%%%%%%%%%%%%%%%%%%
% Introduce P_I(t) here??
%%%%%%%%%%%%%%%%%

\begin{prop}
    \label{prop:Hilbert series SL_k}
    Let $\dim V = k$.
    The ring of $\SL(V)$-invariants has the Stanley decomposition
    \begin{align*}
    \C[V^{*p} \oplus V^q]^{\SL(V)} = \phantom{\oplus} &\bigoplus_{\f \in \scrF_k} f_{\cor(\f)} \: \C[f_{ij} \mid (i,j) \in \f]\\
    \oplus &\bigoplus_{\mathbf{F} \rightarrow \f} f_{\cor(\f)} \: \C[f_{ij}, {\rm det}_{I^*} \mid (i,j) \in \f, \: I \in \mathbf{F}] \cdot {\rm det}_{\cor(\mathbf{F})^*}\\
    \oplus &\bigoplus_{\mathbf{F} \downarrow \f} f_{\cor(\f)} \: \C[f_{ij}, {\rm det}_{J} \mid (i,j) \in \f, \: J \in \mathbf{F}] \cdot {\rm det}_{\cor(\mathbf{F})}.
    \end{align*}
    Furthermore, for $k \leq \min\{p,q\}$, the Hilbert--Poincar\'e series is
    \begin{align*}
      P(\C[V^{*p} \oplus V^q]^{\SL(V)}; t) = \phantom{+}  &P(\C[V^{*p} \oplus V^q]^{\GL(V)};t)\\
        + & \quad t^k \sum_{I \in \Ceq{p}} \left(\frac{\sum_{\mathbf{F} \rightarrow I} (t^k)^{\#\cor(\mathbf F)}}{(1-t^k)^{\#\mathbf{F}}}\right) \left( \frac{\sum_{I \rightarrow \f} (t^2)^{\#\cor(\f)}}{(1-t^2)^{\#\f}} \right)\\
        + & \quad t^k \sum_{J \in \Ceq{q}} \left(\frac{\sum_{\mathbf{F} \downarrow J} (t^k)^{\#\cor(\mathbf F)}}{(1-t^k)^{\#\mathbf{F}}}\right) \left( \frac{\sum_{J \downarrow \f} (t^2)^{\#\cor(\f)}}{(1-t^2)^{\#\f}} \right),
    \end{align*}
    where in the first sum we have $\#\mathbf{F} = |I| + \frac{2-k(k+1)}{2}$ and $\#\f = k(p+q-\frac{k-1}{2})-|I|$; the same holds in the second sum upon replacing $I$ with $J$.
\end{prop}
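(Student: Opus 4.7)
The plan is to use the semiinvariant decomposition~\eqref{decomp SL invariants} to split $\C[V^{*p} \oplus V^q]^{\SL(V)}$ as the direct sum of the $\GL(V)$-semiinvariant spaces $\C[V^{*p} \oplus V^q]^{\GL(V), \det^m}$ over $m \in \mathbb{Z}$. The $m=0$ summand is the $\GL(V)$-invariant ring, and Proposition~\ref{prop:Hilbert series GL_k} already supplies its Stanley decomposition and Hilbert series; this accounts for the first line of each formula. The $m>0$ and $m<0$ components produce the middle and bottom summands respectively, by dual arguments, so I describe only the case $m>0$ below.

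By the standard monomial theory~\eqref{s-monomials SL_k}, a linear basis of the $m$th semiinvariant space consists of products $f_{AB} \cdot \det_{I_1^*} \cdots \det_{I_m^*}$ with $I_1 \leq \cdots \leq I_m \leq A_1$. The crucial observation is that once we fix the top element $I \coloneqq I_m$, the hyperedge and arc factors decouple: the sub-chain $I_1 \leq \cdots \leq I_{m-1} \leq I$ lies in the lower-order ideal $(\Ceq{p})_{\leq I}$, while $f_{AB}$ ranges independently over bitableaux with $A_1 \geq I$. I treat each part separately and then sum over $I$.

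For the \emph{hyperedge part}, the sub-chain together with the obligatory factor $\det_I$ (which contributes the outer $t^k$) forms a standard monomial on the distributive lattice $(\Ceq{p})_{\leq I}$. By the standard theory of shellings of order complexes~\cite{StanleyAC}*{Ch.~12}, the Stanley--Reisner ring of its order complex admits a Stanley decomposition indexed by saturated maximal chains $\mathbf{F}: (1,\ldots,k) \to I$, with restriction set equal to the corner set $\cor(\mathbf{F})$ consisting of the direction-changes of $\mathbf{F}$ viewed as a lattice path in coordinate space. This yields the contribution $(t^k)^{\#\cor(\mathbf{F})}/(1-t^k)^{\#\mathbf{F}}$, with $\#\mathbf{F} = |I| - \binom{k+1}{2} + 1$ by direct enumeration. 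For the \emph{arc part}, the constraint $A_1 \geq I$ combined with Lemma~\ref{lemma:Ci and Di for GL_k}(1) forces the $\ell$th chain $C^*_\ell$ of $\RSK_{\GL}(A,B)$ to begin at row $\geq i_\ell$. Mimicking the RSK + antichain argument from the proof of Proposition~\ref{prop:Hilbert series GL_k}, but now with paths $\bp_\ell$ starting at $(i_\ell, 1)$ rather than $(\ell, 1)$, yields a degree-preserving bijection between such bitableaux and facets $\f$ satisfying $I \to \f$. This produces the contribution $(t^2)^{\#\cor(\f)}/(1-t^2)^{\#\f}$ per facet, and a routine length count gives $\#\f = k(p + q - \tfrac{k-1}{2}) - |I|$.

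Summing the product of these two contributions over $I \in \Ceq{p}$ yields the middle summand in both the Stanley decomposition and the Hilbert series; the bottom summand is obtained by a transposed argument using chains in $\Ceq{q}$ and facets with paths starting along the top edge of $\P_{\GL}$. The main obstacle will be verifying the shifted RSK bijection in the arc part: one must check that the antichain decomposition $\{D_i\}$ of Lemma~\ref{lemma:Ci and Di for GL_k} continues to produce non-intersecting lattice paths once the starting points are shifted from $(\ell, 1)$ to $(i_\ell, 1)$. This reduces to the order-ideal nesting in part (3) of that lemma, which depends only on the antichains themselves rather than on the surrounding rectangle, so the argument should transfer with only notational modifications.
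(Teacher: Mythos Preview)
Your overall strategy matches the paper's: work with the standard-monomial basis~\eqref{s-monomials SL_k}, dispatch the $m=0$ piece via Proposition~\ref{prop:Hilbert series GL_k}, and for $m>0$ separate the determinantal chain from the contraction factor $f_{AB}$ using Lemma~\ref{lemma:Ci and Di for GL_k}(1). The hyperedge part is handled just as the paper does, via shellability of the order complex of (an interval in) $\Ceq{p}$.

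The gap is in your arc part. What is required there is a Stanley decomposition of the module of $f_{AB}$'s with $A_1 \geq I$, indexed by the facets $\f$ with $I\to\f$; equivalently, one needs the subcomplex $\{\f \mid I\to\f\}\subset\Delta_k(\P_{\GL})$ to be shellable with the usual corners as restrictions. The paper obtains this by citing \cite{HerzogTrung}*{Thm.~4.9}. You instead invoke the ``RSK + antichain'' argument and the $D_i$ decomposition from the proof of Proposition~\ref{prop:Hilbert series GL_k}, but that argument served a different purpose there: it produced a bijection between \emph{facets} and small tableau pairs in order to evaluate the numerator in closed form, not an assignment of \emph{monomials} to facets. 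In particular, the $D_i$'s of Lemma~\ref{lemma:Ci and Di for GL_k} are antichains of $\msupp(\RSK_{\GL}(A,B))$ counting the columns of the tableau; they do not build the paths of the facet containing $\supp(f_{AB})$. Your phrase ``a degree-preserving bijection between such bitableaux and facets'' already cannot be right (one side is infinite, the other finite). What actually locates the unique $\f$ is the chain decomposition $C^*_i$ together with a shelling, and for shifted starting points this is precisely the content of the Herzog--Trung theorem the paper invokes. Replace your RSK/$D_i$ plan with that citation (or a direct shelling argument for the shifted subcomplex) and the rest of your outline goes through.
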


\begin{proof}
     To prove the Stanley decomposition, we must show that each monomial of the form~\eqref{fAB three forms SL} lies inside exactly one component.
     By Proposition~\ref{prop:Hilbert series GL_k}, each $f_{AB}$ lies inside a unique component of the first direct sum.
     Hence without loss of generality, consider a monomial $f_{AB} \prod_{\ell = 1}^m \det_{I_\ell^*}$.
    Since the order complex of $\Ceq{p}$ is shellable, the chain $I_1 \leq \cdots \leq I_m$ determines a unique facet $\mathbf{F}$.
     By part (1) of Lemma~\ref{lemma:Ci and Di for GL_k}, the support of $C^*_i$ in $\RSK_{\GL}(A,B)$ equals the $i$th entry in the first column $A_1$ of $A$.
     Since $I_m \leq A_1$, the support of $f_{AB}$ is contained in some family $\f$ such that $I \rightarrow \f$.
     By~\cite{HerzogTrung}*{Thm.~4.9}, the set $\{ \f \mid I \rightarrow \f\}$ is the set of facets of a shellable subcomplex of $\Delta_k$, such that the restrictions are the usual corners in this paper; hence this $\f$ is in fact unique.
     Since $\mathbf{F} \rightarrow I_m \rightarrow \f$, we see that $f_{AB} \prod_{\ell = 1}^m \det_{I_\ell^*}$ lies only in the component labeled by $\mathbf{F} \rightarrow \f$.
     The Stanley decomposition follows upon applying $\psi^*$.  
     
     The Hilbert--Poincar\'e series, in turn, follows directly from the Stanley decomposition, where we recall that $f_{ij}$ has degree 2, while $\det_{I^*}$ and $\det_J$ have degree $k$.  
     Note that we multiply each of the $I$ and $J$ sums by $t^k$, so as not to double-count the monomials $f_{AB}$, which were already counted in the Hilbert--Poincar\'e series of the $\GL_k$-invariants.
     The formulas for $\#\mathbf{F}$ and $\#\f$ are straightforward calculations.
     \end{proof}

\begin{rem}
    There is an explicit formula (involving determinants) for the numerators $\sum_{I\rightarrow \f} t^{\#\cor(\f)}$ and $\sum_{J \downarrow \f} t^{\#\cor(\f)}$ in Proposition~\ref{prop:Hilbert series SL_k}, which is a special case of a general formula of Krattenthaler~\cite{KrattenthalerTurnsPaper}*{Thm.~1} for counting corners in families of non-intersecting lattice paths with arbitrary endpoints.
    Krattenthaler's formula can be seen as a $q$-analogue of Abhyannkar's determinantal formula~\cite{Abhyankar}.
% \[
% \sum_{\f \rightarrow J} (t^2)^{\#\cor(\f)} = \sum_{c=0}^{\substack{\min\{p-2, q-j_2\}\\+\min\{p-1,q-j_1-1\}}} t^{2c} \cdot \sum_{c' = 0}^c \det 
% \begin{bmatrix}
% \binom{p-2}{c'} \binom{q-j_2}{c'} & \binom{p-2}{c'-1}\binom{q+1-j_2}{c'}\\
% \binom{p-1}{c-c'+1}\binom{q-j_1-1}{c-c'} & \binom{p-1}{c-c'}\binom{q-j_1}{c-c'}
% \end{bmatrix}.
% \]
% As for the other numerators $\sum_{\mathbf{F} \rightarrow I} (t^k)^{\#\cor(\mathbf{F})}$ and $\sum_{J \leftarrow \mathbf{F}} (t^k)^{\#\cor(\mathbf{F})}$, however, it appears that there is no general formula known beyond very small values of $k$.  
% This problem is equivalent to finding the distribution of descents among standard Young tableaux.  \question{(Find good reference.)} 
% Again when $k=2$, there is a formula via determinants in~\cite{KrattenthalerTurnsPaper}*{Thm.~2}.
\end{rem}

In the examples below, we indicate each element in $\cor(\f)$ with a solid red square, and we place a red rectangle around each element of $\cor(\mathbf{F})$.

\begin{ex}[$\SL_2$-invariants, for $p=3$ and $q=4$]\
\label{ex:SL invariants k2p3q4}

\input{Hilbert_series_examples/k2p3q4}
    
\end{ex}

\begin{ex}[$\SL_3$-invariants, for $p=q=4$]\

    \tikzstyle{corner}=[rectangle,draw=black,fill=red, minimum size = 5pt, inner sep=0pt]
\tikzstyle{endpt}=[circle,fill=black, text=white, font=\tiny\sffamily\bfseries, minimum size = 8pt, inner sep=0pt]

Below is the Hilbert series for the $\GL_3$-invariants:

%%%%%%%%%%%%%%%%%
\begin{tikzpicture}[scale=.45, baseline]
\draw[gray] (1,1) grid (4,4);

\draw[ultra thick] (1,2)--(2,2)--(2,1);
\draw[ultra thick] (1,3)--(3,3)--(3,1);
\draw[ultra thick] (1,4)--(4,4)--(4,1);
\end{tikzpicture}
%%%%%%%%%%%%%%%%%
\begin{tikzpicture}[scale=.45, baseline]
\draw[gray] (1,1) grid (4,4);

\draw[ultra thick] (1,2)--(1,1)--(2,1);
\draw[ultra thick] (1,3)--(3,3)--(3,1);
\draw[ultra thick] (1,4)--(4,4)--(4,1);

\node at (1,1) [corner] {};

\end{tikzpicture}
%%%%%%%%%%%%%%%%%
\begin{tikzpicture}[scale=.45, baseline]
\draw[gray] (1,1) grid (4,4);

\draw[ultra thick] (1,2)--(1,1)--(2,1);
\draw[ultra thick] (1,3)--(2,3)--(2,2)--(3,2)--(3,1);
\draw[ultra thick] (1,4)--(4,4)--(4,1);

\node at (1,1) [corner] {};
\node at (2,2) [corner] {};

\end{tikzpicture}
%%%%%%%%%%%%%%%%%
\begin{tikzpicture}[scale=.45, baseline]
\draw[gray] (1,1) grid (4,4);

\draw[ultra thick] (1,2)--(1,1)--(2,1);
\draw[ultra thick] (1,3)--(2,3)--(2,2)--(3,2)--(3,1);
\draw[ultra thick] (1,4)--(3,4)--(3,3)--(4,3)--(4,1);

\node at (1,1) [corner] {};
\node at (2,2) [corner] {};
\node at (3,3) [corner] {};

\node[right=0pt of current bounding box.east,anchor=west
    ]{$\leadsto \dfrac{1 + t^2 + t^4 + t^6}{(1-t^2)^{15}}$};

\end{tikzpicture}
%%%%%%%%%%%%%%

Because $p=q$, the sums for $I$ and $J$ are identical; therefore we need only consider each element $I \in \mathcal{C}^4_3$ once.  (The missing diagrams are obtained by reflection about the main diagonal.)
To compensate, we multiply each summand by $2t^3$ rather than just $t^3$:

%%% I=(1,2,3) %%%
\begin{tikzpicture}[scale=.45,baseline]

\node at (1,4) [endpt] {1};
\node at (1,3) [endpt] {2};
\node at (1,2) [endpt] {3};

\node[right=0pt of current bounding box.east,anchor=west
    ]{$\longrightarrow$};
\end{tikzpicture}
%%%%%%%%%%%%%%%%%
\begin{tikzpicture}[scale=.45, baseline]
\draw[gray] (1,1) grid (4,4);

\draw[ultra thick] (1,2)--(2,2)--(2,1);
\draw[ultra thick] (1,3)--(3,3)--(3,1);
\draw[ultra thick] (1,4)--(4,4)--(4,1);

\node at (1,4) [endpt] {1};
\node at (1,3) [endpt] {2};
\node at (1,2) [endpt] {3};

\end{tikzpicture}
%%%%%%%%%%%%%%%%%
\begin{tikzpicture}[scale=.45, baseline]
\draw[gray] (1,1) grid (4,4);

\draw[ultra thick] (1,2)--(1,1)--(2,1);
\draw[ultra thick] (1,3)--(3,3)--(3,1);
\draw[ultra thick] (1,4)--(4,4)--(4,1);

\node at (1,1) [corner] {};

\node at (1,4) [endpt] {1};
\node at (1,3) [endpt] {2};
\node at (1,2) [endpt] {3};

\end{tikzpicture}
%%%%%%%%%%%%%%%%%
\begin{tikzpicture}[scale=.45, baseline]
\draw[gray] (1,1) grid (4,4);

\draw[ultra thick] (1,2)--(1,1)--(2,1);
\draw[ultra thick] (1,3)--(2,3)--(2,2)--(3,2)--(3,1);
\draw[ultra thick] (1,4)--(4,4)--(4,1);

\node at (1,1) [corner] {};
\node at (2,2) [corner] {};

\node at (1,4) [endpt] {1};
\node at (1,3) [endpt] {2};
\node at (1,2) [endpt] {3};

\end{tikzpicture}
%%%%%%%%%%%%%%%%%
\begin{tikzpicture}[scale=.45, baseline]
\draw[gray] (1,1) grid (4,4);

\draw[ultra thick] (1,2)--(1,1)--(2,1);
\draw[ultra thick] (1,3)--(2,3)--(2,2)--(3,2)--(3,1);
\draw[ultra thick] (1,4)--(3,4)--(3,3)--(4,3)--(4,1);

\node at (1,1) [corner] {};
\node at (2,2) [corner] {};
\node at (3,3) [corner] {};

\node at (1,4) [endpt] {1};
\node at (1,3) [endpt] {2};
\node at (1,2) [endpt] {3};

\node[right=0pt of current bounding box.east,anchor=west
    ]{$\leadsto 2t^3 \cdot \dfrac{1}{1-t^3} \cdot \dfrac{1 + t^2 + t^4 + t^6}{(1-t^2)^{15}}$};

\end{tikzpicture}
%%%%%%%%%%%%%%

%%% I=(1,2,4) %%%
\begin{tikzpicture}[scale=.45,baseline]
\node at (1,4) [endpt] {1};
\node at (1,3) [endpt] {2};
\node at (1,1) [endpt] {4};
\node at (0,4) [endpt] {};
\node at (0,3) [endpt] {};
\node at (0,2) [endpt] {};
\draw[thick] (0,4)--(1,4);
\draw[thick] (0,3)--(1,3);
\draw[thick] (0,2)--(1,1);

\node[right=0pt of current bounding box.east,anchor=west
    ]{$\longrightarrow$};
\end{tikzpicture}
%%%%%%%%%%%%%%%%% 
\begin{tikzpicture}[scale=.45, baseline]
\draw[gray] (1,1) grid (4,4);

\draw[ultra thick] (1,1)--(2,1);
\draw[ultra thick] (1,3)--(3,3)--(3,1);
\draw[ultra thick] (1,4)--(4,4)--(4,1);

\node at (1,4) [endpt] {1};
\node at (1,3) [endpt] {2};
\node at (1,1) [endpt] {4};
\end{tikzpicture}
%%%%%%%%%%%%%%%%% 
\begin{tikzpicture}[scale=.45, baseline]
\draw[gray] (1,1) grid (4,4);

\draw[ultra thick] (1,1)--(2,1);
\draw[ultra thick] (1,3)--(2,3)--(2,2)--(3,2)--(3,1);
\draw[ultra thick] (1,4)--(4,4)--(4,1);

\node at (2,2) [corner] {};

\node at (1,4) [endpt] {1};
\node at (1,3) [endpt] {2};
\node at (1,1) [endpt] {4};
\end{tikzpicture}
%%%%%%%%%%%%%%%%% 
\begin{tikzpicture}[scale=.45, baseline]
\draw[gray] (1,1) grid (4,4);

\draw[ultra thick] (1,1)--(2,1);
\draw[ultra thick] (1,3)--(1,2)--(3,2)--(3,1);
\draw[ultra thick] (1,4)--(4,4)--(4,1);

\node at (1,2) [corner] {};

\node at (1,4) [endpt] {1};
\node at (1,3) [endpt] {2};
\node at (1,1) [endpt] {4};
\end{tikzpicture}
%%%%%%%%%%%%%%%%% 
\begin{tikzpicture}[scale=.45, baseline]
\draw[gray] (1,1) grid (4,4);

\draw[ultra thick] (1,1)--(2,1);
\draw[ultra thick] (1,3)--(2,3)--(2,2)--(3,2)--(3,1);
\draw[ultra thick] (1,4)--(3,4)--(3,3)--(4,3)--(4,1);

\node at (2,2) [corner] {};
\node at (3,3) [corner] {};

\node at (1,4) [endpt] {1};
\node at (1,3) [endpt] {2};
\node at (1,1) [endpt] {4};
\end{tikzpicture}
%%%%%%%%%%%%%%%%% 
\begin{tikzpicture}[scale=.45, baseline]
\draw[gray] (1,1) grid (4,4);

\draw[ultra thick] (1,1)--(2,1);
\draw[ultra thick] (1,3)--(1,2)--(3,2)--(3,1);
\draw[ultra thick] (1,4)--(3,4)--(3,3)--(4,3)--(4,1);

\node at (1,2) [corner] {};
\node at (3,3) [corner] {};

\node at (1,4) [endpt] {1};
\node at (1,3) [endpt] {2};
\node at (1,1) [endpt] {4};
\end{tikzpicture}
%%%%%%%%%%%%%%%%% 
\begin{tikzpicture}[scale=.45, baseline]
\draw[gray] (1,1) grid (4,4);

\draw[ultra thick] (1,1)--(2,1);
\draw[ultra thick] (1,3)--(1,2)--(3,2)--(3,1);
\draw[ultra thick] (1,4)--(2,4)--(2,3)--(4,3)--(4,1);

\node at (1,2) [corner] {};
\node at (2,3) [corner] {};

\node at (1,4) [endpt] {1};
\node at (1,3) [endpt] {2};
\node at (1,1) [endpt] {4};

\node[right=0pt of current bounding box.east,anchor=west
    ]{$\leadsto 2t^3 \cdot \dfrac{1}{(1-t^3)^2} \cdot \dfrac{1 + 2t^2 + 3t^4}{(1-t^2)^{14}}$};
    
\end{tikzpicture}
%%%%%%%%%%%%%%%%

%%% I=(1,3,4) %%%
\begin{tikzpicture}[scale=.45,baseline]
\node at (1,4) [endpt] {1};
\node at (1,2) [endpt] {3};
\node at (1,1) [endpt] {4};
\node at (0,4) [endpt] {};
\node at (0,3) [endpt] {};
\node at (0,1) [endpt] {};
\node at (-1,4) [endpt] {};
\node at (-1,3) [endpt] {};
\node at (-1,2) [endpt] {};

\draw[thick] (-1,4)--(0,4)--(1,4);
\draw[thick] (-1,3)--(0,3)--(1,2);
\draw[thick] (-1,2)--(0,1)--(1,1);

\node[right=0pt of current bounding box.east,anchor=west
    ]{$\longrightarrow$};
\end{tikzpicture}
%%%%%%%%%%%%%%%%% 
\begin{tikzpicture}[scale=.45, baseline]
\draw[gray] (1,1) grid (4,4);

\draw[ultra thick] (1,1)--(2,1);
\draw[ultra thick] (1,2)--(3,2)--(3,1);
\draw[ultra thick] (1,4)--(4,4)--(4,1);

\node at (1,4) [endpt] {1};
\node at (1,2) [endpt] {3};
\node at (1,1) [endpt] {4};
\end{tikzpicture}
%%%%%%%%%%%%%%%%% 
\begin{tikzpicture}[scale=.45, baseline]
\draw[gray] (1,1) grid (4,4);

\draw[ultra thick] (1,1)--(2,1);
\draw[ultra thick] (1,2)--(3,2)--(3,1);
\draw[ultra thick] (1,4)--(3,4)--(3,3)--(4,3)--(4,1);

\node at (3,3) [corner] {};

\node at (1,4) [endpt] {1};
\node at (1,2) [endpt] {3};
\node at (1,1) [endpt] {4};
\end{tikzpicture}
%%%%%%%%%%%%%%%%% 
\begin{tikzpicture}[scale=.45, baseline]
\draw[gray] (1,1) grid (4,4);

\draw[ultra thick] (1,1)--(2,1);
\draw[ultra thick] (1,2)--(3,2)--(3,1);
\draw[ultra thick] (1,4)--(2,4)--(2,3)--(4,3)--(4,1);

\node at (2,3) [corner] {};

\node at (1,4) [endpt] {1};
\node at (1,2) [endpt] {3};
\node at (1,1) [endpt] {4};
\end{tikzpicture}
%%%%%%%%%%%%%%%%% 
\begin{tikzpicture}[scale=.45, baseline]
\draw[gray] (1,1) grid (4,4);

\draw[ultra thick] (1,1)--(2,1);
\draw[ultra thick] (1,2)--(3,2)--(3,1);
\draw[ultra thick] (1,4)--(1,3)--(4,3)--(4,1);

\node at (1,3) [corner] {};

\node at (1,4) [endpt] {1};
\node at (1,2) [endpt] {3};
\node at (1,1) [endpt] {4};

\node[right=0pt of current bounding box.east,anchor=west
    ]{$\leadsto 2t^3 \cdot \dfrac{1}{(1-t^3)^3} \cdot \dfrac{1 + 3t^2}{(1-t^2)^{13}}$};

\end{tikzpicture}
%%%%%%%%%%%%%%%%%

%%% I=(2,3,4) %%%
\begin{tikzpicture}[scale=.45,baseline]
\node at (1,3) [endpt] {2};
\node at (1,2) [endpt] {3};
\node at (1,1) [endpt] {4};
\node at (-2,4) [endpt] {};
\node at (-1,4) [endpt] {};
\node at (0,4) [endpt] {};
\node at (-2,3) [endpt] {};
\node at (-1,3) [endpt] {};
\node at (0,2) [endpt] {};
\node at (-2,2) [endpt] {};
\node at (-1,1) [endpt] {};
\node at (0,1) [endpt] {};

\draw[thick] (-2,4)--(-1,4)--(0,4)--(1,3);
\draw[thick] (-2,3)--(-1,3)--(0,2)--(1,2);
\draw[thick] (-2,2)--(-1,1)--(0,1)--(1,1);

\node[right=0pt of current bounding box.east,anchor=west
    ]{$\longrightarrow$};
\end{tikzpicture}
%%%%%%%%%%%%%%%%% 
\begin{tikzpicture}[scale=.45, baseline]
\draw[gray] (1,1) grid (4,4);

\draw[ultra thick] (1,1)--(2,1);
\draw[ultra thick] (1,2)--(3,2)--(3,1);
\draw[ultra thick] (1,3)--(4,3)--(4,1);

\node at (1,3) [endpt] {2};
\node at (1,2) [endpt] {3};
\node at (1,1) [endpt] {4};

\node[right=0pt of current bounding box.east,anchor=west
    ]{$\leadsto 2t^3 \cdot \dfrac{1}{(1-t^3)^4} \cdot \dfrac{1 }{(1-t^2)^{12}}$};

\end{tikzpicture}
%%%%%%%%%%%%%%%%

Simplifying the sum of the rational functions above, we obtain the Hilbert series 
\[
P(\C[V^{*4} \oplus V^4]^{\SL_3}; t) = \dfrac{1 + 4t^2 + 4t^3 + 10t^4 + 8t^5 + 14t^6 + 8t^7 + 10t^8 + 4t^9 + 4t^{10} + t^{12}}{(1-t^2)^{12} (1-t^3)^4}.
\]
We observe that, unlike the previous example, the numerator polynomial is palindromic but not unimodal.
    
\end{ex}

\begin{ex}[$\SL_3$-invariants, for $p=3$ and $q=4$]
    \label{ex:SL_k invariants k3p3q4}
    A similar enumeration can be carried out by hand, as in the previous examples.
    This leads to the Hilbert--Poincar\'e series
    \begin{align*}
        P&(\C[V^{*3} \oplus V^4]^{\SL_3}; t) \\&= \frac{1}{(1-t^2)^{12}} + \frac{2t^3}{(1-t^3)(1-t^2)^{12}} + \frac{t^3 (1 +t^2 + t^4)}{(1-t^3)^2 (1-t^2)^{11}} + \frac
        {t^3(1+2t^2)}{(1-t^3)^3 (1-t^2)^{10}} + \frac{t^3}{(1-t^3)^4 (1-t^2)^9} \\[2ex]
        &= \frac{1 + 3 t^2 + 2 t^3 + 6 t^4 + 3 t^5 + 8 t^6 + 3 t^7 + 6 t^8 + 2 t^9 +  3 t^{10} + t^{12}}{(1 - t^2)^9 (1 - t^3)^3 (1 - t^6)}.
    \end{align*}
    Once again, the numerator is palindromic but not unimodal.  
    Note also the presence of the factor $(1-t^6)$ in the denominator, since unlike the previous examples, the exponent $6$ is neither $2$ nor $k$.
\end{ex}

\begin{rem}
    It follows from the result~\cite{HochsterRoberts}*{Cor.~1.9} for semisimple connected Lie groups that the ring $\C[V^{*p} \oplus V^q]^{\SL(V)}$ is Gorenstein for all values of $k$, $p$, and $q$.
\end{rem}

\subsection{The orthogonal group}

Recall from Section~\ref{sec:shellings} that $\#\f = k(2n-k+1)/2$ for each facet $\f \in \scrF_k \subset \Delta_k(\P_{\O})$.  
Combined with the Stanley decomposition of $\C[\Delta_k]$ in~\eqref{stanley decomp again}, this yields the Hilbert--Poincar\'e series
\begin{equation*}
    P\!\left(\C[z_{ij}]/\ker \pi^*; t\right) = 
    \frac{\sum_{ \f \in \scrF_{k} } t^{\# \cor(\f)}}{(1-t)^{k(2n-k+1)/2}}.
 \end{equation*}
 The Hilbert--Poincar\'e series in the following proposition is found in~\cite{EnrightHunziker04}*{Thm.~24} and \cite{EricksonHunziker23}*{Table 3, row IIIa}, in the context of the $k$th Wallach representation of $\mathfrak{sp}(n,\mathbb{R})$.  
 The combinatorial proof for the numerator polynomial is implicit in~\cite{Krattenthaler}*{Prop.~33}.

 \begin{prop}
 \label{prop:Hilbert series O_k}
     Let $\dim V = k$.
     The ring of $\O(V)$-invariants has the Stanley decomposition
     \[
     \C[V^{n}]^{\O(V)} = \bigoplus_{\f \in \scrF_k} f_{\cor(\f)} \C[f_{ij} \mid (i,j) \in \f].
     \]
     Furthermore, for $k \leq n$, the Hilbert--Poincar\'e series is
     \begin{equation}
     \label{O_k Hilbert series}
         P\!\left(\C[V^{n}]^{\O(V)}; t\right) = \frac{\sum_\nu (\dim\F{\nu}{n-k+1}) \: t^{|\nu|}}{(1-t^2)^{k(2n-k+1)/2}},
     \end{equation}
     where $\nu \in \Par((n-k+1) \times k)$ with columns of even length.
 \end{prop}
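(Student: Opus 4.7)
The proof will follow the same template as Proposition~\ref{prop:Hilbert series GL_k}. First, the Stanley decomposition is obtained by specializing the general decomposition~\eqref{stanley decomp again} of $\C[\Delta_k(\P_\O)]$ and transferring it to the invariant ring via the algebra isomorphism $\C[z_{ij}]/\ker\pi^* \cong \C[V^n]^{\O(V)}$ afforded by Weyl's fundamental theorems. Since each facet has size $k(2n-k+1)/2$ and $\pi^*$ doubles degree, the denominator $(1-t^2)^{k(2n-k+1)/2}$ arises automatically upon substituting $t^2$ for $t$; the task then reduces to proving the identity
\[
\sum_{\f \in \scrF_k} t^{\#\cor(\f)} \;=\; \sum_\nu \#\SSYT(\nu,\, n-k+1)\, t^{|\nu|/2},
\]
where $\nu \in \Par((n-k+1)\times k)$ has even column lengths, so that substituting $t^2$ for $t$ yields the stated formula via $\dim\F{\nu}{n-k+1} = \#\SSYT(\nu,\,n-k+1)$.

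The plan to prove the identity is to construct a bijection from $\scrF_k$ to the relevant set of SSYTs. For each path $\bp_i \in \f$, the corner set $\cor(\bp_i)$ is a strict chain (row index increasing, column index decreasing) inside the triangle $T_i = \{(r,c) : i+1 \leq r \leq c \leq n-k+i\}$ whose NE vertex lies immediately south of $s_i$. The translation
\[
\phi_i(r,c) \coloneqq (r-i,\, c-i+1)
\]
is a bijection $T_i \xrightarrow{\sim} \P_{\Sp}(n-k+1)$ that converts strict chains of $T_i$ into antichains of $\P_{\Sp}(n-k+1)$, since both coordinates strictly move in opposite directions. Superimposing the images $\phi_1(\cor(\bp_1)),\ldots,\phi_k(\cor(\bp_k))$ yields a multiset in $\P_{\Sp}(n-k+1)$ that decomposes into at most $k$ antichains and hence is the multisupport of a strictly upper-triangular matrix $M \in \M_{n-k+1}(\mathbb{N})$ whose $\msupp$ has height at most $k$ (by Mirsky). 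Applying $\RSK_{\Sp}$ (Proposition~\ref{prop:RSK_Sp}) then produces an SSYT of shape $\nu$ with entries in $[n-k+1]$, even column lengths, and at most $k$ columns, while the relation $\sum_{i,j} M_{ij} = \#\cor(\f) = |\nu|/2$ supplies the matching power of $t$.

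The main obstacle will be verifying that this assembly is genuinely a bijection. Injectivity should be straightforward: the canonical antichain decomposition $D_1, D_2, \ldots$ of $\msupp(M)$ (Lemma~\ref{lemma:Ci and Di for Sp_2k}) recovers each $\phi_i(\cor(\bp_i))$ as the $i$th antichain, and each $\bp_i$ is then reconstructed from its corners together with the endpoints $(i,n)$ and a point on the diagonal. Surjectivity is the subtler half: given an arbitrary strictly upper-triangular $M$ of $\msupp$-height $\leq k$, one must check that the paths obtained from $\phi_i^{-1}(D_i)$ are non-intersecting in $\P_\O$. This should follow from the nested upper-order-ideal property in Lemma~\ref{lemma:Ci and Di for Sp_2k}(3), transported back to $\P_\O$ through the inverses of the shifts; the subtlety is that the $\phi_i$ translate by differing amounts in both coordinates, so the triangles $T_i$ overlap along the diagonal of $\P_\O$, and one must carefully confirm that non-crossing in $\P_{\Sp}(n-k+1)$ transfers to non-crossing in $\P_\O$ through these overlaps.
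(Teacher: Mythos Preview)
Your proposal is correct and follows essentially the same route as the paper: transfer the Stanley decomposition~\eqref{stanley decomp again} through $\pi^*$, then establish the numerator by superimposing the corner sets of the $k$ paths into $\P_{\Sp}(n-k+1)$ and invoking $\RSK_{\Sp}$. Your explicit translations $\phi_i$ spell out what the paper calls ``superimposing \ldots\ aligned at the northeast,'' and your caution about the non-intersection check is exactly the point the paper handles by saying the argument ``mimics that of Proposition~\ref{prop:Hilbert series GL_k}''---namely, the nested upper-order-ideal property of the canonical antichain decomposition (Lemma~\ref{lemma:Ci and Di for Sp_2k}(3)) guarantees that the reconstructed paths do not cross.
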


\begin{proof}
The Stanley decomposition again follows from applying $\pi^* : z_{ij} \longmapsto f_{ij}$ to~\eqref{stanley decomp again}.  
As for the numerator of the Hilbert--Poincar\'e series, this time we use the correspondence $\RSK_{\Sp}$ to establish a bijection between $\scrF_k$ and $\bigcup_\nu \SSYT(\nu, \: n-k+1)$, which converts the number of corners into half the size of the corresponding tableau.  
(Note the combinatorial ``switch'' from $\P_{\O}(n)$ to $\P_{\Sp}(n-k+1)$, which will be illuminated in Section~\ref{sub:ES reduction}.)  
The proof mimics that of Proposition~\ref{prop:Hilbert series GL_k}, with the modifications described below.

Recall from Section~\ref{sec:shellings} that for each path $\bp_i$ in $\f$, the corners in $\cor(\bp_i)$ form a strict chain within the $(n-k) \times (n-k)$ right triangle whose northeast vertex is immediately south of the point $s_i$. 
(See the shaded region for $\bp_3$ in Figure~\ref{subfig:facet example O}.)
By superimposing all of these regions inside a common $(n-k+1) \times (n-k+1)$ right triangle (aligned at the northeast), we convert $\cor(\f)$ into a strictly upper-triangular matrix in $\M_{n-k+1}(\mathbb{N})$ whose entries sum to $\#\cor(\f)$.  
Since strict chains in $\P_{\O}$ are antichains in $\P_{\Sp}$, the support of this matrix has height $\leq k$ in the poset $\P_{\Sp}(n-k+1)$. 
We thus have a bijection from $\scrF_k$ to the set of strictly upper-triangular matrices in $\M_{n-k+1}(\mathbb{N})$ whose multisupport has height $\leq k$.  
Again, surjectivity is shown via the antichain decomposition from~\eqref{D1 for O_k}, which decomposes the multisupport of a matrix into antichains $D_i \subset \P_{\Sp}(n-k+1)$, which recover the corners of the paths $\bp_i$.  
By Proposition~\ref{prop:RSK_Sp}, this bijection from $\scrF_k$ to the matrices of height $\leq k$ can be extended to the tableaux with even column lengths and with at most $k$ columns.
\end{proof}

\begin{corollary}
    \label{cor:O_k Gorenstein}
    Suppose that $k < n$.  Then the ring $\C[V^{n}]^{\O(V)}$ is Gorenstein if and only if $n - k$ is odd.
\end{corollary}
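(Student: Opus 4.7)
My plan is to apply the Stanley palindromicity criterion for Gorenstein Cohen--Macaulay rings, recalled just before Corollary~\ref{cor:GL_k Gorenstein}. Since $\O(V)$ is reductive, $\C[V^n]^{\O(V)}$ is Cohen--Macaulay by Hochster--Roberts, so the ring is Gorenstein if and only if the numerator
\[
N(t) = \sum_{\nu} \dim \F{\nu}{n-k+1} \cdot t^{|\nu|}
\]
from Proposition~\ref{prop:Hilbert series O_k} is palindromic, where $\nu$ ranges over partitions in the $(n-k+1)\times k$ rectangle with even column lengths. Because each such $|\nu|$ is even, both $N(1)$ and $N(-1)$ are strictly positive, so $N$ is coprime to $(1-t^2)^{k(2n-k+1)/2}$; the expression is therefore already in reduced form, and palindromicity of $N$ is the correct criterion to check.

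For the direction $n-k$ odd $\implies$ Gorenstein, I would construct an involution $\nu \mapsto \nu^c$ on the indexing set, where $\nu^c$ is the $180^\circ$-rotation of the complement of $\nu$ inside the rectangle (the same construction used in the proof of Corollary~\ref{cor:GL_k Gorenstein}). This preserves the even-column condition precisely because $n-k+1$ is even: the $j$th column of $\nu^c$ has length $(n-k+1) - \nu'_{k-j+1}$, which is the difference of two even numbers. The standard identity $\F{\nu^c}{n-k+1} \cong (\F{\nu}{n-k+1})^* \otimes \det^k$ as $\GL_{n-k+1}$-modules gives $\dim \F{\nu^c}{n-k+1} = \dim \F{\nu}{n-k+1}$, while evidently $|\nu^c| = k(n-k+1) - |\nu|$. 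Reindexing the sum defining $N$ via this involution yields $N(t) = t^{k(n-k+1)} N(1/t)$, so $N$ is palindromic.

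For the converse $n-k$ even $\implies$ not Gorenstein, I would observe that the maximum even column length available in the rectangle is $n-k$, so the indexing set has a unique shape of maximal size, namely $\nu_{\max} = (k^{n-k})$ of size $k(n-k)$. The constant coefficient of $N$ is $\dim \F{\varnothing}{n-k+1} = 1$, while the leading coefficient is $\dim \F{(k^{n-k})}{n-k+1}$. Applying the same duality, $\F{(k^{n-k})}{n-k+1} \cong (\F{(k)}{n-k+1})^* \otimes \det^k$, so the leading coefficient equals $\dim \operatorname{Sym}^k \C^{n-k+1} = \binom{n}{k}$. The hypothesis $1 \leq k < n$ forces $\binom{n}{k} > 1$, so $N$ fails to be palindromic, and the ring is not Gorenstein.

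The main obstacle I anticipate is making Stanley's criterion fully rigorous in this setting: strictly speaking, the criterion requires the denominator of the Hilbert series to come from a homogeneous system of parameters, which here means producing a degree-$2$ regular sequence of length $k(2n-k+1)/2$ equal to the Krull dimension of $\SM_n^{\leqslant k}$. This is standard for determinantal rings of symmetric matrices, but deserves an explicit acknowledgment; the coprimality observation in the first paragraph is the cleanest way to sidestep any concern about non-reduced presentations.
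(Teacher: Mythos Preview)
Your proposal is correct and follows essentially the same approach as the paper's own proof: both directions are handled via Stanley's palindromicity criterion together with the complement-in-rectangle involution from Corollary~\ref{cor:GL_k Gorenstein}. Your version is in fact more careful than the paper's, since you explicitly verify that the expression in Proposition~\ref{prop:Hilbert series O_k} is already reduced (via $N(\pm 1)>0$), explicitly check that the involution preserves the even-column condition, and compute the leading coefficient as $\binom{n}{k}$ rather than merely asserting it exceeds $1$.
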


\begin{proof}
    If $n - k$ is odd, then each of the shapes $\nu$ in Proposition~\ref{prop:Hilbert series O_k} can be paired with its complement in a $(n-k+1) \times k$ rectangle.
    Therefore, just as in the argument in Corollary~\ref{cor:GL_k Gorenstein}, the numerator polynomial of the Hilbert--Poincar\'e series must be palindromic.
    Conversely, if $n-k$ is even, then each $\nu$ actually fits inside a $(n-k) \times k$ rectangle. Therefore the constant term in the numerator is $1$, while the leading coefficient is strictly greater than 1, since the dimension of the $\gl_{n-k+1}$-module with highest weight given by the rectangle with $n-k$ rows is strictly greater than 1.
\end{proof}

\subsection{The special orthogonal group}
\label{sub:SO_k Hilbert series}

Recall from~\eqref{decomp SO_k invariants} that the $\SO(V)$-invariants decompose into two components, namely the $\O(V)$-invariants and the $\O(V)$-semiinvariants:
\begin{equation}
    \label{SO_k invariant decomp again}
    \C[V^{n}]^{\SO(V)} \cong \C[V^{n}]^{\O(V)} \oplus \C[V^{n}]^{\O(V), \: \det}.
\end{equation}
Thus it remains to determine a Stanley decomposition for the module of $\O(V)$-semiinvariants. 
% We will actually give two Stanley decompositions: the first decomposition (Proposition~\ref{prop:Hilbert series O_k semis nonpure}) follows from the standard monomials defined in Section~\ref{sec:graphs SO}, and results from varying the $k$ starting points of paths along the right-hand edge of $\P_{\O}$.
% Because varying starting points also varies the cardinality of facets, this first Stanley decomposition is nonpure, in the sense that the Stanley spaces have different dimensions. 
% Therefore we will give a second Stanley decomposition (Proposition~\ref{prop:Hilbert series O_k semis}), which results from defining a different set of standard monomials, and which (being pure) directly yields a reduced Hilbert--Poincar\'e series.

We convert Proposition~\ref{prop:SO_k graphs} from graphs into lattice paths; thus each basis element determines a pair $(\f',I) \in \scrF_k \times \Ceq{n}$. 
Because of the condition $I \leq A_1$ in~\eqref{basis R1}, and because of part (1) of Lemma~\ref{lemma:Ci and Di for O_k}, we can view $(\f',I)$ as a family $\f$ of lattice paths whose starting points are given by $(i_1,n), \ldots, (i_k,n)$, rather than the usual $(1,n), \ldots, (k,n)$.
In other words, we modify the facets in $\scrF_k$ by allowing the starting points of each path to vary along the right edge of $\P_{\O}$.
We write $\f \leftarrow I$ to express that $\f$ is a family of paths with starting points given by $I$.

 \begin{prop}
 \label{prop:Hilbert series O_k semis nonpure}
     The module of $\O(V)$-semiinvariants has the Stanley decomposition
     \[
     \C[V^{n}]^{\O(V), \det} = \bigoplus_{I \in \Ceq{n}} \bigoplus_{\f \leftarrow I} f_{\cor(\f)} \C[f_{ij} \mid (i,j) \in \f] \cdot {\textstyle \det_{I}},
     \]
     and the Hilbert--Poincar\'e series \begin{equation*}
         P\!\left(\C[V^{n}]^{\O(V), \det}; t\right) = 
         t^k \sum_{I \in \Ceq{n}} \sum_{\f \leftarrow I} \frac{(t^2)^{\#\cor(\f)}}{(1-t^2)^{\#\f}}.
     \end{equation*}
 \end{prop}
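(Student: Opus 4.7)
The plan is to adapt the argument from Proposition~\ref{prop:Hilbert series SL_k} to the present setting, using Lemma~\ref{lemma:Ci and Di for O_k} in place of Lemma~\ref{lemma:Ci and Di for GL_k}. By Section~\ref{sec:graphs SO}, combined with~\eqref{decomp SO_k invariants} and the fact that the standard monomials of type~\eqref{basis R1} form a basis of $\C[z_{ij},u_I]/\ker\psi^*$, a linear basis of $\C[V^n]^{\O(V),\det}$ is given, via the $\RSK_{\O}$ correspondence, by the ordinary monomials $f_{AB}\cdot\det_I$ with $I\leq A_1$. I would begin by attaching to each such monomial the pair $(I,\f)$, where $I=(i_1,\ldots,i_k)$ is the index of the determinantal factor and $\f$ is to be a family of $k$ non-intersecting lattice paths $\bp_1,\ldots,\bp_k$ in $\P_{\O}$ whose starting points are $(i_1,n),\ldots,(i_k,n)$ and whose endpoints lie on the diagonal; i.e., $\f\leftarrow I$.

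The main step is to show that every such $f_{AB}\cdot\det_I$ lies in exactly one summand of the claimed direct sum. Since $I$ is already visible from the determinantal factor, it suffices, for this fixed $I$, to find a unique $\f\leftarrow I$ such that $\supp\RSK_{\O}(A,B)\subseteq\f$ and such that $\cor(\f)$ records (with multiplicities) the positions where $\msupp\RSK_{\O}(A,B)$ exceeds the paths of $\f$ --- in exact parallel with the role of corners in Proposition~\ref{prop:Hilbert series O_k}. By Lemma~\ref{lemma:Ci and Di for O_k}(1), the $i$th entry of the first column $A_1$ is the smallest row index of the chain $C_i=C_i(\RSK_{\O}(A,B))$; thus the hypothesis $I\leq A_1$ is precisely the statement that, for each $j$, the row $i_j$ lies weakly above the first row hit by $C_j$, so a path beginning at $(i_j,n)$ can indeed traverse $C_j$. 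The existence and uniqueness of $\f$ then follow from the same Conca--Herzog--Trung shelling argument~\cites{Conca94,HerzogTrung} used to prove Proposition~\ref{prop:Hilbert series O_k}, now applied to the subcomplex of $\Delta_k(\P_\O)$ consisting of (chains inside) families of paths with right-edge starting points given by $I$.

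Given the resulting bijection between basis monomials and pairs $(I,\f)$, the Stanley decomposition follows immediately. The Hilbert--Poincar\'e series is then read off: under $\psi^*$ each $f_{ij}$ has degree $2$ (contributing $(1-t^2)^{-\#\f}$ for the polynomial ring on the path), each $\det_I$ has degree $k$ (contributing the prefactor $t^k$), and the distinguished product $f_{\cor(\f)}$ contributes $(t^2)^{\#\cor(\f)}$. Summing over all $I\in\Ceq{n}$ and all $\f\leftarrow I$ yields the stated formula.

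The main obstacle I anticipate is the verification that, for each fixed $I$, the set $\{\f\leftarrow I\}$ really does index the facets of a shellable complex whose restrictions are exactly the corners defined in Section~\ref{sec:shellings}; this cannot be deduced from the pure case $I=(1,\ldots,k)$ alone, because allowing $I$ to vary makes the overall complex nonpure (the facets $\f\leftarrow I$ have size depending on $I$). However, stratifying by $I$ reduces the question to the familiar problem of non-intersecting lattice paths with prescribed endpoints, which is precisely the Herzog--Trung setting, so the needed shellability and identification of restrictions as corners should carry over without modification.
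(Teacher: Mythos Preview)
Your proposal is correct and follows essentially the same route as the paper: the paper's justification is the short paragraph preceding the proposition, which (like you) invokes the basis~\eqref{basis R1} of type $f_{AB}\cdot\det_I$ with $I\leq A_1$, uses part~(1) of Lemma~\ref{lemma:Ci and Di for O_k} to translate $I\leq A_1$ into the condition that the starting points $(i_j,n)$ lie weakly above the chains $C_j$, and then appeals to the Herzog--Trung shelling of the subcomplex $\{\f\leftarrow I\}$ (exactly as in the proof of Proposition~\ref{prop:Hilbert series SL_k}, where the same reference \cite{HerzogTrung}*{Thm.~4.9} is cited). Your write-up is in fact more explicit than the paper's about why fixing $I$ restores purity and hence makes the shelling argument go through.
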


The form of the Hilbert--Poincar\'e series above is ``nonpure,'' in the sense that the exponent $\#\f$ in each summand depends on $I$.
This can be remedied by varying the \emph{end}points (along the diagonal), rather than the starting points, of the paths.
% For $\f \in \scrF_k$, we thus write ${\rm end}(\f) \in \Ceq{n}$ to denote the set of endpoints of the paths in $\f$; specifically, if these endpoints are $(i_1, i_1), \ldots, (i_k, i_k)$, then ${\rm end}(\f) = (i_1, \ldots, i_k)$.
To avoid double-counting endpoints, we redefine the set $\cor(\f)$ to contain only non-diagonal corners. 

\begin{prop}
 \label{prop:Hilbert series O_k semis}
     Let $\cor(\f)$ denote the set of corners in $\f$ which are not of the form $(i,i)$.
     Then for $k \leq n$, the module of $\O(V)$-semiinvariants has the pure Hilbert--Poincar\'e series
     % \[
     % \C[V^{n}]^{\O(V), \det} = \bigoplus_{\f \in \scrF_k}  f_{\cor(\f)} \C[f_{ij} \mid (i,j) \in \f] \cdot {\textstyle \det_{{\rm end}(\f)}}.
     % \]
     \begin{align}
         P\!\left(\C[V^{n}]^{\O(V), \det}; t\right) &= 
         t^k \cdot \frac{\sum_{\f} (t^2)^{\#\cor(\f)}}{(1-t^2)^{k(2n-k+1)/2}} \label{O_k semis Hilbert series paths}\\[2ex]
         &= \frac{\sum_\nu (\dim\F{\nu}{n-k+1}) \: t^{|\nu|}}{(1-t^2)^{k(2n-k+1)/2}}, \label{O_k semis Hilbert series tableaux}
     \end{align}
     where $\nu \in \Par((n-k+1) \times k)$ with columns of odd length.
 \end{prop}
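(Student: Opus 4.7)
The plan is to establish~\eqref{O_k semis Hilbert series paths} and~\eqref{O_k semis Hilbert series tableaux} in parallel with the proof of Proposition~\ref{prop:Hilbert series O_k}, now accounting for the $\det$ factor throughout.

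For~\eqref{O_k semis Hilbert series paths}, I would first refine the nonpure decomposition of Proposition~\ref{prop:Hilbert series O_k semis nonpure} into a pure Stanley decomposition
\[
\C[V^n]^{\O(V),\det} \;=\; \bigoplus_{\f \in \scrF_k} \det_{J(\f)}\, f_{\cor(\f)}\, \C[f_{ij} \mid (i,j)\in\f],
\]
indexed by the standard facets $\f \in \scrF_k$, where $J(\f) \in \Ceq{n}$ denotes the set of diagonal endpoints of $\f$ and $\cor(\f)$ excludes diagonal corners. Equation~\eqref{O_k semis Hilbert series paths} then follows immediately, since each summand is a polynomial ring in $k(2n-k+1)/2$ degree-$2$ generators multiplied by a generator of degree $k + 2\#\cor(\f)$. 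To establish the decomposition, I would show that each standard monomial $u_I\,\s(A_1,B_1,\ldots,A_\ell,B_\ell)$ from~\eqref{basis R1} lies in a unique summand: apply $\RSK_\O$ to $\s$ to obtain an upper-triangular matrix $M$ with $\supp(M)$ of width $\leq k$ in $\P_\O$; Lemma~\ref{lemma:Ci and Di for O_k}(1) together with the constraint $I \leq A_1$ then pinpoints a unique pure facet $\f$ with $J(\f) = I$ and $\cor(\f) \subseteq \msupp(M)$, and the assignment via the antichain decomposition from~\eqref{D1 for O_k} proceeds as in the proof of Proposition~\ref{prop:Hilbert series O_k}.

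For~\eqref{O_k semis Hilbert series tableaux}, I would convert the numerator $t^k\sum_\f t^{2\#\cor(\f)}$ into the tableau-sum via an RSK-type bijection. For each facet $\f \in \scrF_k$, superimposing the $(n-k)\times(n-k)$ triangles containing $\cor(\bp_i)$ gives a strictly upper-triangular matrix in $\M_{n-k+1}(\mathbb{N})$ recording $\cor(\f)$, whose support has height $\leq k$ in $\P_\Sp(n-k+1)$. To account for the $k$ diagonal endpoints $J(\f)$, I would prepend a virtual column of length $k$ (with entries given by the shift $j_\ell \mapsto j_\ell - \ell + 1$ sending $J(\f)$ into $[n-k+1]$) to the even-column-length SSYT produced by $\RSK_\Sp$ from the corner matrix. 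This turns all columns into odd-length columns and forces $\nu_1 = k$, yielding a shape $\nu \in \Par((n-k+1)\times k)$ with odd column lengths and $|\nu| = k + 2\#\cor(\f)$. Summing $\dim\F{\nu}{n-k+1}\,t^{|\nu|}$ over all such $\nu$ then recovers the numerator of~\eqref{O_k semis Hilbert series paths}.

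The main obstacle will be verifying the ``shift and prepend'' bijection in the second step: one must check that the prepended column obtained from $J(\f)$ is compatible with the first non-prepended column (coming from $\RSK_\Sp$) so as to yield a genuine SSYT, and that every SSYT of the claimed form arises uniquely in this way. This compatibility should follow from the interplay between the condition $J(\f) \in \Ceq{n}$ and the first-column data of the RSK output encoded by Lemma~\ref{lemma:Ci and Di for O_k}(1), but the argument will likely require a careful case analysis of how paths terminate on the diagonal (vertically vs.\ horizontally) and how these terminations interact with the $\RSK_\Sp$-image of the corner matrix.
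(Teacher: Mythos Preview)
Your outline diverges from the paper in a fundamental way. The paper does \emph{not} establish a pure Stanley decomposition at all; instead it cites~\eqref{O_k semis Hilbert series tableaux} from an earlier reference and then proves the equality $(\ref{O_k semis Hilbert series paths}) = (\ref{O_k semis Hilbert series tableaux})$ by a single bijection between odd-column tableaux and facets. The key device is $\RSK_{\GL}(T,T)$: since the trace of this symmetric matrix equals the number of odd-length columns of $T$ (which is exactly $k$), each antichain $D_i$ in the $\P_{\GL}$-decomposition of its upper-triangular part carries precisely one diagonal element. The non-diagonal elements of $D_i$ become the corners of $\bp_i$, and the diagonal element determines its endpoint. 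Thus corners and endpoints are packaged together in a single matrix, and no separate compatibility argument is needed.

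Your Part~1 has a real gap. You write that Lemma~\ref{lemma:Ci and Di for O_k}(1) together with $I \leq A_1$ ``pinpoints a unique pure facet $\f$ with $J(\f) = I$,'' but that lemma says the entries of $A_1$ are the smallest \emph{row} indices of the chains $C_i$ --- this is exactly what governs the \emph{starting points} along the eastern edge in Proposition~\ref{prop:Hilbert series O_k semis nonpure}, not the diagonal \emph{endpoints}. There is no evident mechanism by which the datum $I$ from $\det_I$ becomes the endpoint set $J(\f)$ of a standard facet, nor why the resulting summands would be disjoint. In Part~2 there is also a slip: prepending a \emph{column} of length $k$ leaves the parities of the remaining columns unchanged; you evidently mean prepending a \emph{row} of length $k$ (consistent with your claim that $\nu_1 = k$), which does convert an even-column output of $\RSK_{\Sp}$ into a tableau with exactly $k$ odd columns. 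The compatibility obstacle you flag --- whether that prepended row fits semistandardly atop the $\RSK_{\Sp}$ output --- is precisely what the paper's $\RSK_{\GL}(T,T)$ trick dissolves.
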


\begin{proof}
The series~\eqref{O_k semis Hilbert series tableaux} is proved in \cite{EricksonHunziker23}*{Table 3, row IIIb}, in the context of the simple $\mathfrak{sp}(n,\mathbb{R})$-module with highest weight $((-\tfrac{k}{2})^{n-k}, (-\tfrac{k}{2})^k)$.  We will show the equality between~\eqref{O_k semis Hilbert series paths} and~\eqref{O_k semis Hilbert series tableaux}.

Let $T$ be a tableau in the alphabet $[n-k+1]$ having $k$ columns, all with odd length.  Then by~\eqref{RSK_GL on twins}, $\RSK_{\GL}(T,T)$ is a symmetric matrix with trace $k$, whose multisupport has height $k$.  
From now on we consider only the upper-triangular part of this matrix.
(Note that the sum of the entries in this upper-triangular part equals $k + |T|/2$.)
We decompose its multisupport into antichains $D_i$, as defined for $\RSK_{\GL}$ in~\ref{D1 for GL_k}; note that each $D_i$ contains exactly one diagonal element.  
% \question{Do this in reverse order, from SE, just to stay uniform with covariants?}
Each $D_i$ thus determines a path $\bp_i$ when aligned inside the $(n-k+1) \times (n-k+1)$ right triangle below $s_i$ in the poset $\P_{\O} = \P_{\O}(n)$, as follows.  
(Note that this triangle extends just outside $\P_{\O}$. 
Note also that each $D_i$, in particular its subset of non-diagonal elements, is a strict chain in $\P_{\O}$.)  
Take $\cor(\bp_i)$ to be the non-diagonal elements of $D_i$, and take the endpoint of $\bp_i$ to be the point immediately above the diagonal element of $D_i$.  
In this way, each $T$ determines a unique facet $\f \in \scrF_k$.  
Moreover, this process is clearly invertible, using the same method of superimposing corners (and now endpoints) from our previous proofs.  
Hence by restricting $\RSK_{\GL}$ to tableau pairs $(T,T)$ with $k$ columns, all of odd length, we obtain a bijection between the set of such tableaux and $\scrF_k$.
As observed above, by part (1) of Proposition~\ref{prop:RSK_GL}, if $T$ has shape $\nu$, then $(|\nu|-k)/2$ equals the number of non-diagonal corners in the corresponding facet.
Hence the exponent $k + 2(\#\cor(\f))$ in~\eqref{O_k semis Hilbert series paths} equals the exponent $|\nu|$ in~\eqref{O_k semis Hilbert series tableaux}.
\end{proof}

The corollary below follows immediately from~\eqref{SO_k invariant decomp again}, \eqref{O_k Hilbert series}, and~\eqref{O_k semis Hilbert series tableaux}:

\begin{corollary}
    \label{cor:Hilbert series SO_k invariants}
    For $k \leq n$, the ring of $\SO(V)$-invariants has the Hilbert--Poincar\'e series
    % Stanley decomposition
    % \[
    % \C[V^{n}]^{\SO(V)} = \bigoplus_{\f \in \scrF} f_{\cor(\f)} \C[f_{ij} \mid (i,j) \in \f] \oplus {\textstyle \det_{{\rm end}(\f)}} f_{\cor(f)} \C[f_{ij} \mid (i,j) \in \f].
    % \]
    \[
        P\!\left(\C[V^{n}]^{\SO(V)}; t\right) = \frac{\sum_\nu (\dim\F{\nu}{n-k+1}) \: t^{|\nu|}}{(1-t^2)^{k(2n-k+1)/2}},
     \]
     where $\nu \in \Par((n-k+1) \times k)$ has either all columns of odd length or all columns of even length.
\end{corollary}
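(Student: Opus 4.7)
The plan is to simply combine three results already established in the paper. First, by the decomposition~\eqref{decomp SO_k invariants}, we have
\[
\C[V^{n}]^{\SO(V)} \cong \C[V^{n}]^{\O(V)} \oplus \C[V^{n}]^{\O(V), \: \det}
\]
as graded vector spaces, so the Hilbert--Poincar\'e series is additive:
\[
P\!\left(\C[V^{n}]^{\SO(V)}; t\right) = P\!\left(\C[V^{n}]^{\O(V)}; t\right) + P\!\left(\C[V^{n}]^{\O(V), \: \det}; t\right).
\]

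Next, I would apply Proposition~\ref{prop:Hilbert series O_k} for the first summand and Proposition~\ref{prop:Hilbert series O_k semis} (specifically the closed form~\eqref{O_k semis Hilbert series tableaux}) for the second summand. The crucial observation is that both rational expressions share the \emph{same} denominator $(1-t^2)^{k(2n-k+1)/2}$. This common denominator is exactly what allows the two sums to be combined cleanly into a single fraction; the numerator of the sum is
\[
\sum_\nu (\dim \F{\nu}{n-k+1})\, t^{|\nu|} \; + \; \sum_\nu (\dim \F{\nu}{n-k+1})\, t^{|\nu|},
\]
where the first sum ranges over $\nu \in \Par((n-k+1) \times k)$ with all columns of even length, and the second over $\nu \in \Par((n-k+1) \times k)$ with all columns of odd length.

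Since these two index sets are disjoint, their union is precisely the set of partitions $\nu \in \Par((n-k+1) \times k)$ in which every column has length of the same parity (either all even or all odd), yielding the asserted formula. There is no genuine obstacle here; the content of the corollary lies entirely in Propositions~\ref{prop:Hilbert series O_k} and~\ref{prop:Hilbert series O_k semis}, and in particular in the matching of denominators. The only point worth remarking is that this matching is not accidental: it reflects the fact that the $\O(V)$-semiinvariants form a free module of rank $1$ over a common polynomial subring of the invariant ring, which is itself encoded combinatorially by the shared facet size $k(2n-k+1)/2$ of the order complex $\Delta_k(\P_{\O})$.
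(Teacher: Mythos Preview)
Your proposal is correct and follows essentially the same approach as the paper, which states that the corollary follows immediately from the decomposition~\eqref{SO_k invariant decomp again} together with~\eqref{O_k Hilbert series} and~\eqref{O_k semis Hilbert series tableaux}. Your additional remark explaining why the denominators coincide is a nice touch but not required.
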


% \begin{corollary}
%     \label{cor:Hilbert series SO_k invariants}
%     The ring of $\SO(V)$-invariants has the Stanley decomposition
%     \[
%     \C[V^{n}]^{\SO(V)} = \bigoplus_{\f \in \scrF_k} f_{\cor(\f)} \C[f_{ij} \mid (i,j) \in \f] \oplus \bigoplus_{I \in \Ceq{n}} \bigoplus_{\f \leftarrow I} f_{\cor(\f)} \C[f_{ij} \mid (i,j) \in \f] \cdot {\textstyle \det_{I}},
%     \]
%     and the Hilbert--Poincar\'e series
%     \[
%         P\!\left(\C[V^{n}]^{\SO(V)}; t\right) = \frac{\sum_\nu (\dim\F{\nu}{n-k+1}) \: t^{|\nu|}}{(1-t^2)^{k(2n-k+1)/2}},
%      \]
%      where $\nu \in \Par((n-k+1) \times k)$ has either all columns of odd length or all columns of even length.
% \end{corollary}

\begin{rem}
    Just as for $\SL(V)$, it follows from~\cite{HochsterRoberts}*{Cor.~1.9} that the ring $\C[V^{n}]^{\SO(V)}$ is Gorenstein for all values of $\dim V$ and $n$.
    This is also easily seen from the fact that each shape $\nu$ above can be paired with its complement in a $(n - k + 1) \times k$ rectangle.
\end{rem}

% To do this, we first exhibit a slightly different monomial basis than the one given in Section~\ref{sec:graphs O}. 

% Recall that $\RSK_{\O}(A_1, B_1, \ldots, A_\ell, B_\ell)$ is an upper-triangular matrix whose support in $\P_{\O}$ has width $|A_1| \leq k$.  
% Let $\f \in \scrF_k$, with path decomposition $\bp_1, \ldots, \bp_k$. 
% Recall that the endpoint of each of these paths is some diagonal element $(j,j) \in \P_{\O}$; because of this, we will refer to the ``endpoint'' of each path as if it were the single integer $j \in \{1, \ldots, n\}$. 
% This allows us to identify the endpoints of $\f$ with an element $\operatorname{end}(\f) \in \Ceq{n}$.  
% We claim that $S_1$ has a basis given by all monomials of the form
% \[
% u_{_{\!I}} \s(A_1,B_1, \ldots, A_\ell, B_\ell),
% \]
% such that the support of $\RSK_{\O}(A_1, B_1, \ldots, A_\ell, B_\ell)$ lies inside some facet $\f$ with $\operatorname{end}(\f) = I$.  
% \question{Fill in this gap!!!  Use Domokos relations somehow?}
% As before, we can also produce a basis of ordinary monomials by replacing $\s$ by $\m$.  
% We obtain the following Stanley decomposition, where the set $\cor(\f)$ now contains only the \scalebox{2}{$\lrcorner$}-corners (which excludes elements on the diagonal of $\P_{\O}$):
% \begin{equation}
%     \label{Stanley decomp O_k semis}
%     S_1 \cong \bigoplus_{\f \in \scrF_k} z_{\cor(\f)} \C[z_{ij} \mid (i,j) \in \f] \cdot u_{\operatorname{end}(\f)}.
% \end{equation}

\begin{ex}[$\SO_2$-invariants, where $n=4$]
\label{ex:SO invariants}

    \input{Hilbert_series_examples/k2n4}
    
\end{ex}

\subsection{The symplectic group}

Recall from Section~\ref{sec:shellings} that $\#\f = k(2n-2k-1)$ for each facet $\f \in \scrF_k \subset \Delta_k(\P_{\Sp})$.  
Combined with the Stanley decomposition of $\C[\Delta_k]$ in~\eqref{stanley decomp again}, this yields the Hilbert--Poincar\'e series
\begin{equation*}
    P\!\left(\C[z_{ij}]/\ker \pi^*; t\right) = 
    \frac{\sum_{ \f \in \scrF_{k} } t^{\# \cor(\f)}}{(1-t)^{k(2n-2k-1)}}.
 \end{equation*}
 The Hilbert--Poincar\'e series in the following proposition is found in~\cite{EHP}*{Cor.~8.5} and \cite{EricksonHunziker23}*{Table 3, row II}, in the context of the $k$th Wallach representation of $\mathfrak{so}^*(2n)$.  
 The combinatorial proof for the numerator polynomial is adapted from that in~\cite{Krattenthaler}*{Prop.~30 and Fig.~9}.

 \begin{prop}
 \label{prop:Hilbert series Sp_2k}
     The ring of $\Sp(V)$-invariants has the Stanley decomposition
    \[
    \C[V^{n}]^{\Sp(V)} = \bigoplus_{\f \in \scrF_k} f_{\cor(\f)} \C[f_{ij} \mid (i,j) \in \f].
    \]
    Furthermore, for $k \leq \lfloor n/2 \rfloor$, the Hilbert--Poincar\'e series is  
     \begin{equation*}
         P\!\left(\C[V^{n}]^{\Sp(V)}; t\right) = \frac{\sum_\nu (\dim\F{\nu}{n-2k-1}) \: t^{|\nu|}}{(1-t^2)^{k(2n-2k-1)}},
     \end{equation*}
     where $\nu \in \Par((n-2k-1) \times 2k)$ with rows of even length.
 \end{prop}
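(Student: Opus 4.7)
My plan is to prove Proposition~\ref{prop:Hilbert series Sp_2k} in close analogy with Propositions~\ref{prop:Hilbert series GL_k} and~\ref{prop:Hilbert series O_k}. The Stanley decomposition part is formal: apply Weyl's FFT and SFT for $\Sp(V)$, which identify $\C[V^n]^{\Sp(V)}$ with $\C[z_{ij}]/\ker\pi^* \cong \C[\AM_n^{\leqslant 2k}]$, which by \cite{Herzog} is isomorphic to the Stanley--Reisner ring $\C[\Delta_k(\P_{\Sp})]$. Then the generic Stanley decomposition~\eqref{stanley decomp again} for $\C[\Delta_k]$ transports via $z_{ij} \mapsto f_{ij}$ to the decomposition in the proposition. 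For the Hilbert series, since each facet has $k(2n-2k-1)$ elements (recalled in Section~\ref{sec:shellings}) and $\pi^*$ doubles the degree, substituting $t \mapsto t^2$ in~\eqref{Hilbert series z_ij GL_k}-style gives the denominator $(1-t^2)^{k(2n-2k-1)}$, and reduces the problem to proving
\[
\sum_{\f \in \scrF_k} (t^2)^{\#\cor(\f)} = \sum_{\nu} (\dim \F{\nu}{n-2k-1}) \: t^{|\nu|},
\]
with $\nu \in \Par((n-2k-1) \times 2k)$ having rows of even length.

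The core combinatorial step is to construct a bijection $\scrF_k \longleftrightarrow \bigcup_\nu \SSYT(\nu,\: n-2k-1)$ over such $\nu$, with the property that $\#\cor(\f) = |\nu|/2$. For each $\f \in \scrF_k$ with constituent paths $\bp_1, \ldots, \bp_k$, the set $\cor(\bp_i)$ is a strict chain in $\P_{\Sp}$ inside the $(n-2k-1) \times (n-2k-1)$ right triangle immediately south of $s_i$. I will translate each of these $k$ triangles diagonally so that they superimpose onto a common strictly upper-triangular region inside $\M_{n-2k-1}(\mathbb{N})$, turning $\cor(\f)$ into an upper-triangular matrix $M$ with $\sum_{i,j}M_{ij} = \#\cor(\f)$. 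The key observation, parallel to the one used in the orthogonal proof, is that a \emph{strict chain in $\P_{\Sp}$} (both coordinates strictly increasing) is automatically an \emph{antichain in $\P_{\O}$} (where comparability demands the column index to weakly decrease); thus $\msupp(M)$ decomposes into $k$ antichains in $\P_{\O}(n-2k-1)$, so its height in $\P_{\O}$ is at most $k$.

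Applying $\RSK_{\O}$ to such matrices and invoking Proposition~\ref{prop:RSK_O}, we obtain an SSYT $T$ of shape $\nu$ in the alphabet $[n-2k-1]$, where $\nu$ has even row lengths and (since multisupport height is $\leq k$) at most $2k$ columns, i.e.~$\nu \in \Par((n-2k-1) \times 2k)$; moreover $|\nu|/2 = \sum_{i,j} M_{ij} = \#\cor(\f)$, which is precisely the degree bookkeeping required. For invertibility, start from any such $T \mapsto M$ and decompose $\msupp(M)$ by the antichain construction of~\eqref{D1 for GL_k}; each $D_i$ is an antichain in $\P_{\O}$, hence a strict chain in $\P_{\Sp}$, and the upper-order-ideal containments from Lemma~\ref{lemma:Ci and Di for Sp_2k}(3) force the reconstructed paths in $\P_{\Sp}$ to be mutually non-intersecting, yielding a unique $\f \in \scrF_k$.

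The main obstacle will be the careful bookkeeping of the superposition step: verifying that the $k$ triangular corner regions, when translated diagonally, embed without overlap conflict into the common $(n-2k-1) \times (n-2k-1)$ strictly upper-triangular region, and that the non-intersecting condition on the original paths $\bp_i$ corresponds exactly to the multisupport height $\leq k$ condition on $M$ in the poset $\P_{\O}$ rather than $\P_{\Sp}$. Once the $\P_{\Sp} \leftrightarrow \P_{\O}$ duality between strict chains and antichains is handled cleanly, the remainder of the argument is essentially a reprise of the proofs of Propositions~\ref{prop:Hilbert series GL_k} and~\ref{prop:Hilbert series O_k}, with $\RSK_{\O}$ playing the role that $\RSK_{\Sp}$ did in the orthogonal case.
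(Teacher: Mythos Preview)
Your proposal is correct and follows essentially the same route as the paper: superimpose the triangular corner regions of the $\bp_i$ into a common $(n-2k-1)$-triangle, use the fact that strict chains in $\P_{\Sp}$ are antichains in $\P_{\O}$, and apply $\RSK_{\O}$ to land in even-row tableaux with at most $2k$ columns. Two small slips to fix: the common region is \emph{upper-triangular} (with diagonal), not strictly upper-triangular, since that is the codomain of $\RSK_{\O}$ and the underlying set of $\P_{\O}(n-2k-1)$; and for the inverse step you want the $D_i$ decomposition and upper-order-ideal containments from~\eqref{D1 for O_k} and Lemma~\ref{lemma:Ci and Di for O_k}(3), not the $\Sp$ versions, since the superimposed matrix lives in $\P_{\O}(n-2k-1)$.
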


\begin{proof}
This time, we use the correspondence $\RSK_{\O}$ to establish a bijection between $\scrF_k$ and $\bigcup_\nu \SSYT(\nu, \: n-2k-1)$, which converts the number of corners into half the size of the corresponding tableau.  
(Note this time the combinatorial ``switch'' from $\P_{\Sp}(n)$ to $\P_{\O}(n-2k-1)$; see Section~\ref{sub:ES reduction}.)

Recall from Section~\ref{sec:shellings} that for each path $\bp_i \subseteq \f$, the corners in $\cor(\bp_i)$ form a strict chain within the $(n-2k-1) \times (n-2k-1)$ right triangle whose upper-left vertex is immediately below the point $s_i$.
(See the shaded region for $\bp_1$ in Figure~\ref{subfig:facet example Sp}.)
By superimposing all of these regions inside a common $(n-2k-1) \times (n-2k-1)$ right triangle, we convert $\cor(\f)$ into an upper-triangular matrix in $\M_{n-2k-1}(\mathbb{N})$ whose entries sum to $\#\cor(\f)$.  
Since strict chains in $\P_{\Sp}$ are antichains in $\P_{\O}$, the support of this matrix has height $\leq k$ in the poset $\P_{\O}(n-2k-1)$. 
We thus have a bijection from $\scrF_k$ to the set of upper-triangular matrices in $\M_{n-2k-1}(\mathbb{N})$ with height $\leq k$.  
Again, surjectivity is shown by decomposing the multisupport into the antichains $D_i$ from~\eqref{D1 for GL_k}, which recover the corners of the paths $\bp_i$.  
By Proposition~\ref{prop:RSK_O}, this bijection from $\scrF_k$ to the matrices of height $\leq k$ can be extended to the tableaux with even row lengths and with at most $2k$ columns.
\end{proof}

\begin{rem}
    As for $\SL(V)$ and $\SO(V)$, it follows from~\cite{HochsterRoberts}*{Cor.~1.9} that the ring $\C[V^{n}]^{\Sp(V)}$ is Gorenstein for all values of $\dim V$ and $n$.
    This is also easily seen from the fact that each shape $\nu$ above can be paired with its complement in a $(n-2k-1) \times 2k$ rectangle.
\end{rem}

\section{Howe duality and modules of covariants}
\label{sec:Howe duality and Enright reduction}

\subsection{Hermitian symmetric pairs}
\label{sub:Hermitian symmetric}

Suppose $G_\R$ is a connected real reductive group 
and $K_\R\subset G_\R$ is a maximal compact subgroup such that $G_\R/K_\R$ is an irreducible Hermitian symmetric space of the non-compact type. 
Let $\g_{\R} = \k_{\R} \oplus \p_{\R}$ be a 
Cartan decomposition of the Lie algebra of $G_\R$ and let 
$\g = \k\oplus \p$ be the corresponding decomposition
of the complexified Lie algebra. 
 From the general theory, there exists a distinguished element $h_0 \in \mathfrak{z}(\k)$ such that $\operatorname{ad} h_0$ acts on $\g$ with eigenvalues $0$ and $\pm 1$.  
 We thus have a triangular decomposition $\g = \p^- \oplus \k \oplus \p^+$, where $\p^{\pm} = \{ x \in \g \mid [h_0, x] = \pm x\}$.  
 The subalgebra $\q = \k \oplus \p^+$ is a maximal parabolic subalgebra of $\g$, with Levi subalgebra $\k$ and abelian nilradical $\p^+$.  
 Parabolic subalgebras of complex simple Lie algebras that arise in this way are called parabolic subalgebras of \emph{Hermitian type}, and $(\g,\k)$ is called a (complexified) \emph{Hermitian symmetric pair}.

Suppose $(\g,\k)$ is a Hermitian symmetric pair, and let $\h$ be a Cartan subalgebra of both $\g$ and $\k$.  
Let $\Phi$ be the root system of the pair $(\g,\h)$, and $\g_\al$ the root space corresponding to $\al \in \Phi$.  
Then put $\Phi(\k) \coloneqq \{ \al \in \Phi \mid \g_\al \subseteq \k\}$ and $\Phi(\p^+) = \{ \al \in \Phi \mid \g_\al \subseteq \p^+\}$.  
 Choose a set $\Phi^+$ of positive roots so that $\Phi(\p^+)\subseteq \Phi^+$, and let $\Phi^- = -\Phi^+$ denote the negative roots.  
 We write $\Phi^{\!+\!}(\k)$ for $\Phi^+ \cap \Phi(\k)$.  Let $\Pi 
 \Phi^+$ denote the set of simple roots $\al_i$.   
 We write $\langle \; , \; \rangle$ to denote the non-degenerate bilinear form on $\h^*$ induced from the Killing form of $\g$.  
 For $\al \in \Phi$, we write $\al^\vee \coloneqq 2\al / \langle \al,\al \rangle$.  
 We define the fundamental roots $\omega_i$ such that they form a basis of $\h^*$ dual to the $\al_i^\vee$, i.e., $\langle \omega_i,\al_j^\vee\rangle = \delta_{ij}$.  
 As usual, we let $\rho \coloneqq \frac{1}{2} \sum_{\al \in \Phi^+} \al$.  For the specific pairs $(\g,\k)$ of classical type, we choose the roots and weights spelled out explicitly in~\cite{EricksonHunziker23}*{\S3.3}.
Let 
\[
    \Lplusk \coloneqq \{ \la \in \h^* \mid \langle \la+\rho, \: \al^\vee\rangle \in \mathbb{Z}_{>0} \text{ for all } \al \in \Phi^{\!+\!}(\k)\}
\]
denote the set of dominant integral weights with respect to $\Phi^{\!+\!}(\k)$.
For $\la \in \Lambda^{{+}}(\k)$, let $F_\la$ denote the simple $\k$-module with highest weight $\la$.

Recall that two roots $\al, \be$ are called \emph{strongly orthogonal} if neither $\al + \be$ nor $\al - \be$ is a root.
We adopt Harish-Chandra's maximal set $\{\gamma_1, \ldots, \gamma_r\}$ of strongly orthogonal roots in $\Phi(\p^+)$, which are defined recursively as follows.
Let $\gamma_1$ be the lowest root in $\Phi(\p^+)$.
(Hence $\gamma_1 \in \Pi$.)
Then for $i > 1$, let $\gamma_i$ be the lowest root in $\Phi(\p^+)$ that is strongly orthogonal to each of $\gamma_1, \ldots, \gamma_{i-1}$.  For any partition $n_1 \geq \cdots \geq n_r \geq 0$, the weight $-\sum_i n_i \gamma_i$ is $\k$-dominant and integral.  
We have the following multiplicity-free decomposition due to Schmid~\cite{Schmid}:
\begin{equation}
    \label{Schmid decomp}
    \C[\p^+] \cong \bigoplus_{\mathclap{n_1 \geq \cdots \geq n_r \geq 0}} F_{-n_1 \gamma_1 -\cdots - n_r\gamma_r.}
\end{equation}
In particular, we have $\p^- \cong F_{-\gamma_1}$ as a $\k$-module.

Let $K$ be the complexification of $K_\R$ as described above.
Then $\k$ is the Lie algebra of $K$, and the adjoint action of $\k$ on
$\g$ exponentiates to a $K$-action.
In particular, $K$ acts on $\p^+$.
The $K$-orbits in $\p^+$ are  
$\scr{O}_0:=\{0\}$ and
$\scrO_k:=K\cdot(e_{\gamma_1}+\dots +e_{\gamma_k})$ for $1\leq k\leq r$, where $e_{\gamma_i}\in \g_{\gamma_i}$ denotes a root vector corresponding to $\gamma_i$.
The closures of the $K$-orbits in $\p^+$ form a chain of algebraic varieties
\[
\{0\} = \overline{\scrO}_0 \subset \overline{\scrO}_1 \subset \cdots \subset \overline{\scrO}_r = \p^+.
\]
For each $0\leq k \leq r$, the coordinate ring of $\overline{\scrO}_k$ can be viewed as a truncation of the Schmid decomposition~\cite{EHW}:
\begin{equation}
    \C[\overline{\scrO}_k] \cong \bigoplus_{\mathclap{n_1 \geq \cdots \geq n_k \geq 0}} F_{-n_1 \gamma_1 -\cdots - n_r\gamma_k.}
\end{equation}
%\question{Just cite~\cite{EHW}, or also describe the Pieri rule?}  
If $(\g,\k)$ is one of the types in Table~\ref{table:Howe}, then $\C[\overline{\scrO}_k]$ is isomorphic to the coordinate ring of a determinantal variety $\M^{\leqslant k}_{p,q}$, $\SM^{\leqslant k}_{n}$, or $\AM^{\leqslant 2k}_{n}$, which appeared in Section~\ref{sec:invariant theory} above.

% \question{Cite \cites{Schmid,EHW,Abeasis80,AbeasisDelFra,Daszkiewicz,DEP}; see comments in \cite{EHW}*{end \S1}, and Theorem 2.1. How to wrap this up?}

\subsection{Howe duality and modules of covariants}

For full details behind the facts presented in this section, we refer the reader to Roger Howe's paper~\cite{Howe89}.
In each of the three Howe duality settings we will describe, the story begins with a real Lie group $G_{\R}$, defined below:
\begin{align}
\U(p,q)&:=\left\{g\in \GL(p+q,\C) \mid g \begin{pmatrix}  I_p &0 \\   0 & -I_q\end{pmatrix}g^* =  \begin{pmatrix}  I_p &0 \\   0 & I_q\end{pmatrix}\right\},\\[5pt] \label{eq: Sp(2n,R)}
\Sp(2n,\R)&:=\left\{g\in \GL(2n,\C) \mid g \begin{pmatrix}  0 & I_n \\  -I_n& 0\end{pmatrix}g^t =  \begin{pmatrix}  0 & I_n \\  -I_n & 0\end{pmatrix}\right\}\cap\U(n,n),\\[5pt]
\O^*(2n) &:= \left\{g\in \GL(2n,\C)\mid g \begin{pmatrix}  0 & I_n \\  I_n & 0\end{pmatrix}g^t =  \begin{pmatrix}  0 & I_n \\   I_n & 0\end{pmatrix}\right\}\cap\U(n,n).
\end{align}
Note that 
$\U(p,q) \cap \U(p+q) \cong \U(p)\times \U(q)$,
$\Sp(2n,\R)\cap \U(2n) \cong \U(n)$ and 
$\O^*(2n)\cap \U(2n)\cong \U(n)$,
where in last two cases $\U(n)$ 
is embedded block-diagonally as follows: 
\[
\left\{\begin{pmatrix}
    a & 0 \\
    0 & (a^{-1})^{t}
\end{pmatrix}
\mid a\in \U(n)\right\}\cong \U(n).
\]
It follows that 
$\Sp(2n,\R)\subset \SL(2n,\C)$ 
and $\O^*(2n)\subset \SL(2n,\C)$. 
For this reason, many authors write $\SO^*(2n)$ to denote $\O^*(2n)$.
We also point out that our definition of $\Sp(2n,\R)$ given by \eqref{eq: Sp(2n,R)}
differs from the more standard definition 
by a Cayley transform as follows.
Let $\mathbf{c}\in \SL(2n,\C)$ be the matrix given by
\[
\mathbf{c}:=\frac{1}{\sqrt{2}}
\begin{pmatrix}I_n &\sqrt{-1}I_n\\\sqrt{-1}I_n& I_n
\end{pmatrix}.
\]
Then $\mathbf{c} \Sp(2n,\R)\mathbf{c}^{-1}\subseteq \SL(2n,\R)$ with equality if and only if $n=1$.

Let $\g_\R$ be the Lie algebra of of one of the Lie groups above, with complexification $\g$.
In each of the three cases above, ``Howe duality'' is a phenomenon by which $\g$ is naturally paired with one of the classical groups $H$ from the previous sections, in the following manner.
(See Table~\ref{table:Howe} for a summary of the data for each case of Howe duality.)

\begin{table}[ht]
    \centering
    \begin{tblr}{colspec={|Q[m,c]|Q[m,c]|Q[m,c]|Q[m,c]|Q[m,c]|Q[m,c]|},stretch=1.5}

\hline

$G_\R$& $(\g, K)$ & $H$ &   $\C[W]$ & $\sigma \in \Sigma$  &$\la \in \Lplusk$ \\

\hline[2pt]

$\operatorname{U}(p,q)$ & $(\gl_{p+q}, \GL_p \times \GL_q)$ & $\GL_k$ & $\C[\M_{p,k}\oplus\M_{q,k}]$ &  {$(\sigma^+, \sigma^-)$: \\ $\ell(\sigma^-) \leq p,$ \\ $\ell(\sigma^+) \leq q$} & $(-k-\overleftarrow{\sigma^{-}}; \: \overrightarrow{\sigma^+})$\\

\hline

$\operatorname{Mp}(2n,\R)$ & $(\sp_{2n}, \widetilde{\GL}_n)$ & $\O_k$ & $\C[\M_{n,k}]$ & {$\sigma'_1 + \sigma'_2 \leq k$, \\ $\ell(\sigma) \leq n$} & $-\dfrac{k}{2} - \overleftarrow{\sigma}$\\

\hline

$\O^*(2n)$ & $(\so_{2n}, \GL_n)$ & $\Sp_{2k}$ & $\C[\M_{n,2k}]$ & {$\ell(\sigma) \leq k$, \\ $\ell(\sigma) \leq n$} & $-k - \overleftarrow{\sigma}$ \\

\hline

\end{tblr}
    \caption{Data for Howe duality.  
    For $H = \GL_k$, we write $\sigma = (\sigma^+, \sigma^-) \coloneqq (\sigma^+_1, \sigma^+_2\ldots, 0, \ldots, 0, \ldots, -\sigma^-_2,-\sigma^-_1)$, where $\sigma^+$ and $\sigma^-$ are the partitions consisting of the positive and negative parts of $\sigma$, respectively.  
    We reverse a partition $\sigma = (\sigma_1, \ldots, \sigma_k)$ by writing $\overleftarrow{\sigma} \coloneqq (\sigma_k, \ldots, \sigma_1)$; in this setting, for the sake of contrast we write $\overrightarrow{\sigma} \coloneqq \sigma$.}
    \label{table:Howe}
\end{table}

Let $H$ be one of the three complex classical groups in Table~\ref{table:Howe}.  Then $H$ acts naturally on the space $W$, which is the same $W$ as in Section~\ref{sec:invariant theory}.  
Let $\mathcal{D}(W)$ be the Weyl algebra of polynomial-coefficient differential operators on $W$, and denote the subalgebra of $H$-invariant operators by $\mathcal{D}(W)^H$.  
For each of the groups $H$ in the table, there is a Lie algebra $\g_{\R}$ whose complexification $\g$ is isomorphic to a generating set for the algebra $\mathcal{D}(W)^H$.
Let $\omega : \g \rightarrow \mathcal{D}(W)^H$ denote this Lie algebra homomorphism whose image generates $\mathcal{D}(W)^H$.  
Following Howe, we call $(H, \g)$ a \emph{dual pair}.
Let $\g = \p^- \oplus \k \oplus \p^+$ be the triangular decomposition described above.
In each case, define an $H$-invariant polynomial map $\pi: W \longrightarrow \p^+$ as follows:
\begin{alignat*}{3}
    &(H = \GL_k) \qquad && \pi: \M_{p,k} \oplus \M_{q,k} \longrightarrow \M_{p,q}, \qquad && (X,Y) \longmapsto XY^t,\\
    &(H = \O_k) \qquad && \pi: \M_{n,k} \longrightarrow \SM_n, \qquad && X \longmapsto X X^t,\\
    &(H = \Sp_{2k}) \qquad && \pi: \M_{n,2k} \longrightarrow \AM_n, \qquad && X \longmapsto X \left(\begin{smallmatrix}0 & I_k \\ -I_k & 0 \end{smallmatrix}\right) X^t.
    \end{alignat*}
Then $\pi(W) \subset \p^+$ is either all of $\p^+$ (if $k \geq r$), or else is the determinantal variety $\M^{\leqslant k}_{p,q}$, $\SM^{\leqslant k}_{n}$, or $\AM^{\leqslant 2k}_{n}$, respectively.
This induces the comorphism $\pi^* : \C[\p^+] \longrightarrow \C[W]$ given by $z_{ij} \mapsto f_{ij}$, where the $z_{ij}$ are the standard coordinates on $\p^+$.  
As in Section~\ref{sec:invariant theory}, the SFT states that $\ker \pi^*$ is generated by the determinants of minors or Pfaffians.

It follows from all this that $\g$ acts on $\C[W]$ via its image $\omega(\g) \subset \mathcal{D}(W)^H$.  
Similarly, if $U$ is any representation of $H$,
then $\mathcal{D}(W)^H$ and hence $\g$
acts on the module of covariants 
$(\C[W]\otimes U)^H$.
Explicitly, the action of an invariant  differential operator $D\in \mathcal{D}(W)^H$ on $(\C[W]\otimes U)^H$ is given by
$D\cdot(\sum_i f_i\otimes u_i)=\sum_{i} (Df_i)\otimes u_i$. 
Furthermore, the $\k$-action integrates to an $K$-action and hence 
$(\C[W]\otimes U)^H$ can be viewed as a $(\g,K)$-module.

\begin{theorem}[Howe, Kashiwara--Vergne]
\label{thm:Howe duality}
We have the following multiplicity-free decomposition of $\C[W]$ as a $(\g,K) \times H$-module:
\begin{equation}
    \label{Howe decomp}
    \C[W] \cong \bigoplus_{\sigma \in \Sigma} 
    (\C[W]\otimes U_\sigma)^H
   \otimes U_{\sigma}^*,
\end{equation}
where $\Sigma \coloneqq \{ \sigma \in \widehat{H} \mid (\C[W]\otimes U_\sigma)^H \neq 0\}$.
Furthermore, there is an injective map $\Sigma \longrightarrow \Lplusk$, $\sigma \longmapsto \lambda$, such that as a $(\g,K)$-module
\begin{equation}
\label{covariants = L lambda}
 (\C[W]\otimes U_\sigma)^H \cong L_\la
 :=\mbox{simple $\g$-module with highest weight $\lambda$}
\end{equation}
and $L_\lambda$ is unitarizable with respect to $\g_\R$.
\end{theorem}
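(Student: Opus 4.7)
The plan is to first decompose $\C[W]$ as an $H$-module, then promote each $H$-isotypic component to a simple $(\g, K)$-module via Howe's double commutant argument, identify the resulting highest weights case by case, and finally establish unitarizability via the Fock--Bargmann inner product.

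First I would use that $H$ is reductive and acts rationally on $\C[W]$, so complete reducibility gives the canonical isotypic decomposition
\[
\C[W] \cong \bigoplus_{\sigma \in \widehat{H}} (\C[W] \otimes U_\sigma)^H \otimes U_\sigma^*,
\]
with $\Sigma$ collecting those $\sigma$ with nonzero multiplicity space. Because $\omega(\g) \subseteq \mathcal{D}(W)^H$, the $\g$-action on $\C[W]$ commutes with that of $H$ and therefore preserves each multiplicity space; the $K$-action on each summand is obtained by exponentiating $\k \subset \g$. This endows every nonzero summand with a $(\g, K)$-module structure.

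The crucial input is the identity $\omega(U(\g)) = \mathcal{D}(W)^H$, an FFT for $H$-invariant differential operators. Granting it, $\mathcal{D}(W)^H$ and the image of the group algebra of $H$ form a mutually commuting pair of subalgebras of $\operatorname{End}(\C[W])$, each being the full commutant of the other. A standard double-commutant argument then forces the displayed decomposition to be multiplicity-free on both sides, and forces each multiplicity space $(\C[W] \otimes U_\sigma)^H$ to be a simple $\g$-module. To match this simple module with a specific $L_\la$, I would locate the joint highest-weight vector inside $\C[W]$ for the product of Borel subgroups in $K$ and $H$: its $H$-weight records $\sigma^*$, while its $\h$-weight under $\omega$ produces the $\k$-highest weight $\la$. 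A case-by-case computation in each of the three dual pairs $(H, \g)$ of Table~\ref{table:Howe} realizes $\sigma \mapsto \la$ as an injection with the explicit formulas in the last column. Since the relevant joint highest-weight vectors are annihilated by the $\omega$-images of $\p^+$, each $L_\la$ is a lowest-weight module with respect to $\g = \p^- \oplus \k \oplus \p^+$.

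For unitarizability, I would equip $\C[W]$ with the Fock--Bargmann Hermitian inner product, under which monomials in suitable linear coordinates on $W$ form an orthogonal basis. Under the oscillator (metaplectic) representation $\omega$, this pairing is $G_\R$-invariant --- via the metaplectic cover where required, as for $\Sp(2n,\R)$ --- and it is simultaneously $H$-invariant. Consequently each multiplicity space inherits a $(\g_\R, K_\R)$-invariant positive-definite Hermitian form, which witnesses the unitarizability of each $L_\la$.

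The main obstacle is the identity $\omega(U(\g)) = \mathcal{D}(W)^H$; this is equivalent to fundamental theorems for $H$-invariant differential operators and is by far the deepest ingredient. Once it is established, multiplicity-freeness, irreducibility, and unitarizability all follow formally, and what remains is the case-by-case bookkeeping needed to verify the explicit highest-weight correspondence recorded in Table~\ref{table:Howe}.
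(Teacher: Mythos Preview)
The paper does not prove this theorem at all: it is stated as a named result (``Howe, Kashiwara--Vergne'') and the surrounding text explicitly refers the reader to Howe's paper \cite{Howe89} for ``full details behind the facts presented in this section.'' The only additional material the paper supplies is the appendix, which records the explicit formulas for the homomorphism $\omega:\g\to\mathcal{D}(W)^H$ and the $K\times H$-action on $\C[W]$ in each of the three cases, but gives no argument for multiplicity-freeness, irreducibility, or unitarizability.

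Your proposal is therefore not a comparison target but rather a self-contained sketch of the standard Howe duality proof. As such it is essentially correct in outline: the isotypic decomposition, the double commutant argument resting on the invariant-theoretic identity $\omega(U(\g))=\mathcal{D}(W)^H$, the extraction of highest weights via joint $B_K\times B_H$-eigenvectors, and the Fock model for unitarity are exactly the ingredients used in the literature. The one point to flag is that you correctly identify $\omega(U(\g))=\mathcal{D}(W)^H$ as the deep step, but you should be aware that in Howe's actual treatment this is not quite taken for granted as an ``FFT for invariant differential operators''; rather, it is proved by filtering $\mathcal{D}(W)$ and reducing to the polynomial FFT on the associated graded, or alternatively by working inside the oscillator representation from the outset. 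Either way, your sketch goes well beyond what the paper itself provides.
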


See the appendix for further details, where we give the action of $K \times H$ on $\C[W]$.
We also write down explicitly the Lie algebra homomorphism $\omega: \g \rightarrow \mathcal{D}(W)^H$, using a block matrix form that we find especially helpful in understanding the differential operators by which $\g$ acts on $\C[W]$.

\subsection{Stanley decompositions for modules of $\GL(V)$-covariants}

Here we treat the modules of $\GL(V)$-covariants in the case where $U_\sigma$ is a polynomial representation or its dual, i.e., where $\sigma^- = 0$ or $\sigma^+=0$.
(For much of the discussion we assume $\sigma^- = 0$, without loss of generality.)
By generalizing our combinatorial approach to the $\SL(V)$-invariants in Section~\ref{sub:Hilbert series SL_k}, we obtain a similar interpretation of the Hilbert--Poincar\'e series, as positive combinations taken over certain ``hybrid'' families of paths living in both $\Cleq{p}$ (or $\Cleq{q}$) and in $\P_{\GL}$.

\subsubsection{Two relations on $\Cleq{p}$ and $\Cleq{q}$}

We begin with a note aimed at unifying different ordering conventions in the literature.
Note that $p$ can just as well be replaced by $q$ throughout the following discussion.

For $A \in \Cleq{p}$, we write $\widetilde{A}$ to denote the element of $\Cleq{p}$ obtained from $A$ by replacing each entry $i$ with $p-i+1$.
Then define a new partial order $\preceq$ such that
\[
A \preceq B \Longleftrightarrow \widetilde{B} \leq \widetilde{A}.
\]
In other words, viewing $A$ and $B$ as adjacent \emph{bottom}-justified columns, we have $A \preceq B$ whenever we obtain an SSYT by rotating 180 degrees and substituting $i \mapsto p-i+1$.
Hence in general, $\leq$ and $\preceq$ do not coincide: for example, when $A$ is shorter than $B$, it is possible that $A \preceq B$, but impossible that $A \leq B$.

We introduce an important relation $A \rightarrow B$ which relaxes the ordering in $\Cleq{p}$, and which turns out to be the key difference in generalizing from invariants to covariants:

\begin{dfn}
\label{def:arrow}
    Let $A = (a_1, \ldots, a_r), \: B = (b_1, \ldots, b_s) \in \Cleq{p}$.
    Then we write $A \rightarrow B$ if and only if $a_i \leq b_{k-r+i}$ for all $i = 1, \ldots, r+s-k$.
\end{dfn}

More intuitively, adjoin $A$ and $B$ as columns and then slide $A$ downward until its bottom row lines up with the (possibly empty) $k$th row of $B$.
Then $A \rightarrow B$ if and only if the resulting skew tableau is semistandard.
For example, if $k=5$, then we have $(3,4,5) \rightarrow (1,2,4,4)$, since the skew tableau below is semistandard:
\[
k=5 \left\{ \; \ytableausetup{smalltableaux}
\ytableaushort{\none1,\none2,34,44,5}\right.
\]
If $k=4$, however, then the relation fails since the first column now slides up by one box.
Note that $A \leq B$ implies $A \rightarrow B$, although the example above shows that the converse is false.
If $\#A + \#B \leq k$, then $A \rightarrow B$ is vacuously true.
More importantly, note that $A \rightarrow B \Longleftrightarrow \widetilde{B} \rightarrow \widetilde{A}$.
Therefore we have
\begin{equation}
\label{two SM sequences}
X_1 \preceq \cdots \preceq X_m \rightarrow A_1 \leq \cdots \leq A_\ell \quad \text{ if and only if} \quad \widetilde{A}_\ell \preceq \cdots \preceq \widetilde{A}_1 \rightarrow \widetilde{X}_m \leq \cdots \leq \widetilde{X}_1.
\end{equation}
We will choose the form on the left-hand side to define our standard monomials in the upcoming section, in order to remain consistent with the first half of the paper, but we could just as well have used the form on the right-hand side (which actually lines up more nicely with our main reference~\cite{Jackson}).
Likewise, in our construction~\eqref{s-monomials SL_k} of the $\SL(V)$ standard monomials, we could have chosen to substitute $A \mapsto \widetilde{A}$ and reversed the inequalities everywhere.
But although this would indeed yield a different basis of standard monomials, nonetheless it is not hard to see that the effect on our lattice path diagrams would simply be a rotation by 180 degrees.

\begin{rem}
    The relations $\preceq$ and $\rightarrow$ were notably absent from our $\SL(V)$ constructions earlier in the paper, for the following reason.
    When we restrict our attention to the subposet $\Ceq{p}$ where all elements have length $k$, both relations $A \preceq B$ and $A \rightarrow B$ are equivalent to $A \leq B$ (in fact, $A \rightarrow B$ is equivalent to $A \leq B$ as long as \emph{one} of the two elements lies in $\Ceq{p}$).
    Hence for the $\SL(V)$-invariants, where the $X$'s (which were $I$'s in that setting) were elements of $\Ceq{p}$, there was no need to think about the relations $\prec$ or $\rightarrow$, since they could all be replaced by $\leq$ on either side of~\eqref{two SM sequences}.
\end{rem}

\subsubsection{Synopsis of the $\SL(V)$-invariants}

We begin by revisiting our results for the $\SL(V)$-invariants (Proposition~\ref{prop:Hilbert series SL_k}), this time focusing on the $\GL(V)$-stable components in the decomposition~\eqref{decomp SL invariants}.
These components are the modules of $\GL(V)$-semiinvariants $\C[V^{*p} \oplus V^q]^{\GL(V), \; \det^m}$, for each $m \in \mathbb{Z}$.
But the semiinvariants are a special case of covariants: as a $\GL(V)$-module, the character $\det^m$ is equivalent to the representation $U_{\sigma}$ where $\sigma = (m^k) = (m, \ldots, m)$.
Thus for fixed integer $m \geq 0$, we have $\sigma = (m^k)$, and one can summarize our previous results in terms of individual modules of covariants: we obtain the Stanley decomposition 
\begin{equation}
    \label{SL Stanley paths}
    (\C[V^{*p} \oplus V^q] \otimes U_\sigma)^{\GL(V)} =  \bigoplus_{\substack{\mathbf{C} \rightarrow \f: \\ 
    \#\mathbf{C} = m}} f_{\cor(\f)} \C[f_{ij} \mid (i,j) \in \f] \cdot \textstyle{\det_{\mathbf{C}^*}}
\end{equation}
which, in turn, yields the following Hilbert--Poincar\'e series (where our annotations foreshadow future notation):
\begin{equation}
\label{SL invariants as paths}
% P((\C[V^{*p} \oplus V^q] \otimes U_\sigma)^{\GL(V)}; t) =
(t^k)^m \sum_{J \in \Ceq{q}} \underbrace{(\text{\# chains $\mathbf C \rightarrow J$ in $\Ceq{q}$ of size $m$})}_{\substack{\#\sigma_{J} \\ \text{(see Def.~\ref{def:bins})}}} \cdot \underbrace{\frac{\sum_{J \downarrow \f} (t^2)^{\# \cor(\f)}}{(1-t^2)^{\# \f}}}_{\substack{Q_J(t) \\ \text{(see~\eqref{P_I})}}}.
\end{equation}

Note that in each summand above, instead of the facets $\mathbf{F} \subset \Ceq{q}$ that appeared in the Stanley decomposition of the $\SL(V)$-invariants, we must instead consider chains $\mathbf{C} \subset \Ceq{q}$ of size $m$.
We briefly recall these combinatorial objects in the formulas above.
Each chain $\mathbf{C}$ in $\Ceq{p}$ (when $m > 0$; resp., $\Ceq{q}$ when $m < 0$) of size $|m|$ was depicted as a non-intersecting family of $k$ ``trails,'' all of length $|m|$, approaching $\P_{\GL}$ from the west (resp., north), with each trail allowed to move weakly south (resp., east) with each step toward the east (resp., south).
(Contrast this with the upcoming Figure~\ref{fig:mascot}, where we will allow fewer than $k$ trails, of different lengths.)
The $k$ trails end at distinct ``interpolation points'' along the western (resp., northern) edge of $\P_{\GL}$, and these points are encoded by an element $I \in \Ceq{p}$ (resp., $J \in \Ceq{q}$).
Our shorthand for this was $\mathbf{C} \rightarrow I$ (resp., $\mathbf{C} \downarrow J$).
We then take $I$ (or $J$) as the \emph{starting} points of the $k$ non-intersecting lattice paths in some family $\f \subset \P_{\GL}$, with the endpoints being the $k$ easternmost points along the southern edge (resp., the $k$ southernmost points along the eastern edge).
Our shorthand for this was $I \rightarrow \f$ (resp., $J \downarrow \f$).
Finally, we write $\mathbf{C} \rightarrow \f$ (resp., $\mathbf{C} \downarrow \f$) if there exists an $I$ such that $\mathbf{C} \rightarrow I \rightarrow \f$.

% (For $m < 0$, in the discussion above, we replace $I$ by $J \in \Ceq{q}$, and reflect the picture of the trails about the main diagonal: 
% now we depict $J$ as $k$ interpolation points along the northern edge of $\P_{\GL}$, and the trails approach from the north, being allowed to (weakly) advance arbitrarily far to the east with each step southward.
% The paths in $\f$ begin from the interpolation points $J$ and end at the $k$ southernmost points along the eastern edge of $\P_{\GL}$.  Recall that the \emph{corners} of $\f$ in this context refer to the \rotatebox[origin=c]{180}{{\scalebox{2}{$\llcorner$}}}-patterns in the lattice paths.)

\subsubsection{Main result for $\GL(V)$-covariants of polynomial type}

We now turn toward generalizing the setting above, from the case $\sigma = (m^k)$ to the case where $\sigma$ is an arbitrary partition.

\begin{dfn}
    Let $\sigma = (\sigma^+, \sigma^-)$ as in Table~\ref{table:Howe}.
    If $\sigma^- = 0$ (resp., $\sigma^+ = 0$), then a \emph{$\sigma$-chain} is a chain $X_1 \preceq X_2 \preceq \cdots \preceq X_{m}$ in $\Cleq{q}$ (resp., $\Cleq{p}$) such that the SSYT with columns $\widetilde{X}_{m}, \ldots, \widetilde{X}_1$ has shape $\sigma^+$ (resp., $\sigma^-$).
    % \begin{itemize}
    
    % \item 
    
    % \item Equivalently, for $R \in \SSYT(\sigma, p)$, the \emph{$\sigma$-chain associated to $R$} is the chain in $\Cleq{p}$ whose elements are the columns of $R$.
    % To visualize $\mathbf{C}_R$, rotate $R$ by 180 degrees and view each row as one of the trails of $\mathbf{C}_R$, with vertical heights measured from the south. 
   % write $\widetilde{R}$ for the skew SSYT obtained by rotating $R$ by 180 degrees, and replacing each entry $i$ by the entry $\widetilde{i} \coloneqq p-i+1$.
    % Viewing the set of columns of $\widetilde{R}$ as a subset of $\Cleq{p}$, we denote this subset by $\mathbf{C}_R \subset \Cleq{p}$ and call it the \emph{$\sigma$-chain associated with $R$}.
    % \end{itemize}
\end{dfn}

As before, one can visualize a $\sigma$-chain as a family of $\ell(\sigma^{\pm})$ many ``trails'' whose lengths are the parts $\sigma^{\pm}_i$.
Note the obvious bijection between $\SSYT(\sigma,q)$ and the set of $\sigma$-chains in $\Cleq{q}$, given by applying $X \mapsto \widetilde{X}$ to each tableau column and then reversing the order.
Hence we can say more about the equivalence in~\eqref{two SM sequences}:
\begin{equation}
\label{two MS with tableau fact}
    \text{$X_1 \preceq \cdots \preceq X_m$ is a $\sigma$-chain in $\Cleq{q}$} \Longleftrightarrow \text{$\widetilde{X}_m \leq \cdots \leq \widetilde{X}_1$ is a tableau in $\SSYT(\sigma, q)$.}
\end{equation}
% \[
% \ytableausetup{smalltableaux}
% R = \ytableaushort{234456,355,5} \quad \curvearrowright \quad  \ytableaushort{\none\none\none\none\none5,\none\none\none553,654432} \quad \leadsto \quad \mathbf{C}_R = 
% \]
\begin{ex}
\label{ex:sigma chain}
In Figure~\ref{subfig:Mascot sigma minus zero}, the lengths of the trails in the $\sigma$-chain $\mathbf{C}$ reveal that $\sigma = (5,5,3,3,2)$.
Explicitly, regarding each row in the trails (from north to south) as an element of $\Cleq{q}$, we see that $\mathbf{C}$ is the $\sigma$-chain
\[
(2,6) \preceq (3,6) \preceq (1,3,5,7) \preceq (1,2,3,6,7) \preceq (1,2,4,6,7).
\]
Note that we can combine these chain elements as columns of a single skew tableau; then upon rotating and replacing each entry $i$ by $q-i+1$, we obtain an element of $\SSYT(\sigma,q)$.
Since $q=8$ in Figure~\ref{fig:mascot}, this construction is given as follows:
\[\ytableausetup{boxsize=1.1em}
\ytableaushort[\scriptstyle]{\none\none\none11,\none\none122,\none\none334,23566,66777}\quad \leadsto \quad
\ytableaushort[\scriptstyle]{22233,33467,566,778,88}
\]
In Figure~\ref{subfig:Mascot sigma plus zero}, the position of the trails (on the western edge) informs us that $\sigma^+ = 0$, and in particular the lengths of the trails reveal that $\sigma^- = (9,6,4)$.
As before, taking columns of dots (west to east) as elements of $\Cleq{p}$, we see that $\mathbf{C}$ is the $\sigma$-chain
\[
(1) \preceq (2) \preceq (5) \preceq (3,6) \preceq (4,7) \preceq (1,4,7) \preceq (1, 5,8) \preceq (2, 5,8) \preceq (2,6,9).
\]
Since $p=10$ in Figure~\ref{fig:mascot}, the tableau construction is given as follows:
\[\ytableausetup{boxsize=1.1em}
\ytableaushort[\scriptstyle]{\none\none\none\none\none1122,\none\none\none344556,125677889}
\quad \leadsto \quad
\ytableaushort[\scriptstyle]{23344569{10},566778,99{10}{10}}
\]
\end{ex}

Next we will partition the set of $\sigma$-chains into subsets determined by their maximal elements.
Equivalently, one can imagine separating elements of $\SSYT(\sigma^-, p)$ into ``bins'' $\sigma_I$, where $T$ belongs to the bin $\sigma_I$ if and only if $T$ remains semistandard after prepending the column $\widetilde{I}$.
The pairs $\mathbf{C} \rightarrow \f$ defined below will ultimately index the Stanley decomposition of the covariants. 

\begin{dfn}
\label{def:bins}
If $\sigma^+ = 0$, let $\mathbf{C}$ be a $\sigma$-chain in $\Cleq{p}$, and let $I \in \Ceq{q}$.
We write $\mathbf{C} \rightarrow I$ if $I$ is the minimal element of $\Ceq{p}$ containing the maximal element of $\mathbf{C}$.
We define the set
\[
\sigma_I \coloneqq \{ \text{$\sigma$-chains $\mathbf{C}$} \mid \mathbf{C} \rightarrow I \}.
\]
We write $\mathbf{C} \rightarrow \f$ if there exists a $I$ (necessarily unique) such that $\mathbf{C} \rightarrow I \rightarrow \f$.
(This definition can be adapted for the case $\sigma^- = 0$ by replacing each instance of $p$ with $q$, and $I$ with $J$.)
%(If $\mathbf{C}$ is a $\sigma$-chain in $\Cleq{q}$ and $J \in \Ceq{q}$, then we instead write $\mathbf{C} \downarrow J$ and $\mathbf{C} \downarrow \f$.)
\end{dfn}

% \begin{dfn}
% \label{def:bins}
% Let $I \in \Ceq{p}$.
% Then we define
% \begin{align*}
%     \sigma_I &\coloneqq \{ \text{$\sigma$-chains $\mathbf{C} \subset \Cleq{p}$} \mid \text{$I$ is the minimal element of $\Ceq{p}$ containing the terminal element of $\mathbf{C}$}\} \\
%     &= \{ \mathbf{C}_R \mid \text{$\widetilde{I}$ is the maximal element of $\Ceq{p}$ containing the first column of $R \in \SSYT(\sigma,p)$}\}.
%     \end{align*}
    
% \end{dfn}

In practice, one can easily determine set $\sigma_I$ by considering the \emph{initial string} in $I$, meaning the longest string of consecutive numbers starting with 1 (which may be empty).
We can now restate Definition~\ref{def:bins} as follows: $\mathbf{C} \rightarrow I$ if the maximal element of $\mathbf{C}$ is obtained by deleting (possibly zero) entries from the initial string of $I$.
For example, if $I = (1,2,3,5,6)$, then $\sigma_I$ contains precisely those $\sigma$-chains with the following maximal element(s):
\begin{align*}
\ell(\sigma) = 5 &\leadsto (1,2,3,5,6),\\
\ell(\sigma) = 4 &\leadsto (1,2,5,6) \text{ or } (1,3,5,6) \text{ or } (2,3,5,6),\\
\ell(\sigma) = 3 &\leadsto (1,5,6) \text{ or } (2,5,6) \text{ or } (3,5,6),\\
\ell(\sigma) = 2 &\leadsto (5,6),\\
\ell(\sigma) = 1 &\leadsto \sigma_I = \varnothing.
\end{align*}
Conversely, given a $\sigma$-chain $\mathbf{C} \subset \Cleq{p}$, we can determine the unique $I$ such that $\mathbf{C} \rightarrow I$ by repeatedly inserting the smallest unused entry into the maximal element of $\mathbf{C}$ until we obtain a sequence $I$ of length $k$.  
For example, in Figure~\ref{subfig:Mascot sigma plus zero}, the maximal element of $\mathbf{C}$ is $(2,6,9)$, and so we insert $1$ and $3$ to obtain $I = (1,2,3,6,9)$.

Since the union of the disjoint sets $\sigma_J$ (resp., $\sigma_I$) is the entire set of $\sigma$-chains in $\Cleq{q}$ (resp., $\Cleq{p}$), we have
\begin{align}
    \label{sum sigma_I dim}
    \begin{split}
    \sum_{J \in \Ceq{q}} \#\sigma_J &= \#\SSYT(\sigma^+, q) = \dim \F{\sigma^+}{q}, \qquad \sigma^- = 0,\\
    \sum_{I \in \Ceq{p}} \#\sigma_I &= \#\SSYT(\sigma^-, p) = \dim \F{\sigma^-}{p}, \qquad \sigma^+ = 0.
    \end{split}
\end{align}
We observe that
\begin{equation}
    \label{I J dim U_sigma}
    \#\sigma_{(1, \ldots, k)} = \#\SSYT(\sigma^{\pm},k) = \dim U_\sigma.
\end{equation}
In the case $\sigma^- = 0$, this is because $\mathbf{C} \rightarrow (1, \ldots, k)$ if and only if the SSYT obtained by substituting its elements $X \mapsto \widetilde{X}$ has all its entries in $\{q-k+1, \ldots, q-1, q\}$.
Upon subtracting $q-k$ from each entry, we obtain an element of $\SSYT(\sigma, k)$.

To summarize, each pair $(\mathbf{C}, \f)$ such that $\mathbf{C} \rightarrow \f$ can be viewed as a ``hybrid'' family of paths that begins as a $\sigma$-chain outside $\P_{\GL}$, and then becomes a family of $k$ lattice paths inside $\P_{\GL}$.
Specifically, we have $\mathbf{C} \rightarrow \f$ if and only if:
\begin{itemize}
    \item every trail in $\mathbf{C}$ joins the starting point of some lattice path in $\f$; and
    \item given the starting point of any unjoined lattice path in $\f$, every point between it and $(1,1)$ belongs to $\f$.
    (It suffices to check this for the unjoined path in $\f$ which starts furthest from $(1,1)$.)  
\end{itemize}
See Figure~\ref{fig:mascot} for an example.

Now that we have defined the required combinatorial objects, it remains to specify the covariants which they represent.
It will most convenient for us to view covariants as certain functions in $\C[W] \coloneqq \C[V^{*p} \oplus V^q]$, via the following identification.
Letting $N$ denote the maximal unipotent subgroup of $\GL(V)$, and considering the $N$-invariants in the decomposition~\eqref{Howe decomp}, we have
\begin{align*}
\C[W]^N &\cong \bigoplus_{\sigma \in \Sigma} (\C[W] \otimes U_\sigma)^{\GL(V)} \otimes (U^*_\sigma)^N \\
\C[W]^N_{\sigma} & \cong  (\C[W] \otimes U_\sigma)^{\GL(V)} \otimes \C u^*_{\sigma},
\end{align*}
where $\C[W]^N_\sigma$ is the subspace of $\sigma^*$-weight vectors for the maximal torus in $\GL(V)$, and $u^*_{\sigma}$ is a highest weight vector of $U_\sigma^*$.
This yields the isomorphisms
\begin{align*}
   (\C[W] \otimes U_\sigma)^{\GL(V)} \cong {\rm Hom}_{\GL(V)}(U_\sigma^*, \C[W]) &\cong \C[W]^N_{\sigma},\\
    \varphi &\mapsto \varphi(u^*_\sigma).
\end{align*}
Hence our main problem reduces to writing down a Stanley decomposition and Hilbert series for $\C[W]^N_\sigma$.

Fixing a basis for $V$ and the dual basis for $V^*$, let $v^*_{ij}$ denote the $i$th coordinate of $v^*_j$, and let $v_{ij}$ denote the $i$th coordinate of $v_j$.
We define the following $N$-invariant functions:
\begin{align*}
    \textstyle{\det_{X^*}}(v^*_1, \ldots, v^*_p, v_1, \ldots, v_q) &\coloneqq \det(v^*_{ij})_{i = 1, \ldots, \#X; \; j \in X,} && X \in \Cleq{p},\\
    \textstyle{\det_{Y}}(v^*_1, \ldots, v^*_p, v_1, \ldots, v_q) &\coloneqq \det(v_{ij})_{i = k+1-\#Y, \ldots, k; \; j \in Y,} && Y \in \Cleq{q}.
\end{align*}
Together with the quadratics $f_{ij}$ defined in~\eqref{f_ij for GL_k}, the functions $\det_{X^*}$ and $\det_Y$ generate the ring $\C[W]^N$; see~\cite{Jackson}*{Cor.~3.4.4}).

Now suppose that $\sigma^- = 0$ (resp., $\sigma^+ = 0$).
If $\mathbf{C}$ is a $\sigma$-chain in $\Cleq{q}$ (resp., $\Cleq{p}$), then we use the familiar shorthand
\begin{equation}
    \label{alpha beta}
    \displaystyle\textstyle \det_{\mathbf{C}} \coloneqq \displaystyle \prod_{Y \in \mathbf{C}} \textstyle{\det_Y}
    \qquad \text{or, respectively,} 
    \qquad
    \textstyle\det_{\mathbf{C}^*} \displaystyle \coloneqq \prod_{X \in \mathbf{C}} \textstyle{\det_{X^*}}.
\end{equation}
% If $R$ and $S$ are SSYT's of length $\leq k$, with entries in $[p]$ and $[q]$, respectively, then we define the functions
% \begin{equation}
%     \label{alpha beta}
%     \textstyle\det_{R^*} \displaystyle \coloneqq \prod_{\mathclap{\substack{\text{columns} \\ \text{$X$ of $R$}}}} \textstyle{\det_{X^*}} \qquad \text{and} \displaystyle \qquad \textstyle \det_S \coloneqq \displaystyle \prod_{\mathclap{\substack{\text{columns} \\ \text{$Y$ of $S$}}}} \textstyle{\det_Y}.
% \end{equation}
Note that in either case, $\det_{\mathbf{C}^*}$ or $\det_{\mathbf{C}}$ is a polynomial function of degree $|\sigma^{\pm}|$.

By~\cite{Jackson}*{Prop.~3.3.6}, a function $f \in \C[W]$ lies in $\C[W]_\sigma$ if and only if
\begin{equation}
    \label{sigma weight}
    \sigma_i =  (\text{degree of $f$ in the variables $v_{i\bullet}$}) - (\text{degree of $f$ in the variables $v^*_{i\bullet}$}), \qquad 1 \leq i \leq k.
\end{equation}
In this case we say that $f$ has weight $\sigma^*$.
It is straightforward to check that if $\mathbf{C}$ is a $\sigma$-chain, then $\det_{\mathbf{C}}$ has weight $\sigma^*$ (as does $\det_{\mathbf{C}^*}$ in the case $\sigma^+ = 0$).

For ease of notation, we  abbreviate the following rational expressions appearing earlier in our Hilbert--Poincar\'e series of the $\SL(V)$-invariants, for $I \in \Ceq{p}$ or $J \in \Ceq{q}$:
\begin{equation}
    \label{P_I}
P_I(t) \coloneqq \sum_{I \rightarrow \f} \frac{(t^2)^{\#\cor(\f)}}{(1-t^2)^{\#\f}} \qquad \text{and} \qquad Q_J(t) \coloneqq \sum_{J \downarrow \f} \frac{(t^2)^{\#\cor(\f)}}{(1-t^2)^{\#\f}}.
\end{equation}
Recall from Proposition~\ref{prop:Hilbert series SL_k} that $\#\f$ depends only on the sum $|I|$ of the elements of $I$, and is given by
\begin{equation}
    \label{size of f}
    \#\f = k\left(p+q-\frac{k-1}{2}\right)-|I|.
\end{equation}
The same holds in $Q_J(t)$ upon replacing $|I|$ by $|J|$.

\begin{rem}
    We observe a connection to earlier work by Herzog and Trung~\cites{HerzogTrung,Herzog}, in a different setting.
    In particular, our rational expressions $P_I(t)$ and $Q_J(t)$ appeared there as the Hilbert series of certain determinantal rings, where the defining ideal is generated by the minors of an order ideal of the set of bitableaux.
    In particular, $P_I(t)$ and $Q_J(t)$ are the Hilbert series of the order ideals generated by bitableaux $(I, (1,\ldots, k))$ and $((1, \ldots, k), J)$, respectively.
\end{rem}

\begin{theorem}
    \label{thm:Hilbert series GL_k covariants}
Let $U_\sigma$ be a polynomial representation of $\GL(V)$, or the dual representation thereof.
    Then the module of $\GL(V)$-covariants of type $U_\sigma$ has the following Stanley decomposition (where the sums range over the hybrid path families $\mathbf{C} \rightarrow \f$ defined in Definition~\ref{def:bins}):
    \begin{equation}
    \label{Stanley decomp GL covariants}
    (\C[V^{*p} \oplus V^q] \otimes U_\sigma)^{\GL(V)} \cong \begin{cases}
    \displaystyle
        \bigoplus_{\mathbf{C} \downarrow \f}  f_{\cor(\f)} \C[f_{ij} \mid (i,j) \in \f] \cdot \textstyle\det_{\mathbf{C}},& \sigma^- = 0,\\[4ex]
        \displaystyle
        \bigoplus_{\mathbf{C} \rightarrow \f} f_{\cor(\f)} \C[f_{ij} \mid (i,j) \in \f] \cdot \textstyle \det_{\mathbf{C}^*}, & \sigma^+ = 0.
    \end{cases}
\end{equation}
Furthermore, we have the following Hilbert--Poincar\'e series, with $\sigma_I$ and $\sigma_J$ as in Definition~\ref{def:bins}, and with $P_I(t)$ and $Q_J(t)$ as in~\eqref{P_I}:
\begin{equation}
    \label{GL covariants Hilbert series}
    P\Big((\C[V^{*p} \oplus V^q] \otimes U_\sigma)^{\GL(V)}; t\Big) = \begin{cases}
        \displaystyle
        t^{|\sigma^+|} \sum_{J \in \Ceq{q}} \#\sigma_J \cdot Q_J(t), & \sigma^- = 0,\\[4ex]
        \displaystyle
        t^{|\sigma^-|} \sum_{I \in \Ceq{p}} \#\sigma_I \cdot P_I(t), & \sigma^+ = 0.
    \end{cases}
\end{equation}
In either case, the sum of coefficients $\sum_I  \#\sigma_I$ or $\sum_J  \#\sigma_J$ equals the dimension of the simple $\k$-module $F_\la$, where $\sigma \mapsto \la$ is the map described in Theorem~\ref{thm:Howe duality}.
\end{theorem}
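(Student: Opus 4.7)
The plan is to adapt the proof of Proposition~\ref{prop:Hilbert series SL_k}, using the identification $(\C[V^{*p}\oplus V^q]\otimes U_\sigma)^{\GL(V)}\cong \C[W]^N_\sigma$ from the discussion preceding the theorem, combined with Jackson's standard monomial theory~\cite{Jackson} for $\C[W]^N$. Without loss of generality, assume $\sigma^-=0$ (the case $\sigma^+=0$ is symmetric via the involution $A\mapsto\widetilde{A}$). The standard monomials in $\C[W]^N$ restricted to the $\sigma^*$-weight space are products of the form $f_{AB}\cdot\det_{\mathbf{C}}$, where $(A,B)$ is a bitableau as in Section~\ref{sub:graphs GL_k}, and $\mathbf{C}=(Y_1\preceq\cdots\preceq Y_m)$ is a chain in $\Cleq{q}$ satisfying the standardness condition $Y_m\rightarrow B_1$. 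The weight constraint~\eqref{sigma weight}, combined with~\eqref{two MS with tableau fact}, forces $\mathbf{C}$ to be a $\sigma$-chain.

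The first key step is to translate the standardness condition $Y_m\rightarrow B_1$ into a condition on the lattice-path family $\f$ associated to $f_{AB}$. Applying $\RSK_{\GL}$ to $(A,B)$ and passing to the ordinary monomial via~\eqref{RSK_GL s to m}, part~(1) of Lemma~\ref{lemma:Ci and Di for GL_k} identifies $B_1$ with the tuple of topmost column indices of the chain decomposition $C_i$. The unique $J\in\Ceq{q}$ such that $\mathbf{C}\rightarrow J$ (in the sense of Definition~\ref{def:bins} adapted to $\Cleq{q}$) then coincides with the element whose entries give the starting column indices of $k$ non-intersecting paths beginning along the northern edge of $\P_{\GL}$. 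Shellability of the subcomplex $\{\f\mid J\downarrow\f\}$ of $\Delta_k(\P_{\GL})$, following~\cite{HerzogTrung}*{Thm.~4.9} with restrictions given by corners in our sense, yields a unique facet $\f$ for each ordinary monomial. The result is a bijection between standard monomials in $\C[W]^N_\sigma$ and pairs with $\mathbf{C}\downarrow\f$, which proves the Stanley decomposition.

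The Hilbert--Poincar\'e series is then immediate: each $f_{ij}$ has degree $2$, each corner contributes an additional $t^2$-factor, and the total degree of $\det_{\mathbf{C}}$ equals $|\sigma^+|$ since its factors have column sizes matching the parts of $\sigma^+$. Grouping the summands by their common $J$ and invoking the shorthand $Q_J(t)$ from~\eqref{P_I}, which depends only on $|J|$ via~\eqref{size of f}, yields the closed form~\eqref{GL covariants Hilbert series}, with multiplicity $\#\sigma_J$ counting $\sigma$-chains with given maximum. For the final assertion, relation~\eqref{sum sigma_I dim} gives $\sum_{J}\#\sigma_J=\dim\F{\sigma^+}{q}$. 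From Table~\ref{table:Howe}, $\la=(-k-\overleftarrow{\sigma^-};\overrightarrow{\sigma^+})=(-k^p;\sigma^+)$, so the $\k=\gl_p\oplus\gl_q$-module $F_\la$ is the tensor product of a one-dimensional $\gl_p$-module (the character $\det^{-k}$) with $\F{\sigma^+}{q}$, yielding $\dim F_\la=\dim\F{\sigma^+}{q}$ as claimed.

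The main obstacle will be the faithful translation between Jackson's standardness constraints (phrased in terms of $H$-sequences and $H$-splits) and the compact conditions given by $\preceq$ and $\rightarrow$. In particular, one must verify that the single constraint $Y_m\rightarrow B_1$, when combined with the internal chains $Y_1\preceq\cdots\preceq Y_m$ and $B_1\leq\cdots\leq B_\ell$, is sufficient to characterize standardness, rather than requiring pairwise conditions $Y_i\rightarrow B_j$ for all $i,j$. This reduces to a combinatorial monotonicity property of the relation $\rightarrow$ under the orderings $\preceq$ and $\leq$, which can be checked by unpacking Definition~\ref{def:arrow} in terms of skew SSYT's.
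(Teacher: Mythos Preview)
Your proposal is correct and follows essentially the same route as the paper: reduce to $\C[W]^N_\sigma$, invoke Jackson's standard monomial theory, use Lemma~\ref{lemma:Ci and Di for GL_k}(1) to read off the starting points of the lattice paths from the first column of the bitableau, apply the Herzog--Trung shellability of $\{\f \mid J \downarrow \f\}$ to pin down a unique facet, and then read off the Hilbert series and $\dim F_\la$ exactly as you do. The paper resolves the ``main obstacle'' you flag not via a monotonicity property of $\rightarrow$, but by two explicit lemmas (Lemmas~\ref{lemma: split to tableaux} and~\ref{lemma: SM basis for GL covariants}) that reinterpret Jackson's $\GL_k$-splits as column pairs violating semistandardness and then show, via the straightening law, that the standard monomials are precisely those of the form $f_{AB}\cdot\det_{\mathbf C}$ with $Y_1 \preceq \cdots \preceq Y_m \rightarrow B_1 \leq \cdots \leq B_\ell$; this is where the real work lies, and your plan would need to recover it.
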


We defer the proof of Theorem~\ref{thm:Hilbert series GL_k covariants} to the end of the next subsection.
The following corollary is a special case of Corollary~9.2 in~\cite{NOT}, which follows immediately from the formulas in our theorem.
 
\begin{corollary}
\label{cor:NOT result}
    Let $(H, \g) = (\GL_k, \gl_{p+q})$, and suppose that $\sigma^- = 0$ or $\sigma^+ = 0$.
    Let $\sigma \mapsto \la$ be the map described in Theorem~\ref{thm:Howe duality}.
    Let \textup{``Deg''} denote the Bernstein degree.  
    Then 
    \[
    \operatorname{Deg} L_\la = \dim U_\sigma \cdot \operatorname{deg} \overline{\scrO}_k.
    \]
\end{corollary}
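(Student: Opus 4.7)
My plan is to extract the Bernstein degree directly from the Hilbert--Poincar\'e series provided by Theorem~\ref{thm:Hilbert series GL_k covariants}. By Theorem~\ref{thm:Howe duality}, $L_\la \cong (\C[V^{*p} \oplus V^q] \otimes U_\sigma)^{\GL(V)}$ as a $(\g,K)$-module, and since the associated variety of $L_\la$ is $\overline{\scrO}_k$ of dimension $d \coloneqq k(p+q-k)$, the Bernstein degree $\operatorname{Deg} L_\la$ is the value at $t=1$ of the numerator once the series is written with the common denominator $(1-t^2)^d$. Treating the case $\sigma^- = 0$ (the case $\sigma^+ = 0$ is analogous), Theorem~\ref{thm:Hilbert series GL_k covariants} gives
\[
P(t) \;=\; t^{|\sigma^+|} \sum_{J \in \Ceq{q}} \#\sigma_J \cdot \frac{N_J(t)}{(1-t^2)^{k(p+q-(k-1)/2) - |J|}},
\]
with $N_J(t) \coloneqq \sum_{J \downarrow \f}(t^2)^{\#\cor(\f)}$. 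Using the identity $k(p+q-(k-1)/2) - |J| = d - (|J| - k(k+1)/2)$, I will clear denominators; this introduces a factor $(1-t^2)^{|J| - k(k+1)/2}$ inside each summand of the numerator.

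Evaluating the numerator at $t=1$, each extra factor $(1-t^2)^{|J| - k(k+1)/2}$ vanishes unless $|J| = k(k+1)/2$, which forces $J = (1,2,\ldots,k)$, the unique minimal element of $\Ceq{q}$. Only this one summand survives, and the numerator reduces to $\#\sigma_{(1,\ldots,k)} \cdot N_{(1,\ldots,k)}(1)$. By~\eqref{I J dim U_sigma}, $\#\sigma_{(1,\ldots,k)} = \dim U_\sigma$.

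The main obstacle is to identify $N_{(1,\ldots,k)}(1)$ with $\deg \overline{\scrO}_k$. The lattice-path families $(1,\ldots,k) \downarrow \f$ are not literally the facets $\scrF_k$ of Proposition~\ref{prop:Hilbert series GL_k} (ours run from the north edge to the east edge, whereas the $\scrF_k$ paths run from the west edge to the south edge), so one must argue that both sets nevertheless furnish shellings of $\Delta_k(\P_{\GL})$ and hence describe the same graded algebra $\C[V^{*p} \oplus V^q]^{\GL(V)}$. A cleaner route is to specialize Theorem~\ref{thm:Hilbert series GL_k covariants} to $\sigma = 0$: then $\dim U_\sigma = 1$ and $\#\sigma_J = \delta_{J,(1,\ldots,k)}$, so the series reduces to $Q_{(1,\ldots,k)}(t)$, which must equal the invariant Hilbert--Poincar\'e series of Proposition~\ref{prop:Hilbert series GL_k}; its numerator at $t=1$ is $\sum_\nu \dim \F{\nu}{p-k} \dim \F{\nu}{q-k} = \deg \overline{\scrO}_k$. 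Combining these ingredients yields $\operatorname{Deg} L_\la = \dim U_\sigma \cdot \deg \overline{\scrO}_k$.
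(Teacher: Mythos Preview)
Your proof is correct and follows essentially the same route as the paper: isolate the $J=(1,\ldots,k)$ term as the unique summand with maximal denominator exponent, so that after clearing to the common denominator $(1-t^2)^{k(p+q-k)}$ all other terms pick up a factor of $(1-t^2)$ and vanish at $t=1$; then invoke~\eqref{I J dim U_sigma} for $\#\sigma_{(1,\ldots,k)}=\dim U_\sigma$. The paper simply asserts that $Q_{(1,\ldots,k)}(t)$ is the Hilbert series of $\overline{\scrO}_k$, whereas you derive it by specializing Theorem~\ref{thm:Hilbert series GL_k covariants} to $\sigma=0$; your ``obstacle'' about path orientations is a red herring that your specialization argument neatly sidesteps.
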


\begin{proof}
    Suppose, without loss of generality, that $\sigma^- = 0$.
    Recall that $Q_{(1, \ldots, k)}(t)$ is the Hilbert--Poincar\'e series of $\overline{\scrO}_k$.
    By~\eqref{size of f}, the exponent $\#\f$ is strictly greater in $Q_{(1,\ldots,k)}(t)$ than it is in $Q_J(t)$ for any $J \neq (1, \ldots k)$.
    Hence in the reduced form of the Hilbert series~\eqref{GL covariants Hilbert series}, the numerator equals 
    \[
    \text{$\#\sigma_{(1,\ldots, k)} \,\cdot$ numerator of $Q_{(1,\ldots, k)}(t) + $ terms containing the factor $(1-t^2)$}.
    \]
    Evaluating this numerator at $t=1$, we obtain $\#\sigma_{(1, \ldots, k)}$ times the Bernstein degree of $\overline{\scrO}_k$.
    The result follows from~\eqref{I J dim U_sigma}.
\end{proof}

Note that Corollary~\ref{cor:NOT result} gives the following combinatorial interpretation: the Bernstein degree of $L_\la$ equals the number of families of hybrid paths $\mathbf{C} \rightarrow (1, \ldots, k)\rightarrow \f$ such that $\mathbf{C}$ is a $\sigma$-chain. 

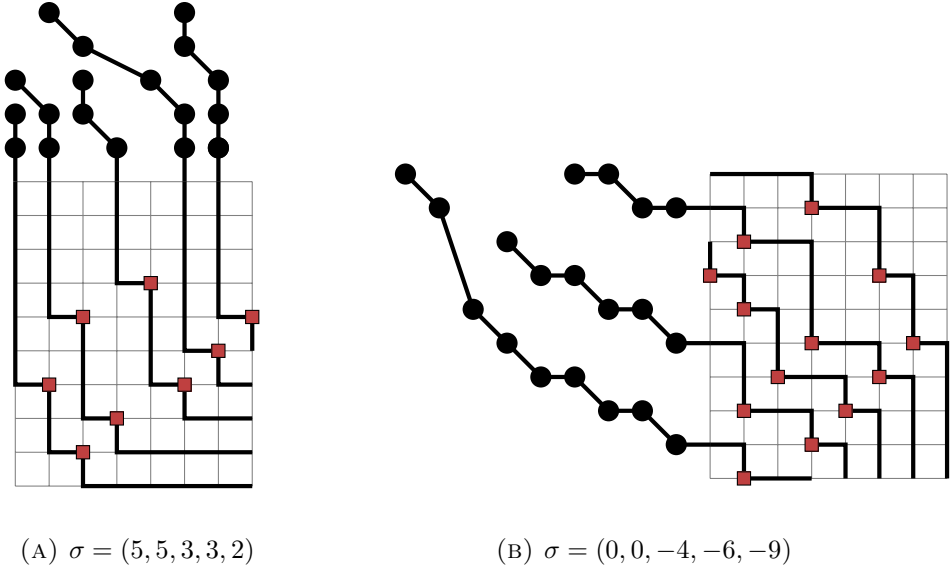
\begin{figure}
    \centering
        \begin{subfigure}[b]{0.4\textwidth}
         \centering
         \tikzstyle{corner}=[rectangle,draw=black,fill=red!50!gray, minimum size = 5pt, inner sep=0pt]
\tikzstyle{endpt}=[circle,fill=black, text=white, font=\tiny\sffamily\bfseries, minimum size = 8pt, inner sep=0pt]
\tikzstyle{blankpoint}=[circle,draw, minimum size = 8pt, inner sep=0pt]

\begin{tikzpicture}[scale=.45,baseline]
\draw[gray] (1,1) grid (8, 10);

\draw[ultra thick] (7,10) -- (7,6) -- (8,6) -- (8,5);
\draw[ultra thick] (6,10) -- (6,5) -- (7,5) -- (7,4) -- (8,4);
\draw[ultra thick] (4,10) -- (4,7) -- (5,7) -- (5,4) --(6,4) -- (6,3) -- (8,3);
\draw[ultra thick] (2,10) -- (2,6) -- (3,6) -- (3,3) -- (4,3) -- (4,2) -- (8,2);
\draw[ultra thick] (1,10) -- (1,4) -- (2,4) -- (2,2) -- (3,2) -- (3,1) -- (8,1);

\node at (1,11) [endpt] {};
\node at (2,11) [endpt] {};
\node at (4,11) [endpt] {};
\node at (6,11) [endpt] {};
\node at (7,11) [endpt] {};

\node at (2,4) [corner] {};
\node at (3,2) [corner] {};
\node at (3,6) [corner] {};
\node at (4,3) [corner] {};
\node at (5,7) [corner] {};
\node at (6,4) [corner] {};
\node at (7,5) [corner] {};
\node at (8,6) [corner] {};

\draw[ultra thick] (1,10) -- (1,11) -- (1,12) ;
\node at (1,12) [endpt] {};

\draw[ultra thick] (2,10) -- (2,11) -- (2,12) -- (1,13) ;
\node at (2,12) [endpt] {};
\node at (1,13) [endpt] {};

\draw[ultra thick] (4,10) -- (4,11) -- (3,12) -- (3,13);
\node at (3,12) [endpt] {};
\node at (3,13) [endpt] {};

\draw[ultra thick] (6,10) -- (6,11) -- (6,12) -- (5,13) -- (3,14) -- (2,15);
\node at (6,12) [endpt] {};
\node at (5,13) [endpt] {};
\node at (3,14) [endpt] {};
\node at (2,15) [endpt] {};

\draw[ultra thick] (7,10) -- (7,11) -- (7,13) -- (6,14) -- (6,15);
\node at (7,11) [endpt] {};
\node at (7,12) [endpt] {};
\node at (7,13) [endpt] {};
\node at (6,14) [endpt] {};
\node at (6,15) [endpt] {};

\end{tikzpicture}
         \caption{$\sigma = (5,5,3,3,2)$}
         \label{subfig:Mascot sigma minus zero}
     \end{subfigure}
    \begin{subfigure}[b]{0.4\textwidth}
         \centering
         \tikzstyle{corner}=[rectangle,draw=black,fill=red!50!gray, minimum size = 5pt, inner sep=0pt]
\tikzstyle{endpt}=[circle,fill=black, text=white, font=\tiny\sffamily\bfseries, minimum size = 8pt, inner sep=0pt]
\tikzstyle{blankpoint}=[circle,draw, minimum size = 8pt, inner sep=0pt]

\begin{tikzpicture}[scale=.45, baseline=(current bounding box.south)]
\draw[gray] (1,1) grid (8, 10);

\draw[ultra thick] (1,10) -- (4,10) -- (4,9) -- (6,9) -- (6,7) -- (7,7) -- (7,5) -- (8,5) -- (8,1);
\draw[ultra thick] (1,9) -- (2,9) -- (2,8) -- (4,8) -- (4,5) -- (6,5) -- (6,4) -- (7,4) -- (7,1);
\draw[ultra thick] (1,8) -- (1,7) -- (2,7) -- (2,6) --(3,6) -- (3,4) -- (5,4) -- (5,3) -- (6,3) -- (6,1);
\draw[ultra thick] (1,5) -- (2,5) -- (2,3) -- (3,3) -- (4,3) -- (4,2) -- (5,2) -- (5,1);
\draw[ultra thick] (1,2) -- (2,2) -- (2,1) -- (4,1);

\node at (0,9) [endpt] {};
\node at (0,5) [endpt] {};
\node at (0,2) [endpt] {};

\node at (4,9) [corner] {};
\node at (6,7) [corner] {};
\node at (7,5) [corner] {};
\node at (2,8) [corner] {};
\node at (4,5) [corner] {};
\node at (6,4) [corner] {};
\node at (1,7) [corner] {};
\node at (2,6) [corner] {};
\node at (3,4) [corner] {};
\node at (5,3) [corner] {};
\node at (2,3) [corner] {};
\node at (4,2) [corner] {};
\node at (2,1) [corner] {};

\draw[ultra thick] (1,9) -- (0,9) -- (-1,9) -- (-2,10) -- (-3,10);
\node at (-1,9) [endpt] {};
\node at (-2,10) [endpt] {};
\node at (-3,10) [endpt] {};

\draw[ultra thick] (1,5) -- (0,5) -- (-1,6) -- (-2,6) -- (-3, 7) -- (-4, 7) -- (-5, 8);
\node at (-1,6) [endpt] {};
\node at (-2,6) [endpt] {};
\node at (-3,7) [endpt] {};
\node at (-4, 7) [endpt] {};
\node at (-5,8) [endpt] {};

\draw[ultra thick] (1,2) -- (0,2) -- (-1,3) -- (-2,3) -- (-3,4) -- (-4,4) -- (-5,5) -- (-6,6) -- (-7,9) -- (-8, 10);
\node at (-1,3) [endpt] {};
\node at (-2,3) [endpt] {};
\node at (-3,4) [endpt] {};
\node at (-4, 4) [endpt] {};
\node at (-5, 5) [endpt] {};
\node at (-6,6) [endpt] {};
\node at (-7,9) [endpt] {};
\node at (-8,10) [endpt] {};

\end{tikzpicture}
         \caption{$\sigma = (0, 0, -4, -6, -9)$}
         \label{subfig:Mascot sigma plus zero}
     \end{subfigure}
    
    \caption{Typical hybrid path families $\mathbf{C} \rightarrow \f$ indexing the components of the Stanley decomposition~\eqref{Stanley decomp GL covariants}.
    Figure~\ref{subfig:Mascot sigma minus zero} shows the case where $\sigma^- = 0$, and Figure~\ref{subfig:Mascot sigma plus zero} where $\sigma^+ = 0$.
    In each case, the $\sigma$-chain $\mathbf{C}$ is depicted as the family of trails outside the $p \times q$ rectangular grid (which represents $\P_{\GL}$), and $\f$ is depicted by the family of lattice paths inside the grid.
    Note that the lengths of the trails give the coordinates of $\sigma$.
    We observe that $k=5$ (the number of lattice paths in $\f$), and $p=10$ and $q=8$ (the dimensions of the grid).
    Note that in Figure~\ref{subfig:Mascot sigma plus zero}, $\ell(\sigma^-) = 3$ is strictly less than $k$, and so two of the paths in $\f$ are not joined to a trail in $\mathbf{C}$.
    To verify that $\mathbf{C} \rightarrow \f$, we check that $\f$ contains every point on the western edge of $\P_{\GL}$ lying north of the lower unjoined path.}
    \label{fig:mascot}
\end{figure}

\subsubsection{A linear basis of standard monomials}

In this subsection, we translate certain results of~\cite{Jackson} into a form that is amenable to our lattice path approach.
We then use this framework to prove Theorem~\ref{thm:Hilbert series GL_k covariants}.

The main result of~\cite{Jackson} realizes $\C[W]^N$ as a \emph{Gr\"obner algebra}, in the following way.
The polynomial ring $\C[\{f_{ij}\}, \{\det_{X^*}\}, \{\det_{Y}\}]$ is equipped with a monomial ordering (see~\cite{Jackson}*{Def.~3.8.22}).
There exists a canonical projection $\C[\{f_{ij}\}, \{\det_{X^*}\}, \{\det_{Y}\}] \longrightarrow \C[W]^N$, such that the images of the monomials lying outside the kernel $\mathcal{I}$ are linearly independent.
These images are called \emph{standard monomials}, and form a linear basis for $\C[W]^N \cong \C[\{f_{ij}\}, \{\det_{X^*}\}, \{\det_{Y}\}] / \mathcal{I}$.
Moreover, there is a \emph{straightening law}, i.e., a relation for each minimal generator of $\mathcal{I}$ expressing it as a linear combination of strictly lesser monomials~\cite{Jackson}*{eqn.~(3.8.21) and (3.8.26)}.
The ideal $\mathcal{I}$ is generated by the finite set containing the following monomials~\cite{Jackson}*{Thm.~3.6.17}:
\begin{itemize}
    \item $\det_{X^*}\det_{X'^*}$, where $X$ and $X'$ are incomparable in $\Cleq{p}$;
    \item $\det_{Y}\det_{Y'}$, where $Y$ and $Y'$ are incomparable in $\Cleq{q}$;
    \item $\det_{X^*} \det_Y$, where $\#X+\#Y > k$;
    \item the monomials of \emph{$\GL_k$-splits}, which we define below.
\end{itemize}

\begin{dfn}[see \cite{Jackson}*{Def.~3.6.15}]
\label{def:split}
    A \emph{$\GL_k$-split} is an arrangement of positive integers taking the form
\[
\arraycolsep=1.5pt
\begin{array}{rcl}
   & a^*_1 & \\
& \vee & \\
    & \vdots & \\
   & \vee & \\
& a^*_s & \\
   & \vee & \\
   b_r > \cdots > b_1 \geq c^*_1 > \cdots > c^*_r > & d^*_1 & > \cdots > d^*_t
\end{array}
\hspace{1cm}
\begin{array}{rcl}
   & a_1 & \\
& \vee & \\
    & \vdots & \\
   & \vee & \\
& a_v & \\
   & \vee & \\
   b^*_u > \cdots > b^*_1 > c_1 > \cdots > c_u > & d_1 & > \cdots > d_w,
   \end{array}
\]
and subject to the following conditions:
\begin{enumerate}
    \item the starred entries are elements of $[p]$, and the unstarred entries are elements of $[q]$;
    \item $r \geq s$ and $u \geq v$;
    \item $s + t \leq k$ and $v + w \leq k$;
    \item \label{sum k+1} $r+t+u+w = k+1$;
    \item $r + u > 0$;
    \item \label{split condition} $b_1 > c_1$ and $b^*_1 > c^*_1$.
\end{enumerate}
(The parameters $r,s,t,u,v,w$ are allowed to be zero, meaning that the corresponding sequences are empty.)
The \emph{monomial} of a split is the following product in $\C[\{f_{ij}\}, \{\det_{X^*}\}, \{\det_{Y}\}]$:
 \begin{equation}
    \label{monomial of split}
 \prod_{i=1}^r f_{c^*_i, b_i} \prod_{i=1}^u f_{b^*_i, c_i} \cdot \textstyle \det_{(d^*_t, \ldots, d^*_1, a^*s, \ldots, a^*_1)^*} \det_{(d_w, \ldots, d_1, a_v, \ldots, a_1)}.
 \end{equation}
\end{dfn}
In this paper, we say that a $\GL_k$-split is of \emph{negative type} if $v=w=0$, since the weight $\sigma$ of its associated monomial is such that $\sigma^+ = 0$.
% \question{Maybe flip all this so that $\sigma^- =0$ later?}
\begin{lemma}
    \label{lemma: split to tableaux} 
Each $\GL_k$-split of negative type gives rise to a triple of columns of strictly increasing integers (with the first two columns joined together), taking one of the two forms below:
\[
\ytableausetup{boxsize = normal}
\textup{length = $k+1$} \; \left\{\begin{ytableau}
c^*_r & a^*_s \\
\raisebox{-2pt}{\vdots} & \raisebox{-2pt}{\vdots} \\
\raisebox{-2pt}{\vdots} & \raisebox{-2pt}{\vdots} \\
\raisebox{-2pt}{\vdots} & \raisebox{-2pt}{\vdots} \\
\raisebox{-2pt}{\vdots} & \raisebox{-2pt}{\vdots} \\
\raisebox{-2pt}{\vdots} & a^*_1 \\
\raisebox{-2pt}{\vdots} \\
c^*_1\\
b^*_1\\
\raisebox{-2pt}{\vdots}\\
b^*_u
\end{ytableau}\right.
\qquad
\begin{ytableau}
    c_u\\
    \raisebox{-2pt}{\vdots}\\
    \raisebox{-2pt}{\vdots}\\
    \raisebox{-2pt}{\vdots}\\
    c_1\\
    b_1\\
    \raisebox{-2pt}{\vdots}\\
    \raisebox{-2pt}{\vdots}\\
    \raisebox{-2pt}{\vdots}\\
    \raisebox{-2pt}{\vdots}\\
    b_r
\end{ytableau}
\qquad
\text{or}
\qquad
\textup{length = $k$} \; \left\{\begin{ytableau}
\none & d^*_t \\
\none & \raisebox{-2pt}{\vdots} \\
c^*_r & d^*_1 \\
\raisebox{-2pt}{\vdots} & a^*_s \\
\raisebox{-2pt}{\vdots} & \raisebox{-2pt}{\vdots} \\
\raisebox{-2pt}{\vdots} & a^*_1 \\
\raisebox{-2pt}{\vdots} \\
c^*_1\\
b^*_1\\
\raisebox{-2pt}{\vdots}\\
b^*_u
\end{ytableau}\right.
\qquad
\begin{ytableau}
    \none\\
    \none\\
    c_u\\
    \raisebox{-2pt}{\vdots}\\
    \raisebox{-2pt}{\vdots}\\
    c_1\\
    b_1\\
    \raisebox{-2pt}{\vdots}\\
    \raisebox{-2pt}{\vdots}\\
    \raisebox{-2pt}{\vdots}\\
    b_r
\end{ytableau}
\]
In the left-hand case, the second column may be empty.
In the right-hand case, the topmost two-box row in the skew shape necessarily exists and violates the semistandard condition.

Conversely, every column triple with one of these two forms --- where the first two columns have entries in $[p]$ and the third column has entries in $[q]$ --- arises from a unique $\GL_k$-split of negative type.
\end{lemma}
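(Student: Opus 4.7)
The plan is to establish the bijection by constructing both directions explicitly and verifying the ordering conditions by direct bookkeeping against Definition~\ref{def:split}.

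For the forward map, I start from a $\GL_k$-split of negative type with parameters $r,s,t,u$ (so $v=w=0$) and form the three columns by concatenation as displayed in the lemma. For the first column, strict increase within the $c^*$-block and within the $b^*$-block is immediate from the chains $c^*_1 > \cdots > c^*_r$ and $b^*_u > \cdots > b^*_1$ in Definition~\ref{def:split}, and strict increase at the junction $c^*_1 < b^*_1$ is exactly condition (6). For the second column, strict increase within the $d^*$- and $a^*$-blocks is again immediate, and at the junction $d^*_1 < a^*_s$ follows from the $\vee$-relation between $a^*_s$ and $d^*_1$ in the left arrangement. The third column $c_u,\ldots,c_1,b_1,\ldots,b_r$ is analogous, with the junction $c_1 < b_1$ coming from condition (6). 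The two shape cases are then distinguished by whether $t=0$ (case 1, first/third column length $r+u=k+1$) or $t\geq 1$ (case 2, length $k$), using $r+t+u=k+1$ from condition~(4). In case 2, the topmost row where both columns are populated contains $c^*_r$ (first column) and $d^*_1$ (second column), and the split chain gives $c^*_r > d^*_1$; this is precisely the violation of the row-weak-increase condition claimed in the lemma.

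For the reverse map, I start with a column triple of one of the two prescribed shapes. The shape determines $t$ (the vertical offset of the second column above the first), and hence also the common first/third column length $r+u=k+1-t$, as well as $s$. The additional datum of the split point $r$ is encoded by the positions of the labels $c^*_1$ and $b^*_1$ (equivalently $c_1$ and $b_1$); given $r$, I read off the entries $c^*_r,\ldots,c^*_1,b^*_1,\ldots,b^*_u$ from the first column, $d^*_t,\ldots,d^*_1,a^*_s,\ldots,a^*_1$ from the second, and $c_u,\ldots,c_1,b_1,\ldots,b_r$ from the third. The strict chains inside each arrangement are immediate from column strictness, condition (6) $b^*_1>c^*_1$ and $b_1>c_1$ is automatic for the same reason, the $\vee$-relation $a^*_s > d^*_1$ corresponds to strict increase across the second column's junction, and $b_1 \geq c^*_1$ in the left arrangement is enforced by the matching of split points in the first and third columns. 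Conditions (2)--(5) of Definition~\ref{def:split} are read directly off the shape, so all split axioms are satisfied and the reconstruction is unique.

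The main obstacle is purely organizational: the two cases $t=0$ and $t\geq 1$ must be treated simultaneously without losing track of the offset, and one must be careful to check that the split point $r$ really is recovered unambiguously from the labeled column triple — in particular that the consistency requirement $b_1\geq c^*_1$ across the left arrangement corresponds precisely to choosing the same $r$ for the first and third columns. Once this correspondence is set up, every remaining condition in Definition~\ref{def:split} reduces to a one-line re-reading of either a column-strictness statement or one of the orderings already present in the split.
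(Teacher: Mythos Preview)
Your forward direction is fine and matches the paper's argument. The gap is in the converse. You write that ``the split point $r$ is encoded by the positions of the labels $c^*_1$ and $b^*_1$,'' and later speak of a ``labeled column triple''; but the column triple in the converse statement is \emph{unlabeled}. The data consist only of three strictly increasing columns together with their shape (the offset $t$); no entry comes tagged as a $c^*$ versus a $b^*$, or as a $c$ versus a $b$. That division is exactly what has to be reconstructed, and your proposal does not say how.

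Your remark that ``the consistency requirement $b_1\geq c^*_1$ \ldots\ corresponds precisely to choosing the same $r$ for the first and third columns'' has the logic inverted. The cross-column constraints $b_1 \geq c^*_1$ (from the left arrangement in Definition~\ref{def:split}) and $b^*_1 > c_1$ (from the right arrangement, which with $v=w=0$ reads $b^*_u>\cdots>b^*_1>c_1>\cdots>c_u$) are not consequences of picking matching split points: they are the mechanism that \emph{determines} the unique admissible $r$. You also do not mention the second of these two constraints at all.

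The paper's proof supplies precisely this missing step. One compares, for each $i$, the $i$th entry from the top of the first column with the $i$th entry from the bottom of the third column. The former is strictly increasing in $i$ and the latter strictly decreasing, so their difference is strictly monotone; hence there is a unique index at which the sign changes, and that index is $r$. With $r$ thus recovered, the remaining identifications (reading off $c^*$'s, $b^*$'s, $a^*$'s, $d^*$'s, $c$'s, $b$'s) and the verification of the split axioms proceed exactly as you outlined.
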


\begin{proof}
    The fact that each column is strictly increasing, along with the fact that $c^*_r > d^*_1$, follows directly from the defining inequalities and condition~(\ref{split condition}) of Definition~\ref{def:split}.
    Since $w=0$, condition~(\ref{sum k+1}) of Definition~\ref{def:split} forces $r+t+u=k+1$.
    Hence if $t = 0$, then $r+u = k+1$, and we obtain the non-skew two-column shape shown above on the left-hand side; the second column is empty if $s=0$, but the first and third columns necessarily have length $k+1$.
    If $t >0$, then we obtain a two-column skew shape of length $k$, shown above on the right-hand side; in this case, the two columns necessarily overlap in at least one row, namely the row $c^*_r > d^*_1$ (or, if $r=0$, the row $b_1 > d^*_1$).
    
    For the converse, first suppose that we are given a pair of columns, each strictly increasing with entries in $[p]$, where the first column has length $k+1$ and the second has length between $0$ and $k+1$; suppose we also are given a third column with entries in $[q]$ and with length $k+1$.
    Then we recover the $\GL_k$-split as follows.
    The $a^*_i$ are the entries in the second column, and there are no $d^*_i$.
    To recover the rest of the elements in the split, it suffices to determine the parameter $r$, which is the unique number such that the $r$th entry from the top of the first column is greater than the $r$th entry from the bottom of the third column (if none exists, then $r=0$), and such that the $(r+1)$th entry from the top of the first column is less than the $(r+1)$th entry from the bottom of the third column (if none exists, then $u = 0$).

    Finally, suppose we are given a two-column skew shape with length $k$ and entries in $[p]$, with at least one two-box row, such that the topmost two-box row violates semistandardness; suppose we also are given a third column of length $\leq k$ with entries in $[q]$.
    It is clear from the picture above how to recover the $d^*_i$ and $a^*_i$ from the second column of the skew shape.
    Likewise, we recover the parameter $r$ as in the case above.
    The violation of semistandardness ensures that $d^*_1$ will be less than the element to its left in Definition~\ref{def:split}.
\end{proof}

The straightening law~\cite{Jackson}*{(3.8.21)} allows one to write the monomial of a $\GL_k$-split as a signed sum of monomials which are stictly lesser in the monomial order.
In the case of a $\GL_k$-split of negative type, we can picture this straightening law in terms of the right-hand side of the picture in Lemma~\ref{lemma: split to tableaux}: these lesser monomials are obtained from the split by all possible ways of exchanging any number of entries in the second column with entries in the first column.
Note that in each resulting configuration, upon reordering the entries in each column to strictly increase,  semistandardness is restored in the topmost two-box row (although the rows beneath may not be semistandard).
By removing all corresponding box-pairs in the first and third columns which lie above the topmost semistandard violation, we divide by a product of $f_{ij}$'s to obtain the monomial of yet another split.
By repeating this straightening procedure a finite number of times, we eventually express the original monomial of a $\GL_k$-split as a combination of monomials which are not divisible by any split monomial, and hence lie outside $\mathcal{I}$.

If, in the right-hand side of the picture in Lemma~\ref{lemma: split to tableaux}, we call the first and second columns $A$ and $X$, respectively, then we observe that the straightening procedure ultimately yields pairs $(A', X')$ whose overlap is semistandard.
But by Definition~\ref{def:arrow}, this just means that $A' \rightarrow X'$.
This leads to the following characterization of a basis of standard monomials for $\C[V^{*p} \oplus V^q]^N_\sigma$:

\begin{lemma}
    \label{lemma: SM basis for GL covariants}

Suppose $\sigma^+ = 0$.  Let $f_{AB}$ be as defined in~\eqref{f_AB}.
Then a basis of standard monomials for $\C[V^{*p} \oplus V^q]^N_{\sigma}$ is given by monomials of the form
\[
f_{AB} \cdot \textstyle\det_{\mathbf{C}^*},
\]
where $\mathbf{C}$ is a $\sigma$-chain in $\Cleq{p}$ such that
\begin{equation}
    \label{SM form in GL lemma}
\underbrace{X_1 \preceq \cdots \preceq X_m}_{\mathbf{C}} \rightarrow \underbrace{A_1 \leq \cdots \leq A_\ell}_A \qquad \text{and} \qquad \underbrace{B_1 \leq \cdots \leq B_\ell}_B,
\end{equation}
and where $A_i \in \Cleq{p}$ and $B_i \in \Cleq{q}$.  
In the case $\sigma^- = 0$, standard monomials take the form
\[
f_{AB} \cdot \textstyle\det_{\mathbf{C}},
\]
where $\mathbf{C}$ is a $\sigma$-chain in $\Cleq{q}$ such that
\[
\underbrace{A_1 \leq \cdots \leq A_\ell}_A \qquad \text{and} \qquad \underbrace{Y_1 \preceq \cdots \preceq Y_m}_{\mathbf{C}} \rightarrow \underbrace{B_1 \leq \cdots \leq B_\ell}_B.
\]
\end{lemma}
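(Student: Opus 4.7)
The plan is to start from Jackson's characterization of $\C[V^{*p} \oplus V^q]^N$ as a Gröbner algebra whose standard monomial basis is indexed by monomials not divisible by any generator of $\mathcal{I}$, and then to successively impose (i) the incomparability and mixing constraints on the $\det_{X^*}$'s and $\det_Y$'s, (ii) the weight $\sigma^*$ condition of~\eqref{sigma weight}, and (iii) non-divisibility by monomials of $\GL_k$-splits. Since the two cases $\sigma^+ = 0$ and $\sigma^- = 0$ are interchanged under the involution $X \mapsto \widetilde{X}$ together with the equivalence~\eqref{two SM sequences}, it suffices to treat $\sigma^+ = 0$.

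After quotienting by the incomparability and size-bound generators of $\mathcal{I}$, every surviving monomial takes the form $\s(A_1, \ldots, A_\ell \mid B_1, \ldots, B_\ell) \cdot \det_{\mathbf{X}^*} \cdot \det_{\mathbf{Y}}$ for multichains $\mathbf{X} \subset \Cleq{p}$ and $\mathbf{Y} \subset \Cleq{q}$; passing to ordinary monomials via the bijection~\eqref{RSK_GL s to m} replaces the first factor by $f_{AB}$. I would then impose the weight condition. Since $f_{AB}$ is $\GL(V)$-invariant of weight zero, each $\det_{X^*}$ contributes a nonpositive weight supported on the first $\#X$ coordinates, and each $\det_Y$ contributes a nonnegative weight supported on the last $\#Y$ coordinates, the hypothesis $\sigma^+ = 0$ forces $\mathbf{Y} = \varnothing$. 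Summing the weight contributions of $\det_{\mathbf{X}^*}$ and reading the column lengths of $\mathbf{X}$ as the conjugate partition of $\sigma^-$, the total weight matches $-\sigma^-$ precisely when $\mathbf{X}$ is a $\sigma$-chain; renaming $\mathbf{X}$ as $\mathbf{C}$ with the $\preceq$-ordering via~\eqref{two MS with tableau fact} then yields the shape $f_{AB} \cdot \det_{\mathbf{C}^*}$.

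The main obstacle is the remaining non-divisibility constraint coming from $\GL_k$-split monomials. Since our monomial has no $\det_Y$ factor, the only dangerous splits are those of negative type (with $v = w = 0$) whose own associated $\det_Y$ factor is likewise absent. By Lemma~\ref{lemma: split to tableaux}, each such split corresponds to a column-triple configuration in which the two starred columns form a two-column shape whose topmost two-box row violates semistandardness. The key observation is that failure of the arrow relation $X_m \rightarrow A_1$ (Definition~\ref{def:arrow}) between the rightmost column of $\mathbf{C}$ and the leftmost column of $A$ is equivalent to the existence of exactly such a semistandard violation in their skew overlap, from which a dividing split monomial can be extracted; conversely, if $X_m \rightarrow A_1$, then no such violation, and hence no dividing split, exists. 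Combined with the chain conditions $X_1 \preceq \cdots \preceq X_m$ and $A_1 \leq \cdots \leq A_\ell$ already in force, this yields the claimed characterization of the basis. The covariant statement then follows from the isomorphism $\C[W]^N_\sigma \cong (\C[W] \otimes U_\sigma)^{\GL(V)} \otimes \C u_\sigma^*$ recorded earlier in the text, and the $\sigma^- = 0$ case is symmetric.
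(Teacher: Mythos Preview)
Your overall strategy matches the paper's: start from Jackson's Gr\"obner-algebra description of $\C[W]^N$, then peel off the four families of generators of $\mathcal{I}$ one at a time.  However, the way you pass from Jackson's conditions to the conditions stated in the lemma contains a genuine gap.

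The incomparability generators $\det_{X^*}\det_{X'^*}$ are defined using the partial order $\leq$ on $\Cleq{p}$, so the surviving $\det_{X^*}$-factors form a $\leq$-chain, not a $\preceq$-chain.  Your sentence ``the total weight matches $-\sigma^-$ precisely when $\mathbf{X}$ is a $\sigma$-chain'' is therefore not correct: the weight condition only fixes the multiset of column lengths, and together with the $\leq$-chain condition this makes the $X_i$ the \emph{columns of an SSYT of shape $\sigma^-$} (Jackson's form), which is not the same as being a $\sigma$-chain.  For instance, with $p=3$, $k=2$, $\sigma^-=(2,1)$, the multiset $\{(1,2),(3)\}$ is a $\leq$-chain (so $\det_{(1,2)^*}\det_{(3)^*}$ is a Jackson standard monomial) but is \emph{not} a $\sigma$-chain in the sense of the lemma; the lemma's basis contains $\det_{(1)^*}\det_{(2,3)^*}$ instead.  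The same orientation issue recurs in your treatment of splits: the column-triple picture of Lemma~\ref{lemma: split to tableaux} naturally yields the condition $A_\ell \rightarrow X_1$ (with the $A_i$ ordered by $\preceq$ and the $X_i$ by $\leq$), not the condition $X_m \rightarrow A_1$ that you assert.  The paper handles this by first deriving Jackson's ``different basis'' and then invoking the relabeling equivalence~\eqref{two SM sequences}--\eqref{two MS with tableau fact} to pass to the form stated in the lemma; this translation step is exactly what is missing from your argument.

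A smaller point: the width constraint $\#A_1 \leq k$ on the $f$-part does not follow from ``the incomparability and size-bound generators,'' which only involve the $\det$-factors.  As the paper notes, it is the length-$(k{+}1)$ column configuration in Lemma~\ref{lemma: split to tableaux} (the first picture there, arising from splits with $t=0$) that forces each $A_i\in\Cleq{p}$ and $B_i\in\Cleq{q}$.
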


\begin{proof}
    Both cases are proved identically, so we will prove the case $\sigma^+ = 0$.
    We will actually show that the conventions in the discussion above yield a different basis, consisting of monomials of the form
    \[
    f_{AB} \cdot \prod_i \textstyle\det_{X_i^*},
    \]
    such that the $X_i$ form the columns of an element of $\SSYT(\sigma,p)$, and
    \[
    A_1 \preceq \cdots \preceq A_\ell \rightarrow X_1 \leq \cdots \leq X_m \qquad \text{and} \qquad B_1 \preceq \cdots \preceq B_\ell.
    \]
    But by~\eqref{two SM sequences} and~\eqref{two MS with tableau fact}, this fact implies the statement given in the lemma.

    We recall the four types of generators of $\mathcal{I}$ given earlier.
    The inequalities $X_1 \leq \cdots \leq X_m$ follow from the usual straightening laws for the generators $\det_{X^*} \det_{X'^*}$ where $X$ and $X'$ are incomparable in $\Cleq{p}$.
    The generators $\det_{Y^*} \det_{Y'^*}$ and $\det_{X^*} \det_{Y^*}$ ($\# X + \#Y > k$) are not relevant here, except to force $\#X \leq k$ in the quotient by $\mathcal{I}$, so that each $X_i \in \Cleq{p}$.  By~\eqref{sigma weight}, in order for $\prod_i \det_{A_i^*}$ to lie in $\C[V^{*p} \oplus V^q]_\sigma$, the chain $A_i$ must in fact form a $\sigma$-chain.
    The only remaining generators of $\mathcal{I}$ to consider are the monomials of $\GL_k$-splits of negative type.
    From the first picture in Lemma~\ref{lemma: split to tableaux}, we see that in the quotient we must have each $A_i \in \Cleq{p}$ and $B_i \in \Cleq{q}$.
    Again, the usual straightening relations in those posets yield the stated inequalities among the $A_i$ and $B_i$.
    From the discussion above on the straightening law for $\GL_k$-split monomials, we must have $A_\ell \rightarrow X_1$.    
\end{proof}

\begin{rem}
    Although this lemma is somewhat implicit in Jackson's work, he instead indexes the standard monomials by a construction he calls ``$\GL_k$-sequences''~\cite{Jackson}*{Def.~3.6.8}.
    These sequences, defined by filling and bumping entries in certain oscillating tableaux, have a much different flavor than our own sequences defined in terms of the three relations $\preceq$, $\rightarrow$, and $\leq$.
    More importantly, as we are about to prove, our approach allows us to index the Stanley decomposition~\eqref{Stanley decomp GL covariants} by the combinatorial objects $\mathbf{C} \rightarrow \f$.
\end{rem}

\begin{proof}[Proof of Theorem~\ref{thm:Hilbert series GL_k covariants}]
As above, we assume that $\sigma^+ = 0$.
In order to prove the Stanley decomposition, we need to show that each standard monomial $f_{AB} \cdot \det_{\mathbf{C}^*}$ satisfying the conditions~\eqref{SM form in GL lemma} lies in a unique component of~\eqref{Stanley decomp GL covariants} indexed by some $\mathbf{C} \rightarrow \f$.
By comparing~\eqref{SM form in GL lemma} and~\eqref{Stanley decomp GL covariants}, we see that the $\mathbf{C}$'s must coincide.
Let $I$ be the unique element of $\Cleq{p}$ such that $\mathbf{C} \rightarrow I$.
By part (1) of Lemma~\ref{lemma:Ci and Di for GL_k}, the support of the degree matrix of $f_{AB}$ decomposes into chains $C^*_i$ with the following property: the lowest row index of $C^*_i$ equals the $i$th entry of $A_1$.
Since $I \leq A_1$, the support of $f_{AB}$ is contained in some family $\f$ such that $I \rightarrow \f$.
By~\cite{HerzogTrung}*{Thm.~4.9}, the set $\{ \f \mid I \rightarrow \f\}$ is the set of facets of a shellable subcomplex of $\Delta_k$, such that the restrictions are the usual corners in this paper; hence this $\f$ is in fact unique.
Therefore $\f$ is the unique family such that $I \rightarrow \f$ and such that $f_{AB}$ lies in $f_{\cor(\f)}\C[f_{ij} \mid (i,j) \in \f]$.
This proves the decomposition~\eqref{Stanley decomp GL covariants}.

The Hilbert--Poincar\'e series~\eqref{GL covariants Hilbert series} follows immediately from this Stanley decomposition, since $\det_{\mathbf{C}^*}$ has degree $|\sigma^-|$, and since by definition $\sigma_I = \{ \mathbf{C} \mid \mathbf{C} \rightarrow I \}$.
Finally, by~\eqref{sum sigma_I dim}, we have $\sum_I \#\sigma_I = \dim \F{\sigma}{p}$.
By Table~\ref{table:Howe}, the corresponding weight for $\k = \gl_p \oplus \gl_q$ is $\lambda = (-k, \ldots, -k ; \sigma)$, which is the highest weight of the $\k$-module $\det^{-k} \otimes \F{\sigma}{p}$, which has the same dimension as $\F{\sigma}{p}$.
\end{proof}

\begin{ex}[$k=3$, $p=q=4$]
\label{ex:GL covariants}
    Suppose that $\sigma^- = 0$.
    In order to use Theorem~\ref{thm:Hilbert series GL_k covariants}, we simply need to know $\#\sigma_I$ and $P_I(t)$ for each $I \in \mathcal{C}^4_3 = \{(1,2,3), (1,2,4), (1,3,4), (2,3,4)\}$.
    Note that we have already found each $P_I(t)$ in Example~\ref{ex:SL_k invariants k3p3q4}, namely, the last rational expression in the contribution from each $I$.
    (Recall that each $P_I(t)$ is easily computed by counting corners of facets.)
    We thus obtain
    \begin{equation}
        \label{Hilbert series in GL cov example}
        t^{|\sigma|}\left(\#\sigma_{(1,2,3)}\frac{1+t^2+t^4+t^6}{(1-t^2)^{15}} + \#\sigma_{(1,2,4)} \frac{1+2t^2 + 3t^4}{(1-t^2)^{14}} + \#\sigma_{(1,3,4)} \frac{1 + 3t^2}{(1-t^2)^{13}} + \#\sigma_{(2,3,4)} \frac{1}{(1-t^2)^{12}}\right).
    \end{equation}
    To make this even more concrete, we specify $\sigma = (2,1,0)$.
    We then determine each $\sigma_I$, using Definition~\ref{def:bins} and the discussion thereafter.
    Below, we depict the $\sigma$-chains in each $\sigma_I$.
    Beneath each $\sigma$-chain we give its corresponding tableaux in $\SSYT((2,1,0), \: 4)$, just as in~\eqref{two MS with tableau fact} and Example~\ref{ex:sigma chain}:
\tikzstyle{endpt}=[circle,fill=black, text=white, font=\tiny\sffamily\bfseries, minimum size = 8pt, inner sep=0pt]
\tikzstyle{smallpt}=[circle,fill=lightgray, minimum size = 5pt, inner sep=0pt]
\ytableausetup{smalltableaux}
\begin{equation*}
\resizebox{\linewidth}{!}{$\begin{split}
\sigma_{(1,2,3)} &= \left\{
\begin{tikzpicture}[scale=.4, baseline=(current bounding box.center)]
\draw[ultra thick,lightgray] (1,1)--(4,1);
\node at (1,1) [smallpt] {};
\node at (2,1) [smallpt] {};
\node at (3,1) [smallpt] {};
\node at (4,1) [smallpt] {};
\draw[ultra thick] (1,0.5) -- (1,2) (2,0.5) -- (2,2) -- (1,3) (3,1) -- (3,0.5);
\node at (1,2) [endpt] {1};
\node at (1,3) [endpt] {1};
\node at (2,2) [endpt] {2};
\node at (2.5,-.75) {\ytableaushort{34,4}};
\end{tikzpicture},
\quad
\begin{tikzpicture}[scale=.4, baseline=(current bounding box.center)]
\draw[ultra thick,lightgray] (1,1)--(4,1);
\node at (1,1) [smallpt] {};
\node at (2,1) [smallpt] {};
\node at (3,1) [smallpt] {};
\node at (4,1) [smallpt] {};
\draw[ultra thick] (1,0.5) -- (1,2) (2,0.5) -- (2,2) -- (2,3) (3,1) -- (3,0.5);
\node at (1,2) [endpt] {1};
\node at (2,3) [endpt] {2};
\node at (2,2) [endpt] {2};
\node at (2.5,-.75)  {\ytableaushort{33,4}};
\end{tikzpicture},
\quad
\begin{tikzpicture}[scale=.4, baseline=(current bounding box.center)]
\draw[ultra thick,lightgray] (1,1)--(4,1);
\node at (1,1) [smallpt] {};
\node at (2,1) [smallpt] {};
\node at (3,1) [smallpt] {};
\node at (4,1) [smallpt] {};
\draw[ultra thick] (1,0.5) -- (1,2) (3,0.5) -- (3,2) -- (1,3) (2,0.5) -- (2,1);
\node at (1,2) [endpt] {1};
\node at (1,3) [endpt] {1};
\node at (3,2) [endpt] {3};
\node at (2.5,-.75) {\ytableaushort{24,4}};
\end{tikzpicture},
\quad
\begin{tikzpicture}[scale=.4, baseline=(current bounding box.center)]
\draw[ultra thick,lightgray] (1,1)--(4,1);
\node at (1,1) [smallpt] {};
\node at (2,1) [smallpt] {};
\node at (3,1) [smallpt] {};
\node at (4,1) [smallpt] {};
\draw[ultra thick] (1,0.5) -- (1,2) (3,0.5) -- (3,2) -- (2,3) (2,0.5) -- (2,1);
\node at (1,2) [endpt] {1};
\node at (2,3) [endpt] {2};
\node at (3,2) [endpt] {3};
\node at (2.5,-.75) {\ytableaushort{23,4}};
\end{tikzpicture},
\quad
\begin{tikzpicture}[scale=.4, baseline=(current bounding box.center)]
\draw[ultra thick,lightgray] (1,1)--(4,1);
\node at (1,1) [smallpt] {};
\node at (2,1) [smallpt] {};
\node at (3,1) [smallpt] {};
\node at (4,1) [smallpt] {};
\draw[ultra thick] (1,0.5) -- (1,2) (3,0.5) -- (3,2) -- (3,3) (2,0.5) -- (2,1);
\node at (1,2) [endpt] {1};
\node at (3,3) [endpt] {3};
\node at (3,2) [endpt] {3};
\node at (2.5,-.75) {\ytableaushort{22,4}};
\end{tikzpicture},
\quad
\begin{tikzpicture}[scale=.4, baseline=(current bounding box.center)]
\draw[ultra thick,lightgray] (1,1)--(4,1);
\node at (1,1) [smallpt] {};
\node at (2,1) [smallpt] {};
\node at (3,1) [smallpt] {};
\node at (4,1) [smallpt] {};
\draw[ultra thick] (2,0.5) -- (2,2) (3,0.5) -- (3,2) -- (1,3) (1,0.5) -- (1,1);
\node at (2,2) [endpt] {2};
\node at (1,3) [endpt] {1};
\node at (3,2) [endpt] {3};
\node at (2.5,-.75) {\ytableaushort{24,3}};
\end{tikzpicture},
\quad
\begin{tikzpicture}[scale=.4, baseline=(current bounding box.center)]
\draw[ultra thick,lightgray] (1,1)--(4,1);
\node at (1,1) [smallpt] {};
\node at (2,1) [smallpt] {};
\node at (3,1) [smallpt] {};
\node at (4,1) [smallpt] {};
\draw[ultra thick] (2,0.5) -- (2,2) (3,0.5) -- (3,2) -- (2,3) (1,0.5) -- (1,1);
\node at (2,2) [endpt] {2};
\node at (2,3) [endpt] {2};
\node at (3,2) [endpt] {3};
\node at (2.5,-.75) {\ytableaushort{23,3}};
\end{tikzpicture},
\quad
\begin{tikzpicture}[scale=.4, baseline=(current bounding box.center)]
\draw[ultra thick,lightgray] (1,1)--(4,1);
\node at (1,1) [smallpt] {};
\node at (2,1) [smallpt] {};
\node at (3,1) [smallpt] {};
\node at (4,1) [smallpt] {};
\draw[ultra thick] (2,0.5) -- (2,2) (3,0.5) -- (3,2) -- (3,3) (1,0.5) -- (1,1);
\node at (2,2) [endpt] {2};
\node at (3,3) [endpt] {3};
\node at (3,2) [endpt] {3};
\node at (2.5,-.75) {\ytableaushort{22,3}};
\end{tikzpicture}
\right\}\\
%%%% SECOND ROW %%%%
\sigma_{(1,2,4)} &= \left\{
\begin{tikzpicture}[scale=.4, baseline=(current bounding box.center)]
\draw[ultra thick,lightgray] (1,1)--(4,1);
\node at (1,1) [smallpt] {};
\node at (2,1) [smallpt] {};
\node at (3,1) [smallpt] {};
\node at (4,1) [smallpt] {};
\draw[ultra thick] (1,0.5) -- (1,2) (4,0.5) -- (4,2) -- (1,3) (2,1) -- (2,0.5);
\node at (1,2) [endpt] {1};
\node at (1,3) [endpt] {1};
\node at (4,2) [endpt] {4};
\node at (2.5,-.75) {\ytableaushort{14,4}};
\end{tikzpicture},
\quad
\begin{tikzpicture}[scale=.4, baseline=(current bounding box.center)]
\draw[ultra thick,lightgray] (1,1)--(4,1);
\node at (1,1) [smallpt] {};
\node at (2,1) [smallpt] {};
\node at (3,1) [smallpt] {};
\node at (4,1) [smallpt] {};
\draw[ultra thick] (1,0.5) -- (1,2) (4,0.5) -- (4,2) -- (2,3) (2,1) -- (2,0.5);
\node at (1,2) [endpt] {1};
\node at (2,3) [endpt] {2};
\node at (4,2) [endpt] {4};
\node at (2.5,-.75) {\ytableaushort{13,4}};
\end{tikzpicture},
\quad
\begin{tikzpicture}[scale=.4, baseline=(current bounding box.center)]
\draw[ultra thick,lightgray] (1,1)--(4,1);
\node at (1,1) [smallpt] {};
\node at (2,1) [smallpt] {};
\node at (3,1) [smallpt] {};
\node at (4,1) [smallpt] {};
\draw[ultra thick] (1,0.5) -- (1,2) (4,0.5) -- (4,2) -- (3,3) (2,1) -- (2,0.5);
\node at (1,2) [endpt] {1};
\node at (3,3) [endpt] {3};
\node at (4,2) [endpt] {4};
\node at (2.5,-.75) {\ytableaushort{12,4}};
\end{tikzpicture},
\quad
\begin{tikzpicture}[scale=.4, baseline=(current bounding box.center)]
\draw[ultra thick,lightgray] (1,1)--(4,1);
\node at (1,1) [smallpt] {};
\node at (2,1) [smallpt] {};
\node at (3,1) [smallpt] {};
\node at (4,1) [smallpt] {};
\draw[ultra thick] (1,0.5) -- (1,2) (4,0.5) -- (4,2) -- (4,3) (2,1) -- (2,0.5);
\node at (1,2) [endpt] {1};
\node at (4,3) [endpt] {4};
\node at (4,2) [endpt] {4};
\node at (2.5,-.75) {\ytableaushort{11,4}};
\end{tikzpicture},
\quad
\begin{tikzpicture}[scale=.4, baseline=(current bounding box.center)]
\draw[ultra thick,lightgray] (1,1)--(4,1);
\node at (1,1) [smallpt] {};
\node at (2,1) [smallpt] {};
\node at (3,1) [smallpt] {};
\node at (4,1) [smallpt] {};
\draw[ultra thick] (2,0.5) -- (2,2) (4,0.5) -- (4,2) -- (1,3) (1,1) -- (1,0.5);
\node at (2,2) [endpt] {2};
\node at (1,3) [endpt] {1};
\node at (4,2) [endpt] {4};
\node at (2.5,-.75) {\ytableaushort{14,3}};
\end{tikzpicture},
\quad
\begin{tikzpicture}[scale=.4, baseline=(current bounding box.center)]
\draw[ultra thick,lightgray] (1,1)--(4,1);
\node at (1,1) [smallpt] {};
\node at (2,1) [smallpt] {};
\node at (3,1) [smallpt] {};
\node at (4,1) [smallpt] {};
\draw[ultra thick] (2,0.5) -- (2,2) (4,0.5) -- (4,2) -- (2,3) (1,1) -- (1,0.5);
\node at (2,2) [endpt] {2};
\node at (2,3) [endpt] {2};
\node at (4,2) [endpt] {4};
\node at (2.5,-.75) {\ytableaushort{23,3}};
\end{tikzpicture},
\quad
\begin{tikzpicture}[scale=.4, baseline=(current bounding box.center)]
\draw[ultra thick,lightgray] (1,1)--(4,1);
\node at (1,1) [smallpt] {};
\node at (2,1) [smallpt] {};
\node at (3,1) [smallpt] {};
\node at (4,1) [smallpt] {};
\draw[ultra thick] (2,0.5) -- (2,2) (4,0.5) -- (4,2) -- (3,3) (1,1) -- (1,0.5);
\node at (2,2) [endpt] {2};
\node at (3,3) [endpt] {3};
\node at (4,2) [endpt] {4};
\node at (2.5,-.75) {\ytableaushort{12,3}};
\end{tikzpicture},
\quad
\begin{tikzpicture}[scale=.4, baseline=(current bounding box.center)]
\draw[ultra thick,lightgray] (1,1)--(4,1);
\node at (1,1) [smallpt] {};
\node at (2,1) [smallpt] {};
\node at (3,1) [smallpt] {};
\node at (4,1) [smallpt] {};
\draw[ultra thick] (2,0.5) -- (2,2) (4,0.5) -- (4,2) -- (4,3) (1,1) -- (1,0.5);
\node at (2,2) [endpt] {2};
\node at (4,3) [endpt] {4};
\node at (4,2) [endpt] {4};
\node at (2.5,-.75) {\ytableaushort{11,3}};
\end{tikzpicture}\right\}\\
%%%% THIRD ROW %%%%
\sigma_{(1,3,4)} &= \left\{
\begin{tikzpicture}[scale=.4, baseline=(current bounding box.center)]
\draw[ultra thick,lightgray] (1,1)--(4,1);
\node at (1,1) [smallpt] {};
\node at (2,1) [smallpt] {};
\node at (3,1) [smallpt] {};
\node at (4,1) [smallpt] {};
\draw[ultra thick] (3,0.5) -- (3,2) (4,0.5) -- (4,2) -- (1,3) (1,1) -- (1,0.5);
\node at (3,2) [endpt] {3};
\node at (1,3) [endpt] {1};
\node at (4,2) [endpt] {4};
\node at (2.5,-.75) {\ytableaushort{14,2}};
\end{tikzpicture},
\quad
\begin{tikzpicture}[scale=.4, baseline=(current bounding box.center)]
\draw[ultra thick,lightgray] (1,1)--(4,1);
\node at (1,1) [smallpt] {};
\node at (2,1) [smallpt] {};
\node at (3,1) [smallpt] {};
\node at (4,1) [smallpt] {};
\draw[ultra thick] (3,0.5) -- (3,2) (4,0.5) -- (4,2) -- (2,3) (1,1) -- (1,0.5);
\node at (3,2) [endpt] {3};
\node at (2,3) [endpt] {2};
\node at (4,2) [endpt] {4};
\node at (2.5,-.75) {\ytableaushort{13,2}};
\end{tikzpicture},
\quad
\begin{tikzpicture}[scale=.4, baseline=(current bounding box.center)]
\draw[ultra thick,lightgray] (1,1)--(4,1);
\node at (1,1) [smallpt] {};
\node at (2,1) [smallpt] {};
\node at (3,1) [smallpt] {};
\node at (4,1) [smallpt] {};
\draw[ultra thick] (3,0.5) -- (3,2) (4,0.5) -- (4,2) -- (3,3) (1,1) -- (1,0.5);
\node at (3,2) [endpt] {3};
\node at (3,3) [endpt] {3};
\node at (4,2) [endpt] {4};
\node at (2.5,-.75) {\ytableaushort{12,2}};
\end{tikzpicture},
\quad
\begin{tikzpicture}[scale=.4, baseline=(current bounding box.center)]
\draw[ultra thick,lightgray] (1,1)--(4,1);
\node at (1,1) [smallpt] {};
\node at (2,1) [smallpt] {};
\node at (3,1) [smallpt] {};
\node at (4,1) [smallpt] {};
\draw[ultra thick] (3,0.5) -- (3,2) (4,0.5) -- (4,2) -- (4,3) (1,1) -- (1,0.5);
\node at (3,2) [endpt] {3};
\node at (4,3) [endpt] {4};
\node at (4,2) [endpt] {4};
\node at (2.5,-.75) {\ytableaushort{11,2}};
\end{tikzpicture}
\right\}\\
\sigma_{(2,3,4)} &= \varnothing.
\end{split}$}
\end{equation*}
    
Hence the Hilbert--Poincar\'e series of the $\GL_3$-covariants of $V^{*4} \oplus V^4$ of type $U_{(2,1,0)}$ equals
    \begin{align*}
    & t^3 \left(8 \cdot\frac{1+t^2+t^4+t^6}{(1-t^2)^{15}} + 8 \cdot \frac{1+2t^2 + 3t^4}{(1-t^2)^{14}} + 4 \cdot \frac{1 + 3t^2}{(1-t^2)^{13}} + 0 \cdot \frac{1}{(1-t^2)^{12}}\right) \\
    = \hspace{1ex} & \frac{20t^3 + 20 t^5 - 4 t^7 - 4 t^9}{(1 - t^2)^{15}}.
    \end{align*}
As the form of this series suggests, the module of covariants in this example is not Cohen--Macaulay; indeed, this example violates the Cohen--Macaulay criterion $\sigma_1 < p-k$ in~\cite{Armour}*{Table~4.2}.
We find it striking that nevertheless, the Hilbert series can be expressed as a positive combination of individual rational functions $P_I(t)$ which have such a nice combinatorial interpretation in terms of lattice paths.

To conclude the example, we give explicit formulas to compute $\#\sigma_I$ for generic $\sigma = (\sigma_1, \sigma_2, \sigma_3)$:
\begin{align*}
    \#\sigma_{(1,2,3)} &= (\sigma_1 - \sigma_2 + 1) (\sigma_1 - \sigma_3 + 2) (\sigma_2 - \sigma_3 + 1)/2,\\
    \#\sigma_{(1,2,4)} &= (\sigma_1 - \sigma_2 + 1) (\sigma_1 - \sigma_3 + 1) (\sigma_2 - \sigma_3 + 1) (\sigma_1 + \sigma_2 + \sigma_3)/6,\\
    \#\sigma_{(1,3,4)} &= (\sigma_1 - \sigma_2 + 1) (\sigma_1 - \sigma_3 + 2) (\sigma_2 - \sigma_3 + 1) (\sigma_2 + \sigma_1 \sigma_2 + 
   2 \sigma_3 + \sigma_1 \sigma_3 + \sigma_2 \sigma_3)/12,\\
   \#\sigma_{(2,3,4)} &= \sigma_3(\sigma_1 + 2) (\sigma_1 - \sigma_2 +1) (\sigma_2 + 1) (\sigma_1 - \sigma_3 + 2) (\sigma_2 - \sigma_3 + 1)/12.
\end{align*}
We obtained these as follows.
By~\eqref{covariants = L lambda}, the Hilbert series of the covariants equals that of $L_\la$, the simple $\mathfrak{su}(4,4)$-module with highest weight $\la = (-3,-3,-3,-3; \sigma_1, \sigma_2, \sigma_3,0)$.
By calculating the generalized BGG resolution of $L_\la$ using the methods in~\cites{EW,EnrightHunziker04}, we can write down its Hilbert series $P(L_\la; t)$ in terms of $\sigma_1$, $\sigma_2$, and $\sigma_3$.
Upon setting $P(L_\la; t)$ equal to~\eqref{Hilbert series in GL cov example}, and solving the resulting system of equations for each $\#\sigma_I$, we obtain the formulas above.
\end{ex}

\subsection{Modules of covariants for the orthogonal group}

Here we consider modules of $\O(V)$-covariants in the special case where $\sigma = (1^m)$ for some $0 \leq m \leq k$.
(For generic $\sigma$, the details are more subtle, and we leave a full exposition for future work.)
Then $U_\sigma = \Wedge^m V$.
Note that the $\O(V)$-invariants and semiinvariants were the special cases $m=0$ and $m=k$, respectively.
For $X = (x_1, \ldots, x_m) \in \mathcal{C}^n_m$, define the $\O(V)$-covariant function 
\begin{align*}
    \phi_X: V^n &\longrightarrow \Wedge^m V,\\
    (v_1, \ldots, v_n) & \longmapsto v_{x_1} \wedge \cdots \wedge v_{x_m}.
\end{align*}
As a simplified analogue of Definition~\ref{def:bins}, for $\sigma = (1^m)$ and $I \in \Ceq{n}$ we define
\[
    \sigma_I \coloneqq \{X \in \mathcal{C}^n_{m} \mid \text{$I$ is the minimal element of $\Ceq{n}$ containing $X$}\}.
\]
If $X \in \sigma_I$ and if $\f$ is a family of $k$ non-intersecting lattice paths in $\P_{\O}$ with starting points given by $I$, then we write $\f \leftarrow X$.
We have the following Stanley decomposition, which is the analogue of~\eqref{Stanley decomp GL covariants}:
\[
(\C[V^n] \otimes \Wedge^m V)^{\O(V)} = \bigoplus_{\f \leftarrow X} f_{\cor(\f)} \C[f_{ij} \mid (i,j) \in \f] \cdot \phi_X.
\]
Then, abbreviating (as before)
\[
P_I(t) \coloneqq \sum_{\f \leftarrow I} \frac{(t^2)^{\#\cor(\f)}}{(1-t^2)^{\#\f}},
\]
we have the following Hilbert--Poincar\'e series:
\begin{equation}
    \label{Hilbert series O_k covariants nonpure}
P\!\Big((\C[V^n] \otimes \Wedge^m V)^{\O(V)}; t\Big) = t^m \sum_{I \in \Ceq{n}} \#\sigma_I \cdot P_I(t).
\end{equation}

To obtain a pure Hilbert--Poincar\'e series, let $\scrF_{k,m}$ be the set of all families of $k$ maximal non-intersecting paths in $\P_{\O}$, exactly $m$ of which have their endpoints on the diagonal ``painted.''
Hence $\scrF_{k,m} \cong \scrF_k \times \mathcal{C}^k_m$.
Painted endpoints are \emph{not} counted now as corners in $\cor(\f)$.
We then have the pure form
\begin{equation}
    \label{Hilbert series O_k covariants pure}
    P\!\left((\C[V^n] \otimes \Wedge^{\!m} V)^{\O(V)}; t\right) = t^m \cdot \frac{\sum_{\f \in \scrF_{k,m}} (t^2)^{\#\cor(\f)}}{(1-t^2)^{k(2n-k+1)/2}}.
\end{equation}

% A basis for $(\C[V^n] \otimes \Wedge^{\!m} V)^{\O(V)})$ is given by the graphs in Proposition~\ref{prop:O_k graphs} in which exactly $m$ vertices are painted according to the rule~\eqref{SO_k painted vertex rules}.
% Note that in $\P_{\O}$, we have $\scrF_k = \scrF_{k,0}$, and thus $\#\scrF_{k,m} = \binom{k}{m}(\#\scrF_k)$.  
% This relates the Bernstein degrees for all $m=0, \ldots, k$.
%\question{Make remark citing \cite{NOT}.}

\begin{ex}[$k=3$, $n=4$, $m=2$]
\label{ex:O covariants}
    \input{Hilbert_series_examples/O-covariants_k3n4m2}
\end{ex}

\subsection{Modules of covariants for the symplectic group}

Let $H = \Sp(V)$ with $\dim V = 2k$.
In this subsection, we let $\f$ denote a family of $k$ non-intersecting lattice paths in $\P_{\Sp}$ whose endpoints are $(n-2i+1, n)$ for $i = 1, \ldots, k$.  
We make analogous definitions to those in the case of $\GL(V)$:

\begin{dfn}
    Let $\sigma$ be a partition with $\ell(\sigma) \leq k$.
    A \emph{$\sigma$-chain} is a chain $X_1 \preceq \cdots \preceq X_{\sigma_1}$ in $\Cleq{n}$ such that the SSYT with columns $\widetilde{X}_{\sigma_1}, \ldots, \widetilde{X}_1$ has shape $\sigma$.
\end{dfn}

\begin{dfn}
\label{C arrow f}
    Let $\mathbf{C}$ be a $\sigma$-chain in $\Cleq{n}$, with maximal element $X$.
    We write $\mathbf{C} \rightarrow I$ if $I$ is the minimal element in $I \in \Ceq{n}$ such that $X \rightarrow I$ and $I \geq (2,4,6,\ldots,2k)$.
    We define the set
    \[
    \sigma_I \coloneqq \{\text{$\sigma$-chains $\mathbf{C}$} \mid \mathbf{C} \rightarrow I \}.
    \]
    We write $\mathbf{C} \rightarrow \f$ if there exists an $I$ (necessarily unique) such that $\mathbf{C} \rightarrow I$ and $I = (i_1, \ldots, i_k)$ gives the starting points $(1, i_\ell)$, for $\ell = 1, \ldots, k$, of the paths in $\f$.
\end{dfn}

\begin{dfn}
\label{def:shadow}
    Given a family $I \rightarrow \f$ with $I = (i_1, \ldots, i_k)$, we say that the $\ell$th starting point $(1, i_\ell)$ \emph{shadows} the points $(j+1, i_\ell - j)$, for $j=1, \ldots, \ell-1$.
    Likewise, we say the $\ell$th endpoint $(n-2\ell+1, \: n)$ \emph{shadows} the points $(n-2\ell+1-j, \: n-j)$, for $j = 1, \ldots, \ell-1$.
    Then we define $\cor(\f)$ to be the set of all \scalebox{2}{$\llcorner$}-patterns in $\f$ which are not shadowed.
    (Intuitively, the $\ell$th start/end point casts a shadow of size $\ell$ to its southwest; see the figures in Example~\ref{ex:Sp covariants k2n6}.)
\end{dfn}

Fixing a basis for $V$, let $v_{ij}$ denote the $i$th coordinate of $v_j$.
Then we define the following $N$-invariant functions:
\[
    \textstyle{\det_{X}}(v_1, \ldots, v_n) \coloneqq \det(v_{ij})_{i = 2k+1-\#X, \ldots, 2k; \; j \in X,} \qquad X \in \mathcal{C}^n_{\leqslant 2k}.
\]
Together with the quadratics $f_{ij}$ defined in~\eqref{f_ij for Sp_2k}, the functions $\det_X$ generate $\C[W]^N$; see~\cite{Jackson}*{Cor.~3.4.4}.
Given a $\sigma$-chain $\mathbf{C}$, we define $\det_{\mathbf{C}}$ just as in~\eqref{alpha beta}.
By~\cite{Jackson}*{Prop.~3.3.7}, a function $f \in \C[W]$ lies has weight $\sigma$ under the action of the maximal torus of $H$ if and only if
\[
\sigma_i = \text{(degree of $f$ in the variables $v_{2k-i+1,\bullet}$)} - \text{(degree of $f$ in the variables $v_{i,\bullet}$)}, \qquad 1 \leq i \leq k.
\]
We define the following rational expression for each $I \in \Cleq{n}$ such that $I \geq (2, 4, 6, \ldots, 2k)$:
\[
P_I(t) \coloneqq \frac{\sum_{I \rightarrow \f} (t^2)^{\#\cor(\f)}}{(1-t^2)^{d_I}},
\]
where $d_I = \#\f = k(2n-k)-|I|$.
%\question{Double-check this.}

\begin{theorem}
    \label{thm:Sp covariants}
    Let $U_\sigma$ be the irreducible representation of $\Sp(V)$ with highest weight $\sigma$.
    Then the module of $\Sp(V)$-covariants of $V^n$, of type $U_\sigma$, has the following Stanley decomposition (where the sums range over the hybrid path families $\mathbf{C} \rightarrow \f$ in Definition~\ref{C arrow f}):
    \begin{equation}
        \label{Stanley decomp Sp covariants}
        (\C[V^n] \otimes U_\sigma)^{\Sp(V)} \cong \bigoplus_{\mathbf{C} \rightarrow \f} f_{\cor(\f)} \C[f_{ij} \mid (i,j) \in \f] \cdot \textstyle \det_{\mathbf{C}}.
    \end{equation}
    Furthermore, we have the following Hilbert--Poincar\'e series:
    \begin{equation}
        \label{Hilbert series Sp covariants}
        P\Big( (\C[V^n] \otimes U_\sigma)^{\Sp(V)};t \Big) = t^{|\sigma|} \sum_{\mathclap{\substack{I \in \Ceq{n}: \\ I \geq (2,4, \ldots, 2k)}}} \#\sigma_I \cdot P_I(t),
    \end{equation}
    and $\sum_I \#\sigma_I = \dim \F{\lambda}{n}$.
\end{theorem}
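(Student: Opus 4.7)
The plan is to follow the template of Theorem~\ref{thm:Hilbert series GL_k covariants}, adapted to the symplectic setting. First I would identify
\[
(\C[V^n] \otimes U_\sigma)^{\Sp(V)} \cong \C[V^n]^N_\sigma,
\]
where $N \subset \Sp(V)$ is a maximal unipotent subgroup and $\C[V^n]^N_\sigma$ is the $\sigma^*$-weight subspace for the maximal torus of $\Sp(V)$; this parallels the identification used for $\GL(V)$ and reduces the problem to giving a Stanley decomposition and Hilbert--Poincar\'e series for the right-hand side.

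Next I would invoke Jackson's standard monomial theory~\cite{Jackson} realizing $\C[V^n]^N$ as a Gr\"obner algebra generated (by~\cite{Jackson}*{Cor.~3.4.4}) by the $f_{ij}$ together with the $\det_X$, modulo the ideal coming from incomparable pairs $\det_X \det_{X'}$ and from the monomials of $\Sp_{2k}$-splits. By an argument paralleling Lemma~\ref{lemma: SM basis for GL covariants}, the resulting straightening law would produce a basis of standard monomials for $\C[V^n]^N_\sigma$ consisting of products $f_A \cdot \textstyle\det_{\mathbf{C}}$, where $A=A_1\leq\cdots\leq A_\ell$ with $A_i \in \mathcal{E}^n_{\leqslant 2k}$ and $\mathbf{C}$ is a $\sigma$-chain with $\mathbf{C} \rightarrow A_1$. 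The weight condition forces $\mathbf{C}$ to be a $\sigma$-chain, and the symplectic split relations (which encode the no-loops rule of Section~\ref{sec:graphs Sp}) are what impose the relation $A_1 \rightarrow X$ between the maximal element $X$ of $\mathbf{C}$ and the first column of $A$.

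With standard monomials in hand, I would prove~\eqref{Stanley decomp Sp covariants} by fixing $f_A \cdot \det_{\mathbf{C}}$ and letting $I$ be the unique element of $\Ceq{n}$ with $\mathbf{C} \rightarrow I$; the condition $I \geq (2,4,\ldots,2k)$ drops out of the fact that the columns in $\mathbf{C}$ contain no diagonal indices together with the padding rule extending $X$ to an element of $\Ceq{n}$. Lemma~\ref{lemma:Ci and Di for Sp_2k}(1) then shows that the support of $f_A$ is contained in some facet $\f$ with $I \rightarrow \f$, and shellability of the subcomplex $\{\f : I \rightarrow \f\}$ (the symplectic analogue of~\cite{HerzogTrung}*{Thm.~4.9}) singles out a unique such $\f$ with $f_A \in f_{\cor(\f)} \cdot \C[f_{ij} \mid (i,j) \in \f]$. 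The Hilbert series~\eqref{Hilbert series Sp covariants} follows by recording degrees ($\det_{\mathbf{C}}$ contributes $t^{|\sigma|}$, corners contribute $t^2$, free generators contribute $1/(1-t^2)$) and grouping $\sigma$-chains by their image $I$, producing the multiplicity $\#\sigma_I$. The identity $\sum_I \#\sigma_I = \dim \F{\lambda}{n}$ is then read off from Table~\ref{table:Howe}, since $\lambda = -k-\overleftarrow{\sigma}$ corresponds to tensoring $\F{\sigma}{n}$ by a power of the determinant character, giving $\dim F_\lambda = \dim \F{\sigma}{n} = \#\SSYT(\sigma,n)$.

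The main obstacle I anticipate lies in the shadow construction of Definition~\ref{def:shadow}. When the starting points $(1,i_\ell)$ of the paths in $\f$ are allowed to vary subject only to $I \geq (2,4,\ldots,2k)$, the facets no longer have a uniform cardinality unless one subtracts the shadowed subset $\{(j+1,\,i_\ell - j) : 1 \leq j \leq \ell-1\}$, which is forced to lie in $\f$ by the non-intersecting condition and therefore carries no combinatorial data; excluding this region from $\cor(\f)$ is precisely what converts the nonpure Hilbert series (analogous to Proposition~\ref{prop:Hilbert series O_k semis nonpure}) into the pure form~\eqref{Hilbert series Sp covariants} with constant denominator exponent $d_I = k(2n-k) - |I|$. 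Verifying that the shelling of the shadow-reduced complex has its restrictions given precisely by the nonshadowed $\llcorner$-patterns will require carefully adapting Krattenthaler's~\cite{Krattenthaler} light-and-shadow machinery to variable starting points, and I expect this bookkeeping to be the most delicate part of the argument.
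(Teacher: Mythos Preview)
Your proposal is correct and mirrors the paper's own approach: the paper likewise proceeds by paralleling the $\GL(V)$ argument, reinterpreting Jackson's $\Sp_{2k}$-standard monomials as products $f_A \cdot \det_{\mathbf C}$, translating the $\Sp_{2k}$-splits into a neighbouring-column condition analogous to Lemma~\ref{lemma: split to tableaux}, and then invoking Lemma~\ref{lemma:Ci and Di for Sp_2k}(1) together with shellability of the subcomplex $\{\f : I \rightarrow \f\}$. Your only misstep is in the final paragraph: the shadow construction does not convert a nonpure series to a pure one (the exponent $d_I = k(2n-k)-|I|$ still varies with $I$), and the shadowed points are not themselves forced into $\f$; rather, the shadows simply generalize the original restriction from Section~\ref{sec:shellings} that corners of $\bp_i$ lie strictly between $s_i$ and $t_i$ to the setting of variable starting points, and the paper treats this as a definitional matter rather than as the delicate obstacle you anticipate.
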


The proof of Theorem~\ref{thm:Sp covariants} proceeds similarly to that of the $\GL(V)$ case in Theorem~\ref{thm:Hilbert series GL_k covariants}, \emph{mutatis mutandis}.
In particular, one can reinterpret Jackson's basis of standard monomials~\cite{Jackson}*{Def.~3.6.10} using our notation $f_{A} \cdot \det_{\mathbf{C}}$, just as in Lemma~\ref{lemma: SM basis for GL covariants}.
In this case, where $H = \Sp(V)$, the semistandard tableau $A$ has entries in $[n]$ and has even column lengths, and $f_A$ denotes the ordinary monomial in the $f_{ij}$'s whose degree matrix is given by $\RSK_{\Sp}(A)$.
As before, the key is to translate Jackson's $\Sp_{2k}$-splits~\cite{Jackson}*{3.6.16} into a condition on neighboring tableau columns (just as in Lemma~\ref{lemma: split to tableaux} above).
Then part (1) of Lemma~\ref{lemma:Ci and Di for Sp_2k} establishes the decomposition~\eqref{Stanley decomp Sp covariants}.

\begin{corollary}[Special case of \cite{NOT}*{Cor.~9.2}]
\label{cor:NOT result Sp}
    Let $(H, \g) = (\Sp_{2k}, \so_{n})$.
    Let $\sigma \mapsto \la$ be the map described in Theorem~\ref{thm:Howe duality}, and let \textup{``Deg''} denote the Bernstein degree.  
    Then 
    \[
    \operatorname{Deg} L_\la = \dim U_\sigma \cdot \operatorname{deg} \overline{\scrO}_k.
    \]
\end{corollary}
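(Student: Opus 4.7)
The plan is to follow the template of the proof of Corollary~\ref{cor:NOT result}, now applied to the symplectic Hilbert--Poincar\'e series in Theorem~\ref{thm:Sp covariants}. Among the $I \in \Ceq{n}$ with $I \geq (2,4,\ldots,2k)$ appearing in~\eqref{Hilbert series Sp covariants}, the element $I_0 \coloneqq (2,4,\ldots,2k)$ uniquely minimizes $|I| = k(k+1)$, and therefore the denominator exponent $d_I = k(2n-k) - |I|$ is uniquely maximized at $d_{I_0} = k(2n-2k-1)$. Moreover, the starting points encoded by $I_0$ are precisely the standard starting points $(1, 2i)$, $1 \leq i \leq k$, from Section~\ref{sec:shellings}, so $P_{I_0}(t)$ coincides with the Hilbert--Poincar\'e series of $\C[\overline{\scrO}_k]$ recorded in Proposition~\ref{prop:Hilbert series Sp_2k}.

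Next, I would clear~\eqref{Hilbert series Sp covariants} to the common denominator $(1-t^2)^{d_{I_0}}$; every $P_I(t)$ with $I \neq I_0$ then picks up a positive power of $(1-t^2)$ in the numerator. Since $t^{|\sigma|}$ evaluates to $1$ at $t = 1$, specializing at $t=1$ isolates the $I_0$-summand and yields
\[
\operatorname{Deg} L_\la = \#\sigma_{I_0} \cdot \deg \overline{\scrO}_k.
\]
This reduces the corollary to the combinatorial identity $\#\sigma_{I_0} = \dim U_\sigma$.

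To prove that identity, let $X = (x_1 < \cdots < x_r)$ be the maximal element of a $\sigma$-chain $\mathbf{C}$, with $r = \ell(\sigma)$. Unwinding Definition~\ref{def:arrow}, the condition $X \to I_0$ amounts to $x_j \leq 2(k-r+j)$ for $1 \leq j \leq r$; because $I_0$ is already the global minimum of $\{I \in \Ceq{n} \mid I \geq I_0\}$, whenever $X \to I_0$ holds the minimality clause of Definition~\ref{C arrow f} is automatically satisfied. Through the bijection $\mathbf{C} \leftrightarrow T \in \SSYT(\sigma, n)$ of~\eqref{two MS with tableau fact}, the leftmost column of $T$ is $\widetilde{X}$, and so the constraint on $X$ translates to $T_{i,1} \geq n-2k-1+2i$; semistandardness then propagates this to $T_{i,j} \geq n-2k-1+2i$ for all $j$. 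Subtracting $n-2k$ from every entry gives a bijection onto SSYTs of shape $\sigma$ with entries in $\{1, \ldots, 2k\}$ and row bound $T'_{i,j} \geq 2i-1$, which are King's symplectic tableaux enumerating $\dim U_\sigma$.

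The hardest step will be this final combinatorial identification: verifying that the shift by $n - 2k$ matches King's conventions on the nose, and confirming that the minimality clause of Definition~\ref{C arrow f} indeed imposes no extra hidden constraint on the non-maximal elements $X_1, \ldots, X_{\sigma_1 - 1}$ of the chain. Once $\#\sigma_{I_0} = \dim U_\sigma$ is in hand, the remainder of the argument is a direct symplectic transcription of the $\GL$ case.
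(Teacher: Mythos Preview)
Your proposal is correct and follows exactly the paper's strategy: transplant the argument of Corollary~\ref{cor:NOT result} to the symplectic Hilbert series and then identify $\sigma_{(2,4,\ldots,2k)}$ with King's symplectic tableaux. Your treatment of the combinatorial identity is in fact more explicit than the paper's (which states the first-column bound directly as $\geq 2i-1$, suppressing the shift by $n-2k$ that you spell out); your worry about the minimality clause is unnecessary, since by Definition~\ref{C arrow f} membership in $\sigma_I$ depends only on the maximal element $X$ of the $\sigma$-chain.
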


\begin{proof}
    The proof is the same as in the $\GL(V)$ case above (Corollary~\ref{cor:NOT result}), except that we must show that $\#\sigma_{(2,4,\ldots, 2k)} = \dim U_\sigma$.
    To this end, note that $\mathbf{C} = X_1 \preceq \cdots \preceq X_m$ is an element of $\sigma_{(2,4,\ldots,2k)}$ if and only if $X_m \rightarrow (2,4,\ldots,2k)$, if and only if $(1,3,\ldots,2k-1) \rightarrow \widetilde{X}_m$.
    This is equivalent to the condition that the SSYT with columns $\widetilde{X}_m, \ldots, \widetilde{X}_1$ has the following property: the $i$th element of its first column must be no less than $2i-1$.
    This is precisely the definition of a \emph{semistandard symplectic tableau} in~\cite{CampbellStokke}*{p.~464}, which follows King's convention~\cite{King75}*{p.~496}.
    (In this paper, we fill our tableaux with the numbers $1,2,\ldots,2k$ rather than the symbols $1, \Bar{1}, \ldots, k, \Bar{k}$ used by the authors cited above.)
    Since these works give a bijection between a weight basis for $U_\sigma$ and the set of symplectic tableaux of shape $\sigma$, we have $\#\sigma_{(2,4,\ldots,2k)} = \dim U_\sigma$, as desired.  
\end{proof}

As in the case of $\GL(V)$, Corollary~\ref{cor:NOT result Sp} allows us to interpret the Bernstein degree of $L_\la$ as the number of families of hybrid paths $\mathbf{C} \rightarrow (2, 4, \ldots, 2k)\rightarrow \f$ such that $\mathbf{C}$ is a $\sigma$-chain. 

\begin{ex}[$k=2$, $n=6$, $\sigma = (1,1)$]
\label{ex:Sp covariants k2n6}

    \input{Hilbert_series_examples/Sp-covariants_k2n6}
    
\end{ex}

\section{Hilbert--Poincar\'e series of the Wallach representations of Type ADE}
\label{sec:ADE}

\subsection{The Wallach representations and rings of invariants}
\label{sub:Wallach}

We return to the general Hermitian symmetric setting described in Section~\ref{sub:Hermitian symmetric}.
Let $c \coloneqq \frac{1}{2}(\rho, \: \gamma_2^\vee - \gamma_1^\vee)$, and let $\zeta$ be the unique fundamental
weight of $\g$ that is orthogonal to $\Phi(\k)$.
Then for any Hermitian symmetric pair $(\g,\k)$, and $k \leq r$, the \emph{$k$th Wallach representation} is the simple $\g$-module $L_{-kc\zeta}$.

When $\g$ arises in one of the three dual pairs $(H,\g)$ described above, the weight $\lambda = -kc\zeta$ is the image of $\sigma = 0$ under the map $\Sigma \rightarrow \Lplusk$.
In this case, $U_\sigma$ is the trivial representation of $H$, and it follows from~\eqref{covariants = L lambda} that the $k$th Wallach representation is isomorphic to $\C[W]^H$ as a $(\g,K)$-module.
Hence, in the dual pair setting, we have already seen how to interpret the Hilbert--Poincar\'e series of $L_{-kc\zeta}$ in terms of lattice paths.  
In this section, we extend this lattice path interpretation to the Wallach representations of all Hermitian symmetric pairs of simply laced type (i.e., where $\g$ is one of the Cartan types $\ssA$, $\ssD$, or $\ssE$).

In the dual pair setting, the two pairs $(H,\g)$ with $\g$ simply laced were $(\GL_k, \gl_{p+q})$ and $(\Sp_{2k}, \so_{2n})$.
We observe that in these two cases, we have an isomorphism of posets
$\P_H \cong \Phi(\p^+)$, given explicitly by
\begin{align*}
    \P_{\GL} \longrightarrow \Phi(\p^+), \qquad & (i,j) \longmapsto \ep_{p+1-i} - \ep_j, \\
    \P_{\Sp} \longrightarrow \Phi(\p^+), \qquad & (i,j) \longmapsto \ep_{n+1-j} + \ep_{n+1-i}.
\end{align*}
In other words, the minimal element $(1,1) \in \P_H$, which we depicted in the upper-left corner, corresponds to the minimal noncompact root $\gamma_1 \in \Phi(\p^+)$, while the maximal element in the lower-right corner of $\P_H$ corresponds to the highest root in $\Phi(\p^+)$.
In this light, one can view each facet in $\scrF_k$ as a maximal width-$k$ subset of $\Phi(\p^+)$.
It is a general fact for Hermitian symmetric pairs that the Hasse diagram of $\Phi(\p^+)$ is a planar lattice, and thus the $k$th order complex is shellable; this allows us to generalize the construction
\[
P(L_{-kc\zeta};t) = (1-t)^{-d} \sum_{\f \in \scrF_k} t^{\#\cor(\f)},
\]
where $d$ is the common size of all facets in $\scrF_k$.
% \question{Explain why $t$ rather than $t^2$.}
We carry this out below to reinterpret the Hilbert--Poincar\'e series of the Wallach representations for the Hermitian symmetric pairs of simply laced type.
These series were written down in in~\cite{EnrightHunziker04}*{\S\S6.6--6.8}, to which we refer the reader for further details; see also~\cite{EH04exceptional}.

\subsection{First Wallach representation of $(\ssD_n, \ssD_{n-1})$}
\label{sub:Wallach Dn}

For $(\g,\k) = (\ssD_n, \ssD_{n-1})$, 
we depict the poset $\Phi(\p^+)$ as two rows of $n-1$ points each, with the minimal element in the upper-left and the maximal element in the lower-right.
Below is the Hasse diagram for $n=6$ (rotated by 45 degrees for typographical reasons):
\tikzstyle{dot}=[circle,fill=gray, minimum size = 3pt, inner sep=0pt]\begin{equation}
\label{Hasse D6}
\begin{tikzpicture}[scale=.4, baseline]
\fill[blue, very nearly transparent] (3.5,0.5) rectangle (4.5,1.5);
\draw[gray] (1,2) --(5,2) -- (5,1) -- (8,1);
\draw[gray] (4,2) --(4,1) -- (5,1);
\node at (1,2) [dot] {};
\node at (2,2) [dot] {};
\node at (3,2) [dot] {};
\node at (4,2) [dot] {};
\node at (5,2) [dot] {};
\node at (4,1) [dot] {};
\node at (5,1) [dot] {};
\node at (6,1) [dot] {};
\node at (7,1) [dot] {};
\node at (8,1) [dot] {};
\end{tikzpicture}
\end{equation}
\noindent There is only one Wallach representation $L_\la$, corresponding to $k=1$, where $\la = -(n+2)\omega_1$.
Since $k=1$, the facets $\f$ are just maximal chains in $\Phi(\p^+)$, i.e., paths with steps to the south or to the east.
The corners are those elements in a path at which an \scalebox{2}{$\llcorner$}-pattern occurs; in the diagram above, we have shaded the only possible corner.
Clearly, there are only two maximal paths:
\begin{center}

\tikzstyle{corner}=[rectangle,draw=black,fill=red, minimum size = 5pt, inner sep=0pt]
\begin{tikzpicture}[scale=.4, baseline]
\draw[gray] (1,2) --(5,2) -- (5,1) -- (8,1);
\draw[gray] (4,2) --(4,1) -- (5,1);
\draw[ultra thick] (1,2) -- (5,2) -- (5,1) -- (8,1);

\end{tikzpicture}
\begin{tikzpicture}[scale=.4, baseline]
\draw[gray] (1,2) --(5,2) -- (5,1) -- (8,1);
\draw[gray] (4,2) --(4,1) -- (5,1);
\draw[ultra thick] (1,2) -- (4,2) -- (4,1) -- (8,1);

\node at (4,1) [corner] {};

\end{tikzpicture}

\end{center}
Note that each facet has size $2n-3$.
From this we obtain the following Hilbert--Poincar\'e series, which coincides with the calculation in~\cite{EnrightHunziker04}*{Thm.~26}:
\[
P(L_{\la};t) = \frac{1+t}{(1-t)^{2n-3}}.
\]

\subsection{First Wallach representation of $(\ssE_6, \ssD_5)$}
\label{sub:Wallach E6}

For $(\g,\k) = (\ssE_6, \ssD_5)$, 
there is only one Wallach representation $L_\la$, corresponding to $k=1$, where $\la = -3\zeta$.
Since $k=1$, the facets $\f$ are just maximal chains in $\Phi(\p^+)$.
Note that each facet has size $11$:
\begin{center}

\tikzstyle{corner}=[rectangle,draw=black,fill=red, minimum size = 5pt, inner sep=0pt]
\begin{tikzpicture}[scale=.3, baseline]
\fill[blue, very nearly transparent] (2.5,2.5) rectangle (4.5,3.5);
\fill[blue, very nearly transparent] (3.5,0.5) rectangle (4.5,2.5);
\fill[blue, very nearly transparent] (4.5,0.5) rectangle (5.5,1.5);
\draw[gray] (1,4) --(5,4) -- (5,1);
\draw[gray] (3,4) --(3,3) -- (5,3);
\draw[gray] (4,4) --(4,1) -- (8,1);
\draw[gray] (4,2) --(6,2) -- (6,1);
\draw[ultra thick] (1,4)--(4,4)--(4,3)--(5,3)--(5,2)--(6,2)--(6,1)--(8,1);
\node at (4,3) [corner] {};
\end{tikzpicture}

\end{center}
\noindent It remains to count paths with respect to the number of corners, which is easily done below:

\tikzstyle{corner}=[rectangle,draw=black,fill=red, minimum size = 5pt, inner sep=0pt]
\begin{tikzpicture}[scale=.3, baseline]
\draw[gray] (1,4) --(5,4) -- (5,1);
\draw[gray] (3,4) --(3,3) -- (5,3);
\draw[gray] (4,4) --(4,1) -- (8,1);
\draw[gray] (4,2) --(6,2) -- (6,1);
\draw[ultra thick] (1,4)--(5,4)--(5,2)--(6,2)--(6,1)--(8,1);

\node[right=0pt of current bounding box.east,anchor=west
    ]{$\leadsto \dfrac{1}{(1-t)^{11}}$};

\end{tikzpicture}

\begin{tikzpicture}[scale=.3, baseline]
\draw[gray] (1,4) --(5,4) -- (5,1);
\draw[gray] (3,4) --(3,3) -- (5,3);
\draw[gray] (4,4) --(4,1) -- (8,1);
\draw[gray] (4,2) --(6,2) -- (6,1);
\draw[ultra thick] (1,4)--(5,4)--(5,1)--(8,1);
\node at (5,1) [corner] {};
\end{tikzpicture}
\begin{tikzpicture}[scale=.3, baseline]
\draw[gray] (1,4) --(5,4) -- (5,1);
\draw[gray] (3,4) --(3,3) -- (5,3);
\draw[gray] (4,4) --(4,1) -- (8,1);
\draw[gray] (4,2) --(6,2) -- (6,1);
\draw[ultra thick] (1,4)--(4,4)--(4,3)--(5,3)--(5,2)--(6,2)--(6,1)--(8,1);
\node at (4,3) [corner] {};
\end{tikzpicture}
\begin{tikzpicture}[scale=.3, baseline]
\draw[gray] (1,4) --(5,4) -- (5,1);
\draw[gray] (3,4) --(3,3) -- (5,3);
\draw[gray] (4,4) --(4,1) -- (8,1);
\draw[gray] (4,2) --(6,2) -- (6,1);
\draw[ultra thick] (1,4)--(4,4)--(4,2)--(6,2)--(6,1)--(8,1);
\node at (4,2) [corner] {};
\end{tikzpicture}
\begin{tikzpicture}[scale=.3, baseline]
\draw[gray] (1,4) --(5,4) -- (5,1);
\draw[gray] (3,4) --(3,3) -- (5,3);
\draw[gray] (4,4) --(4,1) -- (8,1);
\draw[gray] (4,2) --(6,2) -- (6,1);
\draw[ultra thick] (1,4)--(4,4)--(4,1)--(8,1);
\node at (4,1) [corner] {};
\end{tikzpicture}
\begin{tikzpicture}[scale=.3, baseline]
\draw[gray] (1,4) --(5,4) -- (5,1);
\draw[gray] (3,4) --(3,3) -- (5,3);
\draw[gray] (4,4) --(4,1) -- (8,1);
\draw[gray] (4,2) --(6,2) -- (6,1);
\draw[ultra thick] (1,4)--(3,4)--(3,3)--(5,3)--(5,2)--(6,2)--(6,1)--(8,1);
\node at (3,3) [corner] {};

\node[right=0pt of current bounding box.east,anchor=west
    ]{$\leadsto \dfrac{5t}{(1-t)^{11}}$};
    
\end{tikzpicture}

\begin{tikzpicture}[scale=.3, baseline]
\draw[gray] (1,4) --(5,4) -- (5,1);
\draw[gray] (3,4) --(3,3) -- (5,3);
\draw[gray] (4,4) --(4,1) -- (8,1);
\draw[gray] (4,2) --(6,2) -- (6,1);
\draw[ultra thick] (1,4)--(4,4)--(4,3)--(5,3)--(5,1)--(8,1);
\node at (4,3) [corner] {};
\node at (5,1) [corner] {};
\end{tikzpicture}
\begin{tikzpicture}[scale=.3, baseline]
\draw[gray] (1,4) --(5,4) -- (5,1);
\draw[gray] (3,4) --(3,3) -- (5,3);
\draw[gray] (4,4) --(4,1) -- (8,1);
\draw[gray] (4,2) --(6,2) -- (6,1);
\draw[ultra thick] (1,4)--(4,4)--(4,2)--(5,2)--(5,1)--(8,1);
\node at (4,2) [corner] {};
\node at (5,1) [corner] {};
\end{tikzpicture}
\begin{tikzpicture}[scale=.3, baseline]
\draw[gray] (1,4) --(5,4) -- (5,1);
\draw[gray] (3,4) --(3,3) -- (5,3);
\draw[gray] (4,4) --(4,1) -- (8,1);
\draw[gray] (4,2) --(6,2) -- (6,1);
\draw[ultra thick] (1,4)--(3,4)--(3,3)--(4,3)--(4,2)--(6,2)--(6,1)--(8,1);
\node at (3,3) [corner] {};
\node at (4,2) [corner] {};
\end{tikzpicture}
\begin{tikzpicture}[scale=.3, baseline]
\draw[gray] (1,4) --(5,4) -- (5,1);
\draw[gray] (3,4) --(3,3) -- (5,3);
\draw[gray] (4,4) --(4,1) -- (8,1);
\draw[gray] (4,2) --(6,2) -- (6,1);
\draw[ultra thick] (1,4)--(3,4)--(3,3)--(5,3)--(5,1)--(8,1);
\node at (3,3) [corner] {};
\node at (5,1) [corner] {};
\end{tikzpicture}
\begin{tikzpicture}[scale=.3, baseline]
\draw[gray] (1,4) --(5,4) -- (5,1);
\draw[gray] (3,4) --(3,3) -- (5,3);
\draw[gray] (4,4) --(4,1) -- (8,1);
\draw[gray] (4,2) --(6,2) -- (6,1);
\draw[ultra thick] (1,4)--(3,4)--(3,3)--(4,3)--(4,1)--(8,1);
\node at (3,3) [corner] {};
\node at (4,1) [corner] {};

\node[right=0pt of current bounding box.east,anchor=west
    ]{$\leadsto \dfrac{5t^2}{(1-t)^{11}}$};

\end{tikzpicture}

\begin{tikzpicture}[scale=.3, baseline]
\draw[gray] (1,4) --(5,4) -- (5,1);
\draw[gray] (3,4) --(3,3) -- (5,3);
\draw[gray] (4,4) --(4,1) -- (8,1);
\draw[gray] (4,2) --(6,2) -- (6,1);
\draw[ultra thick] (1,4)--(3,4)--(3,3)--(4,3)--(4,2)--(5,2)--(5,1)--(8,1);
\node at (3,3) [corner] {};
\node at (4,2) [corner] {};
\node at (5,1) [corner] {};

\node[right=0pt of current bounding box.east,anchor=west
    ]{$\leadsto \dfrac{t^3}{(1-t)^{11}}$};

\end{tikzpicture}

\noindent From this we obtain the following Hilbert--Poincar\'e series, which coincides with~\cite{EnrightHunziker04}*{Thm.~28}:
\[
P(L_{\la};t) = \frac{1+5t+5t^2+t^3}{(1-t)^{11}}.
\]

\subsection{First Wallach representation of $(\ssE_7, \ssE_6)$}
\label{sub:Wallach E7 k1}

For $(\g, \k) = (\ssE_7, \ssE_6)$ there are two Wallach representations.
For $k=1$, we have $\la = -4\zeta$, and the facets are again the maximal chains in $\Phi(\p^+)$:
\begin{center}

\tikzstyle{corner}=[rectangle,draw=black,fill=red, minimum size = 5pt, inner sep=0pt]

\begin{tikzpicture}[scale=.3, baseline]
\begin{scope}[shift={(0,-4)}]
\fill[blue, very nearly transparent] (3.5,7.5) rectangle (5.5,8.5);
\fill[blue, very nearly transparent] (4.5,4.5) rectangle (5.5,7.5);
\fill[blue, very nearly transparent] (5.5,4.5) rectangle (6.5,6.5);
\fill[blue, very nearly transparent] (6.5,4.5) rectangle (8.5,5.5);
\fill[blue, very nearly transparent] (7.5,3.5) rectangle (8.5,4.5);
\draw[gray] (1,9) --(6,9) -- (6,5);
\draw[gray] (5,9) --(5,5) -- (9,5);
\draw[gray] (4,9) --(4,8)--(6,8) ;
\draw[gray] (5,7) --(7,7) -- (7,5);
\draw[gray] (5,6) --(9,6) -- (9,4);
\draw[gray] (8,6) --(8,4) -- (12,4);
\draw[ultra thick] (1,9)--(5,9)--(5,7)--(7,7)--(7,6)--(8,6)--(8,4)--(12,4);
\node at (5,7) [corner] {};
\node at (8,4) [corner] {};
\end{scope}
\end{tikzpicture}

\end{center}
Note that the size of each facet is $17$, and the maximum number of corners is 5.
Counting all maximal paths and their corners, we obtain
\[
P(L_\la; t) = \frac{ 1+10t+28t^2 +28t^3 +10t^4 +t^5}{(1-t)^{17}},
\]
which agrees with~\cite{EnrightHunziker04}*{Thm.~29}.

\subsection{Second Wallach representation of $(\ssE_7, \ssE_6)$}
\label{sub:Wallach E7 k2}

For $k=2$, we have $\la = -8\zeta$.
Each facet can be depicted as a family of $k=2$ non-intersecting paths in $\Phi(\p^+)$; each family necessarily contains $26$ elements.
Note that each path in a facet can contain at most one corner.
In fact, there are only three facets:
\begin{center}

\tikzstyle{corner}=[rectangle,draw=black,fill=red, minimum size = 5pt, inner sep=0pt]

\begin{tikzpicture}[scale=.3, baseline]
\begin{scope}[shift={(0,-4)}]
\fill[blue, very nearly transparent] (4.5,4.5) rectangle (5.5,5.5);
\draw[gray] (1,9) --(6,9) -- (6,5);
\draw[gray] (5,9) --(5,5) -- (9,5);
\draw[gray] (4,9) --(4,8)--(6,8) ;
\draw[gray] (5,7) --(7,7) -- (7,5);
\draw[gray] (5,6) --(9,6) -- (9,4);
\draw[gray] (8,6) --(8,4) -- (12,4);
\draw[ultra thick] (1,9)--(6,9)--(6,7)--(7,7)--(7,6)--(9,6)--(9,4)--(12,4);
\draw[ultra thick] (4,8)--(5,8)--(5,6)--(6,6)--(6,5)--(8,5)--(8,4);
\end{scope}
\end{tikzpicture}
\begin{tikzpicture}[scale=.3, baseline]
\begin{scope}[shift={(0,-4)}]
\draw[gray] (1,9) --(6,9) -- (6,5);
\draw[gray] (5,9) --(5,5) -- (9,5);
\draw[gray] (4,9) --(4,8)--(6,8) ;
\draw[gray] (5,7) --(7,7) -- (7,5);
\draw[gray] (5,6) --(9,6) -- (9,4);
\draw[gray] (8,6) --(8,4) -- (12,4);
\draw[ultra thick] (1,9)--(6,9)--(6,7)--(7,7)--(7,6)--(9,6)--(9,4)--(12,4);
\draw[ultra thick] (4,8)--(5,8)--(5,5)--(8,5)--(8,4);
\node at (5,5) [corner] {};
\end{scope}
\end{tikzpicture}
\begin{tikzpicture}[scale=.3, baseline]
\begin{scope}[shift={(0,-4)}]
\draw[gray] (1,9) --(6,9) -- (6,5);
\draw[gray] (5,9) --(5,5) -- (9,5);
\draw[gray] (4,9) --(4,8)--(6,8) ;
\draw[gray] (5,7) --(7,7) -- (7,5);
\draw[gray] (5,6) --(9,6) -- (9,4);
\draw[gray] (8,6) --(8,4) -- (12,4);
\draw[ultra thick] (1,9)--(6,9)--(6,6)--(9,6)--(9,4)--(12,4);
\draw[ultra thick] (4,8)--(5,8)--(5,5)--(8,5)--(8,4);
\node at (5,5) [corner] {};
\node at (6,6) [corner] {};
\end{scope}
\end{tikzpicture}
\end{center}
Hence the Hilbert--Poincar\'e series is
\[
P(L_\la; t) = \frac{1+t+t^2}{(1-t)^{26}},
\]
as given in~\cite{EnrightHunziker04}*{Thm.~30}.

\subsection{Enright--Shelton reduction and corner posets}
\label{sub:ES reduction}

%%%%%
% Mention the partition transpose effect?
%%%%%

Recall that Figure~\ref{fig:facet examples} illustrates a typical facet $\f = \bp_1 \sqcup \cdots \sqcup \bp_k$ of the $k$th order complex $\Delta_k(\P)$, where $\P$ is one of the three classical posets.
In particular, for a fixed index $i = 1, \ldots, k$, we shaded the region containing all possible corners of a path $\bp_i$.
Recall also that these ``corner regions'' are translates of each other as $i$ ranges from $1$ to $k$;
hence without ambiguity we now define ${\rm Cor}_k(\P) \subset \P$ to be the corner region induced by $\Delta_k(\P)$.

We will view ${\rm Cor}_k(\P)$ as a subset of $\P$, but not as a subposet, for the following reason.
Recall that each set $\cor(\bp_i)$ is a strict chain with respect to the partial order on $\P$.
In order to exploit the RSK correspondence, however, we imagined  the vertical or horizontal reflection so that $\cor(\bp_i)$ would be an \emph{anti}chain; see the proofs of Propositions~\ref{prop:Hilbert series GL_k}, \ref{prop:Hilbert series O_k}, and \ref{prop:Hilbert series Sp_2k}.
In other words, we actually viewed ${\rm Cor}_k(\P)$ as the poset whose Hasse diagram was a 90-degree rotation of the Hasse diagram it inherited from $\P$.
Upon endowing ${\rm Cor}_k(\P)$ with this partial order, and calling it the \emph{corner poset}, we observe the following poset isomorphisms:
\begin{align*}
    {\rm Cor}_k(\P_{\GL}(p,q)) & \cong \P_{\GL}(p-k, \: q-k),\\
    {\rm Cor}_k(\P_{\O}(n)) & \cong \P_{\Sp}(n-k+1),\\
    {\rm Cor}_k(\P_{\Sp}(n)) & \cong \P_{\O}(n - 2k - 1).
\end{align*}
Somewhat surprisingly, these isomorphisms encode a phenomenon called \emph{Enright--Shelton reduction}~\cites{ES87,ES89}, which we summarize briefly below.
In order to make the connection more transparent, we first rename each classical poset $\P_H$ as $\P(\g,\k)$, where $(\g,\k)$ is the pair corresponding to $H$ in Table~\ref{table:Howe}.
Labeling $(\g,\k)$ by its Killing--Cartan classification, we rewrite the poset isomorphisms above as follows:
\begin{align}
\label{Cor isos g}
\begin{split}
    \operatorname{Cor}_k \P(\ssA_{p+q-1}, \; \ssA_{p-1} \times \ssA_{q-1}) & \cong \P(\ssA_{p+q-2k-1}, \; \ssA_{p-k-1} \times \ssA_{q-k-1}),\\
    \operatorname{Cor}_k \P(\ssC_n, \ssA_n) & \cong \P(\ssD_{n-k+1}, \ssA_{n-k+1}),\\
    \operatorname{Cor}_k \P(\ssD_n, \ssA_n) & \cong \P(\ssC_{n - 2k - 1}, \ssA_{n-2k-1}).
    \end{split}
\end{align}

Enright--Shelton reduction is a process that converts a singular weight $\Lambda^{+}(\k)$ into a regular weight in $\Lambda^{+}(\k')$, where $(\g', \k')$ is a certain Hermitian symmetric pair whose rank is less than that of $(\g, \k)$.
More specifically, for the three families of Hermitian symmetric pairs $(\g, \k)$ arising in the Howe duality setting, where $\lambda = -kc\zeta$ (i.e., where $L_\lambda$ is the $k$th Wallach representation), there exists an associated pair $(\g',\k')$ such that the Enright--Shelton reduction $\lambda \mapsto \lambda' \in \Lambda^+(\k')$ induces a \emph{congruence of blocks} $\mathcal{B}_\la \cong \mathcal{B}_{\la'}$ in parabolic category $\mathcal{O}$.
(See~\cite{EricksonHunziker23}*{\S5} for details.)
The transfer theorem~\cite{EnrightHunziker04}*{p.~623} relates the Hilbert series of $L_\la$ to Enright--Shelton reduction, by interpreting the numerator as the Hilbert series of a finite-dimensional $\g'$-module determined by the weight $\la'$.
In~\cite{Armour}*{Table 4.3} and~\cite{EricksonHunziker23}*{Table 2, entries I--IIIa}, the data for Enright--Shelton reduction (where $\lambda = -kc\zeta$) is given in the context of the dual pairs in Table~\ref{table:Howe}.
Upon inspecting each pair $(\g, \g')$ in the tables cited above, we observe that the corner poset isomorphisms~\eqref{Cor isos g} can be described uniformly in terms of Enright--Shelton reduction, as follows:
\begin{equation}
    \label{reduction isos}
    \operatorname{Cor}_k \P(\g,\k) \cong \P(\g',\k').
\end{equation}
Nor is this phenomenon limited to the classical groups; indeed, at least on the level of covering graphs (although not necessarily Hasse diagrams), it is exhibited by every instance of the Wallach representations shown above in Sections~\ref{sub:Wallach Dn}--\ref{sub:Wallach E7 k2}.
The pairs $(\g', \k')$ given below are all listed in~\cite{EnrightHunziker04}*{\S6}:

\begin{itemize}
\item 
For the first Wallach representation of $(\g,\k) = (\ssD_n, \ssD_{n-1})$, we have $\g' = \ssA_1$, and thus $\P(\g') \cong \P_{\GL}(1,1)$ is a singleton.
Likewise, we see from the figure in Section~\ref{sub:Wallach Dn} that $\operatorname{Cor}_1 \P(\g,\k)$ is a single point.

\item For the first Wallach representation of $(\g,\k) = (\ssE_6, \ssD_5)$, we have $(\g', \k') = (\ssA_5, \ssA_4)$, and thus $\P(\g') \cong \P_{\GL}(4,1)$ is a chain of size 4.
This is true also of $\operatorname{Cor}_1 \P(\g)$ as shown in the figure in Section~\ref{sub:Wallach E6}.

\item For the first Wallach representation of $(\g,\k) = (\ssE_7, \ssE_6)$, we have $(\g', \k') = (\ssD_6, \ssD_5)$, and thus $\P(\g',\k')$ is the poset shown in~\eqref{Hasse D6}.
Clearly this has the same covering graph as the shaded corner poset $\operatorname{Cor}_1 \P(\g,\k)$ in the figure in Section~\ref{sub:Wallach E7 k1}.

\item For the second Wallach representation of $(\ssE_7, \ssE_6)$, we have $\g' = \ssA_1$, and thus $\P(\g',\k')$ is a singleton, as is $\operatorname{Cor}_2 \P(\g,\k)$ in the figure in Section~\ref{sub:Wallach E7 k2}.

\end{itemize}
For the present, we leave~\eqref{Cor isos g} as a mere observation, but it seems that there is a deeper relationship between Enright--Shelton reduction and $\operatorname{Cor}_k \P(\g,\k)$ which has yet to be fully understood.

\section*{Appendix: explicit maps in Howe duality settings}

\subsection*{$\operatorname{GL}_k\!-\mathfrak{gl}_{p+q}$ Howe duality:}\

The Lie algebra homomorphism $\omega: \mathfrak{gl}_{p+q} \rightarrow \mathcal{D}(\operatorname{M}_{p,k}\oplus\operatorname{M}_{q,k})^{\operatorname{GL}_k}$ is given by

\begin{equation*}
\begin{split}
\begin{pmatrix}
A & B\\
C & D
\end{pmatrix}\mapsto
&\ \sum_{i=1}^p\sum_{j=1}^p a_{ij} \left(-\sum_{\ind=1}^k x_{j\ind} \frac{\partial}{\partial x_{i\ind}}-k \delta_{ij}\right)
+\sum_{i=1}^q\sum_{j=1}^q d_{ij}\left(\sum_{\ind=1}^k y_{i\ind} \frac{\partial}{\partial y_{j\ind}}\right)\\
&\quad + \sum_{i=1}^p\sum_{j=1}^qb_{ij}\bigg( - \frac{1}{\eta}\underbrace{\sum_{\ind=1}^k  \frac{\partial^2}{\partial x_{i\ind}\partial y_{j\ind}}}_{\Delta_{ij}}\bigg)
 +\sum_{i=1}^q\sum_{j=1}^p c_{ij} \bigg(\eta \underbrace{\sum_{\ind=1}^k x_{j\ind} y_{i\ind}}_{f_{ji}}\bigg). 
\end{split}
\end{equation*}

The action of $\operatorname{GL}_p\times \operatorname{GL}_q\times \operatorname{GL}_k$ on $\mathbb{C}[\operatorname{M}_{p,k}\oplus\operatorname{M}_{q,k}]$ is given by 
\[
(g_1,g_2,h).f(X,Y)=\det(g_1)^{-k} f(g_1^{-1}Xh, g_2^{t}\,Y(h^t)^{-1}).
\]

\subsection*{$\operatorname{O}_k\!-\mathfrak{sp}_{2n}$ Howe duality:}\

The Lie algebra homomorphism  $\omega: \mathfrak{sp}_{2n}\rightarrow \mathcal{D}(\operatorname{M}_{n,k})^{\operatorname{O}_k}$ is given by
\begin{equation*}
\begin{split}
\begin{pmatrix}
A & B\\
C & -A^T
\end{pmatrix}\mapsto
&\ \sum_{i=1}^n\sum_{j=1}^n a_{ij} \bigg(-\sum_{\ind=1}^k x_{j\ind} \frac{\partial}{\partial x_{i\ind}}-\frac{k}{2} \delta_{ij}\bigg)\\
&\quad +\sum_{i=1}^n\sum_{j=1}^nb_{ij}\bigg( -\frac{1}{2\eta}\underbrace{\sum_{\ind=1}^k  \frac{\partial^2}{\partial x_{i\ind}\partial x_{j\ind}}}_{\Delta_{ij}}\bigg)
+\sum_{i=1}^n\sum_{j=1}^n c_{ij} \bigg(\frac{\eta}{2}\underbrace{\sum_{\ind=1}^k  x_{j\ind} x_{i\ind}}_{f_{ji}}\bigg).  
\end{split}
\end{equation*}

The action of $\widetilde{\operatorname{GL}}_n\times \operatorname{O}_k$ on $\mathbb{C}[\operatorname{M}_{n,k}]$ is given by 
\[((g,s),h).f(X)=\det(g)^{-k/2}f(g^{-1}Xh):=s^{-k}f(g^{-1}Xh),\]
where $\widetilde{\operatorname{GL}}_n:=\{(g,s)\in \operatorname{GL}_n\times \mathbb{C}^{\times} \mid \det(g)=s^2\}$.

\subsection{$\operatorname{Sp}_{2k}\!-\mathfrak{so}_{2n}$ Howe duality:}\

The Lie algebra homomorphism  $\omega: \mathfrak{so}_{2n}\rightarrow \mathcal{D}(\operatorname{M}_{n,2k})^{\operatorname{Sp}_{2k}}$ is given by
\begin{equation*}
\begin{split}
\begin{pmatrix}
A & B\\
C & -A^T
\end{pmatrix}\mapsto
&\ \sum_{i=1}^n\sum_{j=1}^n a_{ij} \bigg(-\sum_{\ind=1}^{2k} x_{j\ind} \frac{\partial}{\partial x_{i\ind}}-k \delta_{ij}\bigg)\\
&\quad +\sum_{i=1}^n\sum_{j=1}^nb_{ij}\bigg(-\frac{1}{2\eta}\underbrace{\sum_{\ind=1}^k \Big( \frac{\partial^2}{\partial x_{i\ind}\partial x_{j,\ind+k}}
- \frac{\partial^2}{\partial x_{i,\ind+k}\partial x_{j,\ind}}\Big)}_{\Delta_{ij}}\bigg) \\[-10pt]
&\quad +\sum_{i=1}^n\sum_{j=1}^n c_{ij} \bigg(\frac{\eta}{2}\underbrace{\sum_{\ind=1}^k  \Big(x_{j\ind} x_{i,\ind+k}-x_{j,\ind+k} x_{i\ind}\Big)}_{f_{ji}}\bigg).  
\end{split}
\end{equation*}

The action of $\operatorname{GL}_n\times \operatorname{Sp}_{2k}$ on $\mathbb{C}[\operatorname{M}_{n,2k}]$ is given by
\[
(g,h).f(X)=\det(g)^{-k} f(g^{-1}Xh).
\]

\subsection*{Maps between coordinate functions:}\

In each of the three cases above, identify $\mathfrak{p}^-$ with $(\mathfrak{p}^+)^*$ via the $K$-equivariant linear isomorphism
\[
\begin{pmatrix}
0 & 0\\
C & 0
\end{pmatrix}
\mapsto 
\left[
\begin{pmatrix}
0 & B\\
0 & 0
\end{pmatrix}
\mapsto 
\operatorname{trace}(CB)
\right].
\]
Then define the matrix coordinate functions $z_{ij}\in (\mathfrak{p}^+)^*$  by
\[
z_{ij} :\begin{pmatrix}
0 & B\\
0 & 0
\end{pmatrix}\mapsto b_{ij}.
\]
Via the inverse isomorphism  $(\mathfrak{p}^+)^*\rightarrow \mathfrak{p}^-$,
we have
\[
z_{ij}\mapsto 
\begin{cases}
\begin{pmatrix}
0 & 0\\
E_{ji} & 0
\end{pmatrix}
 & \mbox{if $\mathfrak{g}=\mathfrak{gl}_{p+q}$ and $1\leq i\leq p$, $1\leq j\leq q$;}\\[20pt]
 \begin{pmatrix}
0 & 0\\
\frac{1}{2}(E_{ji}+E_{ij})  & 0
\end{pmatrix}
& \mbox{if $\mathfrak{g}=\mathfrak{sp}_{2n}$ and $1\leq i\leq  j\leq n$;}\\[20pt]
 \begin{pmatrix}
0 & 0\\
\frac{1}{2}(E_{ji}-E_{ij})  & 0
\end{pmatrix}&  \mbox{if $\mathfrak{g}=\mathfrak{so}_{2n}$ and $1\leq i<  j\leq n$.}\\
\end{cases} 
\]
It follows that by choosing $\eta=1$ if $\mathfrak{g}=\mathfrak{gl}_{p+q}$ and $\eta =2$ if $\mathfrak{g}=\mathfrak{sp}_{2n}$ or $\mathfrak{so}_{2n}$, respectively, we have 
\[
z_{ij} \mapsto f_{ij}
\]
via $\mathbb{C}[z_{ij}]=\mathbb{C}[\mathfrak{p}^+]\cong \mathcal{U}(\mathfrak{p}^-)\subset \mathcal{U}(\mathfrak{g}) \rightarrow \mathcal{D}(W)^{H}$.

 \bibliographystyle{alpha}
 \bibliography{references}

\end{document}